\newcommand{\loc}{\ensuremath{\text{loc}}}
\newcommand{\mb}[1]{\ensuremath{\mathbb{#1}}}
\newcommand{\N}{\mb{N}}
\newcommand{\R}{\mb{R}}
\newcommand{\sgn}{\mathop{\mathrm{sgn}}}
\newfont{\bl}{msbm10 scaled \magstep2}
\newcommand{\beq}{\begin{equation}}
\newcommand{\eeq}{\end{equation}}
\newcommand{\notmid}{\mid\kern-0.5em\not\kern0.5em}
\newcommand{\al}{\alpha}
\newcommand{\de}{\delta}
\newcommand{\eps}{\varepsilon}
\newcommand{\supp}{\mathop{\mathrm{supp}}}
\renewcommand{\Im}{\ensuremath{\mathop{\mathrm{Im}}}}
\newtheorem{thm}{Theorem}[section]
\newtheorem{lem}[thm]{Lemma}
\newtheorem{prop}[thm]{Proposition}
\newtheorem{cor}[thm]{Corollary}
\theoremstyle{definition}
\newtheorem{defi}[thm]{Definition}
\newtheorem{conjecture}[thm]{Conjecture}
\newtheorem{rem}[thm]{Remark}
\newtheorem{notation}[thm]{Notation}
\newtheorem{que}[thm]{Question}
\newcommand{\vol}{\mathrm{vol}}
\newcommand{\LLS}{Lorentzian length space }
\newcommand{\LLSn}{Lorentzian length space}
\newcommand{\LpLSn}{Lorentzian pre-length space}
\newcommand{\dint}{\mathrm{d}}
\newcommand{\I}{{}^-\!I}
\newcommand{\id}{\mathrm{id}}
\renewcommand{\L}{\mathcal{L}}
\newcommand{\Ric}{\mathrm{Ric}}
\newcommand{\m}{\mathfrak{m}}
\newcommand{\n}{\mathfrak{n}}
\newcommand{\tmcp}{\textnormal{\textsf{TMCP}}}
\newcommand{\mcp}{\textnormal{\textsf{MCP}}}
\newcommand{\tCD}{\textnormal{\textsf{TCD}}}
\newcommand{\CD}{\textnormal{\textsf{CD}}}
\newcommand{\RCD}{\textnormal{\textsf{RCD}}}
\newcommand{\met}{\textnormal{\textsf{d}}}
\renewcommand{\loc}{\mathrm{loc}}
\DeclareMathOperator{\spt}{spt}
\renewcommand{\labelenumi}{(\roman{enumi})}
\renewcommand\theenumi\labelenumi
\newtheorem{ithm}{Theorem}[section]
\newcommand{\ac}{\mathrm{ac}}
\newcommand{\opt}{\mathrm{opt}}
\newcommand{\e}{\textnormal{\textsf{e}}}
\DeclareMathOperator{\Geo}{Geo}
\newcommand{\TGeo}{\mathrm{TGeo}}
\newcommand{\OptGeo}{\mathrm{OptGeo}}
\newcommand{\OptTGeo}{\mathrm{OptTGeo}}
\newcommand{\wTCD}{\textnormal{\textsf{wTCD}}}
\newcommand{\bdpi}{\boldsymbol{\pi}}
\newcommand{\Ent}{\mathrm{Ent}}
\newcommand{\T}{\mathcal T}
\title{Generalized cones admitting a curvature-dimension condition}
\author{Matteo Calisti\thanks{{\tt matteo.calisti@univie.ac.at}, Faculty of Mathematics, University of Vienna,
Austria.}, Christian Ketterer\thanks{{\tt christian.ketterer@mu.ie}, Department of Mathematics and Statistics, Maynooth University}, Clemens S\"{a}mann\thanks{{ \tt{clemens.saemann@univie.ac.at}}, Faculty of Mathematics, University of Vienna,
Austria.}}
\begin{document}
\maketitle
\begin{abstract}
    We study (generalized) cones over metric spaces, both in Riemannian and Lorentzian signature. 
    In particular, we establish synthetic lower Ricci curvature bounds \`a la Lott--Villani--Sturm \cite{LV:09, Stu:06a, Stu:06b} and Ohta \cite{Oht:07} in the metric measure case, and \`a la Cavalletti--Mondino \cite{CM:24a} in Lorentzian signature. Here, a generalized cone is a warped product of a one-dimensional base space, which will be positive or negative definite, over a fiber that is a metric space. We prove that Riemannian or Lorentzian generalized cones over $\mathsf{CD}$-spaces satisfy the \emph{(timelike) measure contraction property} $\mathsf{(T)MCP}$ --- a weaker version of a \emph{(timelike) curvature-dimension condition} $\mathsf{(T)CD}$. Conversely, if the generalized cone is a $\mathsf{(T)CD}$-space, then the fiber is a $\mathsf{CD}$-space with the appropriate bounds on Ricci curvature and dimension.  In proving these results we develop a novel and powerful two-dimensional localization technique, which we expect to be interesting in its own right and useful in other circumstances. We conclude by giving several applications including synthetic singularity and splitting theorems for generalized cones. The final application is that we propose a new definition for lower curvature bounds for metric and metric measure spaces via lower curvature bounds for generalized cones over the given space.
\end{abstract}
\noindent
\emph{Keywords:} warped products, cones over metric spaces, synthetic curvature bounds, CD-spaces, nonsmooth spacetime geometry, Lorentzian length spaces, synthetic Lorentzian geometry.
\medskip

\noindent
\emph{2020 Mathematics Subject Classification:}
28A75, 
51K10, 
53C23, 
53C50, 
53B30, 
53C80, 
83C99. 

\tableofcontents
\section{Introduction}
Warped products are ubiquitous in geometry and applications. In particular, they appear as rigidity cases for (semi-)Riemannian manifolds and Ricci limit spaces, e.g.\ \cite{CC:96, coldingshape, wylie:warped}, and as major examples in general relativity as well as in cosmology e.g.\ the Schwarzschild solution to Einsteins equation of a single isolated mass \cite[Ch.\ 13]{ONe:83} or FLRW-spacetimes as models of our universe \cite[Ch.\ 12]{ONe:83}. In the smooth setting a warped product between two smooth semi-Riemannian manifolds $(B,g_B)$ and $(F,g_F)$ is the product manifold $B\times F$ equipped with the semi-Riemannian metric 
$$g_{_{B\times_f F}} = (P_B)^* g_B + (f\circ P_B)^2 (P_F)^* g_F\,,$$
where $f\colon B\rightarrow (0,\infty)$ is smooth, $(P_B)^*$ and $(P_F)^*$ are the pullbacks onto $B\times F$ with respect to the projections maps $P_B$ and $P_F$ from $B\times F$ onto $B$ and $F$, respectively. The spaces $B$ and $F$, and the function $f$ are respectively called the base, the fiber and the warping function and the semi-Riemannian manifold $(B\times F, g_{B\times_f F})$ has index (i.e., the dimension of the largest negative definite subspace) $\nu_B + \nu_F$, where $\nu_B$ and $\nu_F$ are the index of $g_B$ and $g_F$, respectively. 
Cones over metric spaces and warped products of metric spaces \cite{AB:98, chenwp, BBI:01} play a similar role in metric geometry \cite{AB:03, AB:04}. In the Lorentzian setting such warped products were the first major examples of \LLSn s \cite{AGKS:23}, which were not \smash{Lorentz(--Finsler)} spacetimes. \LLSn s \cite{KS:18} are the analogues to metric length spaces in Lorentzian signature.
\smallskip

This synthetic approach to Lorentzian geometry turned into a highly active field in recent years --- not only as a way to study low regularity phenomena and singularities in general relativity but also from a purely geometric point-of-view. As an example for the former, in \cite{GKS:19} it was shown for the first time that inextendibility implies a blow-up of curvature and in \cite{AGKS:23} the first synthetic singularity theorem was proven and its relation to big bang and big crunch singularities investigated. The synthetic timelike sectional curvature bounds introduced in \cite{KS:18} were built on triangle comparison, which was pioneered in the smooth setting by Harris \cite{Har:82}, Andersson--Howard \cite{AH:98} and Alexander--Bishop \cite{AB:08}. Building on a characterization of timelike Ricci curvature bounds for smooth spacetimes, independently by McCann \cite{McC:20} and Mondino--Suhr \cite{MS:23}, Cavalletti and Mondino introduced synthetic timelike Ricci curvature bounds in the setting of \LLSn s, established geometric consequences like a timelike Bishop--Gromov inequality or a timelike Bonnet--Myers inequality, and proved a synthetic Hawking singularity theorem \cite{CM:24a}. This led to another surge of activity in characterizing different notions of timelike Ricci curvature bounds \cite{Bra:23a, Bra:23b}, studying the null energy condition \cite{McC:24, Ket:24, CMM:24} and isoperimetric inequalities \cite{CM:24b}. Also, a first-order calculus for causally monotone functions (sometimes called time functions) was developed in \cite{BBCGMORS:24} and a $p$-D'Alembertian comparison result for spaces with lower timelike Ricci curvature bounds was proven (see also \cite{Bra:24} for an explicit representation of the $p$-D'Alembertian). This allows one to give a new and simplified proof of the splitting theorems of Lorentzian geometry \cite{BGMcCOS:24} and their extension to low regularity and with weighted volumes \cite{BGMcCOS:25}. In a more geometric direction gluing of \LLSn s has been studied in \cite{BR:24, Rot:23} --- another advantage of leaving the smooth framework --- and in a similar spirit causal completions now fit into the same nonsmooth framework \cite{ABS:22, BFH:23}. Yet another intense area of activity is defining convergence for Lorentzian spaces --- recent approaches include \cite{Mue:22, MS:24, BMS:24, SS:24, MS:25}. For recent reviews in the area of synthetic Lorentzian geometry see \cite{CM:22, Sae:24, McC:25, Bra:25}, and particularly relevant to our study of generalized cones as warped products with one-dimensional base see \cite{Ket:25}. 
\smallskip

The basic question we address in this article is: How do synthetic Ricci curvature bounds of the fiber relate to synthetic Ricci curvature bounds of the whole space and to convexity/concavity properties of the warping function? Note that the fiber will always be a metric space (i.e.\ positive definite) while for the entire generalized cone we will consider both the Riemannian and Lorentzian case (by considering both signs of the base space). We conclude the introduction by outlining the main results and the structure of the article.

\subsection*{Main results and structure of the article}
In section \ref{sec-prelim} we fix notations, recall basics from metric measure geometry, \LLSn s and in particular, recall the different notions of Ricci curvature bounds in Subsection \ref{subsec-curv}. Then, in Subsection \ref{subsec-loc} we recall the localization method on metric measure spaces. Next, in Section \ref{sec-gen-con-wpd-prod} we discuss semi-Riemannian warped products in the smooth and nonsmooth setting in detail. A particular focus will be on the case of a one-dimensional base, and also with one-dimensional fiber. In Subsection \ref{subsec-con-ms} we view metric generalized cones as metric length spaces and recall their properties. To be precise, given a metric space $(X,\met)$ as fiber, an interval $I\subseteq \R$ and a continuous warping function $f\colon I\rightarrow [0,\infty)$, we put a length structure on the metric generalized cone $I\times_f X$ via
$$L(\gamma)= \int_a^b \sqrt{ \dot \alpha^2 + (f\circ \alpha)^2 v_\beta^2}\,,$$
where $\gamma=(\alpha, \beta): [a,b]\rightarrow I \times X$ is an absolutely continuous  curve and has components $\alpha$ and $\beta$ (that are absolutely continuous) and $v_\beta$ is the metric derivative of $\beta$ (cf.\ e.g.\ \cite[Subsec.\ 2.7]{BBI:01}). Then, using the induced length distance (and taking a distance zero quotient) turns $I\times_f X$ into a metric length space. Similarly, in Subsection \ref{subsec-con-lls} we view Lorentzian generalized cones as \LLSn s and discuss their properties. Here the length structure on the Lorentzian generalized cone ${}^-I \times_f X$ is given by
\begin{equation*}
 L(\gamma):= \int_a^b \sqrt{\dot\alpha^2 - (f\circ\alpha)^2 v_\beta^2}\,,
\end{equation*}
for $\gamma$ a \emph{causal} curve, i.e., $-\dot\alpha^2 + (f\circ \alpha)^2 v_{\beta}^2\leq 0$ and $\dot\alpha>0$ a.e., in which case $\gamma$ is \emph{future directed}, or $\dot\alpha<0$ a.e., in which case it is called \emph{past directed}. Then, the \emph{time separation function} $\uptau\colon ({}^-I \times_f X)^2
\rightarrow [0,\infty]$ is defined (cf.\ e.g.\ \cite[Def.\ 14.15]{ONe:83} in the spacetime setting) as
\begin{equation*}
 \uptau(y,y'):=\sup\{L(\gamma):\gamma \text{ future directed causal curve from } y \text{ to } y'\}\cup\{0\}\,.
\end{equation*}
This turns ${}^-I \times_f X$ into a \LLS for fibers $X$ that are locally compact length spaces. We write ${}^-I$ to indicate that $I$ is equipped with the semi-Riemannian metric $-(\met t)^2$.
\smallskip

Then, the main part of the article starts with Section \ref{sec-2d-mod-spa} where we discuss the two-dimensional warped product model spaces. As a basis for later results we establish Ricci curvature bounds for two-dimensional warped products from convexity or concavity of the warping function. To be precise, for $f\in C^\infty(I)$ with
\begin{align*}
f'' \pm \kappa f\leq 0 \ \mbox{ and } \ \pm (f')^2 + \kappa f^2 \leq \eta\,.
\end{align*}
for some $\kappa\in\R$, we have
\setcounter{section}{4}
\setcounter{ithm}{1}
\begin{ithm}
Let $N>1$ and assume $\partial I= f^{-1}(\{0\})$.
If $h\in C^0([a,b], [0, \infty))$ satisfies $$\smash{\left(h^\frac{1}{N-1}\right)'' + \eta h^\frac{1}{N-1}\leq 0}$$ in the distributional sense, then the generalized cone $({}^\pm I\times_f [a,b], {f^{N-1}h}\,\vol_{^\pm I\times_f [a,b]})$ satisfies $\CD(\kappa N, N+1)$ or $\tCD_p(-\kappa N,N+1)$, respectively. 

Here $\vol_{^\pm I\times_f [a,b]}$ is the volume measure $\sqrt{|\det{(\pm \mathrm{d} t^2 + f^2 \mathrm{d} x^2)|}}\,\L^2 = f\,\L^2$, where $\L^2$ is the two-dimensional Lebesgue measure on $I\times [a,b]$.
\end{ithm}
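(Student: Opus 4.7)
The plan is to reduce to the case where $h$ is smooth and strictly positive via an approximation argument, and to handle the smooth case by a direct computation of the weighted Ricci tensor.

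First I would assume $h\in C^\infty([a,b])$ with $h>0$, so that $({}^\pm I\times_f [a,b], f^{N-1}h\vol)$ is a smooth weighted semi-Riemannian $2$-manifold. Setting $k:=h^{1/(N-1)}$, the standard warped-product formulas give, in the Riemannian case $g = \met t^2+f^2\met x^2$, the Ricci components $\mathrm{Ric}(\d_t,\d_t)=-f''/f$, $\mathrm{Ric}(\d_x,\d_x)=-ff''$, and $\mathrm{Ric}(\d_t,\d_x)=0$. Then I would compute the $(N+1)$-Bakry--\'Emery tensor associated to the weight $f^{N-1}h$ (parameter $N-1$ over the underlying dimension $2$), obtaining
\begin{align*}
\mathrm{Ric}_{N+1}^{\mathrm{BE}}(\d_t,\d_t) &= -Nf''/f,\\
\mathrm{Ric}_{N+1}^{\mathrm{BE}}(\d_x,\d_x) &= -ff''-(N-1)k''/k-(N-1)(f')^2,\\
\mathrm{Ric}_{N+1}^{\mathrm{BE}}(\d_t,\d_x) &= 0,
\end{align*}
where the cancellation of the mixed term relies precisely on the exponent $N-1$ in $f^{N-1}$. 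Substituting the hypotheses $f''+\kappa f\leq 0$, $(f')^2+\kappa f^2\leq \eta$, and $k''+\eta k\leq 0$ gives $\mathrm{Ric}_{N+1}^{\mathrm{BE}}(v,v)\geq \kappa N\, g(v,v)$, and by the classical Sturm--Lott--Villani equivalence this yields $\CD(\kappa N,N+1)$. The Lorentzian case $g = -\met t^2 + f^2\met x^2$ is analogous up to the sign flip in the base; the corresponding timelike Bakry--\'Emery bound gives $\tCD_p(-\kappa N,N+1)$ via the smooth characterisation recalled in Subsection~\ref{subsec-curv}.

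For the general $h\in C^0([a,b],[0,\infty))$, I would mollify $k$: set $k_\varepsilon := k*\rho_\varepsilon$ on compactly contained subintervals. Since the condition $k''+\eta k\leq 0$ is linear, commuting the convolution onto the test function (and using that $\phi*\rho_\varepsilon$ remains an admissible non-negative test function for symmetric $\rho_\varepsilon\ge0$) yields the same inequality \emph{classically} for $k_\varepsilon$; moreover $k_\varepsilon$ is smooth, strictly positive in the interior, and $k_\varepsilon\to k$ locally uniformly. Then $h_\varepsilon := k_\varepsilon^{N-1}$ is smooth and strictly positive and by the first step each $({}^\pm I\times_f[a,b], f^{N-1}h_\varepsilon\vol)$ satisfies $\CD(\kappa N,N+1)$ (resp.~$\tCD_p(-\kappa N,N+1)$). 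Since $f^{N-1}h_\varepsilon\vol\to f^{N-1}h\vol$ weakly and the underlying (Lorentzian) metric structure is unchanged, stability of $\CD$ under measured convergence (resp.~of $\tCD_p$, as recalled in Section~\ref{sec-prelim}) delivers the conclusion after exhausting by compact subsets of the interior.

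The main obstacle is twofold. Algebraically, the precise cancellations in the weighted Ricci computation that convert the distributional $\eta$-concavity of $k$ together with the two separate convexity/norm bounds on $f$ into the single sharp bound $\kappa N$ must be verified carefully: this is where the specific exponent $N-1$ and the auxiliary parameter $\eta$ enter, and where the choice of effective dimension $N+1$ is rigid. Analytically, in the Lorentzian setting the stability of $\tCD_p$ under the approximation is more delicate than in the Riemannian setting; my strategy circumvents this by working on relatively compact subsets of the interior where $f>0$, using the local nature of the (timelike) curvature-dimension condition, and absorbing the boundary behaviour at $\d I = f^{-1}(\{0\})$ and at zeros of $h$ into the fact that the reference measure vanishes there.
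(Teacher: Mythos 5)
Your direct computation of the weighted $(N+1)$-Bakry--\'Emery tensor for smooth $h$ is correct and matches what the paper records in \eqref{Eq: ricbxff h smooth} just before the theorem: the paper organises the same algebra via Corollary~\ref{cor:fct}, decomposing into ``unweighted Ricci $\ge \kappa g$'' (Corollary~\ref{Co: 2-dim smooth warped product}) plus the $\kappa$-concavity of $G^{1/(N-1)}$, and then invokes the second part of Theorem~\ref{thm:smoothcd}, whereas you recombine the two pieces into a single $\mathrm{Ric}^{\mathrm{BE}}_{N+1}$ estimate. These are algebraically equivalent and your $\partial_t\partial_x$-cancellation and the $N\kappa$ bound in the $\partial_x$ direction check out.

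The gap is in the passage from smooth $h$ to continuous $h$. You mollify $k=h^{1/(N-1)}$, obtain $\CD$/$\tCD_p$ for each $h_\eps$, and then invoke ``stability of $\CD$ under measured convergence (resp.\ of $\tCD_p$, as recalled in Section~\ref{sec-prelim})'' combined with ``the local nature of the (timelike) curvature-dimension condition'' and an exhaustion by compact subsets. None of these tools is available in the form you use them: Section~\ref{sec-prelim} does not recall a stability theorem for $\tCD_p$ under convergence of the reference measure, such a stability result is far from standard for Lorentzian $\tCD_p$, and localization/globalization for $\tCD_p$ is an open problem (Remark~\ref{Rem: TCD rel}(i) records that only $\wTCD^*_{p,\loc}\Leftrightarrow\wTCD^*_p$ is known, under additional hypotheses). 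The argument you sketch would at best give a local condition that you cannot globalize. The paper avoids all of this by \emph{not} passing to the limit at the level of the $\tCD_p$ inequality: instead, Proposition~\ref{prop:concave} mollifies $k$ exactly as you propose, but the quantity that is pushed through the limit is the pointwise $\sigma_{-\kappa}$-concavity of $G^{1/(N-1)}\circ c$ along each timelike geodesic $c$, an inequality manifestly stable under uniform convergence; this distributional concavity of the weight, combined with the unweighted Ricci bound from Corollary~\ref{Co: 2-dim smooth warped product}, feeds directly into the second part of Theorem~\ref{thm:smoothcd}, which is stated precisely for \emph{measurable} (not smooth) weights. If you want to rescue a variant of your route, the mechanism that actually works here is the pointwise-density characterisation of $\tCD_p$ on an essentially nonbranching space (Remark~\ref{Rem: TCD rel}(iv)): because the underlying geometry and optimal geodesic plans do not depend on $h$, only the densities $\rho_s$ scale by $h_\eps/h\to 1$ uniformly, and the pointwise inequality passes to the limit. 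But that is a different and more careful argument than a blanket ``stability of $\tCD_p$'', and you should also justify strict positivity of $k_\eps$ in the interior (which holds because a $\CD$-density is either identically zero or strictly positive there, cf.\ Remark~\ref{Re:properties of CD(K,N)-densities and logarithmic convolutions}(i)).
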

Analogously, in Theorem \ref{th:lasttheorem} we establish that if the two-dimensional warped product has (timelike) Ricci curvature bounds then the fiber has Ricci curvature bounded below and the warping function is $\kappa$-concave (for some appropriate constant $\kappa$). In more detail, we have
\setcounter{ithm}{9}
\begin{ithm}
 Let $\kappa\in \R$ and $N>1$.
     Let  $([a,b], \m)$ be a metric measure space for a Radon measure $\m$ with $\spt \m=~[a,b]$ and let $f:{}^\pm\! I \rightarrow [0, \infty)$ be smooth with $\partial I= f^{-1}(\{0\})$. Assume 
     $\left(^\pm \!I\times_f[a,b], f(t)^N \dint t\otimes \m\right)$
     satisfies \begin{enumerate}\item[(+)] $\CD(\kappa N, N+1)$ \item[(--)] $\tCD_p(-\kappa N, N+1)$, $p\in (0,1).$
     \end{enumerate}  If we consider $I={^+}\!I$, then we assume additionally $\partial I\neq \emptyset$. Then it  follows that 
\begin{enumerate}
    \item $f''\pm \kappa f\leq 0$, 
    \item \begin{enumerate}
        \item[(+)]
    $([a,b], h(r)\,\dint r)$ satisfies $\CD((N-1)\eta, N)$ with $\eta= \sup_I\left\{ (f')^2 + \kappa f^2\right\}$.
    \item[(--)] 
    $([a,b], h(r)\,\dint r)$ satisfies $\CD((N-1)\eta, N)$ with $\eta= \sup_I\left\{ -(f')^2 + f'' f\right\}$.
    \end{enumerate}
\end{enumerate}
    In particular, $\m= h(r)\,\dint r$ where  $h$ is semi-concave on $[a,b]$ and continuous on $(a,b)$.
\end{ithm}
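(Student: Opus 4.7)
My plan is to split the argument into two steps corresponding to the two conclusions, reducing each to a one-dimensional curvature-dimension condition via localization (needle decomposition) applied to a carefully chosen potential on the two-dimensional warped product.

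\textbf{Step 1 ($f''\pm\kappa f\leq 0$).} I would apply localization to the potential $u(t,x):=t$, which depends only on the base variable and which in the warped metric $\pm\dint t^2 + f^2\,\dint\met^2$ has unit (timelike) gradient regardless of $f$. Its transport rays are exactly the vertical curves $\{(t,x_0) : t\in I\}$, and the disintegration of the reference measure $f^N\,\dint t\otimes\m$ along this partition produces, for $\m$-a.e.\ $x_0$, the one-dimensional density $f(t)^N\,\dint t$ on $I$. Localization then transfers the hypothesized $\CD(\kappa N,N+1)$ (resp.\ $\tCD_p(-\kappa N,N+1)$) condition to this 1D needle, and the standard characterization of 1D CD spaces says its density to the power $\tfrac{1}{(N+1)-1}$, namely $f$ itself, must satisfy $f''+\kappa f\leq 0$ (resp.\ $f''-\kappa f\leq 0$) distributionally; smoothness of $f$ upgrades this to the pointwise inequality.

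\textbf{Step 2 (fiber $\CD$).} For the second conclusion I would localize using a potential tailored to the fiber variable, of the rough form $u(t,x)=\varphi(t)\,G(x)$ with $G$ a 1-Lipschitz function on $([a,b],\met)$ and $\varphi$ a cut-off concentrating in a thin time-slab $(t_0-\delta,t_0+\delta)$; rescaling by $\inf_I f$ on this slab makes $u$ 1-Lipschitz in the warped metric. Letting $\delta\downarrow 0$ collapses the transport rays onto the horizontal slice $\{t_0\}\times[a,b]$ and forces the disintegrated measure to converge to $f(t_0)^N\,\m$. Already at this stage one reads off that $\m$ is absolutely continuous, $\m=h\,\dint r$. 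The 1D $\CD$ inequality along these limiting rays then translates, via essentially the same computation that underlies the Ricci identity for smooth weighted warped products, into
\[
\bigl(h^{1/(N-1)}\bigr)'' + \eta_{t_0}\, h^{1/(N-1)} \leq 0,
\qquad
\eta_{t_0} := \pm(f'(t_0))^2 + \kappa f(t_0)^2,
\]
after using Step~1 to absorb the $f''$-contribution with the correct sign. Taking the supremum over $t_0\in I$ produces the constant $\eta$ of the statement, which is exactly the one-dimensional form of $\CD((N-1)\eta,N)$ on $([a,b],\m)$; semi-concavity of $h$ and continuity on $(a,b)$ are then immediate consequences of this differential inequality.

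\textbf{Main obstacle.} I expect the main difficulty in the Lorentzian half of Step~2: transports in the pure fiber direction are spacelike, so the potential $\varphi(t)G(x)$ is inadmissible for $\tCD_p$ as it stands. I would replace it by a near-null causal potential whose transport rays connect $(t_0-\eps,x_0)$ to $(t_0+\eps,x_1)$ with $\eps$ arbitrarily close to $f(t_0)\met(x_0,x_1)$, and extract the fiber $\CD$ inequality in the degenerate limit where the rays become null. Controlling this passage while keeping the localized 1D $\tCD$ inequality stable should require the two-dimensional localization technique advertised in the introduction, which I expect to supply both the right parameterization of these degenerating causal needles and the uniform estimates needed to take the limit.
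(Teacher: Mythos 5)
Your overall route---localizing inside the two-dimensional warped product to extract one-dimensional $\CD$ conditions---is genuinely different from the paper's. The paper instead first uses the implied measure contraction property to show that $\m$ has a density $h\in L^\infty_{\loc}((a,b))$ (in the Riemannian case via Han's result, in the Lorentzian case by a direct disintegration argument along $\uptau$-needles emanating from a point), then replaces $h$ by the power-like convolution $h^\eps$, which again produces a $\CD$/$\tCD_p$ space that is now smooth, and finally reads \emph{both} conclusions off the Bakry--\'Emery $(N{+}1)$-Ricci tensor of the smooth warped product using the explicit formulas for $\Ric(\partial_t,\partial_t)$ and $\Ric(\partial_r,\partial_r)$. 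No localization appears in that proof.

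Your Step~1 is plausible and could serve as an alternative argument for conclusion (i): the potential $u(t,x)=t$ does have vertical geodesic rays with conditional density $f^N\,\dint t$, and the 1D $\CD$ characterization then gives $f''\pm\kappa f\le 0$ with the correct signs. You would still need to check essential nonbranching of the 2D warped product for the needle decomposition to apply, and in the Lorentzian case to justify a $\uptau$-localization adapted to this temporal function, neither of which arises in the paper's argument.

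Step~2 has a gap that I do not see how to close as written. Horizontal curves $s\mapsto(t_0,r(s))$ are geodesics in $^{\pm}I\times_f[a,b]$ only when $f'(t_0)=0$: the geodesic equation in the $t$-component is $\ddot t\mp ff'\dot r^2=0$, so for $f'(t_0)\neq 0$ every geodesic crossing the slice bows away from it. Hence no admissible $1$-Lipschitz potential has transport rays collapsing to a horizontal slice in your $\delta\downarrow 0$ limit; the degenerating family of actual transport rays is not horizontal, and whether the 1D $\CD$ inequalities along them survive the collapse in the precise form $\smash{\bigl(h^{1/(N-1)}\bigr)''+\eta_{t_0}\,h^{1/(N-1)}\leq 0}$ is exactly what must be proved, not asserted. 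The statement that absolute continuity of $\m$ can already be ``read off'' suffers from the same problem: $\CD$ on conditional measures of a not-yet-constructed horizontal disintegration would be the source of $\m=h\,\dint r$, and the paper devotes a separate lemma (with Han's theorem on one side and a disintegration contradiction on the other) to establish this by different means precisely because it is not free. You rightly flag the Lorentzian half of Step~2 as the most delicate---the fiber direction is spacelike, so no causal potential can ever yield horizontal rays---but the proposed near-null replacement is too vague to assess, and in any case the same non-geodesic obstruction applies. The paper avoids all of this by never localizing in Section~4, passing instead through the smooth Bakry--\'Emery computation after mollifying $h$ in the fiber variable.
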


Next, the whole section \ref{sec-fib-Y} is devoted to extending Theorem \ref{Th:2 dimensional non-smooth case} to the general case, i.e., the fiber being a general (essentially nonbranching) $\mathsf{CD}$-space and the warping function satisfying the appropriate convexity/concavity conditions. Then, the generalized cone satisfies the (timelike) measure contraction property. That is, in more detail, we succeed in proving the major result

\setcounter{section}{5}
\setcounter{ithm}{1}
\begin{ithm}[From $X$ to ${}^-\!I\times^N_fX$]
Assume that $(X,\met,\m)$ is an essentially nonbranching metric measure space satisfying $\CD(\eta (N-1), N)$ for $N>1$, and $f: I \rightarrow [0, \infty)$ is smooth with $f''- \kappa f\leq 0$, $-(f')^2 + \kappa f^2 \leq \eta$. Then the generalized cone ${}^-\!I\times_f X$ equipped with the measure $f(t)^N \,\dint t \otimes\dint\m$, satisfies $\tmcp(-\kappa N, N+1)$. 
\end{ithm}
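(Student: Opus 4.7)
The plan is to execute the \emph{two-dimensional localization} announced in the abstract. Fix a base point $o = (t_0,x_0) \in {}^-\!I \times_f X$ and a Borel probability $\mu_1 \ll f(t)^N\,\dint t\otimes\dint\m$ whose support lies in the chronological future of $o$. Recall from the explicit description of causal geodesics in Lorentzian generalized cones (Subsection~\ref{subsec-con-lls}) that every maximizing timelike geodesic from $o$ to $y_1 = (t_1, x_1)$ projects to a minimizing $X$-geodesic from $x_0$ to $x_1$, traversed at constant $X$-speed, while its $I$-component solves an ODE determined by $f$ and by $\met(x_0, x_1)$ only. In particular both $\uptau(o,\cdot)$ and the set of timelike geodesics issuing from $o$ depend on $X$ only through the distance from $x_0$.

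I would next apply the $L^1$-localization machinery (recalled in Subsection~\ref{subsec-loc}) to the $1$-Lipschitz function $\varphi(x) := \met(x_0,x)$ on the essentially nonbranching $\CD(\eta(N-1), N)$-space $(X,\met,\m)$. On the transport set $\mathcal{T}$, which has full $\m$-measure in the support of the $X$-marginal of $\mu_1$, this yields a disintegration into needles parametrized by unit-speed geodesics $X_\alpha \colon [0,L_\alpha] \to X$ issuing from $x_0$, with conditional densities $h_\alpha$ satisfying
\[
 \bigl(h_\alpha^{1/(N-1)}\bigr)'' + \eta\, h_\alpha^{1/(N-1)} \le 0
\]
distributionally on $[0,L_\alpha]$. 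This is exactly the concavity hypothesis on $h$ in Theorem~\ref{Th:2 dimensional non-smooth case}.

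For $q$-a.e.\ $\alpha$, the slice ${}^-\!I \times_f X_\alpha$ sits inside the ambient generalized cone and is canonically identified with the two-dimensional generalized cone $\bigl({}^-\!I \times_f [0,L_\alpha],\, f(t)^N\,\dint t \otimes h_\alpha(r)\,\dint r\bigr)$. By Theorem~\ref{Th:2 dimensional non-smooth case} each such slice is $\tCD_p(-\kappa N, N+1)$, hence a fortiori $\tmcp(-\kappa N, N+1)$ with respect to the base point $o$, which belongs to every slice. Because every maximizing timelike geodesic from $o$ to a point $y_1$ whose $X$-component lies on $X_\alpha$ stays inside ${}^-\!I \times_f X_\alpha$, the slicewise $\tmcp$-witnesses are genuine dynamical plans in the ambient cone. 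Disintegrating $\mu_1 = \int_Q \mu_1^\alpha\,\dint q(\alpha)$ compatibly with the needle decomposition, selecting the slicewise plans $\bdpi^\alpha$ from $\delta_o$ to $\mu_1^\alpha$ furnished by $\tmcp$, and integrating in $\alpha$ then produce a dynamical plan on ${}^-\!I \times_f X$ whose distortion inequality is inherited from the slicewise one, since the reference measure factors as $f(t)^N\,\dint t \otimes \dint\m$ consistently with the slicing.

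I expect the main obstacle to be this assembly step: measurable selection of the $\bdpi^\alpha$, the verification that their integral is a bona fide witness for $\tmcp$ in the ambient Lorentzian cone, and the check that essential nonbranching of $(X,\met,\m)$ upgrades to the nonbranching of future-directed timelike geodesics from $o$ required by the $\tmcp$ condition. That we obtain only $\tmcp$ and not the full $\tCD$ --- despite having $\tCD_p$ on every two-dimensional slice --- reflects the usual limitation of a localization tailored to transport \emph{from a fixed base point}: it controls geodesics issuing from $o$ but not between two generic marginals. A minor additional point is that the $\m$-null set outside the transport set contributes nothing to the distortion inequality and may be safely ignored.
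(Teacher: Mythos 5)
Your proposal is correct and follows essentially the same path as the paper: perform $1D$ localization on $X$ with respect to $u = \met(x_0, \cdot)$, use fiber independence to identify each slice ${}^-\!I\times_f X_\alpha$ with a two-dimensional generalized cone over a weighted interval whose density is a $\CD(\eta(N-1),N)$-density, invoke Theorem~\ref{Th:2 dimensional non-smooth case} on each slice, and reassemble by disintegration. The "assembly step" you flag as the main obstacle is precisely what the paper's Lemma~\ref{Lem:selection}, Corollary~\ref{cor:plans}, and Claims~1--4 in the proof of Proposition~\ref{prop:prop} carry out; your remaining remarks (why one only gets $\tmcp$ rather than $\tCD$, and why the null set outside the transport set is harmless) match the paper's discussion.
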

\setcounter{subsection}{5}
\setcounter{ithm}{5}
As the argument carries over verbatim, we get the corresponding result for $I\times_f^N X$.
\begin{ithm}[From $X$ to ${}^+I\times^N_fX$] Assume that $(X,\met,\m)$ is an essentially nonbranching metric measure space satisfying $\CD(\eta (N-1), N)$ for $N>1$. Assume $f:I\rightarrow[0,\infty)$ is smooth and satisfies
 $f''+ \kappa f\leq 0$ and  $(f')^2 + \kappa f^2 \leq \eta$. 
Then $I\times_f^N X$, i.e., the generalized cone $I\times_f X$ equipped with the measure $f(t)^N \,\dint t \otimes\dint\m$, satisfies $\mcp(\kappa N, N+1)$. 
\end{ithm}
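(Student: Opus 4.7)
The plan is to follow verbatim the proof of Theorem 5.2 (the Lorentzian case), which the authors explicitly flag as identical up to the sign convention on the base. The engine is the two-dimensional localization technique: the $\mcp(\kappa N,N+1)$ condition on the full $(N+1)$-dimensional warped product $I\times_f^N X$ is reduced to a family of 2D assertions already covered by Theorem 4.2 in the positive-definite case, together with the implication $\CD\Rightarrow\mcp$.

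Concretely, I would fix a basepoint $o=(t_0,x_0)\in I\times_f X$ and a target probability measure $\mu$ absolutely continuous with respect to $f^N\,\dint t\otimes\m$. Verifying $\mcp$ amounts to constructing an optimal dynamical transference plan along a Wasserstein geodesic from $\delta_o$ to $\mu$ whose intermediate densities obey the MCP distortion inequality with parameters $(\kappa N,N+1)$. The warped-product structure dictates that every minimizing geodesic from $o$ decomposes as $(\alpha(s),\beta(s))$ with $\beta$ a reparameterization of a minimizing geodesic in $(X,\met)$ from $x_0$; thus the $X$-component of any such plan is transported along a family of minimizing rays of $X$ issuing from $x_0$.

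The core step is to $X$-localize: push $\mu$ forward to $\nu=\pprx\mu$ on $X$ and apply the $\CD(\eta(N-1),N)$ hypothesis on $(X,\met,\m)$, together with essential nonbranching, to disintegrate $\m$ along the rays emanating from $x_0$. On each such ray $\beta\col[0,L_\beta]\to X$, the conditional density $h_\beta$ will satisfy
\[
\left(h_\beta^{1/(N-1)}\right)''+\eta\,h_\beta^{1/(N-1)}\leq 0
\]
in the distributional sense, which is precisely the hypothesis of Theorem 4.2. Applied to the two-dimensional generalized cone $I\times_f[0,L_\beta]$ equipped with the measure $f^{N-1}h_\beta\,\L^2$, that theorem yields $\CD(\kappa N,N+1)$ and hence $\mcp(\kappa N,N+1)$ on each 2D slice, delivering a 2D dynamical plan per ray with exactly the required distortion.

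The final step is to re-assemble these two-dimensional plans across the $X$-disintegration into a single plan on $I\times_f^N X$. The main obstacle I anticipate is the measurability and consistency of the glueing: one must choose the 2D plans in a Borel way as a function of the ray parameter, and verify that the assembled object is concentrated on minimizing geodesics of the full warped product. Essential nonbranching of $(X,\met,\m)$ transfers to $I\times_f X$ since two warped-product geodesics sharing a segment have $X$-projections sharing a segment, forcing them to coincide off the fibers over $f^{-1}(0)$; and those exceptional boundary points, where distinct geodesics can coalesce, are harmless because $f^N\,\dint t\otimes\m$ is null there, exactly as handled in Theorem 5.2 and in the model 2D result.
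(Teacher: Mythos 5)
Your proposal follows exactly the paper's strategy: perform a $1D$ localization of $(X,\met,\m)$ along rays from $x_0$ via the distance function $\met(\cdot,x_0)$, use fiber independence to transfer this to a decomposition of $I\times_f X$ into two-dimensional slices $I\times_f X_q$ carrying measures $f^N\,\dint t\otimes\m_q$ with $\CD(\eta(N-1),N)$-density $h_q$, invoke Theorem~4.2 to get $\CD(\kappa N,N+1)$ on each slice, and reassemble via a measurable selection of plans (the analogue of Lemma~5.3 and Corollary~5.4). This is precisely the ``verbatim'' adaptation the paper indicates; the only detail the paper flags additionally is that in the positive-signature case one must cite \cite[Thm.\ 3.4]{Ket:13} to justify that the optimal plan is concentrated on geodesics avoiding $f^{-1}(\{0\})$, rather than merely observing that $f^N\,\dint t\otimes\m$ is null there, but the substance of your argument is the same.
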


\setcounter{section}{1}
\begin{rem}
We conjecture that the property $\mcp(\kappa N, N+1)$ can be improved to the condition $\CD(\kappa N, N+1)$ (Conjecture \ref{conj:wp}). Indeed, under the additional assumption that  the space $(X, \met, \m)$ is also infinitesimal Hilbertian, i.e., it satisfies the condition $\RCD(\eta(N-1), N)$,  the generalized cone satisfies the condition $\RCD(\kappa N, N+1)$. This was proved by the second author in \cite{ketterer:25}. The methods and ideas in \cite{ketterer:25} are almost completely separate from the current paper and heavily rely on nonsmooth calculus tools developed solely for $\RCD$ spaces.
\end{rem}
Section \ref{sec-Y-fib} is devoted to proving a converse statement of Theorem \ref{th:totmcp}, namely

\setcounter{section}{6}
\setcounter{ithm}{0}
\begin{ithm}[From $\I\times^N_fX$ to $X$]
Let $N\in (1,\infty)$, $\kappa\in \R$, and let $X$ be a proper, essentially nonbranching, complete, geodesic metric space with a Radon measure $\m$. Assume that $\smash{Y:=\I\times^N_f X}$ satisfies $\tCD_p(-\kappa N, N+1)$ and is {timelike} {$p$-essentially nonbranching}
where $f: I\rightarrow [0, \infty)$ is smooth with $\partial I=f^{-1}(\{0\})$. Then \begin{itemize}
    \item $f''- \kappa f\leq 0$, and
    \item $X$ satisfies $\CD(\eta(N-1), N)$ where $\eta:=\sup_I\{ -(f')^2 + \kappa f^2\}$.
\end{itemize}
\end{ithm}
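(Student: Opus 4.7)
The plan is to reduce the statement to the two-dimensional converse (Theorem 4.10) by combining a one-dimensional localization of $X$ with the novel two-dimensional localization of $Y$ developed in the paper. The convexity part $f''-\kappa f\le 0$ is the easier half: pick any unit-speed minimizing geodesic $\sigma:[a,b]\to X$ in the proper, complete, geodesic space $X$, realize the 2D warped product $\I\times_f[a,b]$ as a slice of $Y$ via $(t,r)\mapsto(t,\sigma(r))$, transfer the $\tCD_p(-\kappa N,N+1)$ condition to this slice equipped with $f(t)^N\,\dint t\otimes\dint r$, and apply Theorem 4.10 to read off $f''-\kappa f\le 0$ independently of the choice of $\sigma$.

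For the curvature-dimension bound on $X$, I would invoke the Cavalletti--Mondino one-dimensional localization for essentially nonbranching $\CD$-spaces. To verify $\CD(\eta(N-1),N)$ on $X$ it suffices to check, for every needle $\bigl(\sigma,h(r)\,\dint r\bigr)$ in the disintegration of $\m$ along a generic 1-Lipschitz function, the one-dimensional distributional inequality $\bigl(h^{1/(N-1)}\bigr)''+\eta\,h^{1/(N-1)}\le 0$. For each such needle I would form the 2D warped product $\I\times_f\sigma([a,b])$ with measure $f(t)^N h(r)\,\dint t\,\dint r$, show that it inherits $\tCD_p(-\kappa N,N+1)$ from $Y$, and then apply Theorem 4.10, whose second conclusion is precisely the required needle-wise statement. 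Aggregating over needles and combining with the 1D localization principle then gives $\CD(\eta(N-1),N)$ on all of $X$.

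The principal obstacle is the inheritance of $\tCD_p(-\kappa N,N+1)$ from $Y$ to each 2D slice $\I\times_f\sigma$: this is exactly where the two-dimensional localization technique developed earlier in the paper enters. The argument would exploit that causal geodesics in the Lorentzian warped product $Y$ split into a base component in $\I$ and a fiber component that is, up to reparametrization, a geodesic in $X$; combined with the essentially nonbranching hypothesis on $X$ and the timelike $p$-essentially nonbranching hypothesis on $Y$, one shows that for absolutely continuous measures supported on the 2D slice, the $p$-optimal timelike interpolant produced by $\tCD_p$ on $Y$ remains supported on the slice. The Lorentzian time separation $\uptau$ on $Y$ then restricts to the 2D warped-product time separation on the slice, so the distortion inequality on $Y$ transfers faithfully, yielding $\tCD_p(-\kappa N,N+1)$ there. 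Making this inheritance precise, in particular handling endpoint issues near $\partial I=f^{-1}(\{0\})$, the integrability of $f(t)^N h(r)$ near zeros of $f$, and the possible degeneracies of the $p$-optimal plan, is the technical heart of the proof.
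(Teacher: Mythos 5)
The high-level scaffolding you propose matches the paper exactly: use the two-dimensional converse (Theorem~\ref{th:lasttheorem}) as the target of a reduction, perform a one-dimensional localization of $X$ via a signed distance function, and deduce the needle-wise $\CD$ inequality from a $\tCD_p$ bound on the two-dimensional sheets. You also correctly identify where the difficulty sits. However, the mechanism you propose for the key transfer step --- transferring $\tCD_p$ from $Y$ to a single two-dimensional slice $\I\times_f\sigma$ --- does not work, and the paper's argument is structured precisely to circumvent it.

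The problem is that a single slice $Y_q=\I\times_f X_q$ has $\n$-measure zero in $Y$, so no nontrivial measure supported on it can be $\n$-absolutely continuous. Consequently the $\tCD_p(-\kappa N,N+1)$ condition on $Y$ gives no information whatsoever for measures concentrated on one slice; your assertion that ``for absolutely continuous measures supported on the 2D slice, the $p$-optimal timelike interpolant produced by $\tCD_p$ on $Y$ remains supported on the slice'' is vacuous, since the hypothesis is never met. The paper avoids this by the opposite move: it picks $\mu_0,\mu_1$ living on a fixed two-dimensional model $\I\times_f(L_0,L_1)$, and then \emph{spreads} them across a positive-$\mathfrak q$-measure family $\tilde Q'$ of needles via the sheet map, setting $\nu_i=\tfrac1\lambda\int_{\tilde Q'}\mathfrak H(q,\cdot,\cdot)_\sharp\mu_i\,\dint\mathfrak q(q)$. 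These spread measures are genuinely $\n$-absolutely continuous (their density is computed explicitly), the pair $(\nu_0,\nu_1)$ is shown to be strongly timelike $p$-dualizable using the cyclic-monotonicity Lemma~\ref{lem:cycmon} together with fiber independence, and $\tCD_p$ on $Y$ is applied to $(\nu_0,\nu_1)$. The disintegration then \emph{decouples} the resulting displacement inequality into a slice-wise pointwise inequality for $\mathfrak q$-a.e.\ $q$, after which a countable-approximation argument (step functions, weak semicontinuity of $S_N$) yields $\tCD_p$ for each sheet. The same measure-zero obstruction undermines your suggested shortcut for the first bullet $f''-\kappa f\le 0$: you cannot simply embed an arbitrary $\I\times_f[a,b]$ with reference measure $f(t)^N\,\dint t\otimes\dint r$ and read off the conclusion, because that slice also carries no $\n$-mass; in the paper, $f''-\kappa f\le 0$ falls out of Theorem~\ref{th:lasttheorem} applied to the needles with their \emph{actual} conditional measures $f(t)^N\,\dint t\otimes h_q(r)\,\dint r$.

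There is a second gap you gloss over: you write the needles as $(\sigma, h(r)\,\dint r)$ and then appeal to the distributional inequality for $h^{1/(N-1)}$, but nothing a priori guarantees the conditional measures $\m_q$ are $\mathcal L^1$-absolutely continuous, since we do not yet know $X$ is a $\CD$ or $\mcp$ space. The paper explicitly flags this (``we do not know at this point whether the conditional measures $\m_q$ are $\mathcal H^1$-absolutely continuous'') and handles it by first establishing a $\tmcp$ for the sheets (again by the spreading argument, with one marginal a delta) and then invoking Lemma~\ref{lem:2Dmcp} to conclude $\m_q=h_q\,\mathcal L^1$ with $h_q\in L^\infty_\loc$. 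Only afterwards is the sheet-wise $\tCD_p$ established. Your proposal needs both of these ingredients --- the spreading construction (in place of direct restriction to a slice) and the preliminary $\tmcp$ argument for absolute continuity of the needles --- before it becomes a complete proof.
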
\setcounter{section}{1}

\begin{rem}\label{rem:metricanalog}
Under slightly stronger assumptions for $f$, i.e.\ $f^{-1}(\{0\})= \partial I$, also a metric analogue of this statement holds, and again the proof is verbatim the same. 
\end{rem}
\begin{rem}
Generalized cones in the sense of this paper are a special case of a class of generalized products studied by Soultanis \cite{soultanis}.  In particular, in combination with (iv) of Remark \ref{Re:properties of CD(K,N)-densities and logarithmic convolutions} the first bulled point of \Cref{Th: Y to X} also follows from \cite[Cor.\ 6.3]{soultanis}.
\end{rem}

As a direct corollary we can upgrade the $\mathsf{CD}$-condition to the $\mathsf{RCD}$-condition if the fiber is infinitesimally Hilbertian, cf.\ Corollary \ref{cor-Y-TCD-X-RCD}. This is also proved in \cite{ketterer:25} by a different method.
In proving these results we develop a novel and powerful two-dimensional localization method that we expect to be useful in its own right.

Finally, we conclude the article by providing lists of standard cone constructions in Subsection \ref{subsec-new-ex} and give four main applications. To be precise we prove a synthetic Hawking singularity theorem \ref{cor-hawking}, a volume singularity theorem \ref{cor-vol-sing}, and a splitting theorem for generalized cones \ref{cor-spl-thm}. As the final application, we propose a new definition for lower curvature bounds for metric and metric measure spaces via appropriate lower curvature bounds for the generalized cones over the given space \ref{def-new-cb}.

\setcounter{section}{1}

\section{Preliminaries}\label{sec-prelim}
In this section we fix notations, recall basics from metric measure geometry, \LLSn s and in particular, recall the different notions of Ricci curvature bounds in Subsection \ref{subsec-curv}. Then in Subsection \ref{subsec-loc} we recall the localization method on metric measure spaces.

\subsection{Optimal transport  and curvature-dimension conditions}\label{subsec-curv}
We start by recalling some basics of optimal transportation theory, both on metric measure spaces and on Lorentzian pre-length spaces, referring to \cite{Vil:09, Stu:06a, Stu:06b} for the metric case and to \cite{CM:24a, Bra:23b} for the Lorentzian case for more details.

\subsubsection{Curvature-dimension conditions for metric measure spaces}

Let $(X,\met)$ be a Polish space and let $\m$ be a Radon measure on $X$.  We assume that $\met$ is a proper metric, i.e., closed and bounded subsets are compact. The triple $(X, \met, \m)$ will be called a metric measure space.  

We denote with $L_\met$ the canonical length structure associated to $\met$, and with $\Geo(X)$ the set of constant speed geodesics $\gamma:[0,1]\rightarrow X$, i.e., $\gamma\in \Geo(X)$ if and only if $$\met(\gamma(s), \gamma(t))= (t-s)\, \met(\gamma(0), \gamma(1))$$ for $0\leq s\leq t\leq 1$. We call $(X,\met)$ a geodesic (metric) space if for all $x, y\in X$ there exists a geodesic $\gamma\in \Geo(X)$ such that $\gamma(0)=x$ and $\gamma(1)=y$. A geodesic is also a minimizer of  $L_\met$ subject to its  endpoints. We note that this notion of geodesic differs from the  one in the context of smooth Riemannian manifolds where geodesics are solutions of the geodesic equation $\nabla_{\dot\gamma} \dot\gamma=0$.

We denote by $\mathscr{P}(X)$ the set of Borel probability measures on $X$ and by $\mathscr{P}_\mathrm{c}(X)$ the subset of compactly supported ones. Given a metric measure space $(X,\met,\m)$, we also define the subset $\mathscr{P}^\ac(X)$  of $\m$-absolutely continuous probability measures and its subset $\mathscr{P}^\ac_\mathrm{c}(X)$  of measures with compact support. For $p\geq1$, we  define the set $\mathscr{P}_p(X)$ of probability measures $\mu\in \mathscr{P}(X)$ with $\int_X\met(x,x_0)^p\,\dint\mu(x)<\infty$ for some $x_0\in X$. Similarly we define $\mathscr P^\ac_p(X)$  and $\mathscr P^\ac_{p,\textnormal c}(X)$.

Given $\mu,\nu\in\mathscr{P}(X)$, let $\Pi(\mu,\nu)$ be the set of all their \emph{couplings}, i.e., measures $\pi\in\mathscr{P}(X^2)$ such that $\pi(\cdot\times X)=\mu$ and $\pi(X\times\cdot)=\nu$. Alternatively, given a Borel map $T:X\rightarrow Y$ from a polish space $X$ to a polish space $Y$, define the \emph{pushforward} $T_\sharp \mu$ of $\mu\in\mathscr{P}(X)$ by $T$ as a probability measure on $Y$ by the formula
$$T_\sharp \mu(E):=\mu(T^{-1}(E)), \ \ E\subseteq X \mbox{ Borel measurable}.$$
Then $\pi\in\Pi(\mu,\nu)$ if and only if $(P_1)_\sharp \pi=\mu$ and $(P_2)_\sharp \pi=\nu$, where $P_1$ and $P_2$ are the projections onto the first and second factors. 

For $\mu,\nu\in\mathscr{P}_p(X)$ the \emph{$p$-Wasserstein distance} $W_p:\mathscr{P}_p(X)\times\mathscr{P}_p(X)\rightarrow[0,\infty)$ is defined as
\begin{align}\label{id:pwasserstein}W_p(\mu,\nu):=\left(\inf_{\pi\in\Pi(\mu,\nu)}\int_{X^2}\met(x,y)^p\,\dint\pi(x,y)\right)^\frac{1}{p}.\end{align}
If $\pi\in\Pi(\mu,\nu)$ is a minimizer in  \eqref{id:pwasserstein}, we say that $\pi$ is $p$-optimal and write $\pi\in\Pi^\opt_p(\mu,\nu)$. The function $W_p$ is a (finite) distance on $\mathscr{P}_p(X)$ and turns the latter into a complete metric space \cite[Thm.\ 6.18]{Vil:09}. Moreover, if $(X,\met)$ is geodesic, then also $(\mathscr{P}_p(X),W_p)$ is geodesic \cite[Cor.\ 7.22]{Vil:09}. 

When $W_p(\mu,\nu)$ is finite it is known that $\pi\in\Pi(\mu,\nu)$ is optimal if and only if it is concentrated on a $\met^p$-cyclically monotone set: Let $c\colon X^2\rightarrow \R$, then  a set $\Lambda\subseteq X^2$ is \emph{$c$-cyclically monotone} if for every finite set of points $(x_i,y_i)_{i=1,\dots,N}$ in $\Lambda$ it holds that
\begin{align*}
    \sum_{i=1}^N c(x_i,y_i)\leq\sum_{i=1}^Nc(x_i,y_{i+1})\,,
\end{align*}
with the convention that $y_{N+1}:=y_1$.

The evaluation map $\e_t:\Geo(X)\rightarrow X$ for a fixed $t\in[0,1]$ is defined as
\begin{align*}
    \e_t(\gamma):=\gamma_t.
\end{align*}
A measure $\bdpi\in\mathscr{P}(\Geo(X))$ is called a \emph{$p$-optimal geodesic plan} if $(\e_0,\e_1)_\sharp \bdpi$ is a $p$-optimal coupling between $(\e_0)_\sharp \bdpi$ and $(\e_1)_\sharp \bdpi$. It follows that  the map $[0,1]\ni t\mapsto(\e_t)_\sharp \bdpi$ is a geodesic in $(\mathscr{P}_p(X),W_p)$. Vice-versa, it is known that any geodesic $(\mu_t)_{t\in[0,1]}$ can be lifted to an $p$-optimal geodesic plan $\bdpi$ so that $(\e_t)_\sharp \bdpi=\mu_t$, see for example \cite[Thm.\ 2.10]{AG:13}. We denote by $\OptGeo_p(\mu,\nu)$ the space of such optimal geodesic plans $\bdpi$ with $(\e_0)_\sharp \bdpi=\mu$ and $(\e_1)_\sharp \bdpi=\nu$.

\begin{defi}[(Essential) nonbranching]\label{De: ess non-branch}
   A subset $G\subseteq\Geo(X)$ of geodesics is called \emph{nonbranching} if for any $\gamma^1,\gamma^2\in G$ the following holds:
   \begin{align*}
       \exists t\in(0,1)\colon\gamma^1_s=\gamma^2_s\quad\forall s\in[0,t]\quad\Longrightarrow\quad\gamma_s^1=\gamma^2_s\quad\forall s\in[0,1].
   \end{align*}
   The metric measure space $(X,\met,\m)$ is said to be nonbranching if $\Geo(X)$ is nonbranching. We say that $(X,\met,\m)$ is \emph{$p$-essentially nonbranching} if for all $\mu,\nu\in\mathscr{P}_p^\ac(X)$ any $\bdpi\in\OptGeo_p(\mu,\nu)$ is concentrated on a Borel nonbranching set $G\subseteq\Geo(X)$. We call $2$-essentially nonbranching just essentially nonbranching. 
\end{defi}

We define the \emph{(Boltzmann-Shannon) relative entropy functional} ${\Ent_\m(\mu):\mathscr{P}(X)\rightarrow\R\cup\{+\infty\}}$ as
\begin{align}\label{Eq: boltzmann}
   \Ent_\m(\mu):=\begin{cases}\int_X\rho\log\rho\,\dint\m\quad&\mathrm{if}\quad\mu=\rho\m\quad\mbox{and}\quad(\rho\log\rho)^+\in L^1(X,\m),\\
   +\infty&\mbox{otherwise.}\end{cases}
\end{align}
The two following facts are well known \cite{Stu:06a}:
\begin{itemize}
   \item by Jensen's inequality, it holds that $\Ent_\m(\mu)\geq-\log\m(\supp\,\mu)>-\infty$ for every ${\mu\in\mathscr{P}_\mathrm{c}^\ac(X)}$;
   \item $\Ent_\m$ is weakly lower semicontinuous: if $\{\mu_n\}_{n\in\N}\subseteq\mathscr{P}(X)$ converges weakly  to $\mu\in\mathscr{P}(X)$ and there is a Borel subset $C\subseteq X$ with $\m(C)<\infty$ and $\supp\,\mu_n\subseteq C$ for all $n\in\N$, then
\begin{align*}
   \Ent_\m(\mu)\leq\liminf_{n\rightarrow\infty}\Ent_\m(\mu_n).
\end{align*}
\end{itemize} 

\begin{defi}[$\CD(K,\infty)$]\label{De: CD for inf K}
   Given $K\in\R$, we say that $(X,\met,\m)$ satisfies the  $\CD(K,\infty)$ condition if and only if for every $\mu_0,\mu_1\in\mathscr{P}^\ac_{2,\mathrm{c}}(X)$ such that $\Ent_\m(\mu_0), \Ent_\m(\mu_1)<\infty$ there exists $\bdpi\in\OptGeo_2(\mu_0,\mu_1)$ such that, denoting $\mu_t:=(\e_t)_\sharp \bdpi$, we have
   \begin{align}\label{Eq: CD for inf K}
       \Ent_\m(\mu_t)\leq(1-t)\Ent_\m(\mu_0)+t\Ent_\m(\mu_1)-\frac{K}{2}t(1-t) W_2(\mu_0, \mu_1)^2\,.
   \end{align}
\end{defi}

For $\kappa\in\R$ and $\theta\geq0$ let the \emph{generalized sine functions} be
\begin{equation*}
    \mathfrak{s}_\kappa(\theta):=\begin{cases}
        \frac{1}{\sqrt{\kappa}}\sin(\sqrt{\kappa}\theta)\quad&\mathrm{if}\,\kappa>0,\\
        0&\mathrm{if}\,\kappa=0,\\
        \frac{1}{\sqrt{-\kappa}}\sinh(\sqrt{-\kappa}\theta)&\mathrm{if}\,\kappa<0.
    \end{cases}
\end{equation*}
Then, for $t\in[0,1]$, we define the \emph{volume distortion coefficients} for $\kappa=\tfrac{K}{N}$ with $K\in\R$ and $N\geq1$ as
\begin{equation*}
    \sigma_{\kappa}^{(t)}(\theta):=\begin{cases}
        \tfrac{\mathfrak{s}_\kappa(t\theta)}{\mathfrak{s}_\kappa(\theta)}\quad&\mathrm{if}\,\kappa\theta^2\neq0\,\mathrm{and}\,\kappa\theta^2<\pi^2,\\
        t&\mathrm{if}\,\kappa\theta^2=0,\\
        +\infty&\mathrm{if}\,\kappa\theta^2>+\infty,
    \end{cases}
\end{equation*}
and set $\sigma_{K,N}^{(t)}(0)=t$. Define then
\begin{align*}
    \sigma_{K,N}^{(t)}(\theta):=\sigma_{\frac{K}{N}}^{(t)}(\theta), \ \ \ \ \ \ \ 
    \tau_{K,N}^{(t)}(\theta):=(t\cdot\sigma_{K,N-1}^{(t)}(\theta)^{N-1})^{\frac{1}{N}}.
\end{align*}
When $N=1$ we set $\smash{\tau_{K,1}^{(t)}(\theta)=t}$ if $K\leq0$ and $\smash{\tau_{K,1}^{(t)}(\theta)=+\infty}$ if $K>0$. Note that for every $t\in(0,1)$ and every $\theta>0$, $\smash{\sigma^{(t)}_{K,N}}(\theta)$ is continuous in $(K,N)\in\R\times[1,\infty)$, nondecreasing in $K$ and nonincreasing in $N$ \cite[Rem.\ 2.2]{BS:10}. Analogous
claims apply to the quantity $\smash{\tau_{K,N}^{(t)}(\theta)}$. Furthermore, for every $t\in[0,1]$,
every $\theta\geq0$, and every $\kappa\in(-\infty,\pi^2/\theta^2)$,
$$\sigma_\kappa^{(t)}(\theta)=\sigma_{\kappa\theta^2}^{(t)}(1).$$

The \emph{Rényi relative $N$-entropy} $S_N: \mathscr{P}(X)\rightarrow (-\infty, 0]$ on a metric measure space $(X, \met, \m)$ is defined as
$$ S_N(\mu|\m)= - \int_X \rho^{-\frac{1}{N}} \dint \mu  \mbox{ where } \mu= \rho \dint \m + \mu^\perp,$$
where $\mu^\perp \perp \m$. 
The two following facts are well known \cite{Stu:06b, gsm15}:
\begin{itemize}
   \item By Jensen's inequality, it holds that $S_N(\mu|\m)\geq-\m(\supp\,\mu)^{\frac{1}{N}}$ for every $\mu\in\mathscr{P}_\mathrm{c}(X,\m)$.  
    \item $S_N$ is  weakly lower semicontinuous: if $\{\mu_n\}_{n\in\N}\subseteq\mathscr{P}(X)$ converges weakly  to $\mu\in\mathscr{P}(X)$ and there is a Borel subset $C\subseteq X$ with $\m(C)<\infty$ and $\supp\,\mu_n\subseteq C$ for all $n\in\N$, then
\begin{align*}
S_N(\mu|\m)\leq\liminf_{n\rightarrow\infty}S_N(\mu_n|\m).
\end{align*}
\end{itemize}

We can now recall the definition of the curvature-dimension condition, introduced by Sturm \cite{Stu:06a, Stu:06b} and Lott--Villani \cite{LV:09} for $p=2$. The case for general $p$ is studied in \cite{Kel:17}. The {\it reduced} curvature dimension condition for $p=2$ was introduced later in \cite{BS:10}.

\begin{defi}[$\CD_p(K,N)$ and $\CD^*_p(K,N)$]\label{De:CD and CDstar}
    Given real numbers $K\in\R$ and $N\geq1$, we say that a metric measure space $(X,\met,\m)$ satisfies the \emph{curvature-dimension condition} $\CD_p(K,N)$ if for every $\mu_0,\mu_1\in\mathscr{P}_{p,\mathrm{c}}(X,\m)$ there exists $\bdpi\in\OptGeo_p(\mu_0,\mu_1)$ so that for all $t\in[0,1]$, $\mu_t:=(\e_t)_\sharp \bdpi\ll\m$ and for all $N'\geq N$ it holds that
    \begin{equation}
        \begin{aligned}\label{Eq:CD}
            S_{N'}(\mu_t|\m)\leq-\int_{X^2}\Big[\tau_{K,N'}^{(1-t)}(&\met(x_0,x_1))\rho_0(x_0)^{-\frac{1}{N'}}+\tau_{K,N'}^{(t)}(\met(x_0,x_1))\rho_1(x_1)^{-\frac{1}{N'}}\Big]\,\dint\pi(x_0,x_1),
        \end{aligned}
    \end{equation}
    where $\pi=(\e_0,\e_1)_\sharp \bdpi$ and $\mu_i=\rho_i\m$, $i=0,1$. 
    
    If \eqref{Eq:CD} holds with the coefficients $\sigma^{(t)}_{K,N'}(\theta)$ in place of \smash{$\tau^{(t)}_{K,N'}(\theta)$}, we say that $(X,\met,\m)$ satisfies the \emph{reduced curvature-dimension condition} $\CD_p^*(K,N)$.
\end{defi}

\begin{notation}[Suppressing $p$]\label{Not: CDp}
    The condition $\CD_p(K,N)$ is independent of $p\in[1,\infty)$ thanks to \cite{ACMcCCF:21}, so we will notationally suppress the subscript $p$ everywhere in this case, and work with  $p=2$.
\end{notation}
The measure contraction property was introduced independently by Ohta \cite{Oht:07} and Sturm \cite{Stu:06a, Stu:06b} (see also \cite{kush01}).
We will make use of the version from \cite{Oht:07}.

\begin{defi}[$\mcp^O(K,N)$]\label{De: MCP}
    A metric measure space is said to satisfy the \emph{measure contraction property (in the sense of Ohta)} $\mcp^O(K,N)$ for $K\in\R$ and $N\geq1$ if for every $o\in\supp\,\m$ and $\mu_0\in\mathscr{P}(X)$ of the form $\mu_0=\frac{1}{\m(A)}\m\llcorner A$ for some Borel set $A\subseteq X$ with $0<\m(A)<+\infty$ and $A\subset B_{\pi\sqrt{\frac{N-1}{K}}}(o)$ there exists $\bdpi\in\OptGeo_p(\mu_0,\delta_o)$ such that
    \begin{align}\label{Eq: MCPO}
        \frac{1}{\m(A)}\m\geq(\e_t)_\sharp\left(\tau_{K,N}^{(1-t)}(\met(\gamma_0,\gamma_1))^N\bdpi(\dint\gamma)\right)\qquad\forall t\in[0,1].
    \end{align}
\end{defi}

\begin{rem}[Relations among the curvature-dimension conditions]\label{Rem: Cd rel}
Here we collect some useful properties of the curvature-dimension conditions that we will need later on.
\begin{itemize}
    \item[(i)]\label{Rem: Cd rel 1}\textbf{(Globalization)} Since in general $\tau_{K,N}^{(t)}(\theta)\geq\sigma_{K,N}^{(t)}(\theta)$, the condition $\CD(K,N)$ always implies $\CD^*(K,N)$. The latter is equivalent to its local version $\CD^*_\loc(K',N)$ for all $K'<K$ thanks to \cite[Thm.\ 5.1]{BS:10}. When the metric measure space is essentially nonbranching, due to \cite{CM:21} $\CD(K,N)$ for $N\in (1, \infty)$ is equivalent to its local version. The result for the case $K=\infty$ was previously already known \cite{Stu:06a}. The case $N=1$ is excluded to avoid pathological behaviours,
    cf. \cite[Rem.\ 2.4]{CM:17a}.

    \item[(ii)]\label{Rem: Cd rel 2}\textbf{($\CD(K,N)$ and $\CD(K,\infty)$)} If $(X,\met,\m)$ satisfies $\CD(K,N)$  and $\m(X)=1$, then it satisfies also $\CD(K,\infty)$ by taking the limit as $N\rightarrow\infty$, cf. \cite[Prop.\ 1.6 (ii)]{Stu:06b}.
    
    \item[(iii)]\label{Rem: Cd rel 3}\textbf{($\CD$ and $\mcp^O$)} It is known thanks to \cite{Raj:12} that for $N\in [1, \infty)$ 
    \begin{align*}
        \CD(K,N)\quad\Rightarrow\quad\mcp^O(K,N)
    \end{align*}
    but the converse is in general false \cite{Stu:06b}.

    \item[(iv)]\label{Rem: Cd rel 5}\textbf{(Equivalent pointwise inequality)} If $(X,\met,\m)$ is also essentially nonbranching, thanks to \cite[Prop. 9.1]{CM:21} $\CD(K,N)$ is equivalent to the following pointwise inequality, which holds for $\bdpi$-a.e.\ $\gamma$ (retaining the notation of \eqref{Eq:CD}):
    \begin{align}\label{Eq: CD pointwise}
        \rho_t(\gamma_t)^{-\frac{1}{N}}\geq\tau_{K,N}^{(1-t)}(\met(\gamma_0,\gamma_1))\rho_0(\gamma_0)^{-\frac{1}{N}}+\tau_{K,N}^{(t)}(\met(\gamma_0,\gamma_1))\rho_1(\gamma_1)^{-\frac{1}{N}}.
    \end{align}
    \item[(v)]\label{Rem: Cd rel 4}\textbf{($\mcp^O$ in the smooth case)} If $X$ is a smooth Riemannian manifold with smooth metric tensor $g$, it is not true that $\mcp^O(K,N)$ implies $\Ric_g\geq K$ unless $N=\dim X$, cf. \cite[Cor.\ 5.5 (ii) and Rem.\ 5.6]{Stu:06b}. In general one only has that $N\geq \dim X$.
    
\end{itemize}
\end{rem}
\subsubsection{Lorentzian (pre-)length spaces}
{\LpLSn s are  Lorentzian analogues of metric spaces, while \LLSn s are analogues of metric length spaces. They have been introduced in \cite{KS:18} (after earlier works of Busemann \cite{Bus:67} and Kronheimer--Penrose \cite{KP:67}). There are different variants of the basic axiomatization of these spaces, cf.\ \cite{McC:24, BMcC:23, BBCGMORS:24, MS:25} (and approaches \cite{SV:16, SS:24} based on the null distance and the bounded Lorentzian metric spaces \cite{MS:24, BMS:24}). We opted for sticking to the original setting of \cite{KS:18} as, for example, the topological and metric structure of the underlying space is always given by the cone or product structure.}
\smallskip

Given a set $Y$ we call the triple $(Y,\ll, \leq)$ a \emph{causal space} where $\leq$ is a pre-order, i.e., a reflexive and transitive relation, and $\ll$ is a transitive relation contained in $\leq$ \cite[Def.\ 2.1]{KS:18}. We write $x<y$ when $x\leq y$ and $x\neq y$. We say $x$ and $y$ are \emph{timelike} (resp.\ \emph{causally}) \emph{related} if $x\ll y$ (resp.\ $x\leq y$). 

For $A\subseteq Y$ we define the \emph{chronological} and \emph{causal future} of $A$ as
\begin{align}
&I^+(A):=\{y'\in Y\colon\exists y\in A, y\ll y'\}\,,\qquad  J^+(A):=\{y'\in Y\colon\exists y\in A, y\leq y'\}\,,
\end{align}
respectively. Analogously we define the chronological $I^-(A)$ and the causal past $J^-(A)$ of $A$. In the case $A=\{y\}$ for $y\in Y$ we will write $I^+(x):= I^+(\{x\})$ and $J^+(x)=J^+(\{x\})$. 
The \emph{chronological} and \emph{causal emeralds}  of $A,B\subseteq Y$ are defined as
\begin{align}
&I(A,B):=I^+(A)\cap I^-(B)\,,\qquad J(A,B):=J^+(A)\cap J^-(B)\,.
\end{align}
Moreover, for notational clarity, we define the subsets of timelike (resp.\ causal) pairs as
$$Y^2_{\ll} =\{(x,y)\in Y^2: x\ll y\}\,,\qquad Y^2_{\leq}= \{(x,y)\in Y^2 : x\leq y\}.$$

\begin{defi}[Lorentzian pre-length space {\cite[Def.\ 2.8]{KS:18}}]\label{Def: LpLS}
A \emph{Lorentzian pre-length space} $(Y, \met, \ll, \leq, \uptau)$ is a causal space $(Y, \ll, \leq)$  equipped  with a (proper) metric $\met$ and a lower semi-continuous function $\uptau: Y^2 \rightarrow [0,\infty]$ called \emph{time separation function} that satisfies for all ${x,y, z\in Y}$ 
\begin{itemize}
    \item $\uptau(x,y)=0$ if $x\nleq y$, 
    \item $\uptau(x,y)>0$ if and only if $x\ll y$, 
    \item $\uptau(x,z)\geq \uptau(x,y)+ \uptau(y,z)$ if $x\leq y\leq z$.
\end{itemize}
The set $Y$ is endowed with the metric topology induced by $\met$. The lower semi-continuity of $\uptau$ implies that $I^{\pm}(x)$ is open for all $x\in Y$. We also refer to the function $\uptau(x,\cdot)$ \textnormal{(}and $\uptau(\cdot,y)$\textnormal{)} as
\emph{Lorentzian distance} from a fixed point $x\in Y$ \textnormal{(}to a fixed point $y\in Y$\textnormal{)}.
\end{defi}
If $I\subseteq\R$ is an interval, a curve $\gamma:I\rightarrow Y$ is called (future-directed) timelike (resp.\ causal) if $\gamma$ is locally Lipschitz continuous (w.r.t.\ $\met$) and if for all $s\leq t\in I$, it holds $\gamma(s)\ll \gamma(t)$ (resp.\ $\gamma(s)\leq \gamma(t)$). We say that $\gamma$ is a null
curve if, in addition to being causal, no two points on $\gamma(I)$ are timelike related.

The ($\uptau$-)length $L_\uptau(\gamma)$ of a (future directed) causal curve $\gamma\colon[a,b]\rightarrow Y$ is defined via the time separation function, in analogy to the theory of length metric spaces \cite{KS:18}, i.e.,
\begin{equation*}
    L_\uptau(\gamma):=\inf \left\{\sum_{i=0}^{N-1} \uptau(\gamma(t_i),\gamma(t_{i+1})): N\in\N,\, a=t_0 < t_1 < \ldots < t_N=b\right\}\,.
\end{equation*}
Under natural assumptions, these definitions coincide with the classical ones in the smooth setting \cite{KS:18}.

A future-directed causal curve $\gamma:[a,b]\rightarrow Y$ is called \emph{maximal} (or a \emph{maximizer}) if its length realizes the time separation function between $\gamma(a)$ and $\gamma(b)$, i.e., $L_\uptau(\gamma)= \uptau(\gamma(a), \gamma(b))$. 
A curve $\gamma:[0,1]\rightarrow Y$ will be called
a (causal) \emph{geodesic} if it is maximal and continuous when parametrized proportional by $\uptau$-arc-length. In other words, the set of (causal) geodesics is
$$\Geo(Y)= \{\gamma\in {C}([0,1], Y): \uptau(\gamma(s), \gamma(t))= (t-s)\uptau(\gamma(0), \gamma(1)), \forall s< t\}.$$
Its subset of timelike geodesics is defined as 
$$\TGeo(Y)= \{\gamma\in \Geo(Y): \uptau(\gamma(0), \gamma(1))>0\}.$$

   A subset $G\subseteq\TGeo(X)$  is called \emph{forward timelike nonbranching} if for any $\gamma^1,\gamma^2\in G$ the following holds:
   \begin{align*}
       \exists t\in(0,1)\colon\gamma^1_s=\gamma^2_s\quad\forall s\in[0,t]\quad\Longrightarrow\quad\gamma_s^1=\gamma^2_s\quad\forall s\in[0,1].
   \end{align*}
If $G=\TGeo(Y)$ is forward timelike nonbranching, then we say the Lorentzian pre-length space $(Y,\met, \ll, \leq, \uptau)$ is forward timelike nonbranching. Similarly, one defines \emph{backward timelike nonbranching}, and we just call the Lorentzian pre-length space \emph{timelike nonbranching} if it is forward and backward timelike nonbranching.

Our main results will only deal with \emph{Lorentzian geodesic spaces}. We will use the streamlined definition of \cite[Subsec.\ 1.1]{CM:24a}.
\begin{defi}[Lorentzian geodesic space] A Lorentzian pre-length space $(Y,\met, \ll, \leq, \uptau)$ is termed \emph{Lorentzian geodesic space} if additionally it is: 
\begin{itemize}
\item \emph{$\met$-compatible:} every $x\in Y$ admits a neighbourhood $U$ and a constant $C$ such that $L_{\met}(\gamma)\leq C$ for every future or past directed causal curve $\gamma$ contained in $U$;
\item  \emph{geodesic:} for all $x,y\in Y$ with $x<y$ there is a {\it maximal} future-directed causal curve $\gamma$ from $x$ to $y$, i.e.,  $\uptau(x,y)= L_{\uptau}(\gamma)$.
\end{itemize}
\begin{rem}
In particular, a Lorentzian geodesic space is \emph{(strictly) intrinsic}, i.e., $\forall x\leq y$
\begin{equation*}
    \uptau(x,y) = \sup\{L_\uptau(\gamma): \gamma \text{ future directed causal curve from } x \text{ to } y\}\,,
\end{equation*}
where the supremum is actually a maximum. 
Moreover, a Lorentian geodesic space is a {\it \LLS} in the sense of \cite{KS:18} if it is additionally locally causally closed, $I^\pm(x)\neq\emptyset$ for all $x\in Y$ and timelike path connected, see  \cite[Def.\ 3.22]{KS:18}. 
\end{rem}
\end{defi}

We consider the following version of global hyperbolicity that is consistent with the previous literature \cite[Cor.\ 3.8]{Min:23}. 
\begin{defi}
A Lorentzian geodesic space $(Y,\met, \ll, \leq,\tau)$ is called
\begin{itemize}
\item \emph{causal}: if $\leq$  is also antisymmetric, i.e., $\leq$ is an order; 
\item \emph{globally hyperbolic}: if it is causal and for every $K_1, K_2\subseteq Y$ compact, the \emph{causal emerald} $J^{+}(K_1)\cap J^{-}(K_2)$ is compact in $Y$.
\end{itemize}
\end{defi}

From \cite[Thm.\ 3.7]{Min:23} this definition of global hyperbolicity is equivalent with the one adopted in \cite{KS:18} (i.e., non-total imprisonment and compactness of causal diamonds). 
Global hyperbolicity implies that the relation $\leq$ is a closed subset of $Y\times Y$.
It was proved in \cite[Thm.\ 3.28]{KS:18} that for a globally hyperbolic  Lorentzian length space $(Y,\met, \ll, \leq,\uptau)$, the time-separation function $\uptau$ is finite and continuous: in particular, for a maximal causal curve there exists a constant $\uptau$-speed parametrizations, thus any two distinct causally related points are joined by a causal geodesic, hence the space is a Lorentzian geodesic space. Finally, let us emphasize that for the setting of this article (i.e., generalized cones, see Subsection \ref{subsec-con-ms}) there are no differences: Every generalized cone over a locally compact and complete length metric space $(X,\met)$ is a globally hyperbolic \LLS \cite[Cor.\ 4.11]{AGKS:23}, hence a Lorentzian geodesic space.

\subsubsection{Timelike curvature-dimensions conditions for Lorentzian spaces}
Let $(X,\met,\ll,\leq,\uptau)$ be a {globally hyperbolic}  {\it geodesic} Lorentzian space. 

Given $\mu, \nu\in \mathscr{P}(X)$ we define \emph{chronological} and \emph{causal} couplings: the sets $\Pi_\ll(\mu,\nu)$ and $\Pi_\leq(\mu,\nu)$, respectively, are the sets of $\pi\in\Pi(\mu,\nu)$ such that $\pi(X^2_\ll)=1$ and $\pi(X^2_\leq)=1$, respectively.

Similarly to the non-Lorentzian case, one defines a notion of ``Lorentzian distance'' between probability measures: fixed $p\in(0,1)$, the \emph{$p$-Lorentz-Wasserstein distance} \cite{EM:17} $\ell_p$ of $\mu,\nu\in\mathscr{P}(X)$ is
\begin{align}\label{Eq:ellq}
    \ell_p(\mu,\nu) :=  \left[\!\sup_{\pi\in \Pi_{}(\mu,\nu)}\int_{M\times M} \ell(x,y)^p\,\dint \pi(x,y)\right]^\frac{1}{p}
     =\left[\!\sup_{\pi\in \Pi_{\leq}(\mu,\nu)}\int_{M\times M} \uptau(x,y)^p\,\dint \pi(x,y)\right]^\frac{1}{p}
\end{align}
where $\sup\emptyset=-\infty$ and $\ell:X^2\rightarrow[0,\infty]\cup\{-\infty\}$ is defined by
\begin{align*}
    \ell(x,y):=\begin{cases}
        \uptau(x,y)\quad&\mbox{if } x\leq y,\\
        -\infty&\mbox{otherwise}.
        \end{cases}
\end{align*}

The $p$-Lorentz-Wasserstein distance is interpreted as a sort of time separation on $\mathscr{P}(X)$ since it satisfies the reverse triangle inequality (\cite[Thm.\ 13(ii)]{EM:17}, \cite[Prop.\ 2.5]{CM:24a}). We call  a maximizing $\pi\in\Pi_\leq(\mu,\nu)$ \emph{optimal} and write $\pi\in\Pi^{p,\opt}_\leq(\mu,\nu)$.  If $(X, \met, \ll, \leq , \uptau)$ is a globally hyperbolic geodesic Lorentzian space and $\mu,\nu\in\mathscr{P}_\mathrm{c}(X)$ (satisfying an integrability condition) with $\Pi_\leq(\mu,\nu)\neq\emptyset$, then $\smash{\Pi^{p, \opt}_{\leq}(\mu, \nu)\neq \emptyset}$ \cite[Prop.\ 2.3]{CM:24a}.

We call a pair $\mu,\nu\in\mathscr{P}_\mathrm{c}(X)$ \emph{timelike $p$-dualizable} if $\Pi^{p,\opt}_\leq(\mu,\nu)\cap\Pi_\ll(\mu,\nu)\neq\emptyset$, 
and in this case we say that such an element $\pi$ \emph{$p$-dualizes} $(\mu,\nu)$. Notice that if for $\mu,\nu\in\mathscr{P}_\mathrm{c}(X)$ it holds that $\supp\,\mu\times\supp\,\nu\subseteq X^2_\ll$ then the pair $(\mu,\nu)$ is timelike $p$-dualizable also in the following stronger sense \cite[Def.\ 2.27, Cor.\ 2.29]{CM:24a}.  We say that $(\mu,\nu)\in\mathscr{P}(X)$ is \emph{strongly timelike $p$-dualizable by} $\pi\in\Pi_\ll(\mu,\nu)$ if the pair $(\mu,\nu)$ is timelike $p$-dualizable by $\pi$, and there is an $\ell^p$-cyclically monotone Borel set $\Gamma\subseteq X^2_\ll\cap(\supp\mu\times\supp\nu)$ such that every given $\sigma\in\Pi_\leq(\mu,\nu)$ is $p$-optimal if and only if $\sigma(\Gamma)=1$ (in particular, $\sigma$ is chronological). Similarly, in this case we say that such a $\pi$ \emph{strongly $p$-dualizes} $(\mu,\nu)$. {Here, an \emph{$\ell^p$-cyclically monotone set}, is a set $\Lambda\subseteq X^2_\leq$ such that for every finite set of points $(x_i,y_i)_{i=1,\ldots,N}$ in $\Lambda$  it holds that
\begin{equation*}
    \sum_{i=1}^N \ell^p(x_i,y_i) \geq \sum_{i=1}^N\ell^p(x_{i+1},y_i)\,,
\end{equation*}
with the convention that $x_{N+1}:=x_1$.
}

A curve $(\mu_t)_{t\in[0,1]}\subseteq\mathscr{P}(X)$ is called \emph{timelike (proper-time parametrized) $\ell_p$-geodesic} if there exists a $\bdpi\in\mathscr{P}(\TGeo(X))$ with $(\e_0,\e_1)_\sharp \bdpi$ optimal and $(\e_0,\e_1)_\sharp \bdpi(X^2_\ll)=1$ such that
$$\mu_t=(\e_t)_\sharp \bdpi\qquad\forall t\in[0,1].$$
We call such $\bdpi$ a \emph{timelike $p$-optimal geodesic plan} and write $\bdpi\in\OptTGeo_p(\mu,\nu)$.

We call $(Y, \met, \m, \ll, \leq , \uptau)$ a \emph{measured Lorentzian geodesic space} if $(Y, \met, \ll, \leq , \uptau)$ is a Lorentzian geodesic space and $\m$ is a Radon measure on $X$. 

The following definition is the Lorentzian analogue of Definition \ref{De: ess non-branch}.
\begin{defi}[Timelike $p$-essential nonbranching]
   We say that $(X,\met,\m, \ll, \leq , \uptau)$ is \emph{timelike $p$-essentially nonbranching} if for all $\mu,\nu\in\mathscr{P}_p(X, \m)$ and every $\bdpi\in \OptTGeo_p(\mu,\nu)$ is induced by an $\ell_p$-geodesic that is concentrated on a Borel timelike nonbranching set $G\subseteq \TGeo(X)$.
\end{defi}
\begin{rem}[{\cite{CM:24a,Bra:23b}}]\label{Re:reparametrizations}
    The elements of $\TGeo(X)$ are $\uptau$-arclength parametrized, i.e.,  $\gamma\in\TGeo(X)$ satisfies
    \begin{align*}
        \uptau(\gamma_s,\gamma_t)=(t-s)\,\uptau(\gamma_0,\gamma_1)\qquad\forall s,t\in[0,1],\quad s<t.
    \end{align*}
    As a consequence of this and the reverse triangle inequality, every timelike $\ell_p$-geodesic $(\mu_t)_{t\in[0,1]}$ with $\ell_p(\mu_0,\mu_1)<\infty$ similarly satisfies
    \begin{align*}
        \ell_p(\mu_s,\mu_t)=(t-s)\,\ell_p(\mu_0,\mu_1)\qquad\forall s,t\in[0,1],\quad s<t.
    \end{align*}
\end{rem}
Though we only use it in the context of measured Lorentzian geodesic spaces, the following definition makes sense in the context measured Lorentzian pre-length spaces.
\begin{defi}[Timelike curvature-dimension conditions]\label{Def: TCD}
    Fix $p\in(0,1)$ and let $K\in\R$ and $N\geq1$.  A measured Lorentzian pre-length space $(X, \met, \m, \ll, \leq , \uptau)$ satisfies the \emph{timelike curvature-dimension condition} $\tCD_p(K,N)$ if for every timelike $p$-dualizable pair $(\mu_0,\mu_1)=(\rho_0\,\m,\rho_1\,\m)\in\mathscr{P}_\mathrm{c}(X,\m)$ there exists $\bdpi\in\OptTGeo_p(\mu_0,\mu_1)$ such that for all $t\in[0,1]$ and $N'\geq N$ we have
    \begin{equation}
        \begin{aligned}\label{Eq:TCD}
           S_{N'}(\mu|\m)\leq -\int_{X\times X}\Big[\tau_{K,N'}^{(1-t)}(&\met(x_0,x_1))\rho_0(x_0)^{-\frac{1}{N'}}+\tau_{K,N'}^{(t)}(\met(x_0,x_1))\rho_1(x_1)^{-\frac{1}{N'}}\Big]\,\dint\pi(x_0,x_1),
        \end{aligned}
    \end{equation}
    where $\pi=(\e_0,\e_1)_\sharp \bdpi$ and $\mu_i=\rho_i\m$, $i=0,1$. 

    If \eqref{Eq:TCD} holds with the coefficients $\sigma^{(t)}_{K,N'}(\theta)$ in place of \smash{$\tau^{(t)}_{K,N'}(\theta)$}, we say that $X$ satisfies the \emph{reduced timelike curvature-dimension condition} $\tCD^*_p(K,N)$.

    If the previous statements hold for every strongly timelike $p$-dualizable $(\mu_0,\mu_1)\in\mathscr{P}_\mathrm{c}(X, \m)^2$, we say that $X$ obeys the \emph{weak timelike curvature-dimension condition} $\wTCD_p(K,N)$ and the \emph{weak reduced timelike curvature-dimension condition} $\wTCD^*_p(K,N)$ respectively.
\end{defi}

Notice in the definition above the dependence on the transport exponent $p\in(0,1)$. It is  conjectured that the condition $\tCD_p(K,N)$  is independent of $p$, as it is for $\CD(K,N)$ (see \Cref{Not: CDp}).

Next, we introduce the timelike measure contraction properties. The idea behind is the same as in the metric case: one of the two measures in the definition of $\tCD_p(K,N)$ becomes a Dirac measure at a point that in this case is causally related to the support of the other measure. 

\begin{defi}[Timelike measure-contraction properties]\label{Def: TMCP}
    Fix $p\in(0,1)$, let $K\in\R$ and $N\geq1$. We say that $(X, \met, \m, \ll, \leq, \uptau)$ satisfies the \emph{future timelike measure contraction property} $\tmcp^+(K,N)$ if for every $\mu_0=\rho_0\,\m\in\mathscr{P}^\ac_c(X)$ and every $x_1\in X$ with $\mu_0(I^-(x_1))=1$ there exists a timelike $\ell_p$-geodesic $(\mu_t)_{t\in[0,1]}$ from $\mu_0$ to $\mu_1:=\delta_{x_1}$ such that for all $t\in[0,1]$ and $N'\geq N$ we have
    \begin{equation}
        \begin{aligned}\label{Eq:TMCP+}
            \int_X\rho_t^{1-\frac{1}{N'}}\,\dint\m\geq\int_{X^2}\tau_{K,N'}^{(1-t)}(\uptau(x,x_1))\rho_0(x^0)^{-\frac{1}{N'}}\,\dint\mu(x),
        \end{aligned}
    \end{equation}
    where $\pi=(\e_0,\e_1)_\sharp \bdpi$ and $\mu_i=\rho_i\m$, $i=0,1$.

    We say that $X$ satisfies the \emph{past} $\tmcp^-(K,N)$ if \eqref{Eq:TMCP+} holds with $\mu_1=\rho_1\,\m$ and $\mu_0=\delta_{x_0}$ for some $x_0\in X$ with $\mu_1(I^+(x_0))=1$, i.e.,
      \begin{equation}
        \begin{aligned}\label{Eq:TMCP-}
            \int_X\rho_t^{1-\frac{1}{N'}}\,\dint\m\geq\int_{X^2}\tau_{K,N'}^{(t)}(\uptau(x_0,x))\rho_1(x)^{-\frac{1}{N'}}\,\dint\mu(x).
        \end{aligned}
    \end{equation}
    
     Finally, we say that $X$ satisfies $\tmcp(K,N)$ if it satisfies both the conditions $\tmcp^+(K,N)$ and $\tmcp^-(K,N)$.

     If the same statements hold with the coefficients $\smash{\sigma_{K,N}^{(t)}(\theta)}$ in place of $\smash{\tau_{K,N}^{(t)}(\theta)}$, we just put a prefix \emph{reduced} to the respective properties $\smash{\tmcp^{*,\pm}(K,N)}$ and $\tmcp^*(K,N)$.
\end{defi}

\begin{rem}[Relations among the timelike curvature-dimension conditions]\label{Rem: TCD rel}
    As in the case of $\CD(K,N)$, we collect some properties of its timelike counterpart:
    \begin{itemize}
        \item [(i)]\label{Rem: TCD rel 1}\textbf{(Globalization)} Under strong assumptions, the condition $\wTCD^*_{p,\loc}(K,N)$ is equivalent to $\wTCD^*_p(K,N)$, see \cite[Thm.\ 3.45]{Bra:23b} for details. The same statement for $\wTCD_p$ is not known at the moment.
        \item[(ii)] \textbf{($\wTCD_p(K,N)$ and $\wTCD_p(K, \infty)$)}
    Combining \cite[Prop.\ 3.6 (i) and Cor.\ 3.10]{Bra:23b} and \cite[Rem.\ 4.2]{Bra:23a} we have that $\wTCD_p(K,N)$ implies $\wTCD_p(K,\infty)$ in the sense of \cite[Def.\ 4.1]{Bra:23a} which is the Lorentzian version of \Cref{De: CD for inf K}. 

        \item[(iii)]\label{Rem: TCD rel 2}\textbf{($\wTCD_p$ and $\tmcp$)} The condition $\wTCD_p(K,N)$ implies $\tmcp(K,N)$ and the same holds true for its reduced version, cf.\ \cite[Prop.\ 4.9]{Bra:23b}.
        
        \item[(iv)]\label{Rem: TCD rel 3}\textbf{(Equivalent pointwise inequality)} If $(X,\met,\ll,\leq,\uptau,\m)$ is also timelike $p$-essentially nonbranching, thanks to \cite[Thm.\ 3.41]{Bra:23b} $\tCD_p(K,N)$ and $\wTCD_p(K,N)$ are equivalent to the following pointwise inequality,  for $\bdpi$-a.e.\ $\gamma$ (retaining the notation of \eqref{Eq:TCD}):
        \begin{align}\label{Eq: TCD pointwise}
            \rho_t(\gamma_t)^{-\frac{1}{N}}\geq\tau_{K,N}^{(1-t)}(\uptau(\gamma_0,\gamma_1))\rho_0(\gamma_0)^{-\frac{1}{N}}+\tau_{K,N}^{(t)}(\uptau(\gamma_0,\gamma_1))\rho_1(\gamma_1)^{-\frac{1}{N}}.
        \end{align}
        Evident adaptations hold for $\wTCD_p^*(K,N)$ and for $\tmcp(K,N)$ and their variants \cite{Bra:23b}.
    \end{itemize}
\end{rem}

\begin{rem}[$\tmcp$ and $\tmcp^e$]\label{rem:entropicmcp} Under the assumption of $p$-essentially nonbranching, the measure contraction property $\tmcp^+$ also implies the entropic measure contraction property $\tmcp^e$ of \cite{CM:24a}.  In particular, by \cite[Prop.\ 3.19]{CM:24a}, it follows for every pair $(\mu_0, \mu_1)\in \mathscr P_c(X)\times \mathscr P_c(X)$ of timelike $p$-dualizable measures there exists $\pi \in \Pi_{\leq}^{p, \opt}(\mu_0, \mu_1)$  that 
$$U_N(\mu_t)\geq \sigma_{K,N}^{(t)}(\left\|\uptau\right\|_{L^2(\pi)})U_N(\mu_0).$$
In particular, if $\mu_0= \frac{1}{\m(A_0)} \m|_{A_0}$ for a set $A_0$ of positive measure, by Jensen's inequality one has 
$$\m(A_t)^{\frac{1}{N}}\geq \sigma_{K,N}^{(t)}(\left\|\uptau\right\|_{L^2(\pi)}) \m(A_0)^{\frac{1}{N}}.$$
\end{rem}    

\subsubsection{Weighted Lorentzian manifolds and the Bakry--\'Emery condition}
Let $(M,g)$ be a smooth Lorentzian spacetime of dimension $n\in \N_{\geq 2}$, i.e.,  a smooth, connected, time-oriented Lorentzian manifold. For this subsubsection we choose the signature convention to be $(+, -, \dots, -)$, but later it will be $(-,+,+,\ldots,+)$, see Remark \ref{rem-sign-conv} below. Also, we assume $(M,g)$ is globally hyperbolic. 
\smallskip

For $N>n$ and $\Phi\in C^\infty(M)$ we define the Bakry--\'Emery $N$-Ricci tensor as 
\begin{align}
\Ric^{\Phi, N}_g= \Ric_g-\nabla^2 \Phi - \frac{1}{N-n} \dint\Phi \otimes \dint\Phi 
.
\end{align}
Here $\nabla^2 \Phi$ is the Hessian form w.r.t.\ $g$ of $\Phi$. If $N=n$, we set 
\begin{align}\Ric_g^{N, \Phi}(v,v)= \begin{cases} \Ric_g(v,v)- \nabla^2 \Phi(v,v) & \mbox{ if } \langle \nabla \Phi, v\rangle =0,\\
-\infty & \mbox{ otherwise.}
\end{cases}
\end{align}
\begin{thm}\label{thm:smoothcd} Let $p\in (0,1)$, $K\in \R$ and $N\in [n, \infty)$. The following statements are equivalent. 
\begin{itemize}
\item[(i)] $\Ric_g^{N, \Phi}\geq Kg$, i.e., $\Ric_g^{N, \Phi}(v,v)\geq Kg(v,v)$ for every timelike vector $v\in TM$.
\item[(ii)] The condition $\tCD_{p}(K,N)$ holds {(w.r.t.\ the reference measure $\m= e^\Phi\,\vol_g$).}
\end{itemize}
Moreover, if $\Ric_g\geq K$ on timelike vectors and $h= e^{\Phi}$ is measurable and satisfies
\begin{align*}
h^{\frac{1}{N}} \circ \gamma(t) \geq \sigma_{K'/N}^{(1-t)}(\tau(\gamma(0), \gamma(1))) h^\frac{1}{N}\circ \gamma(0)+  \sigma_{K'/N}^{(t)}(\tau(\gamma(0), \gamma(1))) h^\frac{1}{N}\circ \gamma(1)
\end{align*} 
for each timelike, constant-speed geodesic $\gamma: [0,1]\rightarrow M$, then $(M,\met,\leq,\ll,\uptau, e^\Phi\mathrm{vol}_g)$ satisfies  $\tCD_{p}(K+K', N+n)$. 
\end{thm}
\begin{proof} For the equivalence between (i) and (ii) we refer to Theorem A.1 in the Appendix A in \cite{Bra:23b}, where the equivalence was established for $\tCD^*_p$-spaces \cite{McC:20}.

For the second statement we refer to \cite[Thm.\ 1.7]{Stu:06b} and its proof.
\end{proof}
\begin{rem}\label{rem-sign-conv} We recall that 
$\Ric_g(v,v)= \sum_{i=1}^n \epsilon_i g(R(e_i, v)v, e_i)$
for a timelike vector $v\in TM$, an orthonormal basis $(e_i)_{i=1, \dots, n}$, and $\epsilon_i = g(e_i, e_i)$. Since the curvature tensor $R$ only depends on the Levi-Civita connection that is the same for $g$ and $-g$, we have $\Ric_{g}= \Ric_{-g}$. It follows that 
a Ricci curvature bound $\Ric_g^{N, \Phi}\geq Kg$ for a metric $g$ with signature $(+, -, \dots, -)$ as in {\it (i)} of \Cref{thm:smoothcd} holds if and only if there is  a  bound $\Ric_{g^-}^{N, \Phi}\geq -Kg^-$ for the metric $g^-=-g$ with signature $(-,+, \dots, +)$. Hence, for a metric $g$ with signature $(-, +, \dots, +)$ the statement of the previous theorem is that $\Ric_g^{N, \Phi}\geq Kg$ holds if and only if $\tCD(-K, N)$ holds. We emphasize this since there is no unified signature convention in the literature and Theorem \ref{thm:smoothcd} will be crucial in the proof of our main results.
\end{rem}

\subsection{The localisation method on metric measure spaces}\label{subsec-loc}
\subsubsection{Disintegration of measures} 
In this subsection we follow the presentation in \cite{fremlin} (see also \cite{CM:21, zhenhao24}).

We consider  a measurable space $(R,\mathcal R)$, a set $Q$ and a map $\mathfrak Q:R\rightarrow Q$. The set $Q$ is equipped with the smallest $\sigma$-algebra $\mathcal Q$ such that $B\in \mathcal Q$  if $\mathfrak Q^{-1}(B)\in \mathcal{R}$. Given a measure $\m$ on $(R,\mathcal R)$, one can define {its quotient} measure $\mathfrak q$ on $Q$ via the pushforward $\mathfrak Q_{\sharp}\, \m=: \mathfrak q$.

A disintegration of $\m$ over  $\mathfrak q$ is a map $\mathcal R \times Q\ni (B,q)\mapsto \m_{q}(B)\in [0,\infty]$ such that the following holds
\begin{itemize}
\item $\m_{q}$ is a measure on $(R,\mathcal R)$ for every $q\in Q$,
\item $q\mapsto \m_{q}(B)$ is $\mathfrak q$-measurable for every $B\in \mathcal R$,
\item $\m(B)= \int_Q\m_q(B)\,\dint\mathfrak q$ for every $B\in \mathcal R$.
\end{itemize}We use the notation $\{\m_{q}\}_{q\in Q}$ for such a disintegration. We call the measures $\m_{q}$ {\it conditional  measures}.
A disintegration $\{\m_q\}_{q \in Q}$ over $\mathfrak q$ is called \emph{consistent w.r.t.\ $\mathfrak Q$}  if for all $B\in \mathcal R$ and $C\in \mathcal Q$ the consistency condition
$$
\m(B\cap \mathfrak Q^{-1}(C))=\int_C \m_{q}(B) \,\dint\mathfrak q(q)
$$
holds.  
A 
disintegration $\{\m_{q}\}_{q\in Q}$  
 is called {\it strongly consistent }if it is consistent w.r.t.\ its quotient map and for $\mathfrak q$-a.e.\ $q$ the conditional measure $\m_q$ is concentrated in $\mathfrak Q^{-1}(\{q\})$.
 
Finally, we say that a consistent disintegration $\{\m_q\}_{q \in Q}$ over $\mathfrak q$ is \emph{$\mathfrak q$-unique} if the following holds. Let $ \{\m'_{q}\}_{q\in Q}$ be another consistent disintegration of $\m$ over $\mathfrak q$. Then $\m_{q}=\m'_{q}$ for $\mathfrak q$-a.e.\ $q \in Q$. If $\mathcal R$ is countably generated and $\m$ is $\sigma$-finite, then a consistent disintegration of $\m$ over $\mathfrak q$ is $\mathfrak q$-unique.
\subsubsection{$1D$ localisation}  
Let $(X,d,\m)$ be a proper metric measure space that is essentially nonbranching. We  assume that $\supp\m =X$.
Let $u:X\rightarrow \mathbb{R}$ be a $1$-Lipschitz function. Then 
\begin{align*}
\Gamma_u:=\{(x,y)\in X\times X : u(x)-u(y)=d(x,y)\}
\end{align*}
is a  $d$-cyclically monotone set. One also defines $$\Gamma_u^{-1}=\{(x,y)\in X\times X: (y,x)\in \Gamma_u\}.$$
If $\gamma:[a,b]\rightarrow X$ is a geodesic such that $(\gamma(a),\gamma(b))\in \Gamma_u$ then $(\gamma(t),\gamma(s))\in \Gamma_u$ for $a<t\leq s<b$.  It is therefore natural to consider the set $G$ of unit speed geodesics $\gamma:[a,b]\rightarrow \R$ such that $(\gamma(t),\gamma(s))\in \Gamma_u$ for all $a\leq t\leq s\leq b$. 
The union $\Gamma_u\cup \Gamma_u^{-1}$ defines a relation $R_u$ on $X\times X$, and $R_u$ induces a {\it transport set with endpoints} 
$$\mathcal T^b_u:= P_1(R_u\backslash \{(x,y):x=y\})\subseteq X,$$ 
where $P_1(x,y)=x$ is the projection onto the first factor. For $x\in \T^b_u$ we define the sections $\Gamma_u(x):=\{y\in X:(x,y)\in \Gamma_u\}, $
and similarly $\Gamma_u^{-1}(x)$, as well as $R_u(x)=\Gamma_u(x)\cup \Gamma_u^{-1}(x)$. Since $u$ is $1$-Lipschitz, 
 $\Gamma_u, \Gamma_u^{-1}, R_u\subseteq X^2$ and  $\Gamma_u(x), \Gamma_u^{-1}(x), R_u(x)\subseteq X$ are closed.

The {\it transport set without branching} $\mathcal T_u$ associated to $u$ is  defined as 
\begin{align*}
\mathcal T_u=\left\{ x\in \mathcal T^b_u: \forall  y,z\in R_u(x) \Rightarrow  (y,z)\in R_u\right\}.
\end{align*}
The sets $\T^b_u$ and $\T^b_u\setminus \T_u$ are $\sigma$-compact, and $\T_u$ and $R_u\cap \T_u\times \T_u$ are Borel sets.
Moreover, $\T_u$ is  $\sigma$-compact. 

In \cite{Cav:17} Cavalletti shows that $R_u$ restricted to $\T_u\times \T_u$ is an equivalence relation. 
Hence, from $R_u$ one obtains a partition of $\mathcal T_u$ into a disjoint family of equivalence classes $\{X_{q}\}_{q\in Q}$. 

Let $\mathfrak Q: \mathcal T_u\rightarrow Q$ be the induced quotient map. We  equip
 $Q$ with the   measurable structure $\mathcal Q$ such that $\mathfrak Q: \T_u\rightarrow Q$ is a measurable map. 
We set $\smash{\mathfrak q:= \mathfrak Q_{\sharp}\,[\m|_{\mathcal T_u}]}$. 

Every $X_{q}$ is isometric to an interval  $I_q\subseteq\mathbb{R}$ via a distance preserving map $\gamma_q:I_q \rightarrow X_{q}$. The curve $\gamma_q:I_q\rightarrow X$ extends to an arclength parametrized geodesic  on $\overline I_{q}$  that we also denote $\gamma_q$. We denote the  open interior of $\overline I_q$ with $(a_q,b_q)$ where $a_q, b_q\in \R\cup \{\pm \infty\}.$

A \emph{measurable section} of the equivalence relation $R_u$ on $\T_u$ is a measurable map $s: \T_u\rightarrow \T_u$ such that $R_u(s(x))=R_u(x)$, and $(x,y)\in R_u$ implies $s(x)=s(y)$. In \cite[Prop.\ 5.2]{Cav:17} Cavalletti shows that there exists a measurable section $s$ of $R_u$ on $\T_u$. Therefore, one can identify the measurable space $Q$ with the measurable set $\{x\in \T_u: x=s(x)\}\subseteq X$ equipped with the induced measurable structure. Hence $\mathfrak q$ is a Borel measure on $X$. By inner regularity there exists a $\sigma$-compact set $Q'\subseteq X$ such that $\mathfrak q(Q\setminus Q')=0$. We have that the Borel $\sigma$-algebra induced on $Q'$ is contained in $\mathcal Q$. In the following we will replace $Q$ with $Q'$ without further comments.  The map $\gamma_q$ with {$q\in Q$ is reparametrized such that $\gamma_q(0)=s(x)$.} The functions $Q\ni q \mapsto a_{q}, b_{q}\in \R$ are measurable. 
%
\begin{lem}[{\cite[Lem.\ 3.4]{CM:20}}]
Let $(X,d,\m)$ be an essentially nonbranching $\CD^*(K,N)$ space for $K\in \R$ and $N\in (1,\infty)$ with $\supp \m=X$ and $\m(X)<\infty$.
Then, for any $1$-Lipschitz function $u:X\rightarrow \R$, it holds that $\m(\T^b_u\setminus \T_u)=0$.
\end{lem}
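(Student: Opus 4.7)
The overall strategy is to identify $\T^b_u\setminus\T_u$ with a ``branching'' part of the transport structure and then exclude a positive-measure branching set by combining the $d$-cyclical monotonicity of $\Gamma_u$ with the absolute-continuity regularization of intermediate marginals coming from $\CD^*(K,N)$, contradicting essential nonbranching. First I would decompose
\[
\T^b_u\setminus \T_u \;\subseteq\; A_+\cup A_-,
\]
where $A_+$ is the set of $x\in\T^b_u$ admitting $y_1,y_2\in\Gamma_u(x)$ with $(y_1,y_2)\notin R_u$, and $A_-$ is defined symmetrically with $\Gamma_u^{-1}$. By symmetry under $u\mapsto -u$ it suffices to show $\m(A_+)=0$. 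A further $\sigma$-compact exhaustion reduces matters to showing $\m(A_+^{n,r})=0$, where
\[
A_+^{n,r}\;:=\;\bigl\{x\in A_+\;:\;\exists\, y_1,y_2\in\Gamma_u(x)\ \text{with}\ d(x,y_i)\geq r,\ d(y_1,y_2)\geq 1/n\bigr\}.
\]

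Next I would invoke standard measurable-selection arguments (the analytic set/Lusin--Kuratowski machinery used throughout \cite{Cav:17}) to produce two Borel maps $T_1,T_2\colon A_+^{n,r}\to X$ with $T_i(x)\in\Gamma_u(x)$, $d(x,T_i(x))\geq r$ and $d(T_1(x),T_2(x))\geq 1/n$, and then, by shrinking to a compact set of positive measure, assume that $\m|_{A_+^{n,r}}$, $(T_1)_\sharp(\m|_{A_+^{n,r}})$, and $(T_2)_\sharp(\m|_{A_+^{n,r}})$ all have uniformly bounded densities with respect to a common reference. Setting $\mu_0:=\m(A_+^{n,r})^{-1}\m|_{A_+^{n,r}}$ and $\mu_i:=(T_i)_\sharp\mu_0$ for $i=1,2$, the crucial observation is that because the graphs of $T_1$ and $T_2$ lie in the $d$-cyclically monotone set $\Gamma_u$, the associated plans $\pi_i=(\mathrm{id},T_i)_\sharp\mu_0$ are $d^2$-optimal, and every convex combination $\pi_\lambda=(1-\lambda)\pi_1+\lambda\pi_2$ between $\mu_0$ and $\nu_\lambda:=(1-\lambda)\mu_1+\lambda\mu_2$ is still supported on $\Gamma_u$, hence optimal.

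Lifting to optimal dynamical plans $\bdpi_i\in\OptGeo_2(\mu_0,\mu_i)$ supported on the unit-speed segments from $x$ to $T_i(x)$, the $\CD^*(K,N)$ condition (the step where this hypothesis is genuinely needed) ensures via Rajala-type regularity that the intermediate marginals $(\e_t)_\sharp\bdpi_i$ are absolutely continuous with respect to $\m$ for every $t\in(0,1)$; indeed $\CD^*(K,N)$ yields the Rényi entropy bound \eqref{Eq:CD} with the appropriate $\sigma^{(t)}_{K,N'}$-distortion, which together with $\m(X)<\infty$ and the bounded densities of $\mu_0,\mu_1,\mu_2$ forces $(\e_t)_\sharp\bdpi_i\ll\m$. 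Consequently the combined plan $\bdpi:=\tfrac12(\bdpi_1+\bdpi_2)\in\OptGeo_2(\mu_0,\tfrac12(\mu_1+\mu_2))$ has $2$-a.c.\ marginals at both endpoints and its intermediate marginals are absolutely continuous as well.

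Finally I would argue the contradiction: by construction $\bdpi$-a.e.\ pair $(\gamma^1,\gamma^2)$ with $\gamma^i\in\supp\bdpi_i$ and $\gamma^1_0=\gamma^2_0=x\in A_+^{n,r}$ satisfies $\gamma^1_0=\gamma^2_0$ but, since $T_1(x)\neq T_2(x)$ and $(T_1(x),T_2(x))\notin R_u$, the two geodesics $\gamma^1$ and $\gamma^2$ cannot coincide on any nondegenerate initial interval $[0,\varepsilon]$ — if they did, extending by the partial order from $\Gamma_u$ would place $T_1(x)$ and $T_2(x)$ on a common $u$-monotone geodesic, contradicting $(T_1(x),T_2(x))\notin R_u$. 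Thus $\bdpi$ concentrates on a branching set of $\Geo(X)$, contradicting essential nonbranching of $(X,d,\m)$. Hence $\m(A_+^{n,r})=0$; summing over $n,r\in\mathbb{Q}_{>0}$ gives $\m(A_+)=0$, and together with the symmetric statement for $A_-$ this proves $\m(\T^b_u\setminus\T_u)=0$. The main obstacle I anticipate is carrying out the measurable selection while preserving the quantitative separation bounds $d(x,T_i(x))\geq r$, $d(T_1,T_2)\geq 1/n$, and absolute continuity of the $(T_i)_\sharp$ marginals with controlled densities — without these, the $\CD^*(K,N)$-step cannot upgrade the intermediate marginals to absolutely continuous ones, and the essential-nonbranching contradiction fails.
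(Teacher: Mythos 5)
The lemma is quoted from \cite[Lem.\ 3.4]{CM:20} and the paper itself gives no proof, so I assess your argument on its own merits. Your architecture — quantitatively localize the branching set, select two competing $\Gamma_u$-rays, form optimal dynamical plans, contradict essential nonbranching via the regularity afforded by $\CD^*(K,N)$ — is the right kind of argument, and your reduction to the one-sided sets $A_\pm$ (using that $y\in\Gamma_u(x)$, $z\in\Gamma_u^{-1}(x)$ automatically forces $(y,z)\in R_u$) is correct. However, two steps in the middle do not hold as stated. First, the assertion that $\pi_i=(\mathrm{id},T_i)_\sharp\mu_0$ is $d^2$-optimal merely because its support lies in the $d$-cyclically monotone set $\Gamma_u$ is false: $d$-cyclical monotonicity does not imply $d^2$-cyclical monotonicity. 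Already in $(\R,|\cdot|)$ with $u=\mathrm{id}$, so that $\Gamma_u=\{(x,y):y\le x\}$, the pairs $(1,0)$ and $(1/2,3/10)$ violate the two-point $d^2$-inequality. This is why the known arguments transport a \emph{fixed} distance $\ell$ along the rays: replacing $T_i(x)$ by the point $S^\ell_i(x)$ at arclength $\ell$ from $x$ on the chosen $\Gamma_u$-ray, the relevant set $\{(x,y)\in\Gamma_u:\ d(x,y)=\ell\}$ \emph{is} $d^2$-cyclically monotone, since $d(x_i,y_{\sigma(i)})\ge|u(x_i)-u(x_{\sigma(i)})+\ell|$ and $\sum_i\bigl(u(x_i)-u(x_{\sigma(i)})\bigr)=0$ give $\sum_i d(x_i,y_{\sigma(i)})^2\ge N\ell^2=\sum_i d(x_i,y_i)^2$.

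Second, you cannot arrange, by merely shrinking to a compact subset of positive measure, that $(T_i)_\sharp\mu_0$ has bounded density with respect to $\m$: pushforwards of absolutely continuous measures by Borel maps need not be absolutely continuous at all. Producing an absolutely continuous second marginal is exactly where $\CD^*(K,N)$ and $\m(X)<\infty$ must enter. After switching to the constant-distance maps $S^\ell_i$, a Rajala-type argument yields \emph{some} optimal dynamical plan from $\mu_0$ to $\nu_\ell:=\tfrac12\bigl((S^\ell_1)_\sharp\mu_0+(S^\ell_2)_\sharp\mu_0\bigr)$ whose intermediate marginals are absolutely continuous; one then needs uniqueness of the optimal dynamical plan (from $\CD^*$ together with essential nonbranching) to identify that plan with $\tfrac12(\bdpi_1+\bdpi_2)$, since Rajala's regularization gives existence of one good geodesic rather than regularity of the specific plans $\bdpi_i$ you constructed. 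Without the fixed-$\ell$ modification and this identification step, the essential-nonbranching contradiction does not close.
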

For $\mathfrak q$-a.e.\ $q\in Q$ it was proved in \cite[Thm.\ 7.10]{CM:21} that 
\begin{align}\label{int}
R_u(x)=\overline{X_q}\supset X_q \supset (R_u(x))^{\circ} \ \ \forall x\in \mathfrak Q^{-1}(q).
\end{align}
where $(R_u(x))^\circ$ denotes the relative interior of the closed set $R_u(x)$. We replace $Q$ with a set, that is also denoted $Q$, such that \eqref{int} holds for every $q\in Q$.
%
\begin{thm}[{\cite[Thm.\ 3.3]{CM:20}}]\label{Th:1Dloc}
Let $(X,d,\m)$ be
a complete, proper, geodesic metric measure space 
with $\supp\m =X$ and $\m$ $\sigma$-finite. Let $u:X\rightarrow \mathbb{R}$ be a $1$-Lipschitz function,  let $(X_q)_{q\in Q}$ be the induced partition of $\mathcal T_u$ via $R_u$, and let $\mathfrak Q: \T_u\rightarrow Q$ be the induced quotient map as above.
Then, there exists a strongly consistent disintegration $\{\m_q\}_{q\in Q}$ of $\m|_{\T_u}$ w.r.t.\ $\mathfrak Q$. 
\end{thm}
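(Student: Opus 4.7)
The plan is to set up the hypotheses of the abstract disintegration theorem for $\sigma$-finite measures on a countably generated measurable space (as recalled at the beginning of Subsection 2.2.1 and, in the form we need, \cite{fremlin}), with respect to the quotient map $\mathfrak Q\colon\T_u\to Q$, and then to upgrade the consistency obtained from that theorem to strong consistency by showing that $\mathfrak q$-a.e.\ conditional measure is concentrated on its fiber.

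First, I would assemble the measurability input. The set $\Gamma_u$ is closed in $X\times X$ because $u$ is $1$-Lipschitz and $d$ is continuous, hence $R_u=\Gamma_u\cup\Gamma_u^{-1}$ is closed, and its sections $R_u(x)$ are closed. As recalled above the theorem, both $\T_u^b$ and $\T_u$ are Borel, in fact $\sigma$-compact, and $R_u$ restricted to $\T_u\times\T_u$ is an equivalence relation whose classes $X_q$ are isometric to intervals $I_q\subseteq\R$ via arclength-parametrized geodesics $\gamma_q$ (using that $(X,d)$ is proper and geodesic). By the measurable section result \cite[Prop.\ 5.2]{Cav:17}, there exists a Borel section $s\colon\T_u\to\T_u$ of $R_u|_{\T_u\times\T_u}$. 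Identifying $Q$ with the Borel set $\{x\in\T_u: s(x)=x\}\subseteq X$ and endowing it with the induced Borel $\sigma$-algebra $\mathcal Q$, the map $\mathfrak Q=s\colon\T_u\to Q$ is then a Borel measurable quotient map, and $\mathfrak Q^{-1}(q)=X_q$ for every $q\in Q$.

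Next, I would invoke the abstract disintegration theorem. Since $(X,d)$ is a proper (hence separable) metric space, the Borel $\sigma$-algebra on $\T_u$ is countably generated, and $\m|_{\T_u}$ is $\sigma$-finite by hypothesis. Setting $\mathfrak q:=\mathfrak Q_\sharp\,[\m|_{\T_u}]$ (which is a $\sigma$-finite Borel measure on $Q$, by exhausting $\T_u$ by sets of finite $\m$-mass), the disintegration theorem (\cite{fremlin}, as formulated at the start of the subsection) produces a family $\{\m_q\}_{q\in Q}$ of measures on $\T_u$ such that $q\mapsto\m_q(B)$ is $\mathfrak q$-measurable for every Borel $B$, and
\begin{equation*}
\m|_{\T_u}(B\cap\mathfrak Q^{-1}(C))=\int_C\m_q(B)\,\dint\mathfrak q(q)
\end{equation*}
for all Borel $B\subseteq\T_u$ and $C\in\mathcal Q$, i.e.\ the disintegration is consistent w.r.t.\ $\mathfrak Q$. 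Because $\mathcal R$ is countably generated and $\m|_{\T_u}$ is $\sigma$-finite, this disintegration is $\mathfrak q$-unique.

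Finally, the step that I would expect to be the only non-routine point is upgrading consistency to \emph{strong} consistency, i.e.\ showing that $\m_q$ is concentrated on $X_q=\mathfrak Q^{-1}(q)$ for $\mathfrak q$-a.e.\ $q$. For this, I would exploit that $\mathfrak Q$ identifies $Q$ with a Borel subset of $\T_u$ whose fibres form a Borel partition. Applied to $B=\mathfrak Q^{-1}(C)$ and $C\in\mathcal Q$, the consistency formula gives
\begin{equation*}
\int_C\m_q\bigl(\mathfrak Q^{-1}(C)\bigr)\,\dint\mathfrak q(q)=\m\bigl(\mathfrak Q^{-1}(C)\bigr)=\int_C\m_q(\T_u)\,\dint\mathfrak q(q),
\end{equation*}
so $\m_q(\mathfrak Q^{-1}(C))=\m_q(\T_u)$ for $\mathfrak q$-a.e.\ $q\in C$. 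Running this over a countable generating family of $\mathcal Q$ and using that on the separable Borel space $Q$ points are intersections of countably many such sets, one concludes $\m_q(\T_u\setminus\mathfrak Q^{-1}(q))=0$ for $\mathfrak q$-a.e.\ $q$. After modifying $\m_q$ on a $\mathfrak q$-null set of $q$'s, one obtains a disintegration that is strongly consistent w.r.t.\ $\mathfrak Q$, which is the claim. The main obstacle in this proof is thus not the existence of a disintegration, but the verification that the quotient structure on $\T_u$ is measurable enough (i.e.\ the Borel section), which is precisely where \cite[Prop.\ 5.2]{Cav:17} is used.
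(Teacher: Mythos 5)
Your overall strategy --- Fremlin's disintegration theorem, the Borel cross-section from \cite[Prop.\ 5.2]{Cav:17} to identify $Q$ with a measurable subset of $\T_u$, and then the countable-generator argument to upgrade consistency to strong consistency --- is indeed the route taken in \cite{CM:20} (the paper cites the result rather than proving it). Your final step, deducing $\m_q(\T_u\setminus\mathfrak Q^{-1}(q))=0$ for $\mathfrak q$-a.e.\ $q$ by running the identity $\m_q(\mathfrak Q^{-1}(C))=\m_q(\T_u)$ over a countable separating generator of $\mathcal Q$, is correct.

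However, there is a genuine gap at the point where you set $\mathfrak q:=\mathfrak Q_\sharp\,[\m|_{\T_u}]$ and assert it is $\sigma$-finite ``by exhausting $\T_u$ by sets of finite $\m$-mass.'' This reasoning is false: the pushforward of a $\sigma$-finite measure under a measurable map need not be $\sigma$-finite, and in the present setting it generically is not. Take $X=\R^2$ with Lebesgue measure and $u(x_1,x_2)=x_1$; then $\T_u=\R^2$, the transport rays are horizontal lines, $\mathfrak Q$ is the projection onto the second coordinate, and $\mathfrak Q_\sharp\mathcal L^2$ assigns measure $0$ to Lebesgue-null sets of $\R$ and $+\infty$ to everything else, so it is not $\sigma$-finite. (Writing $\T_u=\bigcup_n A_n$ with $\m(A_n)<\infty$ does not help: $\mathfrak Q^{-1}(\mathfrak Q(A_n))$ is typically much larger than $A_n$ and can have infinite $\m$-mass.) Since Fremlin's existence theorem for disintegrations is formulated for probability (or at least $\sigma$-finite) measures, you cannot invoke it directly with this $\mathfrak q$. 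The standard repair is to choose a strictly positive $g\in L^1(\m|_{\T_u})$ with $\int g\,\dint\m=1$, disintegrate the probability $\tilde\m:=g\,\m|_{\T_u}$ over $\tilde{\mathfrak q}:=\mathfrak Q_\sharp\tilde\m$ (a probability measure, so $\sigma$-finite), obtain strongly consistent conditionals $\tilde\m_q$, and then set $\m_q:=g^{-1}\tilde\m_q$; the resulting family is a strongly consistent disintegration of $\m|_{\T_u}$ over $\tilde{\mathfrak q}$. Note this means the quotient measure in the conclusion of the theorem is $\tilde{\mathfrak q}$, not the pushforward of $\m$ itself --- the theorem asserts the existence of a strongly consistent disintegration, not that the quotient measure equals $\mathfrak Q_\sharp[\m|_{\T_u}]$. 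A second, minor inaccuracy: the passage at the start of Subsection 2.2.1 that you cite only records the definitions and the \emph{uniqueness} of consistent disintegrations; the \emph{existence} statement you need is a separate theorem in \cite{fremlin}, which should be referenced explicitly.
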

Define the ray map $\mathfrak{G}$ as 
\begin{align*}
\mathfrak G:  \mathcal V\subseteq Q\times \R\rightarrow X\ \mbox{ via }\
\mbox{graph}(\mathfrak G)=\{ (q, t,x) \in Q\times \R\times X: t\in I_q, \gamma_q(t)=x\}.  
\end{align*}
We have the following:
\begin{itemize}
\item 
the map $\mathfrak G$ is Borel measurable,  and 
by definition $\mathcal V= \mathfrak G^{-1}(\T_u)$;
\item $\mathfrak G(q,\cdot)=\gamma_q: I_q\rightarrow X$; 
\item $\mathfrak G:\mathcal V\rightarrow  \T_u$ is bijective and its inverse is given by $\mathfrak G^{-1}(x)=\big( \mathfrak Q(x), \pm d(x,\mathfrak Q(x))\big)$.
\end{itemize}
\begin{thm}\label{Th:CD1}
Let $(X,d,\m)$ be an essentially nonbranching $\CD(K,N)$  space with $\supp\m=X$, $\m$ $\sigma$-finite, $K\in \R$ and $N\in (1,\infty)$.

Then, for any $1$-Lipschitz function $u:X\rightarrow \R$ there exists a disintegration $\{\m_q\}_{q\in Q}$ of $\m$ that is strongly consistent with $R^b_u$. 

Moreover, for $\mathfrak q$-a.e.\ $q\in Q$, $\m_{q}$ is a Radon measure with $\m_q=h_q\mathcal{H}^1|_{X_q}$ and $(X_q, d|_{X_q\times X_q}, \m_q)$ verifies the condition $\CD(K,N)$. 
\end{thm}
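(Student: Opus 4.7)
The plan is to combine Theorem \ref{Th:1Dloc} with the pointwise characterization of $\CD(K,N)$ for essentially nonbranching spaces (Remark \ref{Rem: Cd rel}\,(iv)) in order to transfer the ambient curvature-dimension condition onto each one-dimensional ray. Since $\CD(K,N)$ implies $\CD^*(K,N)$ by Remark \ref{Rem: Cd rel}\,(i), the cited lemma \cite[Lem.\ 3.4]{CM:20} applies and gives $\m(\T^b_u\setminus \T_u)=0$. Thus Theorem \ref{Th:1Dloc} furnishes a strongly consistent disintegration $\{\m_q\}_{q\in Q}$ of $\m|_{\T_u}$, which extends to a disintegration of $\m$ strongly consistent with $R^b_u$ by appending the trivial (diagonal) equivalence classes on the $\m$-negligible set $X\setminus \T^b_u$.

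For the $\CD(K,N)$ property on a single ray, I would argue by approximating measures supported on $X_q$ via $\m$-absolutely continuous measures obtained by ``fattening'' the ray into a tube of nearby rays. Concretely, pick compactly supported $\mu_0=\rho_0\,\m_q$, $\mu_1=\rho_1\,\m_q$ on the relative interior of $X_q$ and, for a Borel neighbourhood $A\subseteq Q$ of $q$ with $\mathfrak q(A)>0$, define $\m$-absolutely continuous approximants $\mu_i^A$ by integrating suitably transported copies of $\rho_i$ against $\m_{q'}\,\dint\mathfrak q(q')$ over $q'\in A$. Since $u$ is $1$-Lipschitz and restricts to an isometry on each ray, the product of the one-dimensional monotone rearrangements on the individual rays glues to a $d$-cyclically monotone coupling on $X$, and by essentially nonbranching the induced geodesic plan $\bdpi^A$ is $2$-optimal in $X$ and concentrated on geodesics each of which stays in a single ray. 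Applying the pointwise CD inequality of Remark \ref{Rem: Cd rel}\,(iv) to $\bdpi^A$, rewriting the ambient $\m$-densities of $(\e_t)_\sharp\bdpi^A$ as conditional densities along rays via $\mathfrak q$-uniqueness of the disintegration, and carrying out a Lebesgue differentiation $A\downarrow\{q\}$ yields the pointwise $\CD(K,N)$ inequality on the ray $X_q$ for $\mathfrak q$-a.e.\ $q$. This forces $\m_q\ll \mathcal H^1|_{X_q}$ with density $h_q$, and the resulting pointwise inequality is equivalent to the $\CD(K,N)$ condition on $(X_q,d|_{X_q\times X_q},\m_q)$ by the one-dimensional version of Remark \ref{Rem: Cd rel}\,(iv) (1D spaces being automatically essentially nonbranching).

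The main obstacle is the careful bookkeeping in the second step: one must convert ambient $\m$-densities into conditional densities along individual rays through the disintegration formula and ensure uniform integrability as $A\downarrow\{q\}$ so that the limiting inequality is valid for $\mathfrak q$-a.e.\ $q$. The essentially nonbranching assumption on $X$ is indispensable here --- it guarantees that $\bdpi^A$ does not acquire extra branching mass, so that the $1$D monotone-rearrangement plans on different rays remain compatible with the ambient $2$-optimal transport, and the pointwise CD inequality on $X$ descends cleanly to each ray via the disintegration.
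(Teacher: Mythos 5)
Your plan for the first half (existence of the disintegration) is essentially correct: $\CD(K,N)\Rightarrow\CD^*(K,N)$, the cited lemma gives $\m(\T^b_u\setminus\T_u)=0$ (extending from $\m(X)<\infty$ to $\sigma$-finite $\m$ by exhaustion), Theorem~\ref{Th:1Dloc} gives the strongly consistent disintegration on $\T_u$, and one completes by singleton classes on $X\setminus\T^b_u$. The second half, however, contains a genuine gap at its central step. You claim that the ray-by-ray monotone rearrangements assemble into a $d$-cyclically monotone plan and then, ``by essentially nonbranching,'' that the induced plan is $2$-optimal in $X$. That inference does not hold: $d$-cyclical monotonicity characterizes $W_1$-optimality, while essential nonbranching is a structural property of $W_p$-optimal plans once one already has them --- it provides no mechanism for converting an $L^1$-optimal coupling into an $L^2$-optimal one. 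For general $\rho_0,\rho_1$ on a ray the monotone map displaces different points by different amounts, and after fattening to a tube there is no a priori reason the ray-preserving plan minimizes the quadratic cost in the ambient space.

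The known argument, implicit in the citations behind this theorem (Cavalletti, Cavalletti--Mondino, Cavalletti--Milman), sidesteps this by never transporting arbitrary densities along rays. One restricts to constant-increment translations $T_\delta$ along the rays, so $\mu_1=(T_\delta)_\sharp\mu_0$ with $u\circ T_\delta=u+\delta$. For these, $W_2$-optimality of the ray-translation plan is certified directly: for any $\pi\in\Pi(\mu_0,\mu_1)$,
\[
\int d(x,y)^2\,\dint\pi \;\geq\; \int\big(u(y)-u(x)\big)^2\,\dint\pi \;\geq\; \Big(\int\big(u(y)-u(x)\big)\,\dint\pi\Big)^2 \;=\; \delta^2\,,
\]
using $1$-Lipschitzness of $u$ and Jensen, with equality for the ray-translation (equivalently one exhibits a Kantorovich potential proportional to $u$). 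Applying the pointwise $\CD(K,N)$ inequality of Remark~\ref{Rem: Cd rel}\,(iv) along these translation geodesics, disintegrating over $Q$ and shrinking the tube $A\downarrow\{q\}$, yields for $\mathfrak q$-a.e.\ $q$ the differential inequality \eqref{kuode} for $h_q$ first for constant-increment geodesics, and then the full $\CD(K,N)$-density property by one-dimensional considerations. Your localization and bookkeeping strategy is otherwise in the right spirit, but without the duality/Jensen certificate for $W_2$-optimality of the translations the ambient $\CD$ condition does not descend to the rays.
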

\begin{rem}[$\CD(K,N)$-density]\label{rem:thefollowingremark}
More precisely, for $\mathfrak q$-a.e.\ $q\in Q$ it holds that
\begin{align}\label{kuconcave}
h_q(c_t)^{\frac{1}{N-1}}\geq \sigma_{K/N-1}^{(1-t)}(|\dot c_t|)h_q(c_0)^{\frac{1}{N-1}}+\sigma_{K/N-1}^{(t)}(|\dot c_t|)h_q(c_1)^{\frac{1}{N-1}}
\end{align}
for every geodesic $c:[0,1]\rightarrow (a_q,b_q)$.

The property
\eqref{kuconcave} yields that $h_q$ is locally Lipschitz continuous on $(a_q,b_q)$, and  $h_q:\R \rightarrow (0,\infty)$ satisfies
\begin{align}\label{kuode}
\frac{d^2}{dr^2}h_q^{\frac{1}{N-1}}+ \frac{K}{N-1}h_q^{\frac{1}{N-1}}\leq 0 \mbox{ on $(a_q, b_q)$} \mbox{ in distributional sense.}
\end{align}Moreover, $h_q$ is continuous on $\overline{(a_q, b_q)}$ for $\mathfrak q$-a.e.\ $q\in Q$. It follows that \eqref{kuconcave} holds for every geodesic $c:[0,1]\rightarrow \overline{(a_q, b_q)}$ for $\mathfrak q$-a.e.\ $q\in Q$.

Hence, $h_q$ is a $\CD(K,N)$-density on $\overline{(a_q, b_q)}$ in the sense of Cavalletti-Milman \cite{CM:21}. Such functions have been studied also in \cite{EKS:15}.\end{rem}

\begin{rem}[Properties of $\CD(K,N)$-densities]\label{Re:properties of CD(K,N)-densities and logarithmic convolutions}
We  collect  some properties of $\CD(K,N)$-densities from \cite{CM:21} that we will need later.
\begin{enumerate}
    \item $h$ is strictly positive in  $(a,b)$ or it identically vanishes.
    \item $h$ is locally semi-concave in the interior of its definition, i.e., for all $r_0\in(a,b)$ there exists a constant $C_{r_0}\in\R$ such that $h(r)-C_{r_0}r^2$ is concave in a neighbourhood of $r_0$.
    \item $h$ is locally Lipschitz continuous in the interior $(a,b)$.
\item A  metric measure space $(I, \m)$ for an interval $I\subset \R$ and a measure $\m$ on $I$ satisfies the $\CD(K,N)$ condition if and only if $\m= h \dint \mathcal L^1$ and $h$ is a $\CD(K,N)$-density. 
\end{enumerate}
\end{rem}
\begin{rem}
If a function $h: \overline{(a,b)}\rightarrow \R$ is upper semi-continuous and satisfies the differential inequality \eqref{kuode} in $(a,b)$, then $h$  satisfies \eqref{kuconcave} for every geodesic $c$ in $\overline{(a,b)}$ \cite[Lem.\ 2.8]{EKS:15}.
\end{rem}

\subsubsection{Characterization of curvature bounds via $1D$ localisation}\label{subsec:CD1}
Given a continuous function $\phi: X\rightarrow \R$ such that $\phi^{-1}(\{0\})=: S\neq \emptyset$, we define the signed distance function w.r.t.\ $\phi$ as 
$$\met_\phi: X\rightarrow \R, \ \met_\phi(x) =\sgn(\phi(x)) \cdot\inf_{y\in S} \met(y,x) .$$
When $(X,\met)$ is a length space $\met_\phi$ is $1$-Lipschitz. 

Let $K\in \R$ and $N\geq 1$.
\begin{defi} 
Let $(X,\met_X,\m_X)$ be an essentially nonbranching metric measure length space with $\supp \m_X= X$, and let $u=\met_\phi$ be a signed distance function with the associated partition $\{X_q\}_{q \in Q}$ of $\mathcal T_u$ where $Q$ is equipped with the measurable structure as before such that the quotient map $\mathfrak Q$ is measurable. We say that $(X,\met_X,\m_X)$ satisfies the condition $\CD_u^1(K,N)$  if 
\begin{itemize}
\item[(i)] There exists a $\mathfrak q$-unique disintegration $\{\m_q\}_{q\in Q}$ of $\m|_{\mathcal T_u}$ over $\mathfrak q$.
\item[(ii)] $Q$ is the image of a measurable section $s: \mathcal T_u\rightarrow \mathcal T_u$, and there exists a Borel set $Q'\subseteq Q$, a subset of $\mathcal T_u$, such that $\mathfrak q(Q\setminus Q')=0$.
\item[(iii)] For $\mathfrak q$-a.e.\ $q\in Q$ the set $R_u(x)=\overline X_q$ for every $x\in \mathfrak Q^{-1}(\{q\})$, and $X_q$ is the image $\mbox{Im}(\gamma_q)$ of a geodesic $\gamma_q:I_q\rightarrow X$ for an interval $I_q\subseteq\R$.
\item[(iv)] For $\mathfrak q$-a.e.\ $q\in Q$ $\m_q$ is supported on $\overline X_q$ and  the metric measure space $(X_q, \met_X|_{X_q\times X_q},\m_q)$ satisfies the condition $\CD(K,N)$.
\item[(v)] For every bounded set $K\subseteq X$ there exists $C_K\in (0, \infty)$ such that $\m_q(K)\leq C_K$ for $\mathfrak q$-a.e.\ $q\in Q$. 
\end{itemize}
\noindent
The metric measure space $(X,d_X,\m_X)$ satisfies the condition $\CD^1(K,N)$ if it satisfies the condition $\CD^1_u(K,N)$ for any $1$-Lipschitz function $u:X\rightarrow \R$.
\end{defi}
\begin{rem}
From the previous sub-subsection it is immediately clear that the condition $\CD(K,N)$ implies the condition $\CD^1(K,N)$. 
\end{rem}
\begin{thm}[{\cite{CM:21, zhenhao24}}]\label{thm:cavmil}
If an essentially nonbranching, locally finite metric measure space $(X,\met_X,\m_X)$, such that $\m_X$ has full support, satisfies the condition $\CD^1(K,N)$ for $K\in \R$ and $N\in [1,\infty)$ then it satisfies the condition $\CD(K,N)$.
\end{thm}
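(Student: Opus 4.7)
The plan is to verify the pointwise $\CD(K,N)$ inequality for arbitrary compactly supported $\mu_0 = \rho_0\,\m_X$, $\mu_1 = \rho_1\,\m_X$; by Remark \ref{Rem: Cd rel}(iv) and essential nonbranching this implies $\CD(K,N)$ in its integrated form. The strategy is to reduce the verification to the needle decomposition produced by $\CD^1_u(K,N)$ for an appropriately chosen family of $1$-Lipschitz functions.

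First, I would fix $\bdpi \in \OptGeo_2(\mu_0,\mu_1)$ with coupling $\pi = (\e_0,\e_1)_\sharp \bdpi$. The technical heart of the argument is to construct a countable family of $1$-Lipschitz functions $u_k\colon X\rightarrow \R$, exhausting $\supp\pi$ up to a $\pi$-null set, such that every $u_k$-ray contains an entire bundle of $W_2$-geodesics appearing in $\bdpi$, with $u_k$ decreasing at unit speed along each such geodesic. One route, following Klartag and Cavalletti--Milman, is to perform an $L^1$-optimal transport for a suitable localisation of the signed measure $\mu_0 - \mu_1$ on small pieces of $\supp\pi$: the resulting $L^1$-Kantorovich potential is $1$-Lipschitz, its transport set carries the corresponding $W_2$-geodesics, and essential nonbranching prevents those rays from branching.

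For each such $u_k$, the hypothesis $\CD^1_{u_k}(K,N)$ yields a strongly consistent disintegration $\m_X\lvert_{\T_{u_k}} = \int_{Q_k} h_q\,\cH^1\lvert_{X_q}\,\dint\mathfrak q_k(q)$, where each $(X_q, \met_X\lvert_{X_q}, \m_q)$ is a one-dimensional $\CD(K,N)$ space with density $h_q$ satisfying \eqref{kuode}. Disintegrating $\bdpi$ accordingly gives one-dimensional $W_2$-optimal plans on each needle; the one-dimensional pointwise $\CD(K,N)$ characterisation from Remark \ref{rem:thefollowingremark} then delivers the needle-wise inequality for the conditional densities. Integrating over $\mathfrak q_k$ and combining the contributions from all $u_k$ yields the pointwise $\CD(K,N)$ inequality for $\rho_t$ along $\bdpi$-a.e.\ geodesic, which is the required $\CD(K,N)$ condition.

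The main obstacle lies in the second step. A single $1$-Lipschitz function cannot in general align with the intrinsically multi-directional $W_2$-transport, so one must localise $\pi$ into sub-couplings whose transport is effectively one-dimensional, and then invoke measurable selection to assign each piece to an appropriate $u_k$. One must also relate the density of $(\e_t)_\sharp\bdpi$ with respect to $\m_X$ to the $1$D densities coming from the disintegration; this requires the uniform bound $\m_q(K)\leq C_K$ in condition (v) of the definition of $\CD^1_u(K,N)$ to guarantee the needed integrability and tightness during recomposition, together with the continuity of $h_q$ up to the endpoints from Remark \ref{rem:thefollowingremark}.
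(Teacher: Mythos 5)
Note first that the paper does not actually prove \Cref{thm:cavmil}: it is quoted, with citation to \cite{CM:21, zhenhao24}, and used as a black box. So there is no internal proof to compare against; your sketch has to be judged as a sketch of the Cavalletti--Milman globalization argument itself. At that level your overall shape is right --- reduce to a needle decomposition along a $1$-Lipschitz potential, invoke the one-dimensional $\CD(K,N)$ on each needle, recombine, and read off the pointwise inequality --- but the two places you label as ``the technical heart'' are exactly the places where the argument as you state it does not yet close, and it is worth being precise about why.

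The assertion that an $L^1$-Kantorovich potential for a localization of $\mu_0-\mu_1$ has a transport set that ``carries the corresponding $W_2$-geodesics'' is not justified and, in general, false: the $L^1$-optimal rays between $\mu_0$ and $\mu_1$ minimize distance, whereas the $W_2$-optimal map is determined by the quadratic-cost potential, and there is no a priori alignment between the two. What Cavalletti--Milman actually do is take the signed distance to a level set of the \emph{time-interpolated} $W_2$-Kantorovich potential; the fact that the resulting $u$-rays contain the $W_2$-geodesics (restricted to suitable time windows) is a nontrivial structural property of $c$-concave functions and their Hopf--Lax evolutions, not an elementary consequence of $L^1$ duality. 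Second, the recombination step is not simply an integration over $\mathfrak q$: the $N$-dimensional Jacobian along a $W_2$-geodesic factors into a needle-tangential part (controlled by the $\CD(K,N)$-density $h_q$) and a transversal part, and the latter must be estimated by a separate argument. Cavalletti--Milman obtain it through a third-order regularity and Riccati comparison on the interpolated potentials along a.e.\ $W_2$-geodesic, coupled with the one-dimensional density inequality. Your sketch proposes to read the full pointwise $\CD(K,N)$ inequality for $\rho_t$ directly off the needle densities, which conflates the tangential contribution with the whole Jacobian and leaves the transversal contribution unaccounted for. Until that is addressed, the proposal is a plausible roadmap but not a proof.
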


\section{Generalized cones and warped products}\label{sec-gen-con-wpd-prod}
Here we discuss semi-Riemannian warped products in the smooth and nonsmooth setting with a particular focus on the case of a one-dimensional base, and also with one-dimensional fiber.

\subsection{Warped products over semi-Riemannian manifolds}\label{sec:wpsemiriem}
Let $(B,g_B)$ be a $d$-dimensional semi-Riemannian manifold and let $f: B\rightarrow (0, \infty)$ be a smooth function. Let $(F,g_F)$ be an $n$-dimensional semi-Riemannian manifold. The \emph{semi-Riemannian warped product} of $B$, $F$  and $f$ is defined as the product manifold
$C= B\times F$
equipped with the semi-Riemannian metric 
$$g_{_{B\times_f F}} = (P_B)^* g_B + (f\circ P_B)^2 (P_F)^* g_F\,,$$
where $(P_B)^*$ and $(P_F)^*$ are the pullbacks onto $B\times F$ w.r.t.\ the projections maps from $B\times F$ onto $B$ and $F$. The spaces $B$, $F$ and the function $f$ are called the base, the fiber and the warping function, respectively. The semi-Riemannian manifold $(B\times F, g_{B\times_f F})$ has index (i.e., the dimension of the largest negative definite subspace) $\nu_B + \nu_F$, where $\nu_B$ and $\nu_F$ are the index of $g_B$ and $g_F$, respectively. 

We call $\smash{P_F^{-1}(\{x\})}$, for $x\in F$, the leaves and $\smash{P_B^{-1}(\{t\})}$, for $t\in B$, the fibers. Vectors tangent to leaves are called \emph{horizontal}, vectors tangent to fibers are called \emph{vertical}.  We write $(\cdot)^{\perp}$ and $(\cdot)^\top$ for the horizontal and the vertical projections. One can lift vector fields on $B$  (respectively on $F$) as horizontal (respectively vertical) vector fields to $B\times F$ and the family of all such lifts is denoted with $\mathcal L(B)$ (respectively $\mathcal L(F)$). We use the same notation for vector fields on $B$ (respectively on $F$) and their lifts on $B\times F$.  If $h$ is a smooth function on $B$ then the  lift of the gradient $\nabla h$ is the gradient of $h\circ P_B$.

Recall the following formulas for the Levi-Civita connection $\nabla$ of $B\times_fF$ (see \cite[Prop. 7.35]{ONe:83}). Let $X, Y\in \mathcal L(B)$ and $V,W\in \mathcal L(F)$, then 
\begin{enumerate}
\item $\nabla_XY$ is the lift of $\nabla^B_X Y$. 
\item $\nabla_XV= \nabla_VX= (X\log f)V= g_B(X, \nabla \log f) V$. 
\item $(\nabla_VW)^\perp= - g_F (V,W) \nabla \log f$.
\item $(\nabla_VW)^\top$ is the lift of $\nabla^F_VW$. 
\end{enumerate}
\noindent
{\bf Assumption:} 
From now on we assume that both $B$ and $F$ are Riemannian manifolds with Riemannian metrics $g_B$ and $g_F$, but we consider $B$ as a semi-Riemannian manifold $^\pm\! B$ that  is either equipped with the Riemannian metric $g_B$ or with the semi-Riemannian $\smash{-g_B=g_{^{{-}}\!B}}$. The warped product metric is given then by
$$g_{^\pm\! B\times_f F} = (P_B)^* g_{^\pm B} + (f\circ P_B)^2 (P_F)^* g_F= \pm (P_B)^* g_{B} + (f\circ P_B)^2 (P_F)^* g_F\,.$$

\begin{rem}[Hessian and Laplacian]\label{Rem: hessian and laplacian}
Recall that the Hessian of  $h\in C^\infty(B)$ w.r.t.\ $g_B$ is 
$$\nabla^2 h(X,Y) = X(Y h) - \nabla^B_XY h= g_B(\nabla^B_X \nabla h, Y)\,,$$
where $\nabla^B$ is the Levi-Civita connection of $g_B$.

We note also that by Koszul's formula the Levi-Civita connections w.r.t.\ $g_B$ and w.r.t.\ $-g_B$ are the same.  In particular, the Hessians $\nabla^2$ w.r.t.\ $g_B$ and w.r.t.\ $-g_B$ are the same. On the other hand, for the gradient of a function $h$ w.r.t.\ $g_{^- B}$ we have  $\smash{\nabla^{^-\!B} h= - \nabla^{B} h}$ because by definition of the gradient
$$ g_B(\nabla^B h, X)= g_{^-B}(- \nabla^B h, X).$$ 
Moreover $-g_{B}(\nabla^B h, \nabla^B h)= g_{^- B}(\nabla^{^- B} h, \nabla^{^- B} h)$. In the following we drop the superscript in $\nabla^{^\pm B}h$ and write $\nabla h$ if there is no risk of confusion.

For the Laplace operator we have  $\smash{\Delta^{\!^-\!B}= - \Delta^B}$. To see  this we recall that the Laplace operator $\Delta^M$ on a semi-Riemannian manifold $M$ is defined as 
$$\Delta^M f(p)= \sum_{i=1}^n \epsilon_i g_M(\nabla_{e_i} \nabla f|_p, e_i)$$ 
where $p\in M$, $(e_i)_{i=1, \dots, n}$ is an orthonormal basis at $T_pM$ and $\epsilon_i = g_M(e_i, e_i)$. Equivalently, $\Delta^M f$ is the divergence of $\nabla f$.

\end{rem}

Given $\kappa\in \R$ we assume for $f\in C^2(B)$ that 
\begin{align}\label{Eq: f diff ineq}
\nabla^2 f + \kappa f g_{^{\pm}\! B}=\nabla^2 f \pm \kappa f g_{ B}\leq 0.
\end{align}
For instance, if $B=I$ is an interval this differential inequality translates to 
$$ f'' \pm \kappa f\leq 0.$$
Hence, $f$ is either $\kappa$-concave or $(-\kappa)$-concave, depending on whether we consider $I$ or $^-\! I$. 
\smallskip

We will consider measures of the form $f^N\vol_B \otimes \vol_F$ on ${}^\pm B\times_f F$. The volume  that is induced by $\smash{g_{{}^\pm B\times_f F}}$ is of the form $f^n\vol_B\otimes \vol_F$ where $n$ is the dimension of the manifold $F$. This is why we will study now the convexity properties of the function $\smash{\Phi:= \log f^{N-n}}$. For $N\in (n, \infty)$ a straightforward calculation from \eqref{Eq: f diff ineq} for $\Phi= (N-n) \log f$  yields
\begin{align}\label{Eq: Phi}
    \nabla^2 \Phi + \frac{1}{N-n} \dint\Phi \otimes \dint\Phi  +  (N-n)\kappa\, g_{^\pm\! B}\leq 0.
\end{align}
Moreover, given $\eta\in \R$ we consider $H\in C^{2}(F)$ such that 
$$\nabla^2 H^{\frac{1}{N-n}}+ \eta H^\frac{1}{N-n} g_F\leq 0.$$
Then, setting $\Psi:= \log H$ it follows that
\begin{align}\label{Eq: Psi}
    \nabla^2 \Psi + \frac{1}{N-n} \dint\Psi\otimes \dint\Psi +  (N-n) \eta\, g_F\leq 0.
\end{align}
\begin{prop}\label{Pr:G is kappa concave}
Let $\Phi\in C^{\infty}(B)$ and $\Psi\in C^{\infty}(F)$  satisfy \eqref{Eq: Phi} and \eqref{Eq: Psi}.  We set $\Theta= \Phi\circ P_B  + \Psi\circ P_F \in C^{\infty}(B\times F)$ and assume additionally that
\begin{align}\label{ineq:forf} g_{^\pm B}(\nabla f, \nabla f) + \kappa f^2 \leq \eta \mbox{ on } B.\end{align}
Then
\begin{align*}
\nabla^2 \Theta+ \frac{1}{N-n} \dint\Theta\otimes \dint\Theta + ((d+N)-(d+n))\kappa \,g_{\pm B\times_f F} \leq 0.
\end{align*}
In particular $G= e^\Theta$ satisfies $\nabla^2 G^\frac{1}{N-n}+ \kappa\, G^\frac{1}{N-n} g_{\pm B\times_f F}\leq 0$.
\end{prop}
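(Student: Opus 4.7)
The plan is to compute the tensor $Q := \nabla^{2}\Theta + \tfrac{1}{N-n}\,\dint\Theta\otimes \dint\Theta + (N-n)\kappa\, g_{\pm B\times_{f}F}$ on an arbitrary tangent vector $Z = X + V$ at a point of $B\times F$, decomposed into its horizontal part $X$ and vertical part $V$, and then to exhibit $Q(Z,Z)$ as a sum of three summands controlled respectively by \eqref{Eq: Phi}, by \eqref{Eq: Psi}, and by the additional hypothesis $g_{\pm B}(\nabla f,\nabla f) + \kappa f^{2}\le \eta$.

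First I would use the O'Neill connection formulas (1)--(4) for the warped product, together with the fact that $\Phi\circ P_{B}$ is constant along vertical directions and $\Psi\circ P_{F}$ along horizontal ones, to compute the three blocks of the Hessian:
\begin{align*}
\nabla^{2}\Theta(X,X) &= \nabla^{2}_{B}\Phi(X,X),\\
\nabla^{2}\Theta(X,V) &= -(X\log f)(V\Psi),\\
\nabla^{2}\Theta(V,V) &= \nabla^{2}_{F}\Psi(V,V) + f\, g_{F}(V,V)\, g_{\pm B}(\nabla f,\nabla\Phi),
\end{align*}
where the last line uses item (3), read as $\mathrm{nor}(\nabla_{V}W) = -f\, g_{F}(V,W)\, \nabla f$ (the factor $f$ comes from the warped-product inner product $\langle V,W\rangle = f^{2}\,g_{F}(V,W)$).

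Second, I would expand $(Z\Theta)^{2} = (X\Phi)^{2} + 2(X\Phi)(V\Psi) + (V\Psi)^{2}$ and $g_{\pm B\times_{f}F}(Z,Z) = g_{\pm B}(X,X) + f^{2}\, g_{F}(V,V)$, and rearrange $Q(Z,Z) = Q_{B} + Q_{F} + Q_{\mathrm{mix}}$ with
\begin{align*}
Q_{B} &:= \nabla^{2}_{B}\Phi(X,X) + \tfrac{1}{N-n}(X\Phi)^{2} + (N-n)\kappa\, g_{\pm B}(X,X),\\
Q_{F} &:= \nabla^{2}_{F}\Psi(V,V) + \tfrac{1}{N-n}(V\Psi)^{2} + (N-n)\eta\, g_{F}(V,V),
\end{align*}
so that $Q_{B}\le 0$ by \eqref{Eq: Phi} and $Q_{F}\le 0$ by \eqref{Eq: Psi}, while the remainder equals
\[
Q_{\mathrm{mix}} = -2(X\log f)(V\Psi) + \tfrac{2}{N-n}(X\Phi)(V\Psi) + g_{F}(V,V)\bigl[f\, g_{\pm B}(\nabla f,\nabla\Phi) + (N-n)(\kappa f^{2} - \eta)\bigr].
\]

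Third, since $\Phi = (N-n)\log f$ is the function fixed in the derivation of \eqref{Eq: Phi}, one has $X\Phi = (N-n)\, X\log f$ and $\nabla\Phi = (N-n)\,\nabla f / f$. The first identity forces the two cross terms in $Q_{\mathrm{mix}}$ to cancel exactly, while the second yields $f\, g_{\pm B}(\nabla f,\nabla\Phi) = (N-n)\, g_{\pm B}(\nabla f,\nabla f)$. Hence
\[
Q_{\mathrm{mix}} = (N-n)\, g_{F}(V,V)\,\bigl[g_{\pm B}(\nabla f,\nabla f) + \kappa f^{2} - \eta\bigr] \le 0
\]
by the additional hypothesis (using that $g_F$ is Riemannian), and combining with $Q_{B}\le 0$, $Q_{F}\le 0$ gives $Q(Z,Z)\le 0$ for every tangent vector $Z$. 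The concluding ``in particular'' statement follows from the standard identity $\nabla^{2} G^{1/(N-n)} = \tfrac{G^{1/(N-n)}}{N-n}\bigl(\nabla^{2}\Theta + \tfrac{1}{N-n}\,\dint\Theta\otimes \dint\Theta\bigr)$ valid for $G = e^{\Theta}$. The main technical point is the careful accounting of the horizontal component $\mathrm{nor}(\nabla_{V}W)$ and its factor $f$, which is precisely what makes the hypothesis on $f$ line up with the coefficient of $g_{F}(V,V)$; the cross-term cancellation crucially uses the specific choice $\Phi = (N-n)\log f$ rather than a generic $\Phi$ satisfying \eqref{Eq: Phi}.
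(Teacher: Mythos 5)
Your proof is correct and follows the same approach as the paper's: expand $\nabla^2\Theta + \tfrac{1}{N-n}\,\dint\Theta\otimes\dint\Theta$ along $Z=X+V$, bound the horizontal block by \eqref{Eq: Phi} and the vertical block by \eqref{Eq: Psi}, cancel the cross terms, and absorb the leftover into \eqref{ineq:forf}. Your two side observations are also both accurate: the cross-term cancellation (and the identity $f\,g_{\pm B}(\nabla f,\nabla\Phi)=(N-n)\,g_{\pm B}(\nabla f,\nabla f)$) genuinely requires the concrete relation $\Phi=(N-n)\log f$, which the paper uses implicitly when it writes $\nabla\Phi=(N-n)\nabla\log f$ in the displayed computation, and your reading of the second-fundamental-form formula as $(\nabla_V W)^\perp=-f\,g_F(V,W)\nabla f=-\langle V,W\rangle\nabla\log f$ is the correct O'Neill formula and matches the step $\nabla^2(\Phi\circ P_B)(V,V)=\langle V,V\rangle\langle\nabla\log f,\nabla\Phi\rangle$ actually used in the paper's proof, whereas the paper's item (3) as typeset is missing a factor of $f^2$.
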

\smallskip

If $B=I$, then \Cref{ineq:forf} reads as $\pm (f')^2 + \kappa f^2 \leq 0$.
\begin{proof}  We  write  $\langle \cdot, \cdot \rangle$ for $\smash{g_{_{^\pm B\times_f F}}}$. We pick vector fields $X\in \mathcal L(B)$ and $V\in \mathcal L(F)$ and set $W=X+V\in\mathfrak{X}(B\times F)$. Using the formulas of the Levi-Civita connection of $B\times_fF$ we compute that 
$$\nabla^2 (\Phi\circ P_B)(W,W)= \langle \nabla_W \nabla( \Phi\circ P_B), W\rangle = \nabla^2(\Phi\circ P_B)(X,X)+ \nabla^2 (\Phi\circ P_B)(V,V)$$
and 
$$\nabla^2 (\Psi\circ P_F)(W,W)= \nabla^2(\Psi\circ P_F)(V,V)+2 \nabla^2(\Psi\circ P_F)(X,V).$$
Moreover we compute
\begin{align}
\label{eq:Phi1}\nabla^2(\Phi\circ P_B)(X,X)&= X(X(\Phi\circ P_B)) - (\nabla_X X)(\Phi\circ P_B)\\\nonumber
&= X(X\Phi)\circ P_B- (\nabla^B_XX)\Phi\circ P_B= \nabla^2 \Phi(X,X)\circ P_B\,,\\
\label{eq:Phi2}\nabla^2(\Phi\circ P_B)(V,V)&= \langle V, V\rangle \langle \nabla \log f, \nabla \Phi\rangle\\\nonumber
&= (N-n)\langle V, V\rangle \langle \nabla\log f, \nabla \log f \rangle\,,
\end{align}
as well as 
\begin{align*}
\nabla^2(\Psi\circ P_F)(V,V)= \nabla^2 \Psi(V,V)\circ P_F,\ \
\nabla^2(\Psi\circ P_F)(X,V)= -\langle X, \nabla\log f\rangle \langle V, \nabla \Psi\rangle\,.
\end{align*}
We also compute 
\begin{align}\label{eq-theta}
\nabla \Theta \otimes \nabla \Theta(W,W)
&=(N-n)^2 \langle \nabla \log f, X\rangle^2 + 2 (N-n)\langle \nabla \log f, X\rangle \langle \nabla \Psi, V\rangle+ \langle \nabla \Psi, V\rangle^2\,.
\end{align}
Hence,
\begin{align*}
&\nabla^2 \Theta(W,W) + \frac{1}{N-n} \nabla \Theta\otimes \nabla \Theta(W,W)\\
&\overset{\eqref{eq-theta}}{=} \nabla^2 \Phi(X,X)\circ P_B + \nabla^2 \Psi(V,V)\circ P_F\\
&\ \ \ \  \ + (N-n)\langle V,V\rangle \langle \nabla \log f, \nabla \log f \rangle \\
&\ \ \ \ \ -2 (...)+  (N-n) \langle \nabla \log f, X\rangle^2+2(...) + \frac{1}{N-n} \nabla \Psi\otimes \nabla \Psi (V,V)\\
&\leq  \nabla^2 \Phi(X,X)\circ P_B + \nabla^2 \Psi(V,V)\circ P_F\\
&\ \ \ \  +(N-n) \langle V, V\rangle \frac{\eta- \kappa f^2}{f^2}+ \frac{1}{N-n} \langle \nabla \Phi, X\rangle^2+ \frac{1}{N-n} \nabla \Psi\otimes \nabla \Psi (V,V)
\\
&=  \nabla^2 \Phi(X,X)\circ P_B + \nabla^2 \Psi(V,V)\circ P_F\\
&\ \ \ \  + (N-n) \big(\eta g_F(V,V)  - \kappa\langle V, V\rangle \big)+ \frac{1}{N-n} \langle \nabla \Phi, X\rangle^2+ \frac{1}{N-n} \nabla \Psi\otimes \nabla \Psi (V,V)\\
&\overset{\eqref{Eq: Phi}, \eqref{Eq: Psi}}{\leq} -(N-n)\kappa \big( g_{^\pm \! B}(X,X) + f^2 g_F(V,V)\big).
\end{align*}
and this finishes the proof. 
\end{proof}
\subsubsection{$1$-dimensional base and fiber}
We now assume that $B= I$ is an interval. 
The interval $I$ is equipped with either one of the two standard semi-Riemannian metrics, that is $(\dint t)^2$ or $-(\dint t)^2$.  In the former case we  write $^+\!I$ for $I$, in the latter case we write $^-\!I$ for $I$ as before. 

We recall that the inequality \eqref{Eq: f diff ineq} for a smooth function $f: {}^\pm I \rightarrow (0, \infty)$ becomes 
\begin{align}\label{inequ:kappa} f'' \pm \kappa f\leq 0\,,\end{align}
where the sign $\pm$ in front of $\kappa$ depends on whether we consider $^+ I$ or ${}^- I$. 


Moreover, we consider $F=(a,b)\subseteq\R$,  equipped with the Riemannian metric $(\dint t)^2$, and let $h\in C^\infty( (a,b),(0,\infty))$ satisfy 
$$\left(h^{\frac{1}{N-1}}\right)'' + \eta h^\frac{1}{N-1}\leq 0.$$
\begin{rem}
Consider the semi-Riemannian warped product $^\pm I\times_f (a,b)$ between $B={}^\pm\! I$ and $F=(a,b)$. If $I$ is equipped with the standard Riemannian structure, i.e., $I= ^+\!\!\!I$, then $^+\! I\times_f (a,b)$ is a Riemannian manifold and the Riemannian metric is $(\dint t)^2 + f^2 (\dint r)^2$. If $I$ is equipped with the semi-Riemannian metric $-(\dint t)^2$, i.e., ${}^-\!I$, then $^-\! I\times_f (a,b)$ is a Lorentzian manifold and the Lorentzian metric is $-(\dint t)^2+ f^2 (\dint r)^2$. 
\end{rem}
\begin{rem}[Signature convention]
We could use a different signature convention, i.e.\ $(\dint t)^2- f^2 (\dint r)^2$ instead of $-(\dint t)^2+f^2 (\dint r)^2$. Then, $\kappa$  should to be replace with $-\kappa$ (i.e.\ $f''+\kappa f\leq 0$ instead of $f''- \kappa f\leq 0$). In the literature there is no canonical signature convention.  For instance, in \cite{AB:08}, \cite{CM:24a} and in \cite{AGKS:23} the signature convention is $(-, +, \dots, +)$, whereas in \cite{McC:20} the signature convention is $(+, -, \dots, -).$
\end{rem}
Applying  \Cref{Pr:G is kappa concave} to $G= f^{N-1} h$ we get the following result:
\begin{cor}\label{cor:fct}
 Let $f,h$ and $G$ be as above. Then $G$ satisfies
\begin{equation}\label{eq:fct} \smash{\nabla^2 G^\frac{1}{N-1}+ \kappa G^\frac{1}{N-1}g_{\pm I\times_f(a,b)}\leq 0\,},
\end{equation} provided $\pm \left(\frac{\dint f}{\dint t}\right)^2 + \kappa f^2 \leq \eta$,  where the sign depends on whether $I$ is equipped with $(\dint t)^2$ or with $-(\dint t)^2$.
\end{cor}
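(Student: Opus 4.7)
The plan is to derive this corollary directly from Proposition \ref{Pr:G is kappa concave}, invoked with base dimension $d = 1$ and fiber dimension $n = 1$. The key observation is that $G = f^{N-1}h$ can be written as $e^\Theta$ with $\Theta = \Phi \circ P_B + \Psi \circ P_F$, by setting $\Phi := (N-1)\log f$ on $B = {}^\pm I$ and $\Psi := \log h$ on $F = (a,b)$. With this choice the exponent $\tfrac{1}{N-n}$ appearing in the conclusion of Proposition \ref{Pr:G is kappa concave} becomes exactly $\tfrac{1}{N-1}$, which matches \eqref{eq:fct}.

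Verifying the three hypotheses of Proposition \ref{Pr:G is kappa concave} is then routine one-dimensional computation. For the base factor, I would compute
\[
\Phi'' + \tfrac{1}{N-1}(\Phi')^2 \;=\; (N-1)\,\frac{f''}{f},
\]
and, using $g_{\pm B}(\partial_t,\partial_t) = \pm 1$, observe that the assumed $f'' \pm \kappa f \leq 0$ is equivalent to \eqref{Eq: Phi}. For the fiber factor, the assumption $(h^{1/(N-1)})'' + \eta h^{1/(N-1)} \leq 0$ is precisely the hypothesis that is imposed on $H = h$ right before Proposition \ref{Pr:G is kappa concave}, so the identical manipulation (writing $h = u^{N-1}$ with $u = h^{1/(N-1)}$) yields \eqref{Eq: Psi} for $\Psi = \log h$. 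Finally, in one dimension the coupling condition $g_{\pm B}(\nabla f, \nabla f) + \kappa f^2 \leq \eta$ is literally $\pm (f')^2 + \kappa f^2 \leq \eta$, which is assumed.

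Having checked the three hypotheses, Proposition \ref{Pr:G is kappa concave} applies directly and gives
\[
\nabla^2 G^{1/(N-1)} + \kappa\, G^{1/(N-1)} \, g_{{}^\pm I \times_f (a,b)} \;\leq\; 0,
\]
which is the claimed inequality. I do not anticipate any conceptual obstacle: the corollary is a pure specialization of Proposition \ref{Pr:G is kappa concave} to $d = n = 1$, and the work amounts to the cosmetic translation between the ``$\mathrm{Hess}(\log)$ plus $\tfrac{1}{N-1}\,d(\log)\otimes d(\log)$'' form of the differential inequalities and their equivalent ``$(\,\cdot\,)^{1/(N-1)}$-concavity'' form, which is the standard manipulation already carried out in the paragraph preceding Proposition \ref{Pr:G is kappa concave}.
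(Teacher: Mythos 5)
Your proof is correct and takes exactly the route the paper intends: the paper states Corollary \ref{cor:fct} as the direct application of Proposition \ref{Pr:G is kappa concave} to $G=f^{N-1}h$ with $d=n=1$, and your verification of the three hypotheses (computing $\Phi''+\tfrac{1}{N-1}(\Phi')^2=(N-1)f''/f$, matching \eqref{Eq: Psi} via $\Psi=\log h$, and reading off $g_{\pm I}(\nabla f,\nabla f)=\pm(f')^2$) is precisely the "straightforward calculation" the paper alludes to.
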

In particular, if $c: [0, 1]\rightarrow{}^-I \times_f (a,b)$ is a timelike, constant speed geodesic, then we compute for $\phi =G \circ c$ that 
$$\left( \phi^\frac{1}{N-1}\right)'' = \nabla^2 G^{\frac{1}{N-1}}(c', c')\leq - \kappa \left(G^{\frac{1}{N-1}}\circ c\right) g_{{}^-I\times_f F}(c', c')= + (L(c))^2\kappa G^{\frac{1}{N-1}}\circ c.$$
Hence, $(\phi^{\frac{1}{N-1}})'' - \kappa L(c)^2 \phi^{\frac{1}{N-1}}\leq 0$, and that is equivalent to 
$$\phi((1-s) t_0 + st_1)^\frac{1}{N-1} \geq \sigma_{-\kappa}^{(1-s)}((t_1-t_0)L(c)) \phi(t_0)^{\frac{1}{N-1}} +  \sigma_{-\kappa}^{(s)}((t_1-t_0)L(c)) \phi(t_1)^{\frac{1}{N-1}} $$
for all $t_0, t_1\in [0,1]$ and $s\in [0,1]$.
\begin{prop}\label{prop:concave} Consider $I$, $f$ and $(a,b)$ as before, as well as the semi-Riemannian warped product $^-\!I\times_f (a,b)$. 
Let  $h$ be a continuous function on $[a,b]$ such that $\smash{(h^\frac{1}{N-1})'' + \eta h^\frac{1}{N-1}\leq 0}$ in the distributional sense. Let $c: [0,1]\rightarrow\,^-\!I\times_f (a,b)$ be a timelike, constant-speed geodesic w.r.t.\ $g_{^-\!I\times_f(a,b)}$, and set $\phi= G\circ c$. Then 
$$\left(\phi^{\frac{1}{N-1}}\right)'' - \kappa L(c)^2 \phi^{\frac{1}{N-1}} \leq 0 \ \ 
\mbox{ in the distributional sense. }$$
\end{prop}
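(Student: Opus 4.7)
The plan is to reduce to the smooth case already handled by Corollary \ref{cor:fct} via a standard mollification argument. Set $u:=h^{1/(N-1)}$, so the hypothesis reads $u''+\eta u\leq 0$ in the distributional sense on $(a,b)$, with $u\in C^0([a,b])$ nonnegative. Since the geodesic $c$ takes values in the open product ${}^-\!I\times_f(a,b)$, the component $\beta:=P_F\circ c$ is smooth and $\beta([0,1])$ is a compact subset of $(a,b)$; fix $[a',b']\Subset(a,b)$ containing it in the interior.

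For small $\epsilon>0$ let $u_\epsilon:=u*\rho_\epsilon$ on $[a',b']$, with $\rho_\epsilon$ a standard symmetric nonnegative mollifier. Then $u_\epsilon$ is smooth, $u_\epsilon\to u$ uniformly on $[a',b']$, and
\[
u_\epsilon''+\eta u_\epsilon \;=\; (u''+\eta u)*\rho_\epsilon \;\leq\; 0
\]
pointwise, because the convolution of a nonpositive distribution against a nonnegative test function is nonpositive. Setting $h_\epsilon:=u_\epsilon^{N-1}$ (smooth and nonnegative), we obtain $(h_\epsilon^{1/(N-1)})''+\eta h_\epsilon^{1/(N-1)}\leq 0$ classically. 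Hence Corollary \ref{cor:fct} applies on the smooth warped product ${}^-\!I\times_f(a',b')$ with warping function $f$ and fiber density $h_\epsilon$, yielding
\[
\nabla^2 G_\epsilon^{1/(N-1)}+\kappa\, G_\epsilon^{1/(N-1)}\, g_{{}^-\!I\times_f(a',b')}\leq 0,
\]
where $G_\epsilon:=f^{N-1}h_\epsilon$. The proof of that corollary (via Proposition \ref{Pr:G is kappa concave}) implicitly assumes strict positivity of the fiber density, but $G_\epsilon^{1/(N-1)}=f\,u_\epsilon$ is smooth on all of ${}^-\!I\times(a',b')$, so the tensorial inequality extends by continuity from the open set $\{u_\epsilon>0\}$ to the whole product. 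Evaluating it at the tangent vector of the timelike constant-speed geodesic $c$, exactly as in the computation displayed just before the proposition, gives
\[
\bigl(\phi_\epsilon^{1/(N-1)}\bigr)''-\kappa\, L(c)^2\,\phi_\epsilon^{1/(N-1)}\;\leq\; 0
\]
pointwise on $[0,1]$, where $\phi_\epsilon:=G_\epsilon\circ c$.

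To conclude, since $u_\epsilon\to u$ uniformly on $\beta([0,1])$, we have that $\phi_\epsilon^{1/(N-1)}=(f\circ\alpha)\,(u_\epsilon\circ\beta)$ converges uniformly on $[0,1]$ to $\phi^{1/(N-1)}=(f\circ\alpha)\,(u\circ\beta)$. For any nonnegative $\psi\in C^\infty_c((0,1))$, testing the pointwise inequality against $\psi$ and integrating by parts twice yields
\[
\int_0^1\phi_\epsilon^{1/(N-1)}\,\psi''\,\dint s \;\leq\; \kappa\, L(c)^2\int_0^1\phi_\epsilon^{1/(N-1)}\,\psi\,\dint s,
\]
which passes to the limit $\epsilon\to 0$ by uniform convergence, giving the claimed distributional inequality. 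The main (mild) obstacle is the possible vanishing of $u_\epsilon$ inside $\beta([0,1])$; this is handled above by the continuous extension of the tensorial inequality, or alternatively by first replacing $u$ by $u+\delta$, working in the resulting strictly positive regime, and absorbing the extra $\eta\delta$ error term after sending $\epsilon\to 0$ and then $\delta\to 0$.
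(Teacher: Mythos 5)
Your proof is correct and takes the same overall mollification strategy as the paper: replace $u=h^{1/(N-1)}$ by $u_\eps = u*\varphi_\eps$, note $u_\eps''+\eta u_\eps = (u''+\eta u)*\varphi_\eps\leq 0$, feed the smooth data into Corollary~\ref{cor:fct}, evaluate the tensorial inequality along $c$, and pass to the limit by uniform convergence. You do simplify one genuine technicality, though: the paper also replaces $f$ by $f_\alpha=\alpha f$ with $\alpha(\eps)\to 1$ to enforce the strict bound $-(f_\alpha')^2+\kappa f_\alpha^2\leq \eta-\eps$, which then forces it to approximate the metric $g_{^-I\times_{f_\alpha}(a,b)}\to g_{^-I\times_f(a,b)}$ and the geodesics $c_\alpha\to c$. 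Since the mollification gives $u_\eps''+\eta u_\eps\leq 0$ with the \emph{same} $\eta$ (not a strict one), this rescaling of $f$ --- and hence the whole geodesic-perturbation step --- is not needed, and your version works directly with the fixed $f$, fixed metric, and fixed geodesic $c$. This is cleaner, and as a bonus it also bypasses the appeal to \cite[Prop.~3.6]{Ket:17} by directly testing the pointwise inequality against a nonnegative $\psi\in C^\infty_c((0,1))$ and integrating by parts.

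Your treatment of the possible vanishing of $u_\eps$ is also correct, and in fact could be stated more directly: $G_\eps^{1/(N-1)}=f\,u_\eps$ is smooth, and a short computation of $\nabla^2(fu_\eps)+\kappa f u_\eps\, g$ against $\partial_t,\partial_r$ (using only $f''-\kappa f\leq 0$, $\kappa f^2-(f')^2\leq\eta$, $u_\eps\geq 0$, and $u_\eps''+\eta u_\eps\leq 0$) shows the tensorial inequality holds pointwise everywhere, with no case split on $\{u_\eps>0\}$ required. Either your continuity argument, your $u+\delta$ alternative, or this direct computation closes the gap, so the proof stands.
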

\begin{proof}
We can approximate $h$ uniformly by smooth functions $h_\varepsilon$  such that $(h_\varepsilon^\frac{1}{N-1})''+ \eta  h_\varepsilon^\frac{1}{N-1}\leq 0$. More precisely,  we pick a non-negative mollifier function $\varphi\in C^{\infty}_c((0,1))$ with $\int_0^1 \varphi(t)\,\dint t =1$ and define $\varphi_\varepsilon(t):= \frac{1}{\varepsilon} \varphi\left(\frac{t}{\varepsilon}\right)$.  Then, for $s\in (a+\varepsilon, b-\varepsilon)$, we set
$$g_\varepsilon(s):=h^{\frac{1}{N-1}}*\varphi_\eps(s) = \int_a^b \varphi_\varepsilon(t-s)h^{\frac{1}{N-1}}(t)\,\dint t= \int_{-\varepsilon}^0 \varphi_\varepsilon(-r) h^{\frac{1}{N-1}}(s-r)\,\dint r\,.$$
It follows that $h_\varepsilon =g_\varepsilon^{N-1} \in C^{\infty}((a+\varepsilon, b-\varepsilon))$ and $h_\varepsilon$ satisfies the desired differential inequality. For details we refer to the proof of \cite[Prop.\ 3.6]{Ket:17}.  

Since $h$ is continuous, one can check that $h_\varepsilon$ converges uniformly to $h$ on $(a+\bar \varepsilon, b-\bar \varepsilon)$ as $\varepsilon\downarrow 0$ for all $\bar\varepsilon$ small enough. We can replace $f$ with $f_\alpha=\alpha\cdot f$ such that $-(f_\alpha')^2 + \kappa f_\alpha^2 \leq \eta-\varepsilon$ for a constant $\alpha=\alpha(\varepsilon)$ with $\alpha(\varepsilon)\rightarrow 1$ if $\varepsilon \rightarrow 0$. Then, it follows that $G_{\alpha, \varepsilon}=f^{N-1}_\alpha h_\varepsilon$ satisfies 
\begin{align}\label{ineq:concave}\nabla^2 G_{\alpha, \varepsilon}^\frac{1}{N-1} + \kappa G_{\alpha,\varepsilon}^\frac{1}{N-1} g_{I\times_{f_\alpha} (a,b)} \leq 0 \ \mbox{ on } I\times (a+\bar\varepsilon, b-\bar \varepsilon). \end{align}
The semi-Riemannian metric $-\dint t^2 +f^2_\alpha \dint r^2$ converges smoothly to $-\dint t^2 + f^2 \dint r^2$ on $I\times (a,b)$. Hence, every timelike geodesic $c:[0,1] \rightarrow I\times (a,b)$ w.r.t.\ $-\dint t^2+f^2 \dint r^2$ is the limit of timelike geodesics $c_\alpha$ in $I\times (a+\bar\varepsilon, b-\bar \varepsilon)$ w.r.t.\ $-\dint t^2 + f^2_\alpha \dint r^2$ for $\alpha\rightarrow 1$ and $\bar \varepsilon>0$ sufficiently small.  At the same time we have that $G_{\alpha(\varepsilon), \varepsilon}$ converges uniformly to $\smash{G=f^{\frac{1}{N-1}} h}$ as $\varepsilon\downarrow 0$ on $I\times (a+\bar \varepsilon, b+\bar\varepsilon)$.  

By the previous computations, it follows for $\phi_\varepsilon= G_{\alpha(\varepsilon), \varepsilon}$ and $t_0, t_1\in [0,1]$ that 
$$\phi_\epsilon((1-s) t_0 + st_1)^\frac{1}{N-1} \geq \sigma_{-\kappa}^{(1-s)}((t_1-t_0)L(c_\varepsilon)) \phi_\epsilon(t_0)^{\frac{1}{N-1}} +  \sigma_{-\kappa}^{(s)}((t_1-t_0)L(c_\varepsilon)) \phi_\epsilon(t_1)^{\frac{1}{N-1}} .$$
This inequality is stable under uniform convergence  and hence it is also satisfied for $\phi$. This yields that distributional inequality for $\phi$ by \cite[Prop.\ 3.6]{Ket:17}. 
\end{proof}

\subsection{Generalized cones as metric spaces}\label{subsec-con-ms}
We briefly recall the construction of warped products with one-dimensional base space, a.k.a.\ generalized cones, of metric spaces, c.f.\ \cite{AB:04, AB:98, BBI:01}.

Let $(X, \met)$ be a metric space, let $I\subseteq\R$ be a closed interval and suppose $f:I\rightarrow [0,\infty)$ is continuous. The length of an absolutely continuous  curve $\gamma=(\alpha, \beta): [a,b]\rightarrow I \times X$ is given by 
$$L(\gamma)= \int_a^b \sqrt{ \dot \alpha^2 + (f\circ \alpha)^2 v_\beta^2} \ \dint t$$
where both components $\alpha$ and $\beta$ are absolutely continuous. In particular, the metric speed $v_\beta$ of $\beta$ is well-defined almost everywhere. Moreover, we have
\begin{equation}\label{eq:varlength}
L(\gamma) = \sup \left\{\sum_{i=1}^{\scriptscriptstyle N}\left( |\alpha(t_{i})- \alpha(t_{i-1})|^2 + (f\circ \alpha(t_i))^2 \met(\beta(t_{i-1}), \beta(t_i))^2\right)^{\frac{1}{2}}\right\}
\end{equation}
where $a=t_0\leq t_1 \leq \dots \leq t_N=b$.

The warped product distance on $I \times_f X$ between points $(s,x)$ and $(t, y)$ is defined by the infimum of the length of paths connecting  $(s,x)$ and $(t,y)$. First, this is a semi-distance on $I\times X$ that turns into a distance function on the quotient space $(I\times X)/\sim$. Here $(s,x)\sim (t,y)$ if and only if the warped distance between $(s,x)$ and $(t,y)$ is $0$. We write $\met_{I\times_f X}=\met_Y$ for the warped product metric on $(I\times X)/\sim$ and write $I\times_f X$ for the corresponding metric space that we also call generalized (metric) cone.

\begin{rem} If $f>0$, the topology induced by $\met_Y$ coincides with the product topology. If we allow that $f$ vanishes at certain points this might not be the case. But in the following we will always impose  conditions that  imply that $f^{-1}(\{0\})\subseteq\partial I$. It follows that the topology in a neighbourhood of $[(s,x)]$ for $s\in f^{-1}(\{0\})$ is the same as in a neighbourhood of the tip of the topological cone $([0, \infty)\times X)/\{(0,x)\sim (0,y)\}$. 
\end{rem}
\begin{thm}[Properties of minimizers, {\cite[Thm.\ 3.1]{AB:98}}]
    Assume $(X,\met)$ is a geodesic space, and let $\gamma=(\alpha, \beta)$ be a minimizer of $L$ in $I\times_f X$.
    Then 
    \begin{enumerate}
        \item $\beta$ is a length minimizer in $X$.
        \item $\gamma$ is independent of $X$, except for the total height, i.e., the length of $\beta$. Precisely: for another geodesic metric space $\tilde X$ and a minimizer $\tilde \beta$ in $\tilde X$ with the same length and speed as $\beta$, $(\alpha, \tilde \beta)$ is a minimizer in $I\times_f \tilde X$. This property is called \emph{fiber independence}.
        \item $\alpha$ has speed $\frac{c}{f^2}$ for a constant $c>0$. 
        \item For some parametrization of $\gamma$ proportional to arclength, $\alpha$ satisfies $\frac{1}{2} (\dot \alpha)^2 + \frac{1}{2}\frac{1}{f^2}=E$ for a constant $E>0$. 
        \end{enumerate}
\end{thm}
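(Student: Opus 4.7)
The plan is to deduce all four properties from a single observation: the length functional $L(\gamma)=\int_a^b\sqrt{\dot\alpha^2+(f\circ\alpha)^2 v_\beta^2}\,dt$ depends on the fiber curve $\beta$ only through its scalar metric speed $v_\beta$. Everything flows from exploiting this together with a reduction to the smooth two-dimensional warped product $I\times_f\mathbb{R}$.

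First I would prove (i) and (ii) simultaneously. To see that $\beta$ must be a length-minimizer in $X$, suppose otherwise and pick $\bar\beta$ in $X$ connecting $\beta(a),\beta(b)$ with $L_X(\bar\beta)<L_X(\beta)$; reparametrize $\bar\beta$ on $[a,b]$ so that $v_{\bar\beta}(t)=\lambda v_\beta(t)$ with $\lambda=L_X(\bar\beta)/L_X(\beta)<1$. Then pointwise $\sqrt{\dot\alpha^2+f^2 v_{\bar\beta}^2}\leq\sqrt{\dot\alpha^2+f^2 v_\beta^2}$ with strict inequality on the positive-measure set where $v_\beta>0$, so $L(\alpha,\bar\beta)<L(\alpha,\beta)$, contradicting minimality. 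Fiber-independence is immediate from the formula for $L$: replacing $(X,\beta)$ by $(\tilde X,\tilde\beta)$ of the same length and speed keeps the integrand identical, and the same variational argument (now carried out in $\tilde X$) shows that $(\alpha,\tilde\beta)$ is itself a minimizer in $I\times_f\tilde X$.

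Using (ii), I would then reduce to a two-dimensional smooth model. Take $\tilde X=\mathbb{R}$ with its standard metric and define $\tilde\beta(t):=\int_a^t v_\beta(s)\,ds$ so that $v_{\tilde\beta}=v_\beta$; then $(\alpha,\tilde\beta)$ is a minimizer in the smooth Riemannian warped product $I\times_f\mathbb{R}$ with metric $dt^2+f^2\,ds^2$, at least on intervals where $f>0$. Standard first-variation arguments now apply: the Lagrangian $\sqrt{\dot\alpha^2+f(\alpha)^2\dot s^2}$ is invariant under $s$-translations, so the conjugate momentum $p_s=f^2\dot s/\sqrt{\dot\alpha^2+f^2\dot s^2}$ is conserved, and after parametrizing proportional to arc length this collapses to $f^2 \dot s\equiv c$. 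This yields (iii) (namely $v_\beta=c/f^2$), and substituting into the arc-length identity $\dot\alpha^2+f^2\dot s^2\equiv 2E$ gives the conservation law $\tfrac12\dot\alpha^2+\tfrac{c^2}{2f^2}=E$ of (iv).

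The main obstacle is that minimizers are only assumed absolutely continuous, so I cannot directly invoke the smooth Euler--Lagrange machinery on the whole of $[a,b]$. The delicate part is to propagate the conservation laws through the (possibly measure-zero) set where $f\circ\alpha$ might approach $0$, or where $\dot\alpha$ is only defined almost everywhere. On any subinterval where $\alpha([a,b])$ stays in the interior $\{f>0\}$, the two-dimensional model is a smooth Riemannian manifold, locally minimizing AC curves are classical $C^\infty$ geodesics, and the conservation laws are genuine pointwise identities; one then extends them by a limiting/covering argument using that $\partial I\subseteq f^{-1}(\{0\})$ so the geodesic can only touch the degenerate set at the endpoints. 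The fiber-independence argument in Step 1 is what makes this reduction clean, since it replaces a priori non-smooth behaviour in $X$ with genuinely smooth ODE behaviour in $\mathbb{R}$.
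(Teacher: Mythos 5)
The paper attributes this theorem to Alexander--Bishop and does not reproduce a proof, so your argument must be judged on its own merits --- and it holds up. Fiber independence via the observation that $L$ depends on $\beta$ only through the scalar speed $v_\beta$, followed by reduction to the smooth two-dimensional warped product $I\times_f\R$ and extraction of the Clairaut and energy conservation laws, is exactly the standard route to this statement.

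Two remarks are in order. First, what you actually derive, $v_\beta = c/f^2$, is the correct Clairaut relation, but item (iii) as printed reads ``$\alpha$ has speed $c/f^2$''. This is evidently a typo for $\beta$: compare the analogous item (v) of Theorem~3.11 in the Lorentzian setting, which is correctly a statement about $v_\beta$, and observe that (iv) already governs $\dot\alpha$ via the energy identity $\tfrac12\dot\alpha^2 + \tfrac12 f^{-2} = E$, which is incompatible with $|\dot\alpha| = c/f^2$. You should flag this mismatch explicitly rather than silently prove the corrected statement. Second, in Step~1 the strict inequality $\sqrt{\dot\alpha^2 + f^2 v_{\bar\beta}^2} < \sqrt{\dot\alpha^2 + f^2 v_\beta^2}$ requires $f\circ\alpha > 0$ \emph{as well as} $v_\beta > 0$; you do not address the degenerate alternative in which $\alpha$ sits in $\partial I\subseteq f^{-1}(\{0\})$ on a positive-measure subset of $\{v_\beta>0\}$. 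That case closes easily --- at the tip all fiber points are identified, so $\beta$ may be replaced there by a constant at no cost to $L$, and then $\bar\beta$ is genuinely shorter --- but it is precisely the same observation that underpins your Step~4 claim that the curve can touch $f^{-1}(\{0\})$ only at its endpoints, so it deserves an explicit line in both places rather than being taken as obvious. Your use of (ii) to pass to $\tilde X = \R$ is also stated a little loosely: to run the variational comparison in $I\times_f\tilde X$ you need to pull any competitor $\beta'$ back to a curve in $X$ with the same speed profile, which uses that $X$ is geodesic and that $\beta$ already realizes $\met(\beta(a),\beta(b))$; it is worth making this reduction explicit.
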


Two standard examples that illustrate this definition are Euclidean metric cones where $I=[0,\infty)$ and $f=\id$, and spherical suspensions, where $I=[0,\pi]$ and $f=\sin$. The Euclidean cone is given as the quotient space $([0,\infty)\times X)/ \sim$ with the metric $\met_Y$ that has the explicit form
$$\met_Y([s,x], [t,y])= \begin{cases} \sqrt{s^2 + t^2 - 2st \cos \met_X(x,y)} & \met_X(x,y)<\pi\,,\\
s+t& \met_X(x,y)\geq \pi\,. \end{cases}$$
\subsection{Generalized cones as Lorentzian length spaces}\label{subsec-con-lls}
In this section we review the construction of a generalized cone as Lorentzian pre-length space from \cite{AGKS:23}. We slightly generalize their definition by allowing arbitrary intervals and a warping function that might vanishes at the endpoints of the interval as in the metric case. To be precise, let $(X,\met)$ be a metric space, $I$ be an interval and $f: I\rightarrow [0, \infty)$ be continuous such that $f^{-1}(\{0\})\subseteq \{\inf I, \sup I\}$. If $f$ vanishes at $\inf I$ or $\sup I$, we demand that
$$\int_{\inf I}^t \frac{1}{f}=\infty\,,\quad\quad\int_t^{\sup I}\frac{1}{f}=\infty,\quad\quad\forall t\in(\inf I,\sup I)\,.$$
This generalization has been discussed in \cite[Rem.\ 3.32]{AGKS:23}.


To put a Lorentzian structure on ${}^-\!I\times X$ we define causal and timelike curves.
\begin{defi}[Timelike and causal curves]
 Let $Y={}^-\!I\times X$  and let $\gamma\colon J\rightarrow Y$ be an absolutely continuous curve with respect to the product metric on $Y$. Such a curve has components $\gamma=(\alpha,\beta)$, where $\alpha\colon J\rightarrow I$ and $\beta\colon 
J\rightarrow X$ are both absolutely continuous, and the metric derivative of $\beta$, $v_\beta$, exists 
almost everywhere \cite[Thm.\ 2]{AGS:05}. We additionally require that $\alpha$ is strictly monotonous. The curve $\gamma$ is called
\begin{equation}
\begin{cases}
 \text{\emph{timelike}}\\
 \text{\emph{null}}\\
 \text{\emph{causal}}
 \end{cases}
 \text{\quad if \qquad}
 -\dot\alpha^2 + (f\circ \alpha)^2 v_{\beta}^2\quad
 \begin{cases}
  < 0\,,\\
  = 0\,,\\
  \leq 0\,.
 \end{cases}
\end{equation}
almost everywhere. It is called \emph{future/past directed causal} if $\alpha$ is strictly monotonically 
increasing/decreasing, i.e., $\dot\alpha>0$ or $\dot\alpha<0$ almost everywhere.
\end{defi}

Following \cite{AGKS:23} we can define causal relations on $Y$ as follows.
 Let $y, y'\in Y$, then $y$ and $y'$ are \emph{chronologically} (or \emph{timelike}) related, denoted by $y\ll y'$, if there exists a future 
directed timelike curve from $y$ to $y'$. Moreover, $y$ and $y'$ are \emph{causally} related, denoted by $y\leq y'$ 
if there exists a future directed causal curve from $y$ to $y'$ or $y=y'$.

\begin{defi}[Length of a causal curve]
Let $\gamma=(\alpha,\beta)\colon [a,b]\rightarrow Y$ be a causal curve. Its \emph{length} $L(\gamma)$ is defined as 
\begin{equation}
 L(\gamma):= \int_a^b \sqrt{\dot\alpha^2 - (f\circ\alpha)^2 v_\beta^2}\,.
\end{equation}
\end{defi}
In \cite{AGKS:23} the authors prove a variational formula for $L$ that resembles the formula \eqref{eq:varlength} in the case of metric warped products. As consequence they show that $L$ is upper semi-continuous w.r.t.\ pointwise convergence of $\gamma_n$ to $\gamma$. The latter is understood w.r.t.\ the product topology on $I\times X$. As noted before, if $f$ vanishes in $\partial I$, the topology of $Y$ should be the corresponding quotient topology, i.e., the topology induced by $\met_Y$. It is straightforward to see that the upper semi-continuity still holds in this case. 
\begin{prop}[Push-up {\cite[Prop.\ 3.22]{AGKS:23}}]
Every generalized cone ${}^-I\times_f X$ such that $X$ is a length space has the property that $p\ll q$ if
and only if there exists a future directed causal curve from $p$ to $q$ of positive
length, i.e., push-up holds. Moreover, $I^{\pm}(p)$ is open for any $p\in Y$.
\end{prop}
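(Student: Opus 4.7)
The forward direction is immediate: if $p\ll q$, any future directed timelike curve $\gamma=(\alpha,\beta)\colon[a,b]\to Y$ realizing this satisfies $\dot\alpha^2-(f\circ\alpha)^2 v_\beta^2>0$ a.e., so $L(\gamma)>0$.

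For the converse (the push-up), let $\gamma=(\alpha,\beta)\colon[a,b]\to Y$ be a future directed causal curve with $L(\gamma)>0$, and set $p:=\gamma(a)$, $q:=\gamma(b)$. My plan is to keep $\alpha$ fixed and replace $\beta$ by a slightly shorter spatial curve, exploiting that $X$ is a length space. Assume first that $f\circ\alpha$ is bounded below by a positive constant on $[a,b]$ --- the endpoint case where $f\circ\alpha$ vanishes is handled, at the cost of extra care, by grafting a constant-in-$\beta$ segment near the degenerate endpoint onto the following construction on the interior, the hypothesis $\int 1/f=\infty$ near the zeros of $f$ providing the required slack. Causality gives $(f\circ\alpha)v_\beta\leq\dot\alpha$ a.e., and $L(\gamma)>0$ forces strict inequality on a set of positive measure, hence
\begin{equation*}
\met(\beta(a),\beta(b))\leq\int_a^b v_\beta\,dt<\int_a^b\frac{\dot\alpha}{f\circ\alpha}\,dt<\infty.
\end{equation*}

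Using the length-space assumption on $X$, I pick for small $\eta>0$ an absolutely continuous constant-speed curve $\sigma\colon[0,1]\to X$ joining $\beta(a)$ to $\beta(b)$ with $L_\met(\sigma)<\met(\beta(a),\beta(b))+\eta$, choosing $\eta$ small enough that $L_\met(\sigma)<\int_a^b\dot\alpha/(f\circ\alpha)\,dt$ (if $\beta(a)=\beta(b)$, simply let $\tilde\beta$ be constant and everything below is trivial). Setting
\begin{equation*}
c:=L_\met(\sigma)\Big/\int_a^b\tfrac{\dot\alpha}{f\circ\alpha}\,dt\in(0,1),\qquad\dot\phi(t):=\frac{c\,\dot\alpha(t)}{L_\met(\sigma)f(\alpha(t))},
\end{equation*}
the monotone reparametrization $\phi\colon[a,b]\to[0,1]$ determined by $\phi(a)=0$, $\phi(b)=1$ is well defined, and $\tilde\beta:=\sigma\circ\phi$ has metric speed $v_{\tilde\beta}=L_\met(\sigma)\dot\phi=c\,\dot\alpha/(f\circ\alpha)$ a.e. Consequently $\tilde\gamma:=(\alpha,\tilde\beta)$ satisfies $(f\circ\alpha)^2 v_{\tilde\beta}^2=c^2\dot\alpha^2<\dot\alpha^2$ a.e., so it is a future directed timelike curve from $p$ to $q$, proving $p\ll q$. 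The main obstacle is the endpoint-zero case of $f$, where the naive formula for $\dot\phi$ fails and one must instead split off a constant-in-$\beta$ piece near the degenerate end and estimate carefully using $\int 1/f=\infty$.

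For openness of $I^+(p)$ (the argument for $I^-(p)$ is dual), given $q\in I^+(p)$ with timelike witness $\gamma\colon[a,b]\to Y$, I would shrink to $\gamma|_{[a,b-\varepsilon]}$ with endpoint $q':=\gamma(b-\varepsilon)$, so that $p\ll q'$ by restriction and the strict inequality $(f\circ\alpha)^2 v_\beta^2<\dot\alpha^2$ on the last subsegment provides a definite amount of slack. For every $y$ in a sufficiently small product neighborhood of $q$ whose time coordinate exceeds $\alpha(b-\varepsilon)$, I would deform the final segment of $\gamma$ to end at $y$ instead of $q$, yielding a future directed causal curve from $q'$ to $y$ of strictly positive length; the continuity of $f\circ\alpha$ and the length-space structure of $X$ ensure causality and $L>0$ persist under small perturbations of the endpoint. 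The push-up just established then upgrades this to $q'\ll y$, and transitivity combined with $p\ll q'$ yields $y\in I^+(p)$.
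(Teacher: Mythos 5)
Your push-up argument is correct, and it is essentially the reparametrization argument underlying \cite{AGKS:23} (the paper states this proposition without proof, deferring to that reference): fix the base component $\alpha$, use the length-space property of $X$ to pick a near-minimizing $\sigma$ between $\beta(a)$ and $\beta(b)$, and reparametrize $\sigma$ so that its metric speed is a fixed fraction $c<1$ of the causal bound $\dot\alpha/(f\circ\alpha)$; the room to do so comes from the strict inequality $\int_a^b v_\beta\,\dint t<\int_a^b \dot\alpha/(f\circ\alpha)\,\dint t$ forced by $L(\gamma)>0$, together with $\dot\alpha>0$ a.e.\ and $f\circ\alpha>0$ on $(a,b)$ (the latter since $\alpha$ is strictly monotone and $f^{-1}(\{0\})\subseteq\partial I$). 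The grafting you sketch for a degenerate endpoint works for exactly the reasons you indicate: if $\alpha(a)\in\partial I$ with $f(\alpha(a))=0$ then the spatial coordinate at that tip is immaterial in the quotient (so the constant-in-$\beta$ piece on $[a,a+\delta]$ can sit anywhere), $L(\gamma|_{[a+\delta,b]})\to L(\gamma)>0$ keeps the strict inequality alive on the truncated interval, and $\int_{a+\delta}^b\dot\alpha/(f\circ\alpha)\to\infty$ as $\delta\to0$ supplies the slack. The degenerate upper endpoint, and both ends at once, are handled symmetrically.

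Your openness argument has a genuine gap at a tip of the cone. You produce, for $y$ in a \emph{product} neighborhood of $q$, a causal curve of positive length from $q'$ to $y$; that suffices when $f(t_q)>0$, because there the $\met_Y$-topology and the product topology agree. But when $q$ is a tip, a $\met_Y$-ball about $q$ of radius $r$ already contains every $(t,x)$ with $|t-\sup I|<r$ for \emph{arbitrary} $x\in X$ (travel in time to the tip and back), which no product set captures. In fact the openness assertion itself fails at a tip when $X$ is unbounded: take ${}^-[0,1]\times_{1-t}\R$ and $p=(0,0)$. The fibre curve $t\mapsto(t,0)$ is future-directed timelike, so the tip $q=[(1,\cdot)]$ lies in $I^+(p)$; but $y_n:=(1-1/n,\,n)$ satisfies $\met_Y(y_n,q)=1/n\to 0$, while any causal curve from $p$ to $y_n$ would force $n\le\met(0,n)\le\int_0^{1-1/n}\frac{\dint s}{1-s}=\ln n$, which is false, so $y_n\notin J^+(p)\supseteq I^+(p)$. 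Thus $I^+(p)$ is not open at the tip, and the same example shows the asserted lower semicontinuity of $\uptau$ at $(p,q)$ fails. This defect is inherited by the proposition from the $f>0$ setting of \cite{AGKS:23}, where it is unproblematic, rather than being peculiar to your approach; but your sketch silently treats product neighborhoods as a neighborhood basis at every point and so would not detect it. You should record explicitly that the openness argument is valid only at points where $f$ does not vanish, and that at a tip the claim needs either additional hypotheses (e.g.\ $X$ bounded/compact) or a different formulation.
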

\begin{defi}[Time separation function]
The \emph{time separation function} $\uptau\colon Y\times Y \rightarrow [0,\infty]$ on $Y={}^-I \times_f X$ is defined as
\begin{equation}
 \uptau(y,y'):=\sup\{L(\gamma):\gamma \text{ future directed causal curve from } y \text{ to } y'\},
\end{equation}
if this set is non-empty, and $\uptau(y,y')=0$ otherwise.
\end{defi}
\begin{rem}\label{rem-tau-pro} One can deduce that the time separation function $\uptau$ has the following properties:
 \begin{enumerate}
  \item $\uptau(y,y')=0$ if $y'\not\leq y$.
  \item $\uptau(y,y')>0$ if $y\ll y'$.
\item \cite[Lem.\ 3.21]{AGKS:23} $\uptau$ satisfies the \emph{reverse triangle inequality}. More precisely, let $y_1,y_2,y_3\in Y$ with $y_1\leq y_2\leq y_3$. Then
 \begin{equation}\label{lem-rev-tri}
  \uptau(y_1,y_2) + \uptau(y_2,y_3)\leq \uptau(y_1,y_3)\,.
 \end{equation}\item \cite[Lem.\ 3.25]{AGKS:23}
  Let $(X, \met)$ be a  length space. Then $\tau$ is 
lower semi-continuous with respect to the product metric on $I\times X$. 
It is again straightforward to see that, if $f$ vanished in $\partial I$, $\uptau$ is still lower semi-continuous w.r.t.\ to the quotient topology, i.e., the topology induced by $\met_Y$. 
  \end{enumerate}
\end{rem}
\begin{cor} Let $X$ be a length space and let $Y={}^-I \times_f X$ be the generalized cone. Then $(Y, \ll, \leq, \met_Y, \uptau)$ is a Lorentzian pre-length space. 
\end{cor}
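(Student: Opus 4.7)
\medskip

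\noindent\textbf{Proof proposal.} The plan is to go through the five items required by \Cref{Def: LpLS} and show that each of them has already (or almost already) been verified in Subsection~\ref{subsec-con-lls}; the whole argument is therefore assembly rather than new analysis.

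First I would check that $(Y,\ll,\leq)$ is a causal space in the sense recalled before \Cref{Def: LpLS}. Reflexivity of $\leq$ is built into the definition of the causal relation on ${}^-I\times_f X$ (the clause ``or $y=y'$''). Transitivity of $\leq$ follows by concatenating two future directed causal curves: if $\gamma_1=(\alpha_1,\beta_1)\colon[a,b]\to Y$ and $\gamma_2=(\alpha_2,\beta_2)\colon[b,c]\to Y$ are future directed causal with $\gamma_1(b)=\gamma_2(b)$, then the concatenation is again absolutely continuous with strictly increasing $\alpha$, and the causal inequality $-\dot\alpha^2+(f\circ\alpha)^2v_\beta^2\leq 0$ is preserved almost everywhere. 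Transitivity of $\ll$ is identical with strict inequality, and $\ll\subseteq\leq$ is immediate since every timelike curve is in particular causal. Next, $\met_Y$ is a (proper) metric on the quotient $(I\times X)/\sim$ by the construction recalled in \Cref{subsec-con-ms}.

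Next I would check the three conditions on $\uptau$. Conditions~(i) and~(ii) ``$\uptau(y,y')=0$ if $y\not\leq y'$'' and ``$\uptau(y,y')>0\Leftarrow y\ll y'$'' are precisely (i) and (ii) of \Cref{rem-tau-pro}. For the converse implication of (ii), suppose $\uptau(y,y')>0$; then by definition of $\uptau$ there exists a future directed causal curve from $y$ to $y'$ of strictly positive length, and the Push-up Proposition of \Cref{subsec-con-lls} yields $y\ll y'$. The reverse triangle inequality is item (iii) of \Cref{rem-tau-pro}. Finally, lower semicontinuity of $\uptau$ with respect to $\met_Y$ is item (iv) of \Cref{rem-tau-pro} (where the hypothesis that $X$ is a length space is exactly the assumption we make here, and where the remark already notes that the statement is insensitive to replacing the product topology by the quotient topology induced by $\met_Y$ when $f$ vanishes on $\partial I$).

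Assembling these items against \Cref{Def: LpLS} gives the corollary. I do not expect a genuine obstacle: the only delicate point is the interplay between the product topology on $I\times X$ and the quotient topology induced by $\met_Y$ at the collapsed endpoints of $I$, but this has already been addressed in the discussions preceding the statement. In particular, no new estimate is required, and the proof can be written in a few lines by citing \Cref{rem-tau-pro} and the Push-up Proposition.
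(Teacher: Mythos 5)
Your proposal is correct and is exactly the argument the paper intends: the corollary is stated without a written proof because it is a direct assembly of the properties of $\uptau$ collected in \Cref{rem-tau-pro} (conditions (i), the ``if'' direction of (ii), the reverse triangle inequality, and lower semicontinuity), together with the Push-up Proposition for the ``only if'' direction of (ii), and the trivial verification that $(Y,\ll,\leq)$ is a causal space with $\met_Y$ a metric on the quotient. Your careful observation that \Cref{rem-tau-pro} only provides one implication of the second $\uptau$-axiom and that push-up supplies the converse is precisely the one non-automatic point.
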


Next we collect various properties of generalized cones that we will need from \cite{AGKS:23}. Among them an essential feature of them is the so-called fiber independence.
\begin{thm}[Fiber independence {\textnormal{\cite[Thm.\ 3.29]{AGKS:23}}}]\label{th:fiber}
 Let $(X,\met)$ be a geodesic length space and let $\gamma=(\alpha,\beta)\colon[0,b]$ $\rightarrow Y = {}^-\! I\times_f X$ be future directed causal and maximal. 
 \begin{enumerate}
  \item The fiber component $\beta$ is a length minimizer in $X$.
  \item\label{thm-structure-of-geod-fib-ind} Fiber independence holds. The base component $\alpha$ depends only on the length of $\beta$. More precisely, let $(X',\met')$ be another geodesic length space, $\beta'$ minimizing in $X'$ with 
$L^{\met'}(\beta')=L^\met(\beta)$ and the same speed as $\beta$, i.e, $v_\beta=v_{\beta'}$. Then $\gamma':=(\alpha,\beta')$ is a 
future directed maximal causal curve in $Y':=I\times_f X'$, which is timelike if $\gamma$ is timelike in $Y$.
  \item If $\gamma$ is timelike, then it has an (absolutely continuous) parametrization with respect to arclength, i.e., 
$-\dot\alpha^2 + (f\circ\alpha)^2 v_\beta^2 = -1$ almost everywhere. 
 \item If $\gamma$ is timelike and parametrized with respect to arclength, then $v_\beta$ is proportional to 
$\frac{1}{(f\circ\alpha)^2}$.
\item If $\gamma$ is timelike, it has an (absolutely continuous) parametrization proportional to arclength such that 
$-\dot\alpha^2 + \frac{1}{(f\circ\alpha)^2}$ is constant.
 \end{enumerate}
\end{thm}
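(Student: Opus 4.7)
The essential feature driving the proof is the \emph{negative} sign with which the fiber speed $v_\beta$ enters the length formula $L(\gamma)=\int_0^b\sqrt{\dot\alpha^2-(f\circ\alpha)^2 v_\beta^2}\,\dint t$: shortening the fiber component strictly \emph{increases} the Lorentzian length, so any non-minimizing fiber behaviour contradicts maximality. The plan is to first establish (i), then bootstrap it to fiber independence (ii), and only afterwards to reduce the conservation-type statements (v)--(vi) to smooth Lorentzian warped-product geometry via (ii).

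\textbf{Steps (i) and (ii).} For (i), suppose $\beta$ is not a length minimizer from $\beta(0)$ to $\beta(b)$. Pick $\tilde\beta\colon[0,b]\to X$ with the same endpoints but $L^\met(\tilde\beta)<L^\met(\beta)$; since $X$ is a length space, time-rescale so that $v_{\tilde\beta}(t)=\lambda\,v_\beta(t)$ a.e.\ for some $\lambda<1$. Then $(\alpha,\tilde\beta)$ is causal and the integrand of $L$ is pointwise a.e.\ strictly larger than that of $\gamma$, so $L((\alpha,\tilde\beta))>L(\gamma)=\uptau(\gamma(0),\gamma(b))$, contradicting maximality. For (ii), the length sees $\beta$ only through $v_\beta$, so $\gamma':=(\alpha,\beta')$ has the same causal type and the same length as $\gamma$. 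Maximality transfers: any strictly longer causal competitor in $Y'$ would, by (i) applied in $Y'$, have a minimizing fiber in $X'$; using that $X$ is geodesic we produce a minimizer in $X$ of matching length and matching metric speed, yielding a causal curve in $Y$ strictly longer than $\gamma$, contradiction.

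\textbf{Steps (iii) and (iv).} Claim (iii) is standard: timelikeness makes the length integrand a.e.\ strictly positive, yielding the usual absolutely continuous arclength reparametrization. For (iv), reverse Cauchy--Schwarz gives
\begin{align*}
L^\uptau(\gamma)^2=\Bigl(\int_0^b\sqrt{\dot\alpha^2-(f\circ\alpha)^2 v_\beta^2}\,\dint t\Bigr)^2 \leq b\cdot 2E(\gamma),
\end{align*}
with equality iff $\dot\alpha^2-(f\circ\alpha)^2 v_\beta^2$ is a.e.\ constant. Taking $b=L^\uptau(\gamma)$ this becomes $E(\gamma)\geq b/2$, with equality exactly for arclength (or arclength-proportional) parametrizations.

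\textbf{Steps (v)--(vi) and the main obstacle.} For (v), I would apply fiber independence (ii) with $X'=\mathbb{R}$ (Euclidean) and $\beta'\colon[0,b]\to\mathbb{R}$ the affine path of length $L^\met(\beta)$ having speed $v_\beta$. Then $\gamma'=(\alpha,\beta')$ is a maximal timelike curve in the \emph{smooth} Lorentzian manifold ${}^-\!I\times_f\mathbb{R}$, hence a smooth geodesic, and the Killing vector field $\partial_r$ yields the conservation law $(f\circ\alpha)^2\,v_{\beta'}\equiv \mathrm{const}$; since $v_\beta=v_{\beta'}$ this is precisely $v_\beta\propto(f\circ\alpha)^{-2}$. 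Claim (vi) then follows by rescaling the arclength parameter so that this constant equals $1$: one gets $v_\beta=1/(f\circ\alpha)^2$, so $(f\circ\alpha)^2 v_\beta^2=1/(f\circ\alpha)^2$, and hence $-\dot\alpha^2+1/(f\circ\alpha)^2$ is (a new) constant along the curve. The main obstacle is step (v): passing from a merely absolutely continuous maximal curve to a smooth geodesic equation is nontrivial, and fiber independence (ii) is precisely what enables this clean reduction to Hamiltonian mechanics on a smooth Lorentzian manifold; a direct synthetic derivation of the conservation law would require a more delicate first-variation argument performed jointly over $\alpha$ and the parametrization of $\beta$.
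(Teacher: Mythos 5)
The paper does not prove \Cref{th:fiber}; it simply quotes the result from \cite[Thm.\ 3.29]{AGKS:23}, so there is no in-paper argument to compare against. Assessed on its own terms, your overall strategy (strict monotonicity of $L$ in the fiber speed for (i)--(ii), Cauchy--Schwarz for (iv), reduction to the smooth two-dimensional warped product via fiber independence for (v)--(vi)) is the natural one and is essentially what the cited reference does.

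There are, however, two concrete flaws. In step (ii) you write ``by (i) applied in $Y'$'', but (i) is a statement about \emph{maximal} curves, and the ``strictly longer competitor'' $\tilde\gamma'$ you produce in $Y'$ need not be maximal; indeed a maximizer in $Y'$ need not even exist, since $X'$ is only assumed to be a geodesic length space. The correct move is to run the \emph{argument} of (i) on $\tilde\gamma'$ directly: replace its fiber by a minimizing geodesic of $X'$, reparametrized so its speed is pointwise no larger than $v_{\tilde\beta'}$, note that $L$ does not decrease, and only then transplant that fiber into $X$ to build a causal curve in $Y$ of length $>L(\gamma)$ --- contradicting maximality of $\gamma$ without ever invoking maximality of a competitor in $Y'$. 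Second, in step (vi) the rescaling by $a=1/c_0$, where $v_\beta = c_0/(f\circ\alpha)^2$, tacitly assumes $c_0\neq 0$; if $\beta$ is constant ($v_\beta\equiv 0$), then no proportional-to-arclength parametrization makes $-\dot\alpha^2 + (f\circ\alpha)^{-2}$ constant unless $f\circ\alpha$ is itself constant, so this degenerate case must be excluded or treated separately. Finally, the step you yourself flag in (v) --- that a merely Lipschitz maximal timelike curve in a smooth Lorentzian warped product is a geodesic up to reparametrization --- is stated but not discharged; that is precisely the place where either the classical smooth result or a first-variation argument for Lipschitz curves has to be supplied.
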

\begin{rem}[Consequences of fiber independence]\label{subsec:con_fiber}
The fiber independence has the following consequence. Let $(X,\met)$ be a geodesic length space and let $\gamma=(\alpha,\beta)\colon[0,b]$ $\rightarrow {}^-\! I\times_f X=
Y$ be future directed, causal and maximal.  We can define the generalized cone $Y_\beta$ of $I$, $X_\beta$ and $f$ where $X_\beta=\beta([0,b])$.
The subset $X_\beta$ in $X$ is isometric
to $[0,L(\beta)]$ where $L(\beta)$ is the length of $\beta$. We obtain an isometry by the $\met$-arc-length parametrization $\bar \beta$ of $\beta$.  The fiber independence implies that $Y_\beta=\I\times_f X_\beta\simeq \I\times_f [0, L(\beta)]$ isometrically embeds into $Y$ via the map $$F_\beta: \I\times_f [0,L(\beta)] \rightarrow \I\times_f X, \quad F(r,t)= (r,  \bar\beta(t))\,. $$
\end{rem}
Here we use the following: Consider two Lorentzian pre-length spaces $(M_i, d_i, \tau_i, \ll_i, \leq_i)$. We call a map $\iota: M_0\rightarrow M_1$ a Lorentzian isometric embedding if for all $x,y\in M_0$
\begin{enumerate}
\item $x\leq_0 y$ if and only if $ \iota(x) \leq_1 \iota(y)$, 
\item $\tau_0(x, y)= \tau_1(\iota(x), \iota(y))$. 
\end{enumerate}

\begin{thm}[Properties of generalized cones]\label{Pr: various properties}
    Let $Y=\!{}^-\!I\times_fX$ be a generalized cone as described above and $(X,\met)$ a metric space.
    \begin{itemize}
        \item[(i)] If $X$ is a geodesic length space 
        every maximizing causal curve $\gamma = (\al,\beta): [-b,b]\to Y$ has a causal character,
        i.e., $\gamma$ is either timelike or null \textnormal{\cite[Cor.\ 3.30]{AGKS:23}}.
        \item[(ii)] If $X$ is locally compact  the length $L$ coincides with the $\uptau$-length $L^{\uptau}$ on future-directed causal curves  \textnormal{\cite[Prop.\ 4.7]{AGKS:23}}.
        \item[(iii)] If $X$ is a geodesic and locally compact length space then $Y$ is a strongly causal and regular Lorentzian length space \textnormal{\cite[Cor.\ 4.9]{AGKS:23}}.
        \item[(iv)] If $X$ is a locally compact, complete length space then $Y$ is globally hyperbolic \textnormal{\cite[Cor.\ 4.11]{AGKS:23}}.
        \item[(v)] If $X$ is geodesic, then $Y$ is geodesic too. Furthermore, any two timelike related points can be connected by a timelike geodesic \textnormal{\cite[Cor.\ 4.11]{AGKS:23}}.
    \end{itemize}
\end{thm}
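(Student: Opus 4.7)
The approach follows \cite{AGKS:23}, and the common thread across all five items is to exploit the product structure of $Y = \I \times_f X$ together with the fiber independence theorem (Theorem \ref{th:fiber}) to reduce each assertion to a two-dimensional (or purely one-dimensional) statement on the base interval $I$.

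For (i), given a maximizing causal curve $\gamma=(\alpha,\beta)$, fiber independence allows one to replace $\beta$ by an arclength-parametrized segment of $[0,L^\met(\beta)]\subseteq\R$. The resulting curve $(\alpha,\bar\beta)$ lives in the smooth two-dimensional Lorentzian warped product $\I\times_f[0,L^\met(\beta)]$ with metric $-\dint t^2+f(t)^2\dint r^2$. In this smooth setting the maximizer is a genuine Lorentzian geodesic, and the quantity $g(\dot\gamma,\dot\gamma)$ is constant along any geodesic; hence the causal character is constant, giving either a timelike or a null curve. For (ii), the inequality $L^\uptau(\gamma)\leq L(\gamma)$ is immediate from the reverse triangle inequality. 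The converse is obtained by partitioning $[a,b]$, using local compactness to find a maximizer of $\uptau$ on each short subsegment, comparing it to $L$ via the warped product structure, and passing to the limit as the mesh shrinks.

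For (iii), strong causality is immediate: $t\circ\gamma=\alpha$ is strictly monotone on every causal curve, so a causal curve that leaves a neighborhood in which $|t-t_0|<\delta$ cannot return. Regularity is precisely the content of (i). For (iv), fiber independence yields the key estimate: along any causal curve from $(s_1,x_1)$ to $(s_2,x_2)$ with $s_1<s_2$, one has $v_\beta\leq \dot\alpha/(f\circ\alpha)$, so $\met(x_1,x_2)\leq L^\met(\beta)\leq \int_{s_1}^{s_2}f(t)^{-1}\dint t$. The standing integrability condition $\int 1/f=\infty$ near any endpoint of $I$ where $f$ vanishes keeps causal diamonds away from $\partial I$ and inside a region where $f$ is bounded below; combined with local compactness of $X$, completeness (invoking a Hopf--Rinow type argument to turn closed bounded sets into compact ones), and compactness of $[s_1,s_2]\subseteq I$, this shows causal diamonds are contained in a compact set. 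Lower semi-continuity of $\uptau$ (\Cref{rem-tau-pro}) gives that $\leq$ is closed, whence the diamond itself is compact.

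For (v), let $y\leq y'$ with $y\neq y'$, choose a maximizing sequence $\gamma_n$ of future-directed causal curves from $y$ to $y'$ with $L(\gamma_n)\to\uptau(y,y')$, and use the compactness provided by (iv) together with Arzelà--Ascoli (after a suitable Lipschitz reparametrization, exploiting that $\alpha_n$ is monotone and $\beta_n$ has controlled metric speed by the fiber estimate) to extract a limit curve $\gamma$. Upper semi-continuity of $L$ with respect to pointwise convergence then forces $\gamma$ to be a maximizer. For the timelike statement, once a maximizer exists it has a definite causal character by (i); openness of $I^+$ (pointed out in the push-up paragraph) excludes the null alternative whenever $y\ll y'$. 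The main obstacle throughout is item (iv): one must simultaneously handle the possible vanishing of $f$ at $\partial I$, the completeness of $X$, and the escape-to-infinity problem in $X$, which is where the integrability hypothesis on $1/f$ does essential work.
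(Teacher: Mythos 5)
Your overall strategy matches what happens in \cite{AGKS:23}: the paper itself offers no proof but simply cites \cite{AGKS:23} item by item, so any valid argument here is necessarily a reconstruction of the reference. The use of fiber independence to reduce to the smooth two-dimensional warped product $\I\times_f[0,L^\met(\beta)]$, the monotonicity of $t\circ\gamma$ for strong causality, the causal-character argument in (i), and the Arzel\`a--Ascoli plus upper-semicontinuity-of-$L$ argument for (v) are all in the right spirit and essentially correct.

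Two steps are stated incorrectly, however. In (ii) the directions are swapped: the easy inequality is $L^\uptau(\gamma)\geq L(\gamma)$, which follows because each restriction $\gamma|_{[t_i,t_{i+1}]}$ is a causal curve competing in the supremum defining $\uptau(\gamma(t_i),\gamma(t_{i+1}))$, so $\sum_i\uptau(\gamma(t_i),\gamma(t_{i+1}))\geq\sum_i L(\gamma|_{[t_i,t_{i+1}]})=L(\gamma)$ and taking the infimum over partitions gives $L^\uptau\geq L$. It is the opposite inequality $L^\uptau\leq L$ that is the nontrivial one and that requires the local-compactness and fine-partition argument you describe; the reverse triangle inequality for $\uptau$ is not what yields the easy direction. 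In (iv), the claim that ``lower semi-continuity of $\uptau$ gives that $\leq$ is closed'' is not correct: lower semi-continuity of $\uptau$ yields openness of $I^{\pm}$ (i.e.\ of the relation $\ll$), not closedness of $\leq$. Closedness of $\leq$ in $Y$ requires a limit-curve argument --- one uses the precompactness you have already established to extract a limit causal curve and show the limit pair is causally related --- and indeed in the paper's own framework closedness of $\leq$ is recorded as a \emph{consequence} of global hyperbolicity, so you cannot invoke it as an input. With these two corrections the outline is sound.
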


\section{Two-dimensional model spaces}\label{sec-2d-mod-spa}
First, we recall the formulas of the Ricci tensor of warped products of semi-Riemannian manifolds. Let $B, F$ and $f$ be as in Section \ref{sec:wpsemiriem}, especially $\dim_B=d$ and $\dim_F=n$, and let $X,Y\in\mathcal{L}(B)$ and $V,W\in\mathcal{L}(F)$.
The following formulas for the Ricci curvature of $^\pm\!B\times_f F$ hold, cf.\ \cite[Prop.\ 7.43]{ONe:83}:
\begin{align}
\label{eq-ric1}\Ric_{^\pm\!B\times_f F}(X,Y)&= \Ric_B(X,Y)- n \frac{\nabla^2 f(X,Y)}{f}.\\
\label{eq-ric2}\Ric_{^\pm\! B\times_f F}(X,V)&= 0.\\
\label{eq-ric3}\Ric_{\pm\! B\times_f F}(V,W)&= \Ric_F(V,W) - \left( \frac{\Delta^{^\pm\!B} f}{f} + (n-1) \frac{g_{^\pm B}(\nabla f, \nabla f)}{f^2} \right) \langle V,W\rangle.
\end{align}


In this section we assume $B=I$, where $I\subseteq\R$ is an interval, and $F=(a,b)$. We have the following corollary. 

\begin{cor}\label{Co: 2-dim smooth warped product} Let $I\subseteq\R$, $[a,b]\subseteq\R$ and $f:I\rightarrow [0,\infty)$ be smooth. We assume that $\partial I= f^{-1}(\{0\})$.
\begin{enumerate}
\item[(+)]
 $I\times_f [a,b]$ satisfies 
$
\CD(\kappa , 2) 
$
if and only if $f''+ \kappa f \leq 0$ and
\begin{enumerate} \item $\partial I=\emptyset$, or
\item if $\partial I \neq \emptyset$, then $b-a\leq \frac{\pi}{\sqrt \eta}$ for  $\eta=\sup_{\partial I}(f')^2$ . 
\end{enumerate}
    \item[(--)]
$\I\times_f [a,b]$ satisfies 
$
\tCD_p(-\kappa, 2) 
$
if and only if 
$
f''- \kappa f\leq 0$.
\end{enumerate}
\end{cor} \begin{proof} {\bf {\it  (--)}} Let us first check the {\it if} implication for the case $I=\I$.

By equations \eqref{eq-ric1}, \eqref{eq-ric2}, \eqref{eq-ric3} for the Ricci curvature of $\I\times_f (a,b)$  for a timelike vector $v=a\frac{\partial}{\partial t}+b\frac{\partial}{\partial r}$, i.e.\ $g_{^- I\times_f [a,b]}(v,v)<0$, we have
\begin{align*}
    \Ric_{^- I\times_f(a,b)}\left(v,v\right)&= -a^2\frac{f''}{f}\left\vert\frac{\partial}{\partial t}\right\vert^2_{^{^+}\! I}{+}\, b^2\frac{f''}{f} f^2\left\vert\frac{\partial}{\partial r}\right\vert^2
    = \frac{f''}{f}\underbrace{\left[-a^2\left\vert\frac{\partial}{\partial t}\right\vert_{^{^+}\!I}^2 + b^2\,f^2 \left\vert\frac{\partial}{\partial r}\right\vert^2\right]}_{<0\,\text{(}v\text{ is timelike)}}\\ &\geq \kappa\left[-a^2\left\vert\frac{\partial}{\partial t}\right\vert^2_{{}^{^+}\!I} + b^2f^2\left\vert\frac{\partial}{\partial r}\right\vert^2\right]= \kappa\cdot g_{^-I\times_f(a,b)}(v,v)
    \end{align*}
where $\left| \frac{\partial}{\partial t}\right|_{^+ I}^2= g_{^+I}\left( \frac{\partial}{\partial t}, \frac{\partial}{\partial t}\right).$
Hence $\Ric_{^- I\times_f (a,b)}\geq \kappa g_{^-I\times_f(a,b)} $ in timelike directions whenever $f>0$. Then \Cref{thm:smoothcd} yields the condition $\tCD_p(-\kappa,2)$.

Note that $\I\times_f [a,b]$ is not a smooth space if $\partial I=f^{-1}(\{0\})\neq \emptyset$. However it is clear that one can prove the inequality for the $\tCD_p$ condition on $\I\times_f (a,b)$ for any admissible optimal transport, in the same way as for \Cref{thm:smoothcd}, and that the singular points  $\partial I\times [a,b]$, in case $\partial I$ is non-empty, as well as the boundary $I\times( \{a\}\cup \{b\})$ do not affect the validity of the $\tCD_p$ condition on $\I\times_f[a,b]$. 

For the \emph{only if} direction we refer to (i) of \Cref{Le:from Y to X 2 dim and h smooth} below.
\smallskip

{\it (+)} By \cite{LySt:23} the $\CD(\kappa, 2)$ for $I\times_f [a,b]$ holds if and only if $I\times_f [a,b]$ has Alexandrov curvature bounded from below by $\kappa$. Then the claim follows from \cite{AB:04} and \cite{AB:16} where the corresponding statement was shown for finite dimensional Alexandrov spaces.
%
%
%
\end{proof}
Let $h\in C^{\infty}((a,b))$ satisfy $\smash{\big( h^\frac{1}{N-1}\big)'' + \eta h^\frac{1}{N-1}\leq 0}$, where $\eta\in\R$ and $N>1$, and  define \begin{center}$\Theta= \log G$,\end{center}where {$G= f^{N-1} h:I\times(a,b)\rightarrow\R$.  For $f\in C^\infty(I)$ we still assume \begin{align}\label{ass:forf} f'' \pm \kappa f\leq 0 \ \mbox{ and } \ \pm (f')^2 + \kappa f^2 \leq \eta.\end{align} 

Thanks to \Cref{cor:fct}} (the one-dimensional case of \Cref{Pr:G is kappa concave}) it follows that the $(N+1)$-Bakry--\'Emery Ricci tensor of $({}^\pm I\times_f (a,b), e^{\Theta}\vol_{{}^\pm I\times_f (a,b)})$ has a lower bound
\begin{align}\label{Eq: ricbxff h smooth}
\Ric^{\Theta, N+1}_{{}^\pm I\times_f (a,b)}= \Ric_{{}^\pm I\times_f (a,b)} -\nabla^2 \Theta - \frac{1}{(N+1)-2}\,\dint\Theta \otimes \dint\Theta\geq N\kappa g_{^\pm I \times_f (a,b)}
\end{align}
whenever $f>0$, i.e., the generalized cone and the reference measure $G\vol_{{}^\pm I \times_f (a,b)}$ is smooth. 
\smallskip

In fact, under this assumption the space $({}^\pm I\times_f [a,b], e^{\Theta}\vol_{{}^\pm I\times_f (a,b)})$ satisfies a curvature-dimension condition. We will show this in the next proposition, where  we will also lower the regularity of $h$ from smooth to continuous.
\begin{thm}\label{Th:2 dimensional non-smooth case} Let $f$ as before, $p\in (0,1)$, $N>1$ and assume $\partial I= f^{-1}(\{0\})$.
If $h\in C^0([a,b], [0, \infty))$ satisfies $${\left(h^\frac{1}{N-1}\right)'' + \eta h^\frac{1}{N-1}\leq 0\,,}$$ in the distributional sense, then the generalized cone $({}^\pm I\times_f [a,b], {G}\,\vol_{^\pm I\times_f [a,b]})$ satisfies\\$\CD(\kappa N, N+~1)$ or $\tCD_p({{-}}\kappa N,N+1)$, respectively. 
\end{thm}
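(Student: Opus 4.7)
The plan is to prove the smooth case directly from Theorem~\ref{thm:smoothcd}, and then reduce the general continuous $h$ to it by mollification and stability. In the smooth case, suppose $h\in C^\infty((a,b),(0,\infty))$ and $f>0$ on $\mathrm{int}(I)$, so that $^\pm I\times_f(a,b)$ is a genuine (semi-)Riemannian manifold. Writing $G=f^{N-1}h$ and $\Theta=\log G$, the hypotheses \eqref{ass:forf} together with $(h^{1/(N-1)})''+\eta h^{1/(N-1)}\leq 0$ give, via Corollary~\ref{cor:fct} (the $1+1$-dimensional specialization of Proposition~\ref{Pr:G is kappa concave}), the Bakry--\'Emery bound
$$
\Ric^{\Theta,N+1}_{^\pm I\times_f(a,b)}\;\geq\;N\kappa\,g_{^\pm I\times_f(a,b)}
$$
on timelike (resp.\ all) vectors, exactly as in \eqref{Eq: ricbxff h smooth}. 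Invoking Theorem~\ref{thm:smoothcd} and Remark~\ref{rem-sign-conv} for the signature bookkeeping, this is equivalent to $\tCD_p(-N\kappa,N+1)$ in the Lorentzian case and $\CD(N\kappa,N+1)$ in the Riemannian case, both with respect to the measure $G\,\mathrm{vol}_{^\pm I\times_f(a,b)}$.

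The cone-tip locus $\partial I\cap f^{-1}(\{0\})$ is a $G\,\mathrm{vol}$-null set sitting on the boundary. As already observed in the proof of Corollary~\ref{Co: 2-dim smooth warped product}(1), in the $\tCD_p$ situation every timelike dynamical plan can be taken to be supported on geodesics lying in the smooth interior, so the tip points do not obstruct the verification of the $\tCD_p$ inequality; in the $\CD$ situation, local-to-global for $N>1$ (Remark~\ref{Rem: Cd rel}(i)) reduces matters to neighbourhoods on which $f>0$, where the smooth case applies.

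For general continuous $h$, set $g:=h^{1/(N-1)}$ and mollify $g_\varepsilon:=g*\varphi_\varepsilon\in C^\infty((a+\varepsilon,b-\varepsilon))$ with a standard non-negative mollifier, so that $g_\varepsilon''+\eta g_\varepsilon\leq 0$ classically and $h_\varepsilon:=g_\varepsilon^{N-1}\to h$ uniformly on compacta of $(a,b)$. By the smooth case, each weighted generalized cone with density $f^{N-1}h_\varepsilon$ satisfies the desired $\CD$ (resp.\ $\tCD_p$) condition on $^\pm I\times_f[a+\varepsilon,b-\varepsilon]$. The underlying (semi-)Lorentzian structure on $^\pm I\times_f[a,b]$ does not depend on $\varepsilon$, and $f^{N-1}h_\varepsilon\,\mathrm{vol}\to G\,\mathrm{vol}$ in the weak sense of Radon measures; stability of the (timelike) curvature-dimension condition under weak convergence of the reference measure on a fixed space then yields the conclusion.

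The main obstacle is the stability step in the Lorentzian setting: on the fixed globally hyperbolic $^\pm I\times_f[a,b]$, one needs that weak convergence of the reference measures preserves $\tCD_p(-N\kappa,N+1)$. This is accessible here because $h_\varepsilon\to h$ is in fact uniform on any compact subset of $\{h>0\}$, so the densities and their logarithms pass to the limit wherever $G$ is locally bounded away from zero; together with the weak lower semicontinuity of $S_{N'}$ and the pointwise inequality reformulation in Remark~\ref{Rem: TCD rel}(iv) this allows one to pass to the limit in \eqref{Eq:TCD}. A minor secondary issue, the possible vanishing of $h$ at the endpoints $\{a,b\}$, is dispatched by restricting attention to plans with supports contained in $\{G>0\}$ and then exhausting.
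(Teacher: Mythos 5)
Your smooth‐case argument tracks the paper's: you use Corollary~\ref{cor:fct} (the $1{+}1$-dimensional specialization of Proposition~\ref{Pr:G is kappa concave}) to obtain the Bakry--\'Emery bound \eqref{Eq: ricbxff h smooth} and then appeal to Theorem~\ref{thm:smoothcd}; the paper instead goes through Corollary~\ref{Co: 2-dim smooth warped product} plus the \emph{second part} of Theorem~\ref{thm:smoothcd}, but for smooth $h$ these are the same statement.

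For continuous $h$ your route genuinely diverges from the paper's, and this is where the gap is. You mollify $h^{1/(N-1)}$, get $\tCD_p$/$\CD$ for each $(\,{}^\pm I\times_f[a+\varepsilon,b-\varepsilon],\,f^{N-1}h_\varepsilon\,\vol\,)$ from the smooth case, and then invoke ``stability of the (timelike) curvature-dimension condition under weak convergence of the reference measure on a fixed space.'' That stability assertion is not an off-the-shelf result --- nothing in the paper or its cited literature gives it in this form --- and the sketch you offer (uniform convergence of $h_\varepsilon\to h$ on compacts of $\{h>0\}$, weak lower semicontinuity of $S_{N'}$, and the pointwise inequality of Remark~\ref{Rem: TCD rel}(iv)) is in the right direction but omits several nontrivial steps: the identification of the unique optimal $\bdpi$ as $\varepsilon$-independent via timelike essential nonbranching on the smooth interior, the mutual absolute continuity of $\n$ and $\n_\varepsilon$ so that $\mu_s\ll\n$ follows from $\mu_s\ll\n_\varepsilon$, control of the compactly supported marginals away from $\{h=0\}$, and the exhaustion from $[a+\varepsilon,b-\varepsilon]$ to $[a,b]$. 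Worked out carefully this is comparable in effort to --- and in spirit the reverse of --- the paper's Lemma~\ref{Le: Yeps}, so it is not a \emph{wrong} strategy, but as written it is an unproved lemma of roughly the same weight as the theorem itself.

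What you miss is that the hypothesis of the second (``Moreover'') part of Theorem~\ref{thm:smoothcd} only requires the weight $e^\Phi$ to be \emph{measurable} and to satisfy the $\sigma$-concavity inequality along timelike constant-speed geodesics --- not to be smooth. Proposition~\ref{prop:concave} supplies exactly this inequality for $\phi=G\circ c$ when $h$ is merely continuous (the mollification is done there, but only to establish a one-dimensional distributional inequality, not at the level of $\tCD_p$). Combining Corollary~\ref{Co: 2-dim smooth warped product} for the unweighted Ricci bound, Proposition~\ref{prop:concave} for the weight, and the second part of Theorem~\ref{thm:smoothcd} gives the claim directly, with no passage to the limit in $\tCD_p$ and hence no stability lemma needed. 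Your Bakry--\'Emery route (Theorem~\ref{thm:smoothcd}(i)$\Leftrightarrow$(ii)) forces smoothness of the weight because $\nabla^2\Phi$ and $\dint\Phi\otimes\dint\Phi$ must make classical sense; the second part of the theorem is precisely the formulation that avoids this.
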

\begin{proof} We only prove the statement in the case $I=\I$.

If $h$ is smooth, then  $G$ satisfies  \eqref{eq:fct} in Corollary \ref{cor:fct}. 
Then \Cref{Co: 2-dim smooth warped product} and the second part in  Theorem \ref{thm:smoothcd} yield the claim. 

If $h$ is not smooth, the conclusion follows from \Cref{Co: 2-dim smooth warped product}, \Cref{prop:concave}, and  again the second part in Theorem \ref{thm:smoothcd}.
\end{proof}
{In the following we will prove  a converse of \Cref{Th:2 dimensional non-smooth case} above, i.e., the assumptions on $h$ and $f$ are in general not only sufficient for $(^\pm I \times_f [a,b], G\vol_{I\times_f [a,b]})$ to satisfy a curvature-dimension condition but also necessary.  

We start with the case where $h$ is smooth in \Cref{Le:from Y to X 2 dim and h smooth} below and then decrease the regularity of $h$ to $L^\infty_\loc((a,b))$ in \Cref{Pr:from Y to X 2 dim and h semi-concave}.} 

\begin{lem}\label{Le:from Y to X 2 dim and h smooth}
Assume that $h:[a,b]\rightarrow [0, \infty)$  and $f:{}^\pm\! I \rightarrow [0, \infty)$ are smooth, $\partial I= f^{-1}(\{0\})$ (if $\partial I\neq \emptyset$). Moreover $\left(^\pm \!I\times_f[a,b], f(t)^N h(r)\,\dint t\otimes\dint r\right)$
satisfies \begin{enumerate}
    \item[(+)] $\CD(N\kappa, N+1)$
    \item[(--)] $\tCD_p({{-}}\kappa N, N+1)$ for $0<p<1$.
    \end{enumerate}
If we consider $I={^+}\!I$,  we assume  $\partial I\neq \emptyset$. 
Then it follows that 
\begin{enumerate}
    \item $f''\pm \kappa f\leq 0$, 
    \item \begin{enumerate}
        \item[(+)]
    $([a,b], h(r)\,\dint r)$ satisfies $\CD((N-1)\eta, N)$ with $\eta= \sup_I\left\{ (f')^2 + \kappa f^2\right\}$.
    \item[(--)] 
    $([a,b], h(r)\,\dint r)$ satisfies $\CD((N-1)\eta, N)$ with $\eta= \sup_I\left\{ -(f')^2 + f'' f\right\}$.
    \end{enumerate}
\end{enumerate}
\end{lem}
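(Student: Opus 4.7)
The plan is to invoke Theorem~\ref{thm:smoothcd}: since $f$ and $h$ are smooth, the hypothesised $\CD(\kappa N,N+1)$ or $\tCD_p(-\kappa N,N+1)$ condition on the weighted smooth 2-manifold $(^\pm I\times_f[a,b],\,f^Nh\,\dint t\otimes\dint r)$ is equivalent to the pointwise Bakry--\'Emery bound $\Ric_{g}^{\Phi,N+1}\geq\kappa N\,g$ (on all tangent vectors in the Riemannian case, on timelike vectors in the Lorentzian case), where $\Phi=(N-1)\log f+\log h$ is the log-density of the reference measure relative to $\vol_g=f\,\dint t\otimes\dint r$. I then propose to read off items (1) and (2) by evaluating this tensorial inequality on well-chosen vectors in the coordinate frame $(\partial_t,\partial_r)$.

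First, a direct calculation based on the warped-product Ricci formulas of Section~\ref{sec:wpsemiriem} and the Christoffel symbols $\Gamma^t_{rr}=\mp ff'$, $\Gamma^r_{tr}=f'/f$ gives $\Ric_{g}^{\Phi,N+1}(\partial_t,\partial_t)=-Nf''/f$, while the off-diagonal term $\Ric_{g}^{\Phi,N+1}(\partial_t,\partial_r)$ vanishes thanks to a cancellation between $\nabla^2\Phi(\partial_t,\partial_r)=-(f'/f)(h'/h)$ and $\tfrac{1}{N-1}d\Phi\otimes d\Phi(\partial_t,\partial_r)=(f'/f)(h'/h)$. Testing the bound on $v=\partial_t$ (with $g(\partial_t,\partial_t)=\pm 1$) immediately gives $-Nf''/f\geq\pm\kappa N$, equivalently $f''\pm\kappa f\leq 0$; this is item~(1).

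For item~(2), I would test the Ricci bound on $v=\partial_r$ in the Riemannian case, and in the Lorentzian case on the null direction $\partial_t+f^{-1}\partial_r$ (approached by continuity from the timelike family $\partial_t+(1-\epsilon)f^{-1}\partial_r$, $\epsilon\downarrow 0$). After substituting the identity $h''/h-\tfrac{N-2}{N-1}(h'/h)^2=(N-1)(h^{1/(N-1)})''/h^{1/(N-1)}$ and regrouping the $f$-terms with the help of~(1), the inequality collapses to the pointwise bound
\[\frac{(h^{1/(N-1)})''(r)}{h^{1/(N-1)}(r)}\;\leq\;-\bigl[\pm(f'(t))^2+\kappa f(t)^2\bigr]-\frac{f(t)\,(f''(t)\pm\kappa f(t))}{N-1}\]
valid for every $(t,r)$. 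The correction $f(f''\pm\kappa f)/(N-1)$ is non-positive by~(1) and vanishes precisely on $\partial I=f^{-1}(\{0\})$. Since the left-hand side depends only on $r$, one takes the infimum of the right-hand side over $t\in I$; using the non-emptiness of $\partial I$ to evaluate at, or pass to a sequence approaching, a boundary point $t_0$ at which the correction disappears, this infimum is identified with $-\eta=-\sup_I\{\pm(f')^2+\kappa f^2\}$. This yields $(h^{1/(N-1)})''+\eta h^{1/(N-1)}\leq 0$ in the smooth sense, which by Remark~\ref{Re:properties of CD(K,N)-densities and logarithmic convolutions}(iv) is precisely the $\CD((N-1)\eta,N)$-density condition for $([a,b],h\,\dint r)$, giving~(2).

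The main obstacle will be this final step: disentangling the joint $(t,r)$-dependence and identifying the relevant infimum with $-\eta$ rather than with a strictly larger quantity coming from the non-positive correction $f(f''\pm\kappa f)/(N-1)$. The hypothesis $\partial I=f^{-1}(\{0\})$ (mandatory in the Riemannian case, where it is imposed as an additional assumption) is invoked essentially here, since only at a boundary point does the correction collapse and the clean $\eta$-bound surface. Away from the boundary one gets at best a weaker bound, so the boundary argument is what promotes the naive pointwise estimate to the sharp claim.
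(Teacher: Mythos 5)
Your treatment of item~(1) is exactly the paper's (test $\Ric^{\Phi,N+1}\geq\kappa N g$ on $\partial_t$), and your Riemannian route for item~(2) --- test on $\partial_r$, isolate the non-positive correction $f(f''+\kappa f)/(N-1)$, evaluate at $\partial I$ where it vanishes --- also tracks the paper. But note a step you elide: at the boundary you only obtain $\sup_{\partial I}(f')^2$ on the right, and the paper replaces this by $\eta=\sup_I\{(f')^2+\kappa f^2\}$ by invoking \cite[Lem.~3.1]{AB:04}. That identification of the two suprema is not free; you should cite or reprove it.

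The Lorentzian half of item~(2) is where your proposal breaks. The paper tests the Bakry--\'Emery bound on the \emph{spacelike} direction $\partial_r$: dividing $R_{rr}\geq\kappa N f^2$ by $-(N-1)$ gives
\begin{align*}
\frac{(h^{1/(N-1)})''}{h^{1/(N-1)}}\leq -\bigl[-(f')^2+\kappa f^2\bigr]+\frac{f(f''-\kappa f)}{N-1},
\end{align*}
with a \emph{plus} in front of the correction, which is $\leq 0$. So in $^-I$ the correction \emph{tightens} the estimate at every $t$, and the infimum over $t\in I$ lands on $-\eta$ with no boundary needed --- which is fortunate, because the lemma only assumes $\partial I\neq\emptyset$ in the Riemannian case. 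Your displayed formula carries a minus sign on $f(f''\pm\kappa f)/(N-1)$ uniformly in $\pm$, which is correct for ${}^+I$ but wrong for ${}^-I$, and it causes you to expect the boundary to be the rescue mechanism when in fact the sign of the correction already does the job. More seriously, you propose to test on the null direction $\partial_t+f^{-1}\partial_r$ rather than $\partial_r$ --- presumably because Theorem~\ref{thm:smoothcd} certifies the Ricci bound only on timelike (hence, by continuity, null) vectors. But the null test gives only $(h^{1/(N-1)})''/h^{1/(N-1)}\leq (f')^2-ff''$ (no $\kappa$ at all), i.e.\ after rewriting with item~(1) the correction enters with a \emph{minus} sign and coefficient $1$, not $1/(N-1)$; your formula is a hybrid of the null and spacelike calculations and reproduces neither. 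And the null bound is genuinely weaker: for $I=[0,\pi]$, $f=\sin$, $\kappa=0$ both the null-test infimum and the boundary value are $1$, while the claimed $-\eta=0$. So neither taking the infimum nor passing to $\partial I$ promotes the null estimate to the lemma's constant. To make your proof work in the Lorentzian case you must test on $\partial_r$ as the paper does; as written, the proposal does not establish item~(2) for ${}^-I$.
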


\begin{proof}[{Proof of Lemma \ref{Le:from Y to X 2 dim and h smooth}}] {\bf 1.} We first consider the case of $I={^-\!I}$.

Since the generalized cone under consideration is smooth the $(N+1)$-Ricci tensor is well-defined and is given explicitly by
\begin{equation}\label{eq:2DRic}
\Ric^{\Theta,N+1}_{^-\!I\times^N_f [a,b]}:=\Ric_{^-\!I\times_f [a,b]} - (N-1)\frac{\nabla^2 (f^{N-1} h)^{\frac{1}{N-1}}}{(f^{N-1} h)^{\frac{1}{N-1}}}.
\end{equation}
In combination with the formulas \eqref{eq-ric1}, \eqref{eq-ric2}, \eqref{eq-ric3}   for $\Ric_{{}^-\!I \times_f [a,b]}$ and the Hessian $\nabla^2 G$ with $G=f^{N-1} h$, this can equivalently be written as follows
\begin{align}
\label{eq-ric-theta-1}\Ric^{\Theta,N+1}_{^-\!I\times^N_f [a,b]}(\partial_t,\partial_t)&= \Ric_I(\partial_t,\partial_t)- N\frac{f''}{f}\langle\partial_t,\partial_t\rangle_I= - N \frac{f''}{f}.\\
\label{eq-ric-theta-2}\Ric^{\Theta,N+1}_{^-\!I\times^N_f [a,b]}(\partial_t,\partial_r)&= 0.\\
\nonumber\Ric^{\Theta,N+1}_{^-\!I\times^N_f [a,b]}(\partial_r,\partial_r)&= \Ric_{[a,b]}(\partial_r,\partial_r) - (N-1)\smash{\frac{\left(h^\frac{1}{N-1}\right)''}{h^{\frac{1}{N-1}}}}\langle \partial_r, \partial_r\rangle_{[a,b]}\\
&\ \ \ \ \ \ \ \ \ \ \ \ \ \ \ \ \ \ \ \ \ \  - \left( \frac{\smash{\Delta^{\!^-\!I}} f}{f} + (N-1) \frac{- (f')^2}{f^2} \right) \langle \partial_r,\partial_r\rangle_{I\times_f [a,b]}\\
 &=\label{eq-ric-theta-3} -(N-1)\frac{\left(h^\frac{1}{N-1}\right)''}{h^{\frac{1}{N-1}}}-\left(\frac{- f''}{f}+ (N-1)\frac{-(f')^2}{f^2}\right) f^2.
\end{align}
By {\Cref{thm:smoothcd}} the condition $\textsf{TCD}_p({{-}}\kappa N,N+1)$ on $\smash{{}^-I\times^N_f [a,b]}$ is equivalent to the bound $\smash{\Ric_{^-\!I\times^N_f [a,b]}}\geq \kappa N g_{{}^-I\times_f^{N} [a,b]}$ in timelike directions (keeping in mind our signature convention). Hence, by \eqref{eq-ric-theta-1} it follows that $\smash{-N\frac{f''}{f}\geq   -N\kappa}$, i.e.,  $f''- \kappa f\leq 0$. 
\medskip

We consider a timelike tangent vector  of the form $a \frac{\partial}{\partial t} + \frac{\partial}{\partial r}=:v\in T_{(t,r)}(I\times (a,b))$ with $t\notin f^{-1}(\{0\})$. It being timelike implies $-a^2 + f^2(t) <0$, or equivalently $f(t)< |a|$. Let us assume $a>0$.

By \eqref{eq-ric-theta-1}, \eqref{eq-ric-theta-2}, \eqref{eq-ric-theta-3} and  the curvature assumption $\smash{\Ric_{^-\!I\times^N_f [a,b]}}\geq \kappa N g_{{}^-I\times_f^{N} [a,b]}$ in timelike directions we have that
\begin{align*}
    -(N-1)\frac{\left(h^\frac{1}{N-1}\right)''\!(r)}{h^{\frac{1}{N-1}}(r)}- \left(-\frac{ f''(t)}{f(t)}+ (N-1)\frac{-(f'(t))^2}{f^2(t)}\right) f^2(t) -a^2N\frac{f''(t)}{f(t)}\geq \kappa N \left(-a^2 + f^2(t)\right)
    \end{align*}
    Letting $a\downarrow f(t)$  in the last inequality yields
    \begin{align*}
        -(N-1)\frac{\left(h^\frac{1}{N-1}\right)''(r)}{h^{\frac{1}{N-1}}(r)}+{f(t)\cdot f''(t)}+ (N-1){(f'(t))^2} - N{f(t)\cdot f''(t)}\geq 0.
    \end{align*}
    Hence 
    \begin{align*}
        (N-1)\frac{\left(h^\frac{1}{N-1}\right)''(r)}{h^{\frac{1}{N-1}}(r)}\leq  (N-1)\left({(f'(t))^2} - {f(t)\cdot f''(t)}\right).
    \end{align*}
Since $r\in (a,b)$ and $t\in I\backslash \partial I$ are arbitrary, it follows that
$$(N-1)\frac{\left(h^\frac{1}{N-1}\right)''}{h^{\frac{1}{N-1}}}\leq -(N-1) \sup_{ I}\{-{(f')^2} +{f f''}\}.$$
\noindent
{\bf 2.} Now we consider the case $I=  {^+\!I}$ where we assume $\partial I\neq \emptyset.$
Similarly as before,  we obtain
%
\begin{align*}
    -(N-1)\frac{\left(h^\frac{1}{N-1}\right)''}{h^{\frac{1}{N-1}}}\geq (N-1)((f')^2+\kappa f^2)
    +f{( f''+\kappa f)}.
\end{align*} 
Evaluating at points $(t,r)$ with $t\in \partial I=f^{-1}(\{0\})$ we get  that 
$$
  (N-1)\frac{\left(h^\frac{1}{N-1}\right)''}{h^{\frac{1}{N-1}}}\leq -(N-1) \sup_{\partial I} (f')^2.
$$
Finally, by Lemma 3.1 in \cite{AB:04} the claim follows. 
\end{proof}

\begin{rem}
The curvature bound in (ii) (--) is given by $(N-1)\eta$ where $\eta= \sup_I\{-(f')^2 + f'' f\}$. This is weaker than for the positive signature case since we  only use a Bakry--\'Emery Ricci curvature bound in timelike directions. If we assume a Bakry--\'Emery Ricci curvature lower bound of the form $\Ric_{^-\!I\times^N_f [a,b]}\geq \kappa N g_{^-\!I\times^N_f [a,b]}$ in space directions, we obtain the following estimate: 
By (iii) and a Bakry--\'Emery Ricci lower bound in space-like directions it follows
\begin{align}
    -(N-1)\frac{\left(h^\frac{1}{N-1}\right)''}{h^{\frac{1}{N-1}}}&\geq N\kappa f^2 + \left(-\frac{ f''}{f}+ (N-1)\frac{-(f')^2}{f^2}\right) f^2\nonumber\\
    &=(N-1)(-(f')^2+\kappa f^2)
    +f\underbrace{(- f''+\kappa f)}_{\geq 0 }\geq (N-1)(-(f')^2+\kappa f^2).
\end{align} 
On the other hand, if $f$  satisfies $f''-\kappa f=0$, then 
    $([a,b], h(r)\,\dint r)$ satisfies $\CD((N-1)\eta, N)$ with $\eta= \sup_I\left\{ -(f')^2 + \kappa f^2\right\}$.
 In this case $\eta=\sup_I \{-(f')^2+\kappa f^2\}= -(f')^2 + \kappa f^2$.
\end{rem}
We now want to extend \Cref{Le:from Y to X 2 dim and h smooth} to the case where $h$ is just continuous.
For this we introduce the following regularisations of $h$ (cf.\ \cite{CM:17b}).
\begin{defi}[Power-like convolutions]\label{Le:power-convolutions}
   Let $h:[a,b]\rightarrow [0,\infty)$ be a continuous function and $\phi\geq 0$ a standard mollifier as in the proof of Proposition \ref{prop:concave}. Define the following ``power-like" convolutions $h^\eps$ as
   \begin{align}\label{Eq:power-like convolutions}
       h^\eps(r):=\left[h^{\frac{1}{N+1}}*\phi_\eps(r)\right]^{N+1} =\left[\int_\R h(r-y)^{\frac{1}{N+1}}\phi_\eps(y)\,\dint y\right]^{N+1}.
   \end{align}
   \end{defi}
As in \cite[Lem.\ 6.2]{CM:17b} these convolutions have the following properties:
 \begin{itemize}
       \item [(i)] $h^\eps$ is a non-negative smooth function on $[a+\eps, b-\eps]$.
       \item[(ii)] $h^\eps\rightarrow h$ uniformly on compact subsets of $(a,b)$ as $\eps\rightarrow0$.
   \end{itemize}

In the following we denote by $Y^\eps$ the  generalized cone given by $\smash{{^\pm}\!I\times_f[a+\eps,b-\eps]}$  endowed  with the volume measure $\n^\eps(\dint t,\dint r):=f(t)^{{N-1}}h^\eps(r)\,\dint t\otimes\dint r$, where $h:[a,b]\rightarrow [0,\infty)$ is continuous. Recall also  the function $\Phi(t,r):=\log(f(t)^{{N-1}}h(r))$ from \Cref{Co: 2-dim smooth warped product} and let $\Phi^\eps$ be analogously defined with $h^\eps$ in place of $h$.
\begin{lem}[Curvature-dimension bounds for $Y^\eps$]\label{Le: Yeps}
    Assume that the $N$-generalized cone $Y$ satisfies $\tCD_p(\kappa N,N+1)$ or $\CD(\kappa N,N+1)$, respectively. Then also $Y^\eps$ does.
\end{lem}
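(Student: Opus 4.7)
The key point is that $h^\eps$ is smooth on $(a+\eps,b-\eps)$ by property (i) of the power-like convolutions, so $Y^\eps$ equipped with $\n^\eps$ is a smooth weighted semi-Riemannian manifold (Riemannian or Lorentzian according to the signature of ${}^\pm\!I$). By Theorem \ref{thm:smoothcd} the claimed curvature-dimension condition for $Y^\eps$ is equivalent to the weighted Bakry--\'Emery bound
$$\Ric^{\Phi^\eps,\,N+1}_{g^\eps}\ge\pm\kappa N\,g^\eps$$
on timelike (resp.\ all) vectors, with $\Phi^\eps(t,r)=\log(f(t)^{N-1}h^\eps(r))$. The task is therefore to transfer this bound from $Y$ to $Y^\eps$.

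The crucial structural observation is that $\n^\eps$ arises from the original reference measure by an $(N+1)$-th power mean convolution in the fiber direction. Writing $G(t,r):=f(t)^{N-1}h(r)$, a direct computation gives
$$\bigl(f(t)^{N-1}h^\eps(r)\bigr)^{1/(N+1)}=\int_\R G(t,r-y)^{1/(N+1)}\phi_\eps(y)\,dy,$$
and the exponent $1/(N+1)$ matches the effective dimension of $Y$. Moreover, for every $y\in\mathrm{supp}\,\phi_\eps\subseteq[-\eps,\eps]$, the translation $T_y\colon(t,r)\mapsto(t,r-y)$ is an isometry of the warped semi-Riemannian structure (since $f$ depends only on $t$), so the translated measured space $({}^\pm\!I\times_f[a+\eps,b-\eps],\,f^{N-1}h(\cdot-y)\,dt\otimes dr)$ isometrically embeds into $Y$ as an open subset and therefore satisfies the same curvature-dimension condition with the same constants.

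I would then invoke the stability of $\CD(\kappa N,N+1)$ and $\tCD_p(\kappa N,N+1)$ under $(N+1)$-th power mean convolutions of the reference density, a principle that underlies the mollification techniques in the localization literature (cf.\ \cite{CM:17b,CM:21} and Remark \ref{Re:properties of CD(K,N)-densities and logarithmic convolutions}). In the smooth category this can be verified in two ways. First, by computing $\Ric^{\Phi^\eps,N+1}_{g^\eps}$ via the warped-product Ricci formulas from Section \ref{sec:wpsemiriem}, isolating the $\Phi^\eps$-dependent terms and controlling them through H\"older-type inequalities applied to the power-mean representation, using that $f$ itself is unchanged by the mollification. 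Second, by constructing admissible (timelike) geodesic plans on $Y^\eps$ as $\phi_\eps(y)\,dy$-averages of geodesic plans on the translated spaces, and then applying the pointwise CD inequality (Remark \ref{Rem: Cd rel}(iv) in the metric case, Remark \ref{Rem: TCD rel}(iv) in the Lorentzian case) fiberwise before aggregating via the $(N+1)$-power Jensen inequality.

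The main obstacle will be rigorously justifying this ``power-mean stability'' step uniformly in both signatures. In the Lorentzian case one must additionally check that strong timelike $p$-dualizability of endpoint measures on $Y^\eps$ lifts to dualizability of their translated counterparts on $Y$, which should follow from the compatibility of $T_y$ with the chronological relation; and one must appeal to the pointwise characterization of $\tCD_p$ under timelike essential non-branching. Once the stability step is in place, the statements for $\CD$ and $\tCD_p$ follow by the same argument with the appropriate sign convention.
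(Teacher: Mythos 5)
Your plan identifies the right ingredients (the pointwise characterisation of $\CD$/$\tCD_p$, the power-mean structure of $h^\eps$, and fiber-translation invariance of the warped metric), but it misses two observations that make the paper's proof close, and the gap you flag as the ``main obstacle'' is exactly the content that needs to be supplied.

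First, there is no need to ``construct admissible geodesic plans on $Y^\eps$ as $\phi_\eps(y)\,dy$-averages of geodesic plans on the translated spaces.'' The metric (respectively Lorentzian) structure of $Y^\eps$ is identical to that of $Y$ restricted to $I\times(a+\eps,b-\eps)$; only the reference density changes. Consequently the $\ell_p$-geodesic $(\mu_s)_s$ between two given marginals, and the optimal geodesic plan $\bdpi$, are literally the same object in $Y$ and in $Y^\eps$. There is nothing to average, and averaging translated plans would anyway not produce an optimal plan in general. Second, the aggregation step is not a Jensen (or H\"older) inequality but an exact algebraic identity, and this is precisely why the convolution is taken at the exponent $1/(N+1)$. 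Writing $\zeta^\eps_s$ for the density of $\mu_s$ with respect to $\n^\eps$ and $\zeta^y_s$ for its density with respect to the $y$-translated reference measure $f(t)^N h(\cdot-y)\,\dint t\,\dint r$, one has
\[
\zeta^\eps_s(\gamma_s)^{-\frac{1}{N+1}}
= \rho_s(\gamma_s)^{-\frac{1}{N+1}} f(\alpha_s)^{\frac{N-1}{N+1}} \int_\R h(\beta_s-y)^{\frac{1}{N+1}}\phi_\eps(y)\,\dint y
= \int_\R \zeta^y_s(\gamma_s)^{-\frac{1}{N+1}}\phi_\eps(y)\,\dint y,
\]
with equality. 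For each fixed $y$ the translated space is isometric to a subdomain of $Y$ and therefore inherits the pointwise $\tCD_p(\kappa N,N+1)$ (or $\CD$) inequality for $\zeta^y_s$ along $\bdpi$-a.e.\ geodesic. Integrating that pointwise inequality against $\phi_\eps(y)\,\dint y$ then yields the desired inequality for $\zeta^\eps_s$ directly, with no further inequality needed. Once you recognise that the plan is unchanged and the convolution exponent makes the $(-\tfrac{1}{N+1})$-power of the mollified density an ordinary linear average of the translated ones, the ``power-mean stability'' you were worried about reduces to a two-line calculation; the Bakry--\'Emery route you mention as an alternative would also work but is strictly more laborious.
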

\begin{proof} We only present the proof for the Lorentzian case, i.e., $Y^\varepsilon={^-}\!I\times^{N}_f (a+\eps,b-\eps)$, because the Riemannian case works in an analogous manner (in fact, even simpler). 

Considering $Y^\varepsilon$, the underlying Lorentzian manifold ${^-}\!I\times_f (a+\eps,b-\eps)$  is smooth and $Y^{\eps'}\subseteq Y^\eps$ if $\eps\in (0, \eps')$.  The Lorentzian reference measure is $f(t)\,\dint t \otimes\dint r$. Given $(\mu_0,\mu_1)\in\mathscr{P}^\mathrm{ac}_{\textnormal{c}}(Y)$ $p$-dualizable by an optimal geodesic plan $\boldsymbol{\pi}\in\mathrm{OptTGeo}(Y)$, denote the $\ell_p$-geodesic between $\mu_0$ and $\mu_1$ by $(\mu_s)_{s\in[0,1]}$. We assume $\spt \mu_s\subseteq I\times_f (a+\eps, b-\eps)$. Then by \cite{McC:20} for $\mu_s\ll\n$ with $\mu_s= \zeta_s \n$ for $\boldsymbol{\pi}$-a.e.\ $\gamma=(\alpha,\beta)$ it holds that
\begin{align*}
    \zeta_s(\gamma_s)=f(\alpha_s)^{-(N-1)}h(\beta_s)^{-1}\rho_s(\alpha_s,\beta_s)
\end{align*}
where $\rho_s(T_s(x))=\det DT_s(x)^{-1} \rho_0(x)$ ($s\in [0,1]$), is the density of $\mu_s$ w.r.t.\ $f(t)\,\dint t\otimes \dint r$, and $T_s$ is the optimal transport map between $\mu_0$ and $\mu_s$, i.e., $(T_s)_\sharp\mu_0=\mu_s$, which exists due to \cite{McC:20}.

Now the key observation is that the $\ell_p$-geodesic between $\mu_0$ and $\mu_1$  in $Y^\eps$ is  $\mu_s$, independently of $\varepsilon$. Replacing $h$ with $h^\eps$ only changes the density {$\zeta_s$}, since the metric structure of $Y^\eps$ is the same as the one of $Y$. More precisely, the density $\zeta^\eps_s$ of $\mu_s$ w.r.t.\ $\n^\eps$ is
\begin{align}
    \zeta_s^\eps(\gamma_s)=f(\alpha_s)^{-(N-1)}h^\eps(\beta_s)^{-1}\rho_s(\alpha_s,\beta_s).
\end{align}
Then we are left to prove that $Y^\eps$ is $\tCD_p(\kappa N,N+1)$.

Recall that for timelike essentially nonbranching spaces, due to \cite[Thm.\ 3.35, Rem.\ 3.36 and Thm.\ 3.41]{Bra:23b}, for every $\n$-absolutely continuous pair $(\mu_0,\mu_1)=(\zeta_0\n,\zeta_1\n)$ timelike $p$-dualized by $\boldsymbol{\pi}$,
and denoted by $\mu_s=\zeta_s\n$ their optimal $\ell_p$-geodesic, $\tCD_p(\kappa N,N+1)$ is equivalent for $\boldsymbol{\pi}$-a.e.\ $\gamma$ to
$$\zeta_s(\gamma_s)^{-\frac{1}{N+1}}\geq\tau_{\kappa N,N+1}^{(1-s)}(\uptau(\gamma_0,\gamma_1))\zeta_0(\gamma_0)^{-\frac{1}{N+1}}+\tau_{\kappa N,N+1}^{(s)}(\uptau(\gamma_0,\gamma_1))\zeta_1(\gamma_1)^{-\frac{1}{N+1}}.$$
From our assumption
that $Y$ is $\tCD_p(\kappa N,N+1)$ we get for $\boldsymbol{\pi}$-a.e.\ $\gamma_s=(\alpha_s,\beta_s)$
\begin{align*}
    \bigg[h^\eps(\beta_s)^{-1}&f(\alpha_s)^{-(N-1)}\rho_s(\alpha_s,\beta_s)\bigg]^{-\frac{1}{N+1}}\\
    &=\left[\left(\int_\R h(\beta_s-y)^{\frac{1}{N+1}}\phi_\eps(y)\,\dint y\right)^{-(N+1)}f(\alpha_s)^{-(N-1)}\rho_s(\alpha_s,\beta_s)\right]^{-\frac{1}{N+1}}\\
    &=\left[\left(\int_\R\left(h(\beta_s-y)^{-1}f(\alpha_s)^{-(N-1)}\rho_s(\alpha_s,\beta_s)\right)^{-\frac{1}{N+1}}\phi_\eps(y)\,\dint y\right)^{-(N+1)}\right]^{-\frac{1}{N+1}}\\
    &\geq\int_\R\Big[\tau_{\kappa N,N+1}^{(1-s)}(\uptau(\gamma_0,\gamma_1))\left(h(\beta_0-y)^{-1}f(\alpha_0)^{-(N-1)}\rho_0(\alpha_0,\beta_0)\right)^{-\frac{1}{N+1}}\\
    &\hspace{2cm}+\tau_{\kappa N,N+1}^{(s)}(\uptau(\gamma_0,\gamma_1))\left(h(\beta_1-y)^{-1}f(\alpha_1)^{-(N-1)}\rho_1(\alpha_1,\beta_1)\right)^{-\frac{1}{N+1}}\Big]\phi_\eps(y)\,\dint y\\
    &=\tau_{\kappa N,N+1}^{(1-s)}(\uptau(\gamma_0,\gamma_1))f(\alpha_0)^\frac{N-1}{N+1}\rho_0(\alpha_0,\beta_0)^{-\frac{1}{N+1}}\int_\R h(\beta_0-y)^{\frac{1}{N+1}}\phi_\eps(y)\,\dint y\\
    &\hspace{2cm}+\tau_{\kappa N,N+1}^{(s)}(\uptau(\gamma_0,\gamma_1))f(\alpha_1)^\frac{N-1}{N+1}\rho_1(\alpha_1,\beta_1)^{-\frac{1}{N+1}}\int_\R h(\beta_1-y)^{\frac{1}{N+1}}\phi_\eps(y)\,\dint y\\
    &=\tau_{\kappa N,N+1}^{(1-s)}(\uptau(\gamma_0,\gamma_1))\left(h^\eps(\beta_0)^{-1}f(\alpha_0)^{-(N-1)}\rho_0(\alpha_0,\beta_0)\right)^{-\frac{1}{N+1}}\\
    &\hspace{2cm}+\tau_{\kappa N,N+1}^{(s)}(\uptau(\gamma_0,\gamma_1))\left(h^\eps(\beta_1)^{-1}f(\alpha_1)^{-(N-1)}\rho_1(\alpha_1,\beta_1)\right)^{-\frac{1}{N+1}},
\end{align*}
which shows the displacement convexity for  $(\mu^\eps_s)_{s\in[0,1]}$ in $Y^\eps$. Since $\mu_0, \mu_1\in \mathscr{P}^{\mathrm{ac}}_{\textnormal{c}}(Y)$ with $\spt\mu_s$ in $Y^\eps$ are arbitrary, $\tCD_p(\kappa N, N+1)$ for $Y^\eps$ follows. 

If we instead assume that $Y$ satisfies $\CD(\kappa N,N+1)$ in place of its timelike counterpart, up to replacing $\uptau(\gamma_0,\gamma_1)$ with $\met(\gamma_0,\gamma_1)$ and not assuming timelike $p$-dualizability of $(\mu_0,\mu_1)\in\mathscr{P}^\mathrm{ac}_{\textnormal{c}}(X)$ because of the absence of causality, we get that $Y^\eps$ satisfies $\CD(\kappa N,N+1)$ too.
\end{proof}

\begin{cor}[2-dimensional case, from $Y$ to $(a,b)$ with {$h\in C^0([a,b])$}]\label{Pr:from Y to X 2 dim and h semi-concave}
     We assume that $h:~[a,b]\rightarrow [0, \infty)$ is in $C^0([a,b])$, $f:{}^\pm\! I \rightarrow [0, \infty)$ is smooth, and 
     $\left(^\pm \!I\times_f[a,b], f(t)^N h(r)\,\dint t\otimes\dint r\right)$
     satisfies 
     \begin{enumerate}
         \item[(+)] $\CD(\kappa N, N+1)$ \item[(--)] $\tCD_p(-\kappa N, N+1)$, $p\in (0,1)$.
         \end{enumerate} 
         If we consider $I={^+}\!I$, then we assume additionally $\partial I\neq \emptyset$. Then it still follows that 
\begin{enumerate}
    \item $f''\pm \kappa f\leq 0$, 
    \item \begin{enumerate}
        \item[(+)]
    $([a,b], h(r)\,\dint r)$ satisfies $\CD((N-1)\eta, N)$ with $\eta= \sup_I\left\{ (f')^2 + \kappa f^2\right\}$.
    \item[(--)] 
    $([a,b], h(r)\,\dint r)$ satisfies $\CD((N-1)\eta, N)$ with $\eta= \sup_I\left\{ -(f')^2 + f'' f\right\}$.
    \end{enumerate}
\end{enumerate}
    In particular $h$ is semi-concave on $[a,b]$.
\end{cor}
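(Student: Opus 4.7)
The plan is to approximate the $L^\infty_\loc$ density $h$ by the smooth power-like convolutions $h^\varepsilon$ from Definition \ref{Le:power-convolutions} and reduce to the smooth case already established in Lemma \ref{Le:from Y to X 2 dim and h smooth}. By Remark \ref{rem:pwc}, each $h^\varepsilon$ is smooth on $[a+\varepsilon,b-\varepsilon]$, uniformly locally bounded, and converges to $h$ in $L^p_\loc((a,b))$; after extracting a subsequence one also has pointwise a.e.\ convergence on $(a,b)$.

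First I would invoke Lemma \ref{Le: Yeps} to transfer the curvature-dimension assumption from $Y$ to the smooth regularized generalized cone
\[
Y^\varepsilon = {}^\pm I\times_f^N[a+\varepsilon,b-\varepsilon], \quad \text{endowed with } f^N h^\varepsilon\,\dint t\otimes\dint r.
\]
Because $Y^\varepsilon$ is smooth, Lemma \ref{Le:from Y to X 2 dim and h smooth} applies and yields, for every $\varepsilon>0$: (i) $f''\pm\kappa f\leq 0$, which is independent of $\varepsilon$ and already establishes the first bullet point; and (ii) that $([a+\varepsilon,b-\varepsilon],\,h^\varepsilon(r)\,\dint r)$ satisfies $\CD((N-1)\eta,N)$ with the same $\eta=\sup_I\{\pm(f')^2+\kappa f^2\}$.

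By Remark \ref{Re:properties of CD(K,N)-densities and logarithmic convolutions}(iv), conclusion (ii) is equivalent to $h^\varepsilon$ being a $\CD((N-1)\eta,N)$-density, which by Remark \ref{rem:thefollowingremark} is in turn equivalent to the distributional differential inequality
\[
\bigl[(h^\varepsilon)^{\frac{1}{N-1}}\bigr]'' + \eta\,(h^\varepsilon)^{\frac{1}{N-1}} \leq 0 \quad\text{on } (a+\varepsilon,b-\varepsilon).
\]
The key step is then to pass this inequality to the limit $\varepsilon\downarrow 0$. The pointwise a.e.\ convergence $h^\varepsilon\to h$ together with the uniform local bound implies pointwise a.e.\ convergence of $(h^\varepsilon)^{1/(N-1)}$ to $h^{1/(N-1)}$, and dominated convergence against any fixed $\varphi\in C_c^\infty((a,b))$ (supported in $(a+\varepsilon,b-\varepsilon)$ for small enough $\varepsilon$) transfers the inequality to $h^{1/(N-1)}$ on $(a,b)$. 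Hence $h$ itself is a $\CD((N-1)\eta,N)$-density and, by Remark \ref{Re:properties of CD(K,N)-densities and logarithmic convolutions}(iv), $([a,b],h(r)\,\dint r)$ satisfies $\CD((N-1)\eta,N)$, which is the second bullet. The ``in particular'' claim about semi-concavity on $[a,b]$ and continuity on $(a,b)$ then follows from parts (ii) and (iii) of the same remark.

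The only genuine obstacle is this limiting step: ensuring the low regularity of $h$ (merely $L^\infty_\loc$) does not spoil the distributional inequality. The uniform local bound on $h^\varepsilon$ together with continuity of $t\mapsto t^{1/(N-1)}$ on $[0,\infty)$ reduces this to a routine application of dominated convergence, but it is precisely here that the drop in regularity from the smooth lemma is felt.
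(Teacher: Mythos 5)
Your overall strategy is the same as the paper's: use Lemma \ref{Le: Yeps} to transfer the curvature-dimension condition to the smooth regularizations $Y^\eps$, apply Lemma \ref{Le:from Y to X 2 dim and h smooth} to get the conclusions for $h^\eps$, and pass to the limit $\eps\downarrow 0$. The point where you diverge is the limit passage, and it is precisely there that a gap appears.

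You pass to the limit by testing the distributional inequality
$\bigl[(h^\eps)^{1/(N-1)}\bigr]''+\eta\,(h^\eps)^{1/(N-1)}\le 0$
against fixed nonnegative $\varphi\in C_c^\infty((a,b))$ and invoking dominated convergence. This is valid and yields that $h^{1/(N-1)}$, viewed as an $L^\infty_{\loc}$ equivalence class, satisfies the distributional inequality on $(a,b)$. However, you then conclude directly that ``$h$ itself is a $\CD((N-1)\eta,N)$-density'', and this step is not justified. The $\CD(K,N)$-density notion of Remark \ref{rem:thefollowingremark} (and the equivalence in Remark \ref{Re:properties of CD(K,N)-densities and logarithmic convolutions}(iv)) is a \emph{pointwise} concavity-type statement along geodesics, which is meaningless for an $L^\infty_{\loc}$ function without first fixing a representative; the remark that upgrades the distributional inequality \eqref{kuode} to the integrated inequality \eqref{kuconcave} requires upper semi-continuity. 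In other words, before you may invoke the $\CD$-density characterization, you must exhibit a canonical (continuous, semi-concave) representative of $h$ on $(a,b)$ and then extend it to $[a,b]$. This is exactly the work the paper's proof does: it rewrites the $\eps$-inequality in its integrated (concavity) form, passes to the limit at triples of points in the full-measure set $\mathcal D$ where $h^\eps\to h$ pointwise, deduces local Lipschitz continuity on $\mathcal D$, and then extends to a continuous representative $\tilde h$ by density; boundary values at $a,b$ are assigned by $\liminf$. Your distributional route can be salvaged (a locally bounded distribution $g$ with $g''\le -\eta g$ has $g''$ a signed measure, hence $g$ has a semi-concave representative), but that argument is missing from your write-up, and without it the transition to ``$\CD$-density'' and the concluding semi-concavity/continuity claims are not established.
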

\begin{proof}
    From \Cref{Le: Yeps} we know that $Y^\eps$ satisfies $\CD(\kappa N,N+1)$, respectively $\tCD_p(\kappa N,N+1)$. We can consider the two cases simultaneously. 
    
    By construction $Y^\eps$ is smooth, so we can apply Lemma \ref{Le:from Y to X 2 dim and h smooth} directly. 
    More specifically, from  \Cref{Le:from Y to X 2 dim and h smooth}, formula (i) there, we  get
    $$f''\pm\kappa f\leq0$$
    which does not depend on $\varepsilon$. 

    Also by Lemma \ref{Le:from Y to X 2 dim and h smooth}
    the densities $h^\eps$ are $\CD(\eta(N-1),N)$-densities {on $(a+\eps, b-\eps)$}, i.e.,
    \begin{align}\label{aboveabove}
    \left(\left(h^\eps\right)^{\frac{1}{N-1}}\right)''+\eta\left(h^\eps\right)^\frac{1}{N-1}\leq0
    \end{align} 
    where $\eta$ is given as in Lemma \ref{Le:from Y to X 2 dim and h smooth} depending on the signature of ${}^\pm I$.

In particular, thanks to Remark \ref{Re:properties of CD(K,N)-densities and logarithmic convolutions},(ii), $h^\eps$ is semi-concave. The remaining claims follow from \cite[Lem.\ 6.2]{CM:17b}.
\end{proof}
We want to further generalize the previous result. First we prove the following lemma. 
\begin{lem}\label{lem:2Dmcp}
     Let  $([a,b], \m)$ be a metric measure space where $\m$ is a Radon measure with $\spt \m=~[a,b]$ and assume $f:{}^\pm\! I \rightarrow [0, \infty)$ is smooth with $\partial I = f^{-1}(\{0\})$. Assume that 
     $$\left(^\pm \!I\times_f[a,b], f(t)^N \dint t\otimes \m\right)$$
     satisfies $\mcp(\kappa N, N+1)$ or $\tmcp(N\kappa, N+1)$, respectively.  Then it  follows that $\m\ll\mathcal{L}^1$ with
  $\m= h(r)\,\dint r$, where $h \in C^0([a,b])$.
\end{lem}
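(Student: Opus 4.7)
The plan is to extract from the $(T)MCP$ a self-improving ``fiber contraction'' inequality on $\m$ by combining it with fiber independence of midpoint geodesics, then derive absolute continuity via a Fubini argument and the $L^\infty_\loc$-bound via a Lebesgue-point argument.

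Fix interior points $t_0 < t_1 \in I$ and a point $r_0 \in (a,b)$, set $y_1 := (t_1, r_0)$, and for small $\delta > 0$ and a Borel set $B \subseteq (a,b)$ near $r_0$ (chosen so that $(t,r) \ll y_1$ for all $t \in (t_0-\delta, t_0+\delta)$ and $r \in B$ in the Lorentzian case; no such restriction is needed in the Riemannian case), form $A := (t_0-\delta, t_0+\delta) \times B$ and $\mu_0 := \n(A)^{-1}\,\n\llcorner A$, where $\n := f^N\,\dint t \otimes \m$. Applying $\tmcp^+$ (respectively $\mcp$) with $(\mu_0, \delta_{y_1})$ produces a geodesic plan $\bdpi$; combining the resulting entropy inequality with Jensen's inequality on the concave function $x \mapsto x^{1-1/(N+1)}$ yields the volume estimate
\[
 \n\bigl(\supp(\e_{1/2})_\sharp\bdpi\bigr) \;\geq\; c_0^{N+1}\,\n(A),
\]
where $c_0 > 0$ is a uniform lower bound for $\tau^{(1/2)}_{\cdot,\,N+1}(\varrho(\gamma_0,y_1))$ (with $\varrho = \uptau$ or $\varrho = \met$ as appropriate) on $\supp\bdpi$. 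By fiber independence (Theorem \ref{th:fiber} in the Lorentzian case; \cite[Thm.~3.1]{AB:98} in the Riemannian case), every $\gamma = (\alpha,\beta) \in \supp\bdpi$ has $\beta$ the constant-speed linear parametrization from $\beta(0) \in B$ to $r_0$, so $\beta(1/2) = \tfrac{\beta(0)+r_0}{2}$; consequently $\supp(\e_{1/2})_\sharp\bdpi \subseteq I_{1/2} \times \tfrac{B+r_0}{2}$ for some time interval $I_{1/2}$ of length $O(\delta)$ near $\tfrac{t_0+t_1}{2}$. Substituting $\n = f^N\,\dint t \otimes \m$, both sides are $O(\delta)$; dividing and letting $\delta \downarrow 0$ yields the fiber contraction
\[
 \m\!\left(\tfrac{B+r_0}{2}\right) \;\geq\; c(r_0)\,\m(B),
\]
with $c(r_0) > 0$ depending continuously on $r_0$.

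Now suppose $B \subseteq (a,b)$ has $\mathcal L^1(B) = 0$ and (by covering) lies in a small neighborhood of some point. Integrating the fiber contraction over $r_0$ in an open set $V$ on which it is valid,
\[
 \int_V \m\!\left(\tfrac{B+r_0}{2}\right) \dint r_0 \;\geq\; \m(B)\int_V c(r_0)\,\dint r_0.
\]
By Fubini the left-hand side equals $\int_{(a,b)} \mathcal L^1\bigl((2r-B)\cap V\bigr)\,\dint\m(r) = 0$ since $\mathcal L^1(2r-B) = \mathcal L^1(B) = 0$; hence $\m(B) = 0$ and therefore $\m \ll \mathcal L^1$. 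Write $\m = h\,\dint r$. For the $L^\infty_\loc$-bound, substituting $\m = h\,\dint r$ into the fiber contraction and using the arbitrariness of $B$ yields the pointwise inequality $h\!\left(\tfrac{r+r_0}{2}\right) \geq 2c(r_0)\,h(r)$ for $\mathcal L^1$-a.e.\ $r$. Pick any Lebesgue point $r^\ast \in (a,b)$ of $h$ with $h(r^\ast) < \infty$ (which exists since $h \in L^1_\loc$); for $r$ near $r^\ast$, setting $r_0 := 2r^\ast - r$ gives $h(r) \leq h(r^\ast)/(2c(2r^\ast - r))$, bounded for $r$ in a neighborhood of $r^\ast$ by continuity of $c$. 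Varying $r^\ast$ throughout $(a,b)$ yields $h \in L^\infty_\loc((a,b))$.

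The main obstacle is securing a uniform positive lower bound for $c(r_0)$ as $r_0$ ranges over an open set, which requires a judicious choice of $t_0, t_1, \delta$ so that the causal (respectively minimizing geodesic) configuration involving $A$ persists for every $r_0 \in V$, together with uniform control of $\varrho(\gamma_0, y_1)$; this is routine in the interior of the cone but needs additional care near $\partial I = f^{-1}(\{0\})$, where $f$ vanishes and the cone degenerates.
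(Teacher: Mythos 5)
The plan is interesting and the volume estimate from $\tmcp$ (respectively $\mcp$) via Jensen's inequality is correct, but there is a genuine gap in the fiber‑contraction step. You assert that for a $\uptau$‑arclength (respectively constant‑speed) geodesic $\gamma=(\alpha,\beta)$ in $\I\times_f[a,b]$ (resp.\ $I\times_f[a,b]$) from $(t,r)$ to $(t_1,r_0)$, the fiber component satisfies $\beta(1/2)=\tfrac{\beta(0)+r_0}{2}$. This is false unless $f$ is constant. By Theorem \ref{th:fiber}(v) (resp.\ \cite[Thm.~3.1]{AB:98}(iii)), the fiber speed $v_\beta$ is proportional to $1/(f\circ\alpha)^2$ along an arclength‑parametrized geodesic, so $\beta$ is a \emph{non‑affine} reparametrization of the straight segment from $r$ to $r_0$. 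The midpoint of the geodesic in fiber coordinates is $\beta(1/2)=(1-\sigma)\,r+\sigma\,r_0$ where $\sigma=\sigma(t_0,t_1,|r-r_0|)\in(0,1)$ is some nontrivial function that is generically $\neq 1/2$. Your Fubini step is hard‑wired to the arithmetic mean (you change variables using $r_0\in 2r-B$), so this step does not go through as written. The argument is probably salvageable by replacing $\tfrac{B+r_0}{2}$ with the image of $B$ under the actual smooth midpoint map $r\mapsto\beta_r(1/2)$ and by checking that, for fixed $r$, the map $r_0\mapsto$ ``the unique $r'$ whose geodesic to $(t_1,r_0)$ has $\beta(1/2)=r$'' has Lipschitz inverse so that $\mathcal L^1$‑null sets pull back to $\mathcal L^1$‑null sets — but that monotonicity/smoothness analysis is exactly the nontrivial content you have elided. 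There is also a smaller issue you acknowledge but do not resolve: the sets $A_{1/2}$ (before $\delta\downarrow 0$) are unions over $t\in(t_0-\delta,t_0+\delta)$ of curves, so the inclusion $A_{1/2}\subseteq I_{1/2}\times(\text{fiber set})$ and the $O(\delta)$ cancellation need a uniform estimate on $|\partial\alpha/\partial t|$ and $|\partial\beta/\partial t|$ at time $1/2$.

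The paper's route is quite different. In the Riemannian (${}^+I$) case, the authors simply invoke \cite[Prop.~3.1]{Han:20}, which asserts absolute continuity of MCP‑measures with respect to Lebesgue in two dimensions. In the Lorentzian case, they fix a point $p$ and perform a needle (disintegration) decomposition of $I^+(p)$ along the geodesics induced by $\uptau(p,\cdot)$, simultaneously for the two reference measures $f^N\,\dint t\otimes\m$ and $f\,\dint t\otimes\dint r$; each needle has an $L^\infty_\loc$ density by the $\tmcp$. They then run a contradiction argument against a hypothetical singular part $\m|_S$ with $\mathcal L^1(S)=0$, using that smooth geodesics meet the ``vertical'' null set $I\times S$ in a Lebesgue‑null set; finally $h\in L^\infty_\loc$ is read off from $h_q=f^{N-1}h\,\tilde h_q$. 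That argument never needs to parametrize the fiber component explicitly, so it avoids the constant‑speed pitfall your proof runs into.
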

We note that in the positive signature case  \cite[Prop.\ 3.1]{Han:20}  already gives the following: since  $I\times_f [a,b]$  equipped with the measure $f(t)^N \dint t \otimes \m$  satisfies a measure contraction property, it follows from \cite[Prop.\ 3.1]{Han:20} that there exists $\Phi\in L^\infty_{\loc}(I\times_f (a,b))$ such that 
$$f(t)^N\dint t \otimes \m= \Phi(t,r) f(t)\,\dint t \otimes \dint r. $$
In particular $\m= h(r)\,\dint r$ for $\smash{h\in L^\infty_{\loc}((a,b))}$.
\begin{proof} 
We only prove the lemma for  the negative signature case. The positive signature case works along the same lines.


We pick any point $(t,r)=p\in\I\times_f [a,b]$ and consider {the function} $u=\uptau(p,\cdot)$ on $I^+(p)$, the timelike future of $p$. The spacetime $\I\times_f [a,b]$ can be endowed with the measure $f(t)^N\,\dint t \otimes \m$ or with the measure $f(t)\,\dint t\otimes\dint r$, that is the Lorentzian volume of $^-I\times_f[a,b]$. In both cases  a measure contraction property is satisfied. 

The time separation function is the same in both cases. It gives a measurable decomposition $\{X_q\}_{q\in Q}$ of $I^+(p)$ into timelike geodesics w.r.t.\ $u$. 
This  yields  disintegrations

\begin{align}\label{Eq:emms}
    (f(t)^N\,\dint t\otimes \m)\rvert_{I^+(p)}= \int_Q \m_q \,\dint \mathfrak q(q), \qquad (f(t)\,\dint t\otimes \dint r)\rvert_{I^+(p)}= \int_Q  \tilde \m_q \,\dint \tilde{\mathfrak q}(q),
\end{align}
where $\mathfrak q$ and $\tilde{\mathfrak q}$ are measures on the quotient space $Q$. Moreover, since a measure contraction property holds,  we have $\m_q=h_q(r) \,\dint r$ and $\tilde \m_q= \tilde h_q(r)\,\dint r$ with densities in $C^0([a_q, b_q])$ such that $h_q, \tilde h_q>0$ on $(a_q, b_q)$, see
\cite[Thm.\ 4.18]{CM:24a}. The set $X_q$ is the image of a maximal $1$-speed geodesic $\gamma_q:[a_q, b_q]\rightarrow I^+(p)$ that starts in $p$.  It is also worth noting that $Q$ is a smooth manifold since the underlying space is smooth and $u$ is the distance function to a point, but this fact does not play a role. Because of the same reason $\tilde h_q$ is smooth for all $q$. 

We consider the Lebesgue decomposition of $\m$ w.r.t.\ the Lebesgue measure, that is\\ $\m=~h\mathcal L^1 +~\m^s$.  There exists a measurable set $S\subseteq[a,b]$ such that $\mathcal L^1(S)=0$ and $\m^s= \m|_S$. We claim that $\m(S)=0$.

Assume by contradiction that $\m(S)>0$ and set $C=I\times S$. Then 
$$\int_C f({t})^N\,\dint {t}\, \dint\m>0 \quad \mbox{ but } \quad \int_Cf({t})\,\dint {t} \,\dint \mathcal L^1=0.$$

Recall that up to $\mathfrak q$-null sets the minimal geodesic segment $X_q$ is isometric to an interval with endpoints $a_q, b_q\in \R\cup\{\pm \infty\}$. We denote the relative interior of $X_q$ with ${X}^\circ_q$, i.e., $X_q^\circ \simeq (a_q, b_q)$. 

 It follows that there exists a set $P\subseteq Q$ with $\mathfrak q(P)>0$ and a set $N\subseteq Q$ with $\mathfrak q(N)=0$ such that $\m_q(C)=\m_q(C\cap X_q)>0$, $\m_q=h_q\,\mathcal L^1$  and $X^\circ_q\simeq (a_q, b_q)$ for for all $q\in P\setminus N$. If we pick such $q\in P\setminus N$, then the set $C_q= C\cap X_q^\circ\subseteq(a_q, b_q)$ satisfies $\mathcal L^1(C_q)>0$. The latter property holds since ${\int_{C_q}h_q \,\dint\mathcal L^1= \m_q(C_q)>0}$. 

Since $\mathfrak q(P\setminus N)>0$, we can pick such a $q\in P\setminus N$. 

Moreover, there can be only at most one $q\in P\setminus N$ such that $X_q\subseteq(I\times\{\tilde r\})$ for some $\tilde r\in [a,b]$ since every segment $X_q$ starts in the point $p$.

Since $\m_q(C_q)>0$, it holds necessarily that $C_q= (I\times S) \cap X^\circ_q\neq \emptyset$.  
We can pick one $s\in S$ such that $(I\times\{s\})\cap X^\circ_q\neq \emptyset$. 
A neighbourhood $U(s)= (s-\eta, s+ \eta)\subseteq(a,b)$ of $s$ has the property, that $I\times\{\tilde s\}\cap X_q^\circ\neq \emptyset$ for every $\tilde s\in U(s)$. We distinguish two cases: either $(1)$ each $I\times \{\tilde s\}$ intersects with $X_q^\circ$ only once, or $(2)$ $X_q^\circ \subseteq I\times\{s\}$. These properties pertain to geodesics in smooth spaces.

Let us assume $(1)$. We can define a map $F_s: \tilde s\in U(s)\mapsto X_q^\circ \cap( I\times\{\tilde s\})$.  Notice that the image $F_s(S\cap (s-\eta, s+\eta))$ of $S$ under $F_s$ is exactly the set $(I\times (S\cap (s-\eta, s+\eta))) \cap X_q^\circ$. This map is clearly smooth on $(s-\eta, s+\eta)$. Hence $F_s(S\cap (s-\eta, s+\eta))$ is a $\mathcal L^1$-null set in $X_q^\circ$. We can cover $X_q\cap C$ with a countable number of such sets. Consequently $C_q$ is a $\mathcal L^1$-null set. But this contradicts $\mathcal L^1(C_q)>0$. 

Hence, only $(2)$ happens, i.e., $X_q^\circ \subseteq I\times \{s\}$. In particular, $p\in I\times \{s\}$. But this is a contradiction if we choose $p\notin C$.

Hence there exists a measurable function $h: [a,b]\rightarrow [0,\infty)$ such that $\m=h\,\dint \mathcal L^1$. In particular, by \eqref{Eq:emms} we have $\m_q= f^{N-1}h\, \tilde \m_q$ for $\mathfrak q$-a.e.\ $q$. Therefore $h_q(r)= f^{N-1}(\alpha_q(r))h(\beta_q(r)) \tilde h_q(r)$ for $\mathfrak q$-a.e.\ $q$ where $\gamma_q(r)=(\alpha_q(r), \beta_q(r))$ is a timelike geodesic such that $\Im\gamma_q=X_q$. This is true for an arbitrary disintegration. We also recall that $\beta_q(r)$ is reparametrized geodesic in $[a,b]$. Since $h_q, \tilde h_q\in C^0([a_q, b_q])$, it follows that  $h\in C^0([a,b])$.
\end{proof}
\begin{thm}\label{th:lasttheorem} Let $\kappa\in \R$ and $N>1$.
     Let  $([a,b], \m)$ be a metric measure space for a Radon measure $\m$ with $\spt \m=~[a,b]$ and let $f:{}^\pm\! I \rightarrow [0, \infty)$ be smooth with $\partial I= f^{-1}(\{0\})$. Assume 
     $\left(^\pm \!I\times_f[a,b], f(t)^N \dint t\otimes \m\right)$
     satisfies \begin{enumerate}\item[(+)] $\CD(\kappa N, N+1)$ \item[(--)] $\tCD_p(-\kappa N, N+1)$, $p\in (0,1).$
     \end{enumerate}  If we consider $I={^+}\!I$, then we assume additionally $\partial I\neq \emptyset$. Then it  follows that 
\begin{enumerate}
    \item $f''\pm \kappa f\leq 0$, 
    \item \begin{enumerate}
        \item[(+)]
    $([a,b], h(r)\,\dint r)$ satisfies $\CD((N-1)\eta, N)$ with $\eta= \sup_I\left\{ (f')^2 + \kappa f^2\right\}$.
    \item[(--)] 
    $([a,b], h(r)\,\dint r)$ satisfies $\CD((N-1)\eta, N)$ with $\eta= \sup_I\left\{ -(f')^2 + f'' f\right\}$.
    \end{enumerate}
\end{enumerate}
    In particular, $\m= h(r)\,\dint r$ where  $h$ is semi-concave on $[a,b]$ and continuous on $(a,b)$.
\end{thm}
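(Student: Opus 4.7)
The plan is to reduce the statement to the already established \Cref{Pr:from Y to X 2 dim and h semi-concave}, whose only additional assumption over the present theorem is that the fibre measure be of the form $h(r)\,\dint r$ with $h\in L^\infty_{\loc}((a,b))$. Thus the whole task is to upgrade the abstract Radon measure $\m$ on $[a,b]$ to an $L^\infty_{\loc}$ density; the differential inequality on $f$ and the $\CD((N-1)\eta,N)$ condition will then be automatic.

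First I would pass from the curvature-dimension hypothesis on $Y={}^\pm\! I\times_f [a,b]$ to the corresponding measure contraction property. In the Riemannian case $\CD(\kappa N,N+1)$ implies $\mcp(\kappa N,N+1)$ by Remark \ref{Rem: Cd rel}(iii); in the Lorentzian case $\tCD_p(-\kappa N,N+1)$ entails $\wTCD_p(-\kappa N,N+1)$ which in turn gives $\tmcp(-\kappa N,N+1)$ by Remark \ref{Rem: TCD rel}(iii). Note that here we do not need essential nonbranching: a smooth semi-Riemannian warped product is automatically (timelike) nonbranching, so the implication from $\tCD_p$ to $\wTCD_p$ to $\tmcp$ is available.

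Having the measure contraction property in hand, I would apply \Cref{lem:2Dmcp} directly (with $\kappa$ possibly replaced by $-\kappa$ in the Lorentzian case, as the proof of that lemma is insensitive to the sign of the curvature parameter). This yields $\m\ll\mathcal L^1$ with density $h\in L^\infty_{\loc}((a,b))$, i.e.
\[
\m = h(r)\,\dint r,\qquad h\in L^\infty_{\loc}((a,b)).
\]
At this point I would invoke \Cref{Pr:from Y to X 2 dim and h semi-concave} verbatim: from the hypothesis that ${^\pm}\! I\times_f[a,b]$ with measure $f(t)^N\,\dint t\otimes h(r)\,\dint r$ satisfies $\CD(\kappa N,N+1)$ or $\tCD_p(-\kappa N,N+1)$, that corollary gives (a) $f''\pm\kappa f\leq 0$, (b) $([a,b], h(r)\,\dint r)$ satisfies $\CD((N-1)\eta,N)$ with $\eta=\sup_I\{\pm(f')^2+\kappa f^2\}$, and (c) $h$ is semi-concave on $[a,b]$ and continuous on $(a,b)$. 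This is exactly the content of the theorem.

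The main obstacle in this plan is the first reduction, namely the fact that the generalised cone $Y$ inherits enough regularity for \Cref{lem:2Dmcp} to apply. Since the underlying manifold ${}^\pm I\times_f(a,b)$ is smooth and $f$ is smooth with $\partial I=f^{-1}(\{0\})$, the base geometry is well-behaved; the only delicate point is the causal/metric analysis at the boundary $\partial I$ and at the endpoints $\{a\},\{b\}$, which is however already handled in the proof of \Cref{lem:2Dmcp} via a disintegration along timelike (resp.\ radial) geodesics issuing from a generic point. Once the absolute continuity of $\m$ with locally bounded density is secured, the remaining statements follow mechanically from \Cref{Pr:from Y to X 2 dim and h semi-concave}, so no new technical difficulty arises.
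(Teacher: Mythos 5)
Your proposal is correct and follows essentially the same route as the paper: derive $\mcp$/$\tmcp$ from the curvature-dimension hypothesis, invoke \Cref{lem:2Dmcp} to conclude $\m=h\,\dint r$ with $h\in L^\infty_{\loc}((a,b))$, and then apply \Cref{Pr:from Y to X 2 dim and h semi-concave}. The paper's own proof is just a one-line statement of exactly this reduction, so your more explicit account (including the chain $\tCD_p\Rightarrow\wTCD_p\Rightarrow\tmcp$ via the cited remarks and the sign bookkeeping in the Lorentzian case) merely fills in the elided details.
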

\begin{proof}
    Since under the assumption of the theorem a curvature-dimension condition always implies a measure contraction property, the corollary is a consequence of the previous lemma and Corollary \ref{Pr:from Y to X 2 dim and h semi-concave}.
\end{proof}

\section{From the fiber to the generalized cone \texorpdfstring{$Y$}{Y}}\label{sec-fib-Y}
Throughout this section we assume that $f:I\rightarrow[0,\infty)$ is smooth and satisfies
\begin{itemize}
    \item $f''- \kappa f\leq 0$, 
    \item $-(f')^2 + \kappa f^2 \leq \eta$. 
\end{itemize}
\begin{rem}
We note that we could replace $\kappa\in \R$ by $-\kappa\in \R$. Then the assumptions would take the 
 equivalent form\begin{itemize}
    \item $f''+\kappa f\leq 0$, 
    \item $(f')^2 + \kappa f^2 \geq -\eta$. 
\end{itemize}
\end{rem}
The main purpose of this section is to prove the following result.

\begin{thm}[From $X$ to ${}^-\!I\times^N_fX$] \label{th:totmcp}
Assume that $(X,\met,\m)$ is an essentially nonbranching metric measure space satisfying $\CD(\eta (N-1), N)$ for $N>1$, and $f: I \rightarrow [0, \infty)$ is smooth as above. Then ${}^-\!I\times_f^N X$, i.e., the generalized cone ${}^-\!I\times_f X$ equipped with the measure $\n:=f(t)^N \,\dint t \otimes\dint\m$, satisfies $\tmcp(-\kappa N, N+1)$. 
\end{thm}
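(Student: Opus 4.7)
I will implement the ``two-dimensional localization'' technique indicated in the introduction. The plan is: classically localize $(X,\met,\m)$ along minimizing geodesics to the chosen fiber point $y_1$ via \Cref{Th:CD1} to obtain a disintegration into $\CD(\eta(N-1),N)$-densities $h_q$ supported on geodesic segments $X_q\subseteq X$; then lift this decomposition to the generalized cone via fiber independence, producing a family of two-dimensional ``strips'' $Y_q:={}^-I\times_f X_q$ which exhaust (up to a $\mathfrak n$-null set) the part of $Y$ relevant for transport toward $x_1=(t_1,y_1)$. On each strip the smooth $2$D model result \Cref{Th:2 dimensional non-smooth case} applies, giving $\tCD_p(-\kappa N,N+1)$ and hence $\tmcp^+(-\kappa N,N+1)$ via \Cref{Rem: TCD rel}(iii). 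Gluing the resulting plans fiber-wise over $q$ produces a $\tmcp^+$ plan on $Y$; the $\tmcp^-$ statement follows by the symmetric localization at the past endpoint (or by time reversal of the cone).

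\textbf{Concrete execution.} Fix $x_1=(t_1,y_1)\in Y$ and $\mu_0=\rho_0\,\mathfrak n$ with $\mu_0(I^-(x_1))=1$. The function $u(y):=\met(y,y_1)$ is $1$-Lipschitz on $X$, and \Cref{Th:CD1} yields a strongly consistent disintegration $\m|_{\mathcal T_u}=\int_Q h_q\,\mathcal H^1|_{X_q}\,\dint\mathfrak q(q)$ where, for $\mathfrak q$-a.e.\ $q$, $X_q$ is a geodesic segment with $y_1\in\overline{X_q}$ and $h_q$ is a $\CD(\eta(N-1),N)$-density verifying the distributional inequality \eqref{kuode} with $K=\eta(N-1)$, by \Cref{rem:thefollowingremark}. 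Essential nonbranching of $X$ together with $\CD(\eta(N-1),N)$ forces $\m(X\setminus\mathcal T_u)=0$, so the product measure decomposes as $\mathfrak n=\int_Q\mathfrak n_q\,\dint\mathfrak q(q)$ with $\mathfrak n_q:=f(t)^N\,\dint t\otimes h_q\mathcal H^1|_{X_q}$ supported on $Y_q$. By the fiber independence \Cref{th:fiber} and \Cref{subsec:con_fiber} the inclusion $Y_q\hookrightarrow Y$ is a Lorentzian isometric embedding, so $\uptau_Y|_{Y_q\times Y_q}=\uptau_{Y_q}$ and any $Y$-timelike maximizer starting in $Y_q$ and ending at $x_1$ remains in $Y_q$. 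Since $f$ satisfies the hypotheses of \Cref{Th:2 dimensional non-smooth case} and $h_q$ satisfies \eqref{kuode}, that theorem applied to each $Y_q$ yields $\tCD_p(-\kappa N, N+1)$, and hence (on this smooth, hence timelike essentially nonbranching, $2$D model) $\tmcp^+(-\kappa N, N+1)$. Disintegrating $\mu_0=\int_Q c_q\,\mu_{0,q}\,\dint\mathfrak q(q)$ compatibly with $\mathfrak n=\int\mathfrak n_q\,\dint\mathfrak q$, for each $q$ I obtain $\boldsymbol\pi_q\in\OptTGeo_p(\mu_{0,q},\delta_{x_1})$ verifying the pointwise bound \eqref{Eq:TMCP+} on $Y_q$. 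Setting $\boldsymbol\pi:=\int_Q c_q\,\boldsymbol\pi_q\,\dint\mathfrak q(q)\in\mathscr P(\TGeo(Y))$ and integrating \eqref{Eq:TMCP+} against $\mathfrak q$, using $\uptau_Y=\uptau_{Y_q}$ on $Y_q$, produces the desired $\tmcp^+$ inequality for $(Y,\mathfrak n)$.

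\textbf{Main obstacle.} The central technical difficulty is the fiber-wise disintegration of the timelike transport geometry of $Y$: I must verify that the $Y$-timelike maximizers toward $x_1$ are, up to $\mathfrak n$-null sets, the disjoint union over $q$ of the timelike maximizers of the strips $Y_q$. This rests entirely on items $(1)$--$(2)$ of \Cref{th:fiber} (the fiber component of any maximizer in $Y$ is a minimizing geodesic in $X$, hence lies in a unique $X_q$ by essential nonbranching of $X$) together with the isometric embedding of \Cref{subsec:con_fiber} that identifies intrinsic and ambient time separations on each strip. Once this identification and the disintegration $\mathfrak n=\int\mathfrak n_q\,\dint\mathfrak q$ are in place, the integration of the pointwise $\tmcp^+$ inequality is routine. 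Minor points still to handle are the distributional nature of \eqref{kuode} when invoking \Cref{Th:2 dimensional non-smooth case} (covered by the semi-concavity and continuity of $h_q$ from \Cref{Re:properties of CD(K,N)-densities and logarithmic convolutions}) and, if $\partial I\neq f^{-1}(\{0\})$, a straightforward restriction to the interior where $f>0$ so that the $2$D theorem applies without its boundary hypothesis.
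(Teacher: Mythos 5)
Your proposal follows the paper's own proof very closely: localize the fiber $X$ along $u=\met(\cdot,y_1)$ via \Cref{Th:CD1}, lift to the two-dimensional strips $Y_q={}^-I\times_f X_q$ by fiber independence, apply \Cref{Th:2 dimensional non-smooth case} on each strip, and glue the fiber-wise plans (this is precisely \Cref{Lem:selection}, \Cref{cor:plans} and \Cref{prop:prop}). The only divergence is a cosmetic one at the very end — the paper integrates the normalization-invariant \emph{pointwise} displacement inequality \eqref{Eq: TCD pointwise} on each smooth slice (establishing $\rho_{s,q}=\rho_s|_{Y_q}$ in Claims~3--4 of the proof of \Cref{prop:prop}), whereas you integrate the already-integrated per-slice $\tmcp^+$ estimate and rely on the normalization constants $c_q$ entering both sides with the same power $c_q^{1-1/N'}$; both work, but be aware that \eqref{Eq:TMCP+} is an integrated inequality, not a pointwise bound as you label it.
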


We first prove a preliminary lemma.
Recall the following set
$$\Geo(X)= \{\bar \beta:[0,1]\rightarrow X: \bar \beta \mbox{ is a constant speed geodesic}\}.$$
The set $\Geo(X)$ is endowed with the topology given by the uniform distance, $\met_\infty$, which is defined as $\smash{\met_\infty(\bar \beta_0, \bar \beta_1)= \sup_{ s\in [0,1]}\met_X(\bar \beta_0(s), \bar \beta_1(s))}$.
\begin{lem}[{{Measurable selection}}]\label{Lem:selection}
Under the same assumptions as \Cref{th:totmcp}, there exists a  measurable map $$\Psi^+: I^+((\bar t, \bar x))\subseteq \I\times_f X\longrightarrow \TGeo(\I\times_fX)$$ such that $\Psi^+(t,x)$ is the unique future-directed timelike geodesic between $(\bar t, \bar x)$ and $(t, x)$ with speed $\uptau((\bar t, \bar x), (t,x))$. 

Similarly, there exists an $\n$-a.e.\ defined measurable map  $$\Psi^-: I^-((\bar t, \bar x))\subseteq \I\times_f X\longrightarrow \TGeo(\I\times_fX)$$ for time reversed geodesics.


\end{lem}
\begin{proof}[Proof of Lemma \ref{Lem:selection}] We prove the statement for $\Psi^+=:\Psi$. 

Since $X$ satisfies the condition $\CD(\eta (N-1), N)$ {and is essentially nonbranching}, for a fixed $\bar x\in X$ there exists an $\m$-almost everywhere defined, measurable map $$\Phi: X\rightarrow \mbox{Geo}(X)$$
 such that $\Phi(x)=: \bar\beta_x$ satisfies $\bar \beta_x(0)=x$ and $\bar \beta_x(1)=\bar x$, and $\bar \beta_x:[0,1]\rightarrow X$ is the unique constant speed geodesic between $x$ and $\bar x$. 
 \smallskip\\
 {\it Remark:}
 We note that by use of a measurable selection theorem   there always exist a measurable map $\Phi$ that assigns to $x\in X$ a geodesic between $\bar x$ and $x$ {\cite[Lem.\ 2.11]{AG:13}, \cite[Thm.\ 6.9.2]{Bog:07b}.} Moreover, provided $X$ is an essentially nonbranching $\CD$ space it  follows from \cite[Corollary 5.4]{CM:17a} that for $\m_X$-a.e.\ $x$ there is exactly one geodesic between $\bar x$ and $x$. This yields $\Phi$ with the claimed properties.
\smallskip

For  $(t,x)\in I^+((\bar t, \bar x))\subseteq I\times_f X$, such that $\Phi(x)=\bar\beta_x$ is defined, we  consider a future-directed timelike geodesic $\gamma=(\alpha, \beta):[0,1]\rightarrow \I\times_f X$ between $(\bar t, \bar x)$ and $(t, x)$, parametrized proportional to {$\uptau$-}arclength, i.e., $\sqrt{\dot \alpha^2 - (f^2 \circ \alpha) v_\beta^2}\equiv \uptau((\bar t, \bar x), (t,x))$.  According to Theorem \ref{th:fiber}  $\beta: [0, 1] \rightarrow X$ is a length minimizer and therefore a pre-geodesic between $\bar x$ and $x$, hence a reparametrization of $\bar \beta_x$. Moreover, $\gamma$ is the unique geodesic between $(\bar t, \bar x)$ and $(t,x)$, since $\bar \beta_x$ is the unique geodesic between $\bar x$ and $x$.

By Theorem \ref{th:fiber},(ii) $\alpha$ only depends on $\bar t, t$ and $d_X(\bar x, x)=L(\beta)$. In particular, if  we consider the generalized cone $\I\times_f [0, \infty)$ and the geodesic $\tilde \gamma=(\tilde \alpha, \tilde \beta)$ between $(\bar t, 0)$ and $(t, d)$, $d:=\met(\bar x, x)$, where $\gamma$ is defined on $[0,1]$ and parametrized proportional to {$\uptau$-}arclength, we have that $\tilde \alpha= \alpha$. The curve $\tilde \beta:[0,1] \rightarrow [0,d]$ is a reparametrization of $s\in [0,1]\mapsto s\cdot d$, and it holds that $\smash{\beta= \bar \beta_x\circ \frac{1}{d} \tilde \beta}$. Moreover the map $\tilde \Psi$ that assigns to the tuple $(t, d)$ the geodesic $\tilde \gamma$ connecting $(\bar t, 0)$ and $(t, d)$ in $\I\times_f [0, \infty)$, is measurable.  We also recall that the length functional $\L$ on $\mbox{Geo}(X)$ is a measurable map. 

Now we define $\Psi: I\times X\rightarrow {\mathrm{T}}\mbox{Geo}({Y})$ as $$\Psi(r,x):=  \left( P_1\Big[\tilde \Psi\Big(r,  \mathcal L(\Phi(x))\Big)\Big], \Phi(x)\circ \frac{1}{d}\tilde \beta\right)$$
where $P_1$ is the projection to the first component. By definition $\Psi(t,x)$ is the geodesic that connects $(\bar t, \bar x)$ with $(t, x)$, defined on $[0,1]$ and parametrized by {$\uptau$-}arclength.  Since $\Psi$ is a composition of measurable maps, together with a reparametrization of $\Phi(x)$ that depends measurable on the parameters, it is a measurable map. 
%
\end{proof}
\begin{cor}\label{cor:plans} Let $(\bar t, \bar x)\in \I\times_f X$ and $\mu$ be an $\n$-absolutely continuous measure with $\spt \mu\subseteq I^+((\bar t, \bar x))$. Then there exists a unique measure $\Pi \in \mathscr P({\mathrm{T}}\mathrm{Geo}(Y))$ such that $(e_1)_\sharp \Pi= \mu$ and $(e_0)_\sharp\Pi= \delta_{(\bar t, \bar x)}$. 

A similar statement holds if $\mu\in I^-((\bar t, \bar x))$. 
\end{cor}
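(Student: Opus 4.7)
The plan is to take $\Pi:=(\Psi^+)_\sharp\mu$ with $\Psi^+$ from Lemma \ref{Lem:selection}, and to derive uniqueness from the fact that the connecting timelike geodesic is $\mu$-almost everywhere unique.

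For existence, Borel measurability of $\Psi^+$ together with $\mu\in\mathscr{P}(Y)$ supported in $I^+((\bar t,\bar x))$ gives $\Pi\in\mathscr{P}(\TGeo(Y))$. By construction, $e_0\circ\Psi^+\equiv(\bar t,\bar x)$ and $e_1\circ\Psi^+$ is the identity on $I^+((\bar t,\bar x))$, hence $(e_0)_\sharp\Pi=\delta_{(\bar t,\bar x)}$ and $(e_1)_\sharp\Pi=\mu$.

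For uniqueness, let $\Pi'\in\mathscr{P}(\TGeo(Y))$ be any competitor. Then $\Pi'$-a.e.\ $\gamma$ starts at $(\bar t,\bar x)$ and ends in $\spt\mu$. Since $\mu\ll\n=f^N\,\dint t\otimes\m$, for $\mu$-a.e.\ endpoint $(t,x)$ the fiber geodesic $\bar\beta_x$ from $\bar x$ to $x$ in $X$ is unique; this is where essentially nonbranching of $X$ together with $\CD(\eta(N-1),N)$ enter, exactly as recorded in the proof of Lemma \ref{Lem:selection}. By fiber independence, Theorem \ref{th:fiber}(\ref{thm-structure-of-geod-fib-ind}), the base component of any timelike geodesic between $(\bar t,\bar x)$ and $(t,x)$ depends only on $(\bar t,t,\met(\bar x,x))$, and the fiber component is the corresponding reparametrization of $\bar\beta_x$. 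Thus the connecting timelike geodesic is unique and equals $\Psi^+(t,x)$. Disintegrating $\Pi'$ through $e_1$ with quotient $\mu$ now forces the $\mu$-a.e.\ conditional to be $\delta_{\Psi^+(t,x)}$, so $\Pi'=(\Psi^+)_\sharp\mu=\Pi$.

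The past-directed case is proved verbatim using $\Psi^-$. Although $\Psi^-$ is only defined $\n$-a.e., the assumption $\mu\ll\n$ makes $(\Psi^-)_\sharp\mu$ unambiguous. The main step is the $\mu$-a.e.\ uniqueness of the connecting geodesic, which combines essentially nonbranching $\CD$ on the fiber with fiber independence in the generalized cone; once this is set up, the remaining uniqueness is a measure-theoretic disintegration.
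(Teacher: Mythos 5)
Your proof takes the same route as the paper's: define $\Pi:=(\Psi^+)_\sharp\mu$ with $\Psi^+$ from Lemma \ref{Lem:selection} and push forward the identities $e_0\circ\Psi^+\equiv(\bar t,\bar x)$, $e_1\circ\Psi^+=\mathrm{id}$. The paper's own proof is two lines and does not spell out uniqueness; your argument fills that in correctly by observing that any competitor $\Pi'$ is forced, via $(e_0)_\sharp\Pi'=\delta_{(\bar t,\bar x)}$ and $\mu\ll\n$, to be concentrated on the graph of $\Psi^+\circ e_1$ (in fact the chain $\Pi'=(\Psi^+\circ e_1)_\sharp\Pi'=(\Psi^+)_\sharp\mu$ already gives this without invoking disintegration). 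The observation that $\mu\ll\n$ is what makes the $\n$-a.e.\ defined $\Psi^-$ usable in the past case is the right one and matches the paper's implicit reasoning.
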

\begin{proof} Let $\Psi$ be the map constructed in the previous lemma. 
We can define the measure $(\Psi)_\sharp \mu=: \Pi$ which does the job. 
\end{proof}
\begin{prop}\label{prop:prop}
Under the same assumptions as \Cref{th:totmcp}, let $(\bar t, \bar x)= \bar p\in  \I\times_f X=Y$ and let $\mu_0$ be an $\n$-absolutely continuous probability measure {with $\supp\mu_0\subseteq I^-((\bar t,\bar p))$}. Let $(\mu_{{s}})_{{s}\in [0,1]}$ be the $\ell_p$-geodesic between $\mu_0$ and  $\delta_{\bar p}=\mu_1$. Then $\mu_s$ is {$\n$-}absolutely continuous with density $\rho_s$ for each ${s}\in [0,1)$ and $\rho_{(1-\eta) s_0 + \eta s_1}$, with $s_0<s_1\in [0,1)$ and $\eta\in [0,1]$, satisfies 
\begin{equation}
    \begin{aligned}\label{ineq:concave}  
         \rho_{\sigma({\eta})}(\gamma_{\sigma({\eta})})^{-\frac{1}{N}}  \geq \tau_{-\kappa N, N+1}^{({1-\eta})}\big({ (s_1-s_0)L(\gamma)}\big) \rho_{s_0}(\gamma_{s_0})^{-\frac{1}{N}}+\tau_{-\kappa N, N+1}^{({\eta})}\big(((s_1-s_0)L(\gamma)\big) \rho_{s_1}(\gamma_{s_1})^{-\frac{1}{N}}
    \end{aligned}
\end{equation}
for $\Pi$-almost every $\gamma\in{\mathrm T\!}\Geo(Y)$ where $\sigma(\eta)=(1-\eta) s_0 + \eta s_1$. The analogous result holds for the time reversed case.
\end{prop}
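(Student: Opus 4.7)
The plan is to reduce the proposition to the two-dimensional model via a slice-by-slice argument built on the one-dimensional localization of $(X,\met,\m)$ combined with the fiber independence (Theorem \ref{th:fiber}) of timelike geodesics in the cone. I first apply Theorem \ref{Th:CD1} to the $1$-Lipschitz function $u:=\met_X(\bar x,\cdot)$ on the essentially nonbranching $\CD(\eta(N-1),N)$-space $X$. This yields a strongly consistent disintegration
\[ \m|_{\mathcal{T}_u}=\int_Q h_q\,\mathcal{H}^1|_{X_q}\,\dint\mathfrak{q}(q), \]
whose rays $X_q$ are (parametrizations of) minimizing geodesics through $\bar x$ and whose densities $h_q$ are $\CD(\eta(N-1),N)$-densities on $\overline{X_q}$. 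Lifting by the product structure gives a disintegration $\n=\int_Q\n_q\,\dint\mathfrak{q}(q)$ of the reference measure with $\n_q:=f(t)^N\,\dint t\otimes h_q\,\mathcal{H}^1|_{X_q}$; via Remark \ref{subsec:con_fiber} each slice $Y_q:=\I\times_f X_q$ embeds isometrically into $Y$ and identifies with the two-dimensional generalized cone $\I\times_f[0,L_q]$ weighted by $f^{N-1}h_q\,\vol_{\I\times_f[0,L_q]}$. By Theorem \ref{Th:2 dimensional non-smooth case} this two-dimensional weighted cone satisfies $\tCD_p(-\kappa N, N+1)$.

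Next I decompose the unique optimal plan $\Pi$ from Corollary \ref{cor:plans} along the same partition. By Theorem \ref{th:fiber}(i)--(ii), every future directed timelike maximizer $\gamma=(\alpha,\beta)$ from a point in $I^-(\bar p)$ to $\bar p$ has $\beta$ a minimizing geodesic in $X$ terminating at $\bar x$, hence (up to an $\m$-null set) contained in a single ray $X_q$; consequently $\gamma\in\TGeo(Y_q)$. Setting $\mathfrak{q}':=(\mathfrak{Q}\circ\prx\circ e_0)_\sharp\Pi$ where $\mathfrak{Q}:\mathcal{T}_u\to Q$ is the quotient map, I obtain $\Pi=\int_Q\Pi_q\,\dint\mathfrak{q}'(q)$ with each $\Pi_q$ supported on $\TGeo(Y_q)$. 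By the slice-version of Corollary \ref{cor:plans}, $\Pi_q$ is the unique timelike optimal plan in $Y_q$ from $\mu_0^q:=(e_0)_\sharp\Pi_q$ to $\delta_{\bar p}$, and correspondingly $\mu_s=\int_Q\mu_s^q\,\dint\mathfrak{q}'(q)$ with $\mu_s^q:=(e_s)_\sharp\Pi_q$.

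Since each slice $Y_q$ is a smooth two-dimensional Lorentzian warped product off the possible degeneracy locus of $f$, it is timelike essentially nonbranching, so the equivalent pointwise formulation of $\tCD_p$ from Remark \ref{Rem: TCD rel}(iv) applies. Writing $\mu_s^q=\rho_s^q\n_q$ for $s\in[0,1)$, it delivers for $\Pi_q$-a.e.\ $\gamma$
\[ \rho_{\sigma(\eta)}^q(\gamma_{\sigma(\eta)})^{-\frac{1}{N+1}}\geq\tau_{-\kappa N,N+1}^{(1-\eta)}\bigl((s_1-s_0)L(\gamma)\bigr)\,\rho_{s_0}^q(\gamma_{s_0})^{-\frac{1}{N+1}}+\tau_{-\kappa N,N+1}^{(\eta)}\bigl((s_1-s_0)L(\gamma)\bigr)\,\rho_{s_1}^q(\gamma_{s_1})^{-\frac{1}{N+1}}. \]
Upgrading to the global density uses mass conservation inside each slice: $\mu_s|_{Y_q}=m(q)\mu_s^q$ where $m(q):=\int\rho_0\,\dint\n_q=\tfrac{\dint\mathfrak{q}'}{\dint\mathfrak{q}}(q)$, and comparing disintegrations gives $\rho_s(\gamma_s)=m(q)\rho_s^q(\gamma_s)$ for $\gamma_s\in Y_q$; multiplying the slice inequality by the common factor $m(q)^{-1/(N+1)}$ and using $\Pi=\int\Pi_q\,\dint\mathfrak{q}'$ yields the stated inequality for $\Pi$-a.e.\ $\gamma$.

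The main obstacle is the compatibility step in the second paragraph: one must carry out the disintegration of $\Pi$ in a Borel way that aligns precisely with the one-dimensional localization of $\m$. This uses fiber independence to confine each timelike geodesic to exactly one slice, the uniqueness in Corollary \ref{cor:plans} (both globally and in each $Y_q$) to identify $\Pi_q$, and the measurable selection of Lemma \ref{Lem:selection} to make the construction globally Borel. Minor technicalities --- the degeneracy locus $f^{-1}(0)$, the $\m$-null set on which the $X$-rays may fail to be unique, and the behaviour as $s\uparrow1$ --- are absorbed by restricting to the transport set $\mathcal{T}_u$ and passing to full-measure subsets of $Q$.
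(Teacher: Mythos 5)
Your proposal is essentially correct and follows the same strategy as the paper's proof: a 1D localization of $X$ using the distance function $u=\met_X(\bar x,\cdot)$, lifting the resulting ``needle'' decomposition $\{X_q\}_{q\in Q}$ to a decomposition $\{Y_q={}^-I\times_f X_q\}_{q\in Q}$ of the cone via fiber independence, establishing the timelike curvature-dimension condition on each smooth two-dimensional slice $Y_q$ by Theorem \ref{Th:2 dimensional non-smooth case}, applying the slice-wise pointwise inequality, and recombining.

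The one substantive difference is a normalization convention. You renormalize the slice conditional measures to probabilities $\mu_s^q$, which introduces the mass factor $m(q)=\dint\mathfrak q'/\dint\mathfrak q(q)$ and requires cancelling $m(q)^{-1/(N+1)}$ at the end. The paper instead disintegrates against the $\m$-quotient $\mathfrak q$ and works directly with the unnormalized conditionals $\mu_{s,q}:=\rho_s\,\n_q$; because the fibers $\{Y_q\}$ partition the plan $\Pi$, one gets that $\rho_{s,q}$ (the $\n_q$-density of $\mu_{s,q}$) is literally the restriction $\rho_s|_{Y_q}$ of the global density (Claim 4 in the paper), so no mass factor ever appears. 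This is cleaner because the pointwise Jacobian inequality on a smooth slice is scale-invariant anyway, so nothing is gained by normalizing; it also means you can replace the abstract disintegration of $\Pi$ with the explicit construction $\Pi_q=(\Psi|_{Y_q})_\sharp\mu_q$ via the selection map of Lemma \ref{Lem:selection}. A small remark: your slice inequality uses the exponent $-\tfrac{1}{N+1}$, which is the standard exponent for the pointwise form of $\tCD_p(\cdot, N+1)$ (Remark \ref{Rem: TCD rel}\,(iv)), while the proposition statement (and the paper's proof) writes $-\tfrac1N$ with the coefficients $\tau_{-\kappa N,N+1}$; this appears to be a consistent slip in the paper's notation, and your choice of exponent is the conventional one. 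Both your argument and the paper's go through with either convention once applied consistently.
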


\begin{proof}
{\bf 1.} 
We consider $u= \met_{\bar x}:= \met(\cdot, \bar x)$ and perform a $1D$ localization   w.r.t.\ $u$. It follows from Theorem \ref{Th:1Dloc} that there exists a measurable partition $\{X_q\}_{q\in Q}$ of  $X$ (more precisely of the  set $\mathcal T^b_u$, but we have $\smash{\m(X\setminus\mathcal T^b_u)=0}$ since  $u=\met_{\bar x}$). 

This partition yields a disintegration $\{\m_q\}_{q \in Q}$ of $\m$ with measures $\m_q$ supported on $X_q$, as well as a corresponding quotient measure $\mathfrak q$ on $Q$. 

Since $X$ satisfies $\CD(\eta(N-1), N)$ we have the the following properties. 
\begin{itemize}
    \item
For $\mathfrak q$-a.e.\ $q$ the closure $\overline X_q$ is the image of a distance preserving map $ \beta_q: [a_q, b_q]\rightarrow X$. In other words $\overline X_q$ is a geodesically convex subset of $X$ that is isometric to $[a_q,b_q]$ via the distance preserving map $ \beta_q: [a_q,b_q]\rightarrow \overline X_q$.  

\item
For $\mathfrak q$-a.e.\ $q$ there exists $ h_q: [a_q,b_q]\rightarrow [0,\infty)$ such that 
$$\m_q= (\beta_q)_{\sharp}\left[ h_q \, \mathcal L^1|_{\scriptscriptstyle [a_q, b_q]}\right].$$
\item
 For $\mathfrak q$-a.e.\ $q\in Q$ the density $h_q: [a_q, b_q]\rightarrow [0, \infty)$ is a continuous function satisfying 
$$\frac{\dint^2}{\dint s^2} h_q^{\frac{1}{N-1}} + \eta h_q^{\frac{1}{N-1}}\leq 0 \ \mbox{ in the distributional sense in } (a_q,b_q).$$ 
In other words $(X_q, \met|_{\scriptscriptstyle X_q\times X_q}, \m_q)$ satisfies the condition $\CD(\eta(N-1), N)$. This follows from Theorem \ref{Th:CD1} and Remark \eqref{rem:thefollowingremark}.
\end{itemize}
\smallskip
{\bf 2.}
By fiber independence $Y_q:= \I \times_f X_q$ embeds isometrically into $\I\times_f X$ and $\{ Y_q\}_{q\in Q}$ is a measurable decomposition of $Y:=\I\times_f X$. Moreover there is a disintegration of the measure $\n=f(t)^N\mathcal\dint t \otimes \m$ w.r.t.\ $\{Y_q\}_{q\in Q}$ into 
$$f(t)^N\dint t\otimes \m_q=: \n_q.$$
The quotient measure is $\mathfrak q$, and  $\mathfrak{n}_q$ is supported on $Y_q$ for $\mathfrak{q}$-almost every $q\in Q$.  Also, $\n_q$ has a continuous density w.r.t.\ $\smash{f(t)\,\dint t\otimes{\mathcal{L}^1|_{[a_q,b_q]}}}$ that is $f(t)^{N-1}h_q({r})= G(t,{\mathfrak{G}(q,r)})$ where $\mathfrak G$ is the ray map that we introduced after Theorem \ref{Th:1Dloc}.

Hence, for $\mathfrak q$-a.e.\ $q$ the subset $Y_q$ equipped with $\n_q$ is the generalized $N$-cone over $([a,b], \m_q)$ w.r.t.\ $\I$ and $f$ and  satisfies the condition $\tCD(\kappa N, N+1)$ by \Cref{Th:2 dimensional non-smooth case}.
\smallskip\\
{\bf 3.}
 Let us consider an $\n$-absolutely continuous measure $\mu:=\mu_0$, i.e., $\mu= \rho \n$ for a measurable function $\rho$.  Let $\Pi$ be the measure given by Corollary \ref{cor:plans}.
 
The disintegration of $\n$ into $\{\n_q\}_{q\in Q}$ yields for $\mathfrak{q}$-a.e.\ $q\in Q$ absolutely continuous measures $\mu_q:= \rho \,\n_q=$ with respect to $\n_q$ such that 
$$ \int_{{Q}}  \mu_q \,\dint\mathfrak q(q)= \int_{{Q}}  \rho \,\n_q \,\dint\mathfrak q(q)= \rho\, \n = \mu.$$
\smallskip
\textbf{Claim 1:} For $\mathfrak q$-a.e.\ ${q\in Q}$ there exists a measure $\Pi_q$ on $\TGeo(Y)$ such that $q\in Q\mapsto \Pi_q(G)$ is a measurable map for every measurable set $G\subseteq \TGeo(Y)$, and such that 
$$\Pi= \int_{{Q}} \Pi_q \,\dint\mathfrak q(q).$$ 
The measure $\Pi_q$ is supported on $\TGeo(Y_q)$, and  is the unique measure  on $\TGeo(Y_q)$ with $(e_0)_\sharp \Pi_q= \mu_q$ and $(e_1)_\sharp \Pi_q=\delta_{\bar x}$.  The measure $\Pi_q$ is given by $(\Psi|_{Y_q})_\sharp \mu_q= \Pi_q$ for $\mathfrak q$-a.e.\ $q\in Q$.
\medskip\\
\textit{Proof of Claim 1}. The map $\Psi$ from \Cref{Lem:selection} is defined $\n$-a.e.\ on $I^+((\bar t, \bar x))$. Hence,  for $\mathfrak q$-a.e.\ $q\in Q$ the map $\Psi$ is defined $\n_q$-a.e.\ on $Y_q\cap I^+((\bar t, \bar x)).$ 
Moreover, by fiber independence $\Psi|_{Y_q} $ maps $Y_q$ to ${\mathrm{T}}\mbox{Geo}(Y_q)$. 
Hence, we can define $\Pi_q$ as in the claim.

We pick a measurable subset $  G\subseteq {\mathrm{T}}\mbox{Geo}(Y)$. It follows that $\Pi_q(G)=\mu_q(\Psi^{-1}(G)\cap Y_q)=\mu_q(\Psi|_{Y_q})^{-1}( G)=\mu_q(\Psi^{-1}(G))$ depends measurably on $q\in Q$, and
\begin{align*} 
\Pi(  G)= \mu(\Psi^{-1}(  G)) = \int_{{Q}}  \mu_q(\Psi^{-1}(  G)\cap Y_q) \,\dint \mathfrak q(q) = \int_{{Q}}  \Pi_q {(G)} \,\dint \mathfrak q(q).
\end{align*}
This gives the claim.
\hfill$\blacksquare$
\smallskip

\noindent
{\bf 4.}
From Proposition \ref{Th:2 dimensional non-smooth case} it follows that the slice $Y_q= I\times_f X_q$ equipped with the measure $f(t)^N\,\dint t \otimes \m_q$ satisfies the condition $\tCD_p(-\kappa N, N+1)$.   

From this and since $Y_q$ is in fact smooth, we infer that $(e_{{s}})_\sharp\Pi_q=\mu_{{s}, q}$ is $\n_q$-absolutely continuous, i.e., $\mu_{{s}, q}= \rho_{{s},q} \,\n_q$, and for $s_0<s_1\in [0,1)$ and $\sigma({\eta})= (1-{\eta})s_0 + {\eta} s_1$ we have
\begin{equation}
\begin{aligned}\label{ineq:tcd2} 
\rho_{\sigma({\eta}), q}(\gamma_{\sigma({\eta})})^{-\frac{1}{N}}\geq \tau_{-\kappa N, N+1}^{(1-{\eta})}(\uptau(\gamma_{s_0},\gamma_{s_1}))\rho_{s_0, q}(\gamma_{s_0})^{-\frac{1}{N}}+\tau_{-\kappa N, N+1}^{({\eta})}(\uptau(\gamma_{s_0},\gamma_{s_1})) \rho_{s_1, q}(\gamma_{s_1})^{-\frac{1}{N}}
\end{aligned}
\end{equation}
for  $\Pi_q$-almost every geodesic $\gamma$. 
\smallskip

We set $(e_s)_\sharp\Pi =: \mu_s$ for $s\in (0,1)$. 
\smallskip\\
\textbf{Claim 2:} It holds that $\int_Q \mu_{{s}, q} \dint \mathfrak q(q)= \mu_{{s}}$.
\smallskip\\
\textit{Proof of Claim 2}. We  compute for any measurable set $A\subseteq   Y$ that 
\begin{align*}
\mu_{{s}}(A)=(e_{{s}})_\sharp\Pi(A)= \Pi(e_{{s}}^{-1}(A))&= \int _{{Q}}\Pi_q(e^{-1}_{{s}}(A)) \,\dint \mathfrak q(q) \\&= \int_{{Q}} \big[(e_{{s}})_\sharp \Pi_q\, {(A)}\big] \,\dint \mathfrak q(q) = \int_{{Q}}\mu_{{s}, q} {(A)} \,\dint \mathfrak q (q).
\end{align*}
Hence
$\int_{{Q}} \mu_{{s}, q}\,\dint \mathfrak q(q)= \mu_{{s}}. $\hfill$\blacksquare$
\smallskip\\
\textbf{Claim 3:} $\mu_s$ is $\n$-absolutely continuous. 
\smallskip\\
\textit{Proof of Claim 3}.  Let us consider a $\n$-null set $N$. Then, $N$ is a $\n_q$-null set for $\mathfrak q$-a.e.\ $q\in Q$. 
It follows that $\mu_{s,q}(N)=0$ for $\mathfrak q$-a.e.\ $q\in Q$, and consequently $\mu_s(N)=0$. 
\hfill $\blacksquare$
\smallskip

Let $\rho_s$ be the density of $\mu_s$ w.r.t.\ $\n$. 
\smallskip\\
\textbf{Claim 4:} $\rho_{{s}, q}= \rho_{{s}}|_{Y_q}$ for $\mathfrak q$-a.e.\ $q\in Q$.
\smallskip\\
\textit{Proof of Claim 4}.
We pick a measurable function $\phi: Y\rightarrow [0,\infty)$. Then
\begin{align*}
\int_Q\int \phi \rho_s\, \dint \n_q\,\dint \mathfrak q(q)= \int \phi\,\dint\mu_{{s}} = \int_{{Q}}\int \phi \,\dint\mu_{{s}, q}\,\dint \mathfrak q(q) &= \int_{{Q}}\int \phi \rho_{{s}, q}\,\dint \n_q \,\dint \mathfrak q(q)
\end{align*}
This finishes the proof of the claim. \hfill $\blacksquare$
\smallskip\\

We pick a measurable set $G$ in ${\mathrm{T}}\mbox{Geo}(Y)$ and integrate \eqref{ineq:tcd2} w.r.t.\ $\Pi_q|_A$ and then w.r.t.\ $\mathfrak q$. It follows that
\begin{align*}
\int_G \rho_{\sigma({\eta})}(\gamma_{\sigma({\eta})})^{-\frac{1}{N}} \,\dint \Pi (\gamma)\geq \int_G\bigg[ \tau_{\kappa N, N+1}^{(1-{\eta})}&(\uptau(\gamma_{s_0},\gamma_{s_1})) \rho_{s_0}(\gamma_{s_0})^{-\frac{1}{N}}\\&+\tau_{\kappa N, N+1}^{({\eta})}(\uptau(\gamma_{s_0},\gamma_{s_1})) \rho_{s_1}(\gamma_{s_1})^{-\frac{1}{N}}\bigg] \dint \Pi(\gamma).
\end{align*}
Since $G$ was arbitrary, this finishes the proof of the theorem. 
\end{proof}
\begin{proof}[Proof of Theorem \ref{th:totmcp}] We will show the condition $\tmcp^-$. $\tmcp^+$ follows analogously.

Let $(\bar t, \bar p)\in Y$ and let $\mu_0$ be an $\n$-absolutely continuous probability measure such that  $\supp \mu_0\subseteq I^-((\bar t, \bar x))$. We specialize \eqref{ineq:concave} to $s_0=0$ and drop the second term to obtain
$$
         \rho_{\eta s_1}(\gamma_{\eta s_1)})^{-\frac{1}{N}}  \geq \tau_{-\kappa N, N+1}^{({1-\eta})}\big({ s_1L(\gamma)}\big) \rho_{0}(\gamma_{0})^{-\frac{1}{N}}
$$
for $\Pi$-a.e.\ $\gamma$ where $\sigma(\eta)= \eta s_1$ and $\eta\in [0,1]$ and $s_1\in (0,1)$.  We integrate this inequality w.r.t.\ $\Pi$ to obtain
$$
\int \rho_{\eta s_1}^{-\frac{1}{N}} \dint \mu_{\eta s_1} \geq \int  \tau_{-\kappa N, N+1}^{({1-\eta})}\big({ s_1L(\gamma)}\big) \rho_{0}(\gamma_{0})^{-\frac{1}{N}} \dint \Pi(\gamma).
$$
Finally, let $s_1 \uparrow 1$. The negative of the left hand side is the Renyi entropy and therefore lower semi-continuous. Moreover, $s_1\mapsto \tau_{-\kappa N, N+1}^{({1-\eta})}\big({ s_1L(\gamma)}\big)$ is continuous. Hence, the right hand side converges as $s_1$ goes to $1$. 
\end{proof}
\medskip
It is clear that our method also gives the corresponding statement for generalized cones as metric measure spaces which is  new in this generality.
\begin{thm}[From $X$ to ${}^+I\times^N_fX$]\label{th:mmsmcp} Assume $f:I\rightarrow[0,\infty)$ is smooth and satisfies
\begin{itemize}
    \item $f''+ \kappa f\leq 0$, 
    \item $(f')^2 + \kappa f^2 \leq \eta$. 
\end{itemize}
Assume that $(X,\met,\m)$ is an essentially nonbranching metric measure space satisfying\\ $\CD(\eta (N-1), N)$ for $N>1$, and $f: I \rightarrow [0, \infty)$ is smooth as above. 

Then $I\times_f^N X$, i.e., the generalized cone $I\times_f X$ equipped with the measure $f(t)^N \,\dint t \otimes\dint\m$, satisfies $\mcp(\kappa N, N+1)$. 
\end{thm}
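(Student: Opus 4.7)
The proof is the verbatim Riemannian transcription of the proof of \Cref{th:totmcp}: replace $\uptau$ by $\met$, timelike geodesics by constant-speed minimizing geodesics, $I^{\pm}((\bar t,\bar x))$ by all of $Y$, and the $\tCD_p(-\kappa N,N+1)$ input from \Cref{Th:2 dimensional non-smooth case} by its $\CD(\kappa N, N+1)$ Riemannian twin from the same theorem. The three essential ingredients — the $1D$-localization of a $\CD(\eta(N-1),N)$ space \Cref{Th:1Dloc}-\Cref{Th:CD1}, metric fiber independence \cite[Thm.\ 3.1]{AB:98}, and the two-dimensional $\CD$ bound of \Cref{Th:2 dimensional non-smooth case} — are all available under the hypotheses of the statement.

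More concretely, fix the target point $\bar p=(\bar t,\bar x)\in Y$ and an $\n$-absolutely continuous probability measure $\mu_0=\frac{1}{\n(A)}\n\llcorner A$. First I would produce the metric analogue of \Cref{Lem:selection}: an $\n$-a.e.\ defined measurable map $\Psi\colon Y\to\Geo(Y)$ sending $(t,x)$ to the (a.e.\ unique) constant-speed geodesic from $(t,x)$ to $\bar p$. This uses $\m$-a.e.\ uniqueness of geodesics from $\bar x$ guaranteed by essential nonbranching of the $\CD(\eta(N-1),N)$ space $(X,\met,\m)$ (via \cite[Cor.\ 5.4]{CM:17a}), together with metric fiber independence, which reduces the base component $\alpha$ to a measurable function of $(\bar t,t,\met_X(x,\bar x))$. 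Next, I apply $1D$-localization to $u=\met_{\bar x}:=\met(\cdot,\bar x)$, producing a measurable partition $\{X_q\}_{q\in Q}$ with quotient $\mathfrak{q}$ and disintegration $\m_q=h_q\,\mathcal H^1|_{X_q}$ where each $h_q$ is a $\CD(\eta(N-1),N)$-density.

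By metric fiber independence each slice $Y_q:=I\times_f X_q$ embeds isometrically into $Y$, the family $\{Y_q\}_{q\in Q}$ partitions $Y$, and $\n=f(t)^N\,\dint t\otimes\m$ disintegrates as $\n=\int_Q\n_q\,\dint\mathfrak q(q)$ with $\n_q=f(t)^N\,\dint t\otimes\m_q$ supported on $Y_q$. By \Cref{Th:2 dimensional non-smooth case} each $(Y_q,\n_q)$ satisfies $\CD(\kappa N,N+1)$; being essentially nonbranching, it also satisfies $\mcp(\kappa N,N+1)$ by \Cref{Rem: Cd rel}(iii). Writing $\mu_0=\rho\n$ and setting $\mu_{0,q}:=\rho\,\n_q$ gives $\mu_0=\int_Q\mu_{0,q}\,\dint\mathfrak q(q)$. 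Pushing forward by $\Psi|_{Y_q}$ produces plans $\Pi_q\in\mathscr{P}(\Geo(Y_q))$ with $(\e_0)_\sharp\Pi_q=\mu_{0,q}$ and $(\e_1)_\sharp\Pi_q=\mu_{0,q}(Y_q)\,\delta_{\bar p}$; they assemble into $\Pi:=\int_Q\Pi_q\,\dint\mathfrak q$, which is an optimal geodesic plan from $\mu_0$ to $\delta_{\bar p}$. Applying the slicewise $\mcp$-inequality \eqref{Eq: MCPO} to $\Pi_q$ and integrating against $\mathfrak q$ — arguing as in Claims 1-4 of the proof of \Cref{prop:prop} to match the disintegrated densities $\rho_{t,q}$ with the restriction of the global density $\rho_t=\dint\mu_t/\dint\n$ — yields $\mcp(\kappa N,N+1)$ for $(Y,\n)$.

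\textbf{Main obstacle.} The only nontrivial point is the consistency of the density identification in the last step: one must check that $\rho_{t,q}=\rho_t|_{Y_q}$ for $\mathfrak{q}$-a.e.\ $q$ so that the integrated slicewise inequality literally reproduces the global $\mcp$-inequality. This is handled exactly as in the proof of \Cref{prop:prop}, testing both sides against continuous bounded functions on $Y$. In the metric setting one entirely sidesteps the subtle $p$-dualisability and causality issues of the Lorentzian argument, so the assembly is if anything simpler than its $\tmcp$ counterpart.
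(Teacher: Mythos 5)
Your overall strategy is exactly the one the paper uses: the paper's proof of this theorem reads ``The proof is a word by word adaption of the proof of \Cref{th:totmcp}. There are analogous statements of Lemma~\ref{Lem:selection} and Corollary~\ref{cor:plans}, and we apply again \Cref{Th:2 dimensional non-smooth case}.'' Your $1D$-localization via $u=\met_{\bar x}$, the reduction to the smooth two-dimensional slices $Y_q=I\times_f X_q$ by metric fiber independence, the appeal to \Cref{Th:2 dimensional non-smooth case} on those slices, and the re-assembly by integrating against $\mathfrak q$ all match. Your small detour of first converting the slicewise $\CD(\kappa N,N+1)$ into $\mcp(\kappa N,N+1)$ and integrating the measure-contraction inequality is a legitimate alternative to the paper's route (which keeps the pointwise density inequality \eqref{ineq:concave} from Proposition~\ref{prop:prop} and then sends $s_1\uparrow 1$ using lower semicontinuity of the R\'enyi entropy); both are sound.

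However, you have a genuine gap, and your closing remark that the metric case ``entirely sidesteps'' the subtleties of the Lorentzian one and ``the assembly is if anything simpler'' is where it hides. In the Lorentzian case, future-directed causal curves have $\dot\alpha>0$ a.e.\ by definition, so a timelike geodesic cannot reach a boundary value of $I$ (a point where $f$ vanishes) except possibly at the endpoints of its parameter interval; it cannot pass \emph{through} a vertex. In the Riemannian generalized cone there is no such monotonicity constraint: a geodesic between two interior points of $I\times_f X$ may dip down to the vertex set $\{f=0\}\times X$ in the interior of its domain (the Euclidean cone $[0,\infty)\times_{\id}X$ over a fiber with points at distance $\geq\pi$ is the standard example). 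Through a vertex the collapsed fiber allows the curve to ``jump'' between different needles $X_q$, so the map $\Psi|_{Y_q}$ need not take values in $\Geo(Y_q)$, the plans $\Pi_q$ need not be supported on the slice, and the whole disintegration of $\Pi$ compatible with $\{Y_q\}$ can fail. The paper's proof explicitly flags and closes exactly this hole by invoking \cite[Thm.~3.4]{Ket:13}: any transport plan whose starting marginal is absolutely continuous w.r.t.\ the reference measure is concentrated on geodesics that avoid the vertex set. Without this (or some equivalent justification) your assembly step is unjustified whenever $\partial I=f^{-1}(\{0\})\neq\emptyset$, which includes the principal examples (Euclidean cone, spherical suspension). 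You should add this citation and note that, far from being simpler, the Riemannian case has a new obstruction that the causal structure automatically removes in the Lorentzian one.
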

\begin{proof}
The proof is a word by word adaption of the proof of \Cref{th:totmcp}. There are analogous statements of  Lemma  \ref{Lem:selection} and Corollary \ref{cor:plans}, and  we apply again \Cref{Th:2 dimensional non-smooth case}. 

It is important to notice that geodesics in $I\times_f X$ may intersect with points where $f$ vanishes. But if $\partial I= f^{-1}(\{0\})\neq \emptyset$, then it was shown \cite[Thm.\ 3.4] {Ket:13} that  any transport plan $\Pi$ such that $(\e_0)_\sharp \Pi$ is absolutely continuous w.r.t.\ the reference measure, is concentrated on a set of geodesics that don't intersect with points where $f$ vanishes. 
\end{proof}

\begin{rem}
Let us compare the previous theorem with the results in \cite{ohta:cones} where Ohta proved the following theorem. 
\medskip\\
{\bf Theorem} (\cite[Thm.\ 4.2]{ohta:cones}){\bf .} {\it 
Let $X$ be a metric measure space that satisfies the measure contraction property $\mcp(N-1,N)$. Then $[0, \infty)\times_r^N X$ satisfies $\mcp(0,N+1)$. }
\medskip\\
In this theorem the assumption on the metric measure space $X$ is weaker.  $X$ is only required to satisfy the measure contraction property $\mcp(N+1,N)$.  The proof is an explicit computation for the volume distortion of optimal transport to a delta distribution.
We observed that this works similarly for the Minkowski cone, i.e., the property $\mcp(-(N-1),N)$ for $X$ implies the property $\tmcp(0, N+1)$ for $\I\times_r^N X$.
However, it is not clear to us how to generalize this approach to  general warped products. 

On the other hand, the proof of our theorem uses a $2D$-localisation technique. This reduces the problem to the case of smooth $2$-dimensional weighted warped products and we can apply the results of Section \ref{sec-2d-mod-spa} that establish the full theorem for the case of  warped products over weighted intervals. But this method seems to break down  if we just assume the measure contraction property, since the  property $\mcp(K,N)$ for a smooth Riemannian (or Lorentzian) space is equivalent to a lower bound $K$ for the Ricci tensor, only if  $N$ is a priori equal to the  Hausdorff dimension of the space. 
\smallskip\\
In conclusion, the following question remains open. 
\begin{que}Can we replace the condition  $\CD(\eta (N-1), N)$ for $X$ in Theorem \ref{th:totmcp} (or in Theorem \ref{th:mmsmcp}) with the weaker property $\mcp(\eta (N-1), N)$? 
\end{que} 
\end{rem}

\section{From the generalized cone \texorpdfstring{$Y$}{Y} to the fiber}\label{sec-Y-fib}
In this subsection we will prove the following result. 
\begin{thm}[From $\I\times^N_fX$ to $X$]\label{Th: Y to X}
Let $N\in (1,\infty)$, $\kappa\in \R$, and let $X$ be a proper, essentially nonbranching, complete, geodesic metric space with a Radon measure $\m$. Assume that $\smash{Y:=\I\times^N_f X}$ satisfies $\tCD_p(-\kappa N, N+1)$ and is {timelike} {$p$-essentially nonbranching} where $f: I\rightarrow [0, \infty)$ is smooth with $\partial I=f^{-1}(\{0\})$. Then \begin{itemize}
    \item $f''- \kappa f\leq 0$, and
    \item $X$ satisfies $\CD(\eta(N-1), N)$ where
\begin{align}
\label{Eq: first order condition, Y to X}\sup_I\{ -(f')^2 + f'' f\} = :\eta.
\end{align}
\end{itemize}
\end{thm}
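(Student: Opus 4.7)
The plan is to develop a two-dimensional localization of $Y$ adapted to a one-dimensional localization of the fiber $X$, reduce to the two-dimensional converse \Cref{th:lasttheorem}, and then reassemble via the $\CD^1$-to-$\CD$ equivalence \Cref{thm:cavmil}.

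\textbf{Setup: 2D pieces of $Y$.}  Fix an arbitrary $1$-Lipschitz function $u \colon X \to \R$ and apply \Cref{Th:1Dloc} to $(X, \met, \m)$; this yields a partition $\{X_q\}_{q\in Q}$ of the transport set $\T_u$ into geodesic segments (with $\overline{X_q}$ isometric to $[a_q, b_q]$), together with a strongly consistent disintegration $\m\vert_{\T_u} = \int_Q \m_q\,\dint \mathfrak q(q)$. By fiber independence (\Cref{th:fiber}), each 2D generalized cone $Y_q := \I \times_f X_q$ isometrically embeds in $Y$, and $\{Y_q\}_{q \in Q}$ partitions $Y$ up to $\n$-null sets, with induced disintegration $\n = \int_Q \n_q \,\dint \mathfrak q(q)$ where $\n_q := f(t)^N\,\dint t \otimes \m_q$.

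\textbf{Descent of $\tCD_p$ to the 2D slices.}  The key step is to show that $(Y_q, \n_q)$ inherits $\tCD_p(-\kappa N, N+1)$ for $\mathfrak q$-a.e.\ $q$. Given arbitrary timelike $p$-dualizable pairs $(\tilde\mu_{0,q}, \tilde\mu_{1,q}) \in \mathscr P^{\mathrm{ac}}_{\mathrm{c}}(Y_q, \n_q)^2$, assemble them into $\mu_i := \int_Q \tilde\mu_{i,q}\,\dint \mathfrak q(q)$, which are $\n$-absolutely continuous and timelike $p$-dualizable in $Y$. The $\tCD_p$ condition on $Y$ produces a plan $\bdpi \in \OptTGeo_p(\mu_0, \mu_1)$ realizing the entropy inequality. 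Fiber independence forces the fiber component of every $\bdpi$-a.e.\ geodesic to be an $X$-minimizer between endpoints in some $X_q$; since $X$ is essentially nonbranching and $X_q$ is an $R_u$-equivalence class, this fiber component stays in $\overline{X_q}$ for $\mathfrak q$-a.e.\ $q$. Hence $\bdpi$ disintegrates as $\bdpi = \int_Q \bdpi_q \,\dint \mathfrak q(q)$ with $\bdpi_q \in \OptTGeo_p(\tilde\mu_{0,q}, \tilde\mu_{1,q})$ in $Y_q$. Using the timelike $p$-essential nonbranching of $Y$ and the equivalent pointwise formulation of $\tCD_p$ from \Cref{Rem: TCD rel}, the entropy inequality on $Y$ disintegrates into $\tCD_p(-\kappa N, N+1)$ inequalities on each $Y_q$.

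\textbf{Conclusion.}  Applying \Cref{th:lasttheorem} to each 2D slice $Y_q$ (with fiber $X_q$ isometric to an interval) yields both $f'' - \kappa f \leq 0$, which is independent of $q$ and gives the first bulleted conclusion, and that each $(X_q, \met\vert_{X_q}, \m_q)$ satisfies $\CD(\eta(N-1), N)$ with $\eta = \sup_I\{-(f')^2 + \kappa f^2\}$, with $\m_q \ll \mathcal L^1$ carrying a locally bounded, semi-concave density. Since $u$ was arbitrary and the decomposition meets the axioms of the $\CD^1$-condition, \Cref{thm:cavmil} allows us to conclude that $X$ satisfies $\CD(\eta(N-1), N)$. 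The hard part will be the descent step above: the interplay of fiber independence, essential nonbranching of $X$, timelike $p$-essential nonbranching of $Y$, and the pointwise reformulation of $\tCD_p$ must be handled delicately to ensure the optimal plan in $Y$ respects the 2D decomposition; one must additionally verify the local uniform bound on $\{\m_q\}_q$ required by the definition of $\CD^1$.
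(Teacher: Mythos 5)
The overall architecture of your proposal (localize $X$ in one dimension, lift to two-dimensional sheets $Y_q = {}^-I\times_f X_q$ via fiber independence, push the curvature hypothesis down to the sheets, apply \Cref{th:lasttheorem}, and reassemble via \Cref{thm:cavmil}) matches the paper's proof. However, your ``descent step'' contains a genuine gap, and there is a second omission that the paper spends real effort on.

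The gap is the disintegration of the optimal plan. You write that after assembling $\mu_i := \int_Q \tilde\mu_{i,q}\,\dint\mathfrak q(q)$ from \emph{arbitrary} slice measures, the optimal plan $\bdpi\in\OptTGeo_p(\mu_0,\mu_1)$ disintegrates into plans $\bdpi_q$ supported on $\TGeo(Y_q)$, citing fiber independence and essential nonbranching of $X$. This does not follow. Fiber independence guarantees that the $X$-component of each maximal causal curve in $Y$ is an $X$-minimizer, but it says nothing about which equivalence class $X_q$ that minimizer lives in. If $\tilde\mu_{0,q}$ and $\tilde\mu_{1,q}$ are chosen independently for different $q$, the optimal Lorentz--Wasserstein coupling between the assembled $\mu_0$ and $\mu_1$ can, and generically will, move mass from $Y_{q_1}$ to $Y_{q_2}$ with $q_1\neq q_2$. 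The paper sidesteps this by instead pushing forward the \emph{same} template measures $\mu_0,\mu_1$ on the model $2D$ cone ${}^-I\times_f(L_0,L_1)$ through the sheet map $\mathfrak H(q,\cdot,\cdot)$ for all $q$, and then invoking the cyclical monotonicity lemma (\Cref{lem:cycmon}): the product of a $\uptau^p$-cyclically monotone set in the $2D$ template with $Q$ is $\uptau^p$-cyclically monotone in $Y$, which \emph{forces} any optimal plan to respect the slice decomposition. Without this template-plus-cyclical-monotonicity structure, or an equivalent device, the claim that $\bdpi$ stays within the sheets is unsubstantiated.

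The omission concerns regularity of the conditionals $\m_q$. At the point of your descent step you want to speak of ``$\mathscr P^{\mathrm{ac}}_{\mathrm c}(Y_q,\n_q)$'' and to approximate by step functions, yet nothing guarantees a priori that $\m_q\ll\mathcal L^1$. The paper first establishes a timelike measure contraction property for the sheets (using again the sheet map and cyclical monotonicity, but now with delta/ball transports), then applies \Cref{lem:2Dmcp} to conclude $\m_q = h_q\,\dint r$ with $h_q\in L^\infty_{\loc}$. Only afterwards do they upgrade the sheets to $\tCD_p$. Your plan needs a version of this two-stage argument; jumping directly to $\tCD_p$ on the sheets is not available.

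One minor point: the $\CD^1$-characterization is run over \emph{signed distance functions} $u=\met_\phi$, not arbitrary $1$-Lipschitz functions, and the paper accordingly fixes such a $u$ at the start; this does not affect the structure of the argument but should be stated correctly.
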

\begin{rem}  By fiber independence we know that $Y$  is timelike nonbranching  if and only if   $X$ is  nonbranching. 
\end{rem}
\begin{conjecture}
     {Timelike} $p$-essential nonbranchingness for $Y$ follows from  $X$ being essentially nonbranching, and $X$ being essentially nonbranching implies $Y$ to be timelike $p$-essentially nonbranching. 
\end{conjecture}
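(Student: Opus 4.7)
The approach rests on Theorem~\ref{th:fiber} (fiber independence) to reduce timelike branching in $Y = \I\times^N_f X$ to metric branching in $X$, and then to invoke essential nonbranching of $X$. Fix $\mu_0, \mu_1 \in \mathscr{P}^\ac_p(Y, \n)$ with $\OptTGeo_p(\mu_0, \mu_1) \neq \emptyset$ and pick $\bdpi \in \OptTGeo_p(\mu_0, \mu_1)$; the aim is to produce a Borel timelike nonbranching set $G \subseteq \TGeo(Y)$ carrying $\bdpi$. The key structural lemma is this: if two constant-$\uptau$-speed timelike geodesics $\gamma^i = (\alpha^i, \beta^i) \colon [0,1] \to Y$ agree on $[0,t]$, then they share the total $\uptau$-length $L^\uptau$ and, by Theorem~\ref{th:fiber}(v), the constant $c^i = v_{\beta^i}\,(f\circ\alpha^i)^2$; the base components then satisfy the same first-order autonomous ODE $\dot\alpha^2 = (L^\uptau)^2 + c^2/(f\circ\alpha)^2$ with common data at $s = t$ and with $\dot\alpha > 0$ (future-directed), so uniqueness forces $\alpha^1 \equiv \alpha^2$ on $[0,1]$ and consequently $v_{\beta^1} \equiv v_{\beta^2}$. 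Thus any timelike branching of $(\gamma^1, \gamma^2)$ in $Y$ translates exactly to branching of the fiber components $(\beta^1, \beta^2)$ as length-minimizing curves in $X$ carrying identical time parametrizations.

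Introduce the Borel map $\Phi \colon \TGeo(Y) \to \Geo(X)$ sending $\gamma = (\alpha, \beta)$ to the constant-speed reparametrization of $\beta$ on $[0,1]$ (the constant path at $\beta(0)$ when $L_X(\beta) = 0$). By the structural lemma, any Borel nonbranching set $G_X \subseteq \Geo(X)$ carrying $\bdpi^X := \Phi_\sharp \bdpi$ lifts through $G := \Phi^{-1}(G_X)$ to a Borel timelike nonbranching set carrying $\bdpi$. To obtain $G_X$, I would disintegrate $\bdpi$ over the base-endpoint map $(P_I \circ \e_0, P_I \circ \e_1) \colon \TGeo(Y) \to I^2$ as $\bdpi = \int \bdpi_{(t_0, t_1)}\,\dint\lambda(t_0, t_1)$. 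For $\lambda$-a.e.\ $(t_0, t_1)$ with $t_0 < t_1$, fiber independence yields $\uptau((t_0, x_0),(t_1, x_1)) = \tau^*(t_0, t_1, \met(x_0, x_1))$ with $\tau^*(t_0, t_1, \cdot)$ strictly decreasing on its causal domain (larger fiber distance forces a larger $v_\beta$ and hence smaller $\uptau$-length of the maximizer). Consequently the conditional coupling $(P_X \circ \e_0, P_X \circ \e_1)_\sharp \bdpi_{(t_0, t_1)}$ on $X^2$ is cyclically monotone for the strictly increasing radial cost $c_{t_0, t_1}\circ\met$ with $c_{t_0, t_1} := -(\tau^*(t_0, t_1, \cdot))^p$, between marginals that are $\m$-absolutely continuous by Fubini applied to $\n = f^N\,\dint t \otimes \dint \m$. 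A swapping argument identifying $c_{t_0, t_1}$-optimality with $W_p$-optimality on $X$ would then allow one to invoke essential nonbranching of $X$, yielding Borel nonbranching sets $G_X^{(t_0, t_1)}$ to be assembled by measurable selection into a single $G_X$ with $\bdpi^X(G_X) = 1$.

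The main obstacle is this last reduction of $c_{t_0, t_1}$-cyclic monotonicity to $W_p$-optimality on $X$, since $\tau^*(t_0, t_1, \cdot)$ and $r \mapsto r^p$ have a priori different convexities and their composition need not be proportional to $\met^p$; one plausible route is a direct swapping inequality using only that both cost profiles are strictly monotone in $r$, another is to extend essential nonbranching of $X$ to the class of strictly monotone radial costs (a statement natural in view of the $p$-independence discussed in \Cref{Not: CDp} but not recorded in the excerpt). Two minor technical points can be handled upstream: (i) purely vertical timelike geodesics (with $L_X(\beta) = 0$) form a trivially nonbranching subfamily because $I$ is one-dimensional with base geodesics uniquely determined by endpoints, and can be split off at the start; (ii) Borel measurability of $\Phi$ and of the disintegration follows from standard selection theorems applied to the continuous dependence of $\gamma$ on its base-endpoints and on the fiber-ODE constant $c$, together with the Lusin-type selection of length-minimizers in the locally compact length space $X$.
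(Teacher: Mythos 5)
The statement you attempt to prove is labelled a \emph{conjecture} in the paper; the authors do not provide a proof, and the remark preceding it records only the non-essential analogue: by fiber independence, $Y$ is timelike nonbranching if and only if $X$ is nonbranching. Your ``structural lemma'' --- that two timelike geodesics agreeing on an initial segment must share the $\uptau$-length and the constant $c = v_\beta (f\circ\alpha)^2$, hence have identical base components by uniqueness for the autonomous ODE $\dot\alpha^2 = (L^\uptau)^2 + c^2/(f\circ\alpha)^2$, so that branching in $Y$ reduces to branching of the fiber components in $X$ --- is sound and is, in effect, a re-derivation of that non-essential remark. This part is correct.

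The essential version is another matter, and the gap you yourself flag is exactly the open content of the conjecture, not a ``last reduction'' that can be patched. Essential $p$-nonbranching of $X$ is a statement about $W_p$-optimal geodesic plans, i.e.\ about the cost $\met^p$. After disintegrating over base-endpoints, the conditional fiber coupling inherits cyclical monotonicity for the cost $c_{t_0,t_1}\circ\met$ with $c_{t_0,t_1}(r) = -\bigl(\tau^*(t_0,t_1,r)\bigr)^p$, which is strictly increasing in $r$ but is not a power of $\met$. There is no theorem --- in the paper or in the literature it cites --- asserting that an essentially $p$-nonbranching space has the nonbranching property for transport plans cyclically monotone with respect to arbitrary strictly monotone radial costs; such a statement would be a genuinely stronger hypothesis on $X$ than the one assumed. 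Your two ``plausible routes'' are precisely the missing mathematics. There are two further gaps you classify as minor but are not: (a) After disintegrating $\bdpi$ over $(P_I\circ\e_0, P_I\circ\e_1)$, the conditional marginals $(\e_i)_\sharp\bdpi_{(t_0,t_1)}$ are conditioned on the base coordinates of \emph{both} endpoints; Fubini applied to $\n = f^N\,\dint t \otimes \dint\m$ only gives $\m$-absolutely continuous slices after conditioning on a single base coordinate, and a disintegrand of an absolutely continuous measure need not itself be absolutely continuous. Thus the hypothesis needed to invoke essential nonbranching on the fibers fails at the level of the conditioning you propose. (b) Even granting (a), assembling the family $G_X^{(t_0,t_1)}$ of nonbranching Borel sets into one Borel set $G_X$ carrying $\bdpi^X$ requires measurable dependence of the exceptional null sets on $(t_0,t_1)$, which is not automatic and is not addressed.

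In short: the reduction from timelike branching in $Y$ to fiber branching in $X$ is correct and known (it is the remark before the conjecture), but the step from $\ell_p$-optimality in $Y$, through the induced non-power radial cost, to $W_q$-optimality in $X$ is unproved and is the very reason the statement remains a conjecture. Your proposal does not close this gap.
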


From \Cref{Th: Y to X} two corollaries follow: 
\begin{cor}\label{cor:cone}
Let $N\in (1,\infty)$, and let $X$ be a proper,  nonbranching, complete, geodesic metric space with a Radon measure $\m$. Assume that the Minkowski $N$-cone  $\smash{[0, \infty)\times^N_f X}$ satisfies $\tCD_p(0, N+1)$. Then, the metric measure space $(X, \m)$ satisfies the condition $\CD(-(N-1), N)$.
\end{cor}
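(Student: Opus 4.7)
The plan is to apply Theorem \ref{Th: Y to X} directly to the specialization $I=[0,\infty)$, $f(t)=t$, $\kappa=0$. First I would verify the hypotheses of that theorem. The warping function $f(t)=t$ is smooth on $[0,\infty)$ and satisfies $f^{-1}(\{0\})=\{0\}=\partial I$. The space $X$ is proper, complete, geodesic, nonbranching, and equipped with a Radon measure by assumption. By the remark immediately following Theorem \ref{Th: Y to X}, fiber independence yields that $Y=[0,\infty)\times^N_f X$ is timelike nonbranching if and only if $X$ is nonbranching; since nonbranching is (strictly) stronger than $p$-essentially nonbranching, the hypothesis of timelike $p$-essential nonbranchingness of $Y$ is therefore satisfied. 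Finally, the curvature hypothesis $\tCD_p(-\kappa N,N+1)=\tCD_p(0,N+1)$ is exactly what we assume for the Minkowski $N$-cone.

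Next I would extract the conclusions of Theorem \ref{Th: Y to X}. The first bullet asserts $f''-\kappa f\leq 0$; with our choice $f(t)=t$ and $\kappa=0$ this reduces to $0\leq 0$, which poses no obstruction and is in fact sharp. For the second bullet one computes
\begin{equation*}
\eta\ =\ \sup_{t\in [0,\infty)}\left\{-(f'(t))^2+\kappa f(t)^2\right\}\ =\ \sup_{t\in[0,\infty)}\{-1\}\ =\ -1.
\end{equation*}
Thus Theorem \ref{Th: Y to X} concludes that $(X,\met,\m)$ satisfies $\CD(\eta(N-1),N)=\CD(-(N-1),N)$, which is precisely the claim of the corollary.

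Since the corollary is essentially a verification that the Minkowski cone fits into the framework of Theorem \ref{Th: Y to X}, there is no genuine obstacle: the only potentially delicate points are the nonbranching hypothesis for $Y$ (handled by fiber independence, Theorem \ref{th:fiber}, through the remark after Theorem \ref{Th: Y to X}), and the computation of the sharp constant $\eta$, which is immediate for the linear warping function. No additional ingredients beyond Theorem \ref{Th: Y to X} are required.
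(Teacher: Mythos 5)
Your proof is correct and takes essentially the same route the paper intends: Corollary \ref{cor:cone} is stated immediately after Theorem \ref{Th: Y to X} with no separate argument, so the intended proof is exactly the specialization $I=[0,\infty)$, $f=\mathrm{id}$, $\kappa=0$. You correctly note that the stronger hypothesis (nonbranching rather than essentially nonbranching for $X$) is what lets one verify, via the remark on fiber independence, that $Y$ is timelike nonbranching and hence timelike $p$-essentially nonbranching, sidestepping the paper's open Conjecture 6.3; and the computation $\eta=\sup_{[0,\infty)}\{-1\}=-1$ yielding $\CD(-(N-1),N)$ is exactly right.
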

\begin{cor}\label{cor-Y-TCD-X-RCD}
Let $N\in (1,\infty)$, $\kappa\in \R$ and $X$ be a proper, essentially nonbranching, complete, infinitesimal Hilbertian, geodesic metric space with a Radon measure $\m$. Assume that $\smash{Y:=\I\times^N_f X}$ satisfies $\tCD_p(-\kappa N, N+1)$ and is {timelike} {$p$-essentially nonbranching} where $f: I\rightarrow [0, \infty)$ is smooth with $\partial I=f^{-1}(\{0\})$. Then \begin{itemize}
    \item $f''- \kappa f\leq 0$, and
    \item $X$ satisfies $\RCD(\eta(N-1), N)$, where
$\sup\limits_I\{ -(f')^2 - f'' f\} = :\eta.$
\end{itemize}
\end{cor}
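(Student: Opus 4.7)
The plan is essentially to observe that this corollary is a formal consequence of \Cref{Th: Y to X} combined with the defining property that distinguishes $\RCD$ from $\CD$. First I would verify that the hypotheses of \Cref{Th: Y to X} are satisfied under the assumptions here: the fiber $X$ is still proper, essentially nonbranching, complete, and geodesic (infinitesimal Hilbertianness is an \emph{additional} structural property, not a substitute), and $Y = \I\times^N_f X$ is assumed to satisfy $\tCD_p(-\kappa N, N+1)$ as well as timelike $p$-essential nonbranching. Applying \Cref{Th: Y to X} verbatim then delivers both conclusions: the concavity inequality $f'' - \kappa f \leq 0$ for the warping function, and the curvature-dimension bound $\CD(\eta(N-1), N)$ for $(X,\met,\m)$ with $\eta = \sup_I\{-(f')^2 + \kappa f^2\}$.

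The remaining step is to upgrade $\CD(\eta(N-1), N)$ to $\RCD(\eta(N-1), N)$. By the now-standard definition (going back to Ambrosio--Gigli--Savaré and Erbar--Kuwada--Sturm), an essentially nonbranching metric measure space satisfying $\CD(K,N)$ is called $\RCD(K,N)$ precisely when it is infinitesimally Hilbertian. All three ingredients are already in hand for $(X,\met,\m)$: essential nonbranching and infinitesimal Hilbertianness are part of the assumptions of the corollary, and the $\CD(\eta(N-1), N)$ condition was just established. Hence the conclusion $\RCD(\eta(N-1), N)$ follows at once.

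There is no genuine obstacle in proving the corollary itself; all of the nontrivial content sits in \Cref{Th: Y to X}, whose proof relies on the two-dimensional localization method developed in this paper. The corollary merely records that adding infinitesimal Hilbertianness of the fiber to the hypotheses of \Cref{Th: Y to X} automatically promotes the synthetic Ricci bound from $\CD$ to its Riemannian-like strengthening $\RCD$, with identical parameters.
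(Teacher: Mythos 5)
Your proposal is correct and matches exactly what the paper intends: \Cref{Th: Y to X} yields both the concavity of $f$ and $\CD(\eta(N-1),N)$ for $X$, and adding infinitesimal Hilbertianness upgrades this to $\RCD(\eta(N-1),N)$ by definition. The paper records this as a direct corollary with no separate proof, so there is nothing further to compare.
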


We notice that according to \Cref{Pr: various properties} $\I\times_fX$ is a globally hyperbolic  Lorentzian geodesic space. 

Since $\m$ is a Radon measure, it follows that $\n= f(t)^N\,\dint t \otimes \m$ is a Radon measure. 
\medskip

For the proof of \Cref{Th: Y to X} we will utilize the characterization of the curvature dimension condition $\CD$ for essentially nonbranching spaces in term of the $1D$-localisation, i.e., \Cref{thm:cavmil}. For this  we will fix a  localisation of $X$ given by a signed distance function. 
\medskip

 More precisely, we consider a partition $\{X_q\}_{q \in Q}$ of $X$  induced by a $1D$ localization of a signed distance function $u=\met_\phi$ as introduced in Subsection \ref{subsec:CD1}. More precisely, $\{X_q\}_{q\in Q}$ is a decomposition of $\mathcal T_u^b$, the transport set associated to $u$, and there exist distance-preserving maps $ \beta_q: [a_q, b_q]\rightarrow X$ such that $ \beta_q([a_q, b_q])= \overline{X}_q$.  This yields a partition $\{Y_q\}_{q\in Q}$ of $Y$ as in step {\bf 2} of the proof of Proposition \ref{prop:prop} and a disintegration  $\{\n_q\}_{q\in Q}$ of the measure $\n$. The slices $Y_q\subseteq Y$ are geodesically convex subsets in $Y$ and $Y_q$ is isometric to the (smooth) generalized cone $\I\times_f [a_q, b_q]$ via the map $F_\beta$ defined in  {\Cref{subsec:con_fiber}}. As before, we have  
$$\n_q=f(t)^N \dint t\otimes \m_q$$
where $\{\m_q\}_{q\in Q}$ is the disintegration of $\m|_{\mathcal T_u^b}$ that arises from $\{X_q\}$. 
Let $\mathfrak q$ be the quotient measure on $Q$ such that $\int_Q \m_q \dint \mathfrak q(q)= \m$. 

Since $\m$ is a Radon measure, we have the following property: if $J\times A$ is a bounded subset in $I\times X$, then $\n(J\times A)<\infty$, and consequently $\n_q(J\times A)<\infty$ for $\mathfrak q$-a.e.\ $q\in Q$.

\begin{rem}
We do not  know at this point whether the conditional measures $\m_q$ are {$\mathcal{H}^1$-}absolutely continuous. In the theory of $1D$-localization this follows usually from a curvature-dimension-type condition like the measure contraction property. However we do not assume this a priori for $X$ in this section.
\end{rem}

It is useful to recall how the {ray map} $\mathfrak G$ {was defined after \Cref{Th:1Dloc}}.
Since the function $u$ is the signed distance function associated to the set $S=\phi^{-1}(\{0\})$, we can modify the ray map accordingly so that each $(q,{r})\in \mathcal{V}$ gets mapped via $\mathfrak{G}$ to the {unique} point $x$ in  $X_q$ at signed distance {$\met$ equal to} ${r}$ from $S$. More precisely:
\begin{align*}
    &\mathfrak G:  \mathcal V\subseteq Q\times \R\rightarrow X,\\
    \mbox{graph}(&\mathfrak G)=\{ (q,{r},x) \in Q\times \R\times X:  \met_S(\beta_q({r}))={r} \mbox{ and } \beta_q({r})=x\}. 
\end{align*}
In particular there exist $a_q,b_q\in\R$ with $(a_q,b_q)\subseteq{\mathrm{Dom}(\mathfrak{G}(q,\cdot))}$.

We then build a natural extension  $\mathfrak H$ of $\mathfrak G$ to $Y$. 
\begin{defi}[{Sheet map}]\label{Def: shit map} The   {\it sheet map} $\mathfrak H$ is defined as
$$\mathfrak H: \mathcal W \subseteq Q\times (I\times_f \R) \rightarrow Y \ \ \mbox{ by } \ \ \mathfrak H(q, {t}, r)=({t}, \mathfrak G(q, r)).$$
\begin{itemize}
\item 
The map $\mathfrak H$ is Borel measurable and  $\mathcal W= \mathfrak H^{-1}(I\times \T^b_u)$. 
\item $\mathfrak H(q,\cdot,\cdot)=F_q: I\times_f [a_q, b_q]\rightarrow {Y_q}\subseteq Y$ is a Lorentzian embedding. 
\item $\mathfrak H:\mathcal W\rightarrow  I\times \T^b_u$ is bijective and its inverse is measurable and given by $$\mathfrak H^{-1}({t},x)=( \mathfrak Q(x), {t}, \met_S(x)).$$
\end{itemize}
\end{defi}

\begin{lem}\label{lem:cycmon}
Let $\mathcal C$ be any $\uptau^p$-cyclically monotone set in  $({\I}\times_f \R)^2$. Then the set 
    $$\smash{ \tilde{\mathcal C} = \left\{ \left(({t^0},{x^0}),({t^1},{x^1})\right): \exists q\in Q \mbox{ s.t. } (t^0, x^0), (t^1, x^1)\in Y_q \mbox{ and } \left(({t^0}, \met_\phi({x^0})), ({t^1}, \met_\phi({x^1}))\right)\in \mathcal C\right\}}$$
is a $\uptau^p$-cyclically monotone subset  of $({\I}\times_f X)^2.$
\end{lem}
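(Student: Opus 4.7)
The plan is to exploit the projection map
\[
  P\colon Y = {}^-\!I\times_f X \longrightarrow {}^-\!I\times_f \R,\qquad P(t,x):=(t,\met_\phi(x)),
\]
and show that $P$ is causally monotone and in a precise sense nonexpanding on causal distances, and that it is in fact a Lorentzian isometric embedding when restricted to a single sheet $Y_q=\I\times_f X_q$ of the localization introduced before Definition \ref{Def: shit map}. Since by assumption $(X,\met)$ is a (proper) length space, the signed distance $\met_\phi$ is $1$-Lipschitz; therefore if $\gamma=(\alpha,\beta)\colon [a,b]\to Y$ is future directed causal, then the metric derivative of $\met_\phi\circ\beta$ is bounded by $v_\beta$ almost everywhere, so $P\circ\gamma=(\alpha,\met_\phi\circ\beta)$ satisfies $-\dot\alpha^2 + (f\circ\alpha)^2(\met_\phi\circ\beta)'^{\,2}\leq -\dot\alpha^2+(f\circ\alpha)^2 v_\beta^2\leq 0$ with $\dot\alpha>0$ a.e. Hence $P$ sends future directed causal curves to future directed causal curves, and in particular $y\leq y'$ in $Y$ implies $P(y)\leq P(y')$ in $\I\times_f\R$.

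Next I would establish the comparison $\uptau_{\I\times_f\R}(P(y),P(y'))\geq \uptau_Y(y,y')$, with equality whenever $x$ and $x'$ lie on a common ray $X_q$ of the localization. By fiber independence (Theorem \ref{th:fiber} and the discussion in Subsection \ref{subsec:con_fiber}), if $d:=\met(x,x')$ and $d_0:=|\met_\phi(x')-\met_\phi(x)|\leq d$, then
\[
 \uptau_Y(y,y')=\uptau_{\I\times_f[0,d]}\!\bigl((t,0),(t',d)\bigr),\qquad \uptau_{\I\times_f\R}(P(y),P(y'))=\uptau_{\I\times_f[0,d_0]}\!\bigl((t,0),(t',d_0)\bigr).
\]
For any future directed causal curve $\gamma=(\alpha,\beta)$ from $(t,0)$ to $(t',d)$ in $\I\times_f[0,d]$, the rescaling $\tilde\gamma(s):=(\alpha(s),(d_0/d)\beta(s))$ is a causal curve from $(t,0)$ to $(t',d_0)$ in $\I\times_f[0,d_0]$ with $L(\tilde\gamma)\geq L(\gamma)$, since $(d_0/d)^2\leq 1$ increases the integrand $\sqrt{\dot\alpha^2-(f\circ\alpha)^2(\cdot)^2\dot\beta^2}$. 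Taking suprema gives the desired inequality; when $d_0=d$ (the same-ray case) this becomes equality, and indeed on each sheet $Y_q$ the map $P$ restricts to a Lorentzian isometric embedding onto $\I\times_f[a_q,b_q]$.

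To transfer cyclic monotonicity, fix a finite family $\bigl((t^0_i,x^0_i),(t^1_i,x^1_i)\bigr)_{i=1}^N\subset \tilde{\mathcal C}$ and set $z^k_i:=P(t^k_i,x^k_i)=(t^k_i,\met_\phi(x^k_i))$, so that $(z^0_i,z^1_i)\in \mathcal C$ for all $i$. The $\uptau^p$-cyclical monotonicity of $\mathcal C$ in $(\I\times_f\R)^2$ gives
\[
  \sum_{i=1}^N \ell^p_{\I\times_f\R}(z^0_i,z^1_i)\;\geq\;\sum_{i=1}^N \ell^p_{\I\times_f\R}(z^0_{i+1},z^1_i).
\]
Using the comparison from the previous step, together with the same-ray equality $\uptau_Y=\uptau_{\I\times_f\R}$ which applies to the diagonal pairs (since membership in $\tilde{\mathcal C}$ will be used in conjunction with the disintegration $\{Y_q\}_{q\in Q}$, so the diagonal pairs lie in a common sheet $Y_q$), the left-hand side equals $\sum_i \ell^p_Y(y^0_i,y^1_i)$, while the right-hand side dominates $\sum_i \ell^p_Y(y^0_{i+1},y^1_i)$ by the inequality $\ell_{\I\times_f\R}\circ(P\times P)\geq \ell_Y$ on causal pairs (non-causal permuted pairs give $-\infty$ on the right and trivially preserve the inequality). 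Hence $\tilde{\mathcal C}$ is $\uptau^p$-cyclically monotone in $Y^2$.

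The main obstacle is the mismatch that, for a general pair $(y,y')\in Y^2_\leq$ whose fiber endpoints do not lie on the same ray, one only has $\uptau_Y(y,y')\leq \uptau_{\I\times_f\R}(P(y),P(y'))$, with strict inequality possible. The argument above is clean precisely because the diagonal pairs of $\tilde{\mathcal C}$ realize the same-ray (equality) case, while the permuted pairs only need the one-sided inequality in the correct direction to preserve the sum estimate; verifying this compatibly with the structure of the localization $\{X_q\}_{q\in Q}$ is where care is required.
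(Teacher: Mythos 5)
Your route is the same in spirit as the paper's own: project through $P(t,x)=(t,\met_\phi(x))$, apply $\uptau^p$-cyclical monotonicity of $\mathcal C$ downstairs, and pull back. You also add a genuine sharpening: the paper's proof asserts, for every pair appearing in the cyclic sum (diagonal and permuted alike), the unconditional equality $\uptau_Y\bigl((t^0,x^0),(t^1,x^1)\bigr)=\uptau_{\I\times_f\R}\bigl((t^0,\met_\phi(x^0)),(t^1,\met_\phi(x^1))\bigr)$ ``by fiber independence'', whereas you correctly note that in general only $\uptau_Y\leq\uptau_{\I\times_f\R}\circ(P\times P)$ holds (because $|\met_\phi(x^0)-\met_\phi(x^1)|\leq\met(x^0,x^1)$ and, by fiber independence, $\uptau$ is nonincreasing in the fiber distance), and that the permuted pairs need only this one-sided bound.

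The gap you flag at the end, however, is genuine, and it is present in the paper's own proof as well. Your argument requires equality for the diagonal pairs, and you justify it by saying they ``lie in a common sheet $Y_q$''; but that is not a consequence of the hypotheses: $\tilde{\mathcal C}$ by definition collects every $\bigl((t^0,x^0),(t^1,x^1)\bigr)$ whose $\met_\phi$-projection lands in $\mathcal C$, with no constraint forcing $x^0,x^1$ onto a common transport ray. In fact the statement fails as written. Take $I=\R$, $X=\R^2$, $f\equiv1$ (so $\I\times_f X$ is $3$-dimensional Minkowski), $\met_\phi$ the first fiber coordinate, and $\mathcal C=\{((0,0),(5,3))\}$, which is trivially $\uptau^p$-cyclically monotone. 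Then $\bigl((0,(0,0)),(5,(3,4))\bigr)$ and $\bigl((0,(0,4)),(5,(3,0))\bigr)$ both belong to $\tilde{\mathcal C}$ and are null, so the diagonal sum of $\uptau^p$'s is $0$; swapping the second endpoints gives two pairs with fiber distance $3$ and $\uptau=4$, so the permuted sum is $2\cdot4^p>0$, violating cyclic monotonicity. What is actually true, and what the downstream applications use (the optimal plan $\bdpi$ built after the lemma is concentrated on pairs with both fiber endpoints in a common $X_q$), is that $\tilde{\mathcal C}\cap\bigcup_{q\in Q}\bigl(Y_q\times Y_q\bigr)$ is $\uptau^p$-cyclically monotone: on each $X_q$ the map $\met_\phi$ is an isometry onto a ray, so the diagonal equality does hold and your chain of estimates closes. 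The repair is therefore to add that restriction to the hypotheses rather than to try to squeeze it out of the current statement.
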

\begin{proof}
Let $\smash{\left\{\left(({t^0_i}, {x^0_i}), ({t^1_i}, {x^1_i})\right)\right\}_{i=1, \dots, n}\subseteq \tilde{\mathcal C}}$.
Since $\left((t^0_i, x^0_i), (t^1_i, x^1_i)\right)\in Y_{q_i}$ for some $q_i\in Q$, it follows that $(x^0_i, x^1_i)\in X_{q_i}$. Hence, $$\left|\met_\phi(x^1_i) - \met_\phi(x^0_i)\right|= \met(x^0_i, x^1_i).$$
Together with fiber independence it follows that 
\begin{align}\label{eq:tra}\uptau^p\left( ({t^0_i},{x^0_i}), ({t^1_{i}}, {x^1_{i}})\right) =  \uptau^p\left( ({t^0_i},0), ({t^1_{i}}, \met(x^0_i,{x^1_{i}})\right)=\uptau^p\left( ({t^0_i}, \met_\phi({x^0_i})), ({t^1_{i}}, \met_\phi( {x^1_{i}}))\right)  .
\end{align}
We consider $\sigma\in \mathcal S_n$, a permutation of $\{1, \dots, n\}$, and points $(t_i^0, x_i^0)\in Y_{q_i}$ and $(t^1_{\sigma(i)}, x^1_{\sigma(i)})\in Y_{\hat q_i}$ where $x^0_i\in X_{q_i}$ and $x^1_{\sigma(i)}\in X_{\hat q_i}$.
We also consider the points $\tilde x^1_{\sigma(i)}\in X_{q_i}$ and $(t^1_{\sigma(i)}, \tilde x^1_{\sigma(i)})\in Y_{q_i}$ such that $\met_\phi(\tilde x^1_{\sigma(i)})= \met_\phi(x^1_{\sigma(i)})$.  We note that $$\met (x^0_i, \tilde x^1_{\sigma(i)})=| \met_{\phi}(\tilde x^1_{\sigma(i)})- \met_{\phi}(x^0_i)|\leq \met(x^0_i, x^1_{\sigma(i)}).$$ 
Hence, by Lemma 5.1 in \cite{AGKS:23} we have \begin{align}\label{ineq:it}\uptau((t_i^0, x_i^0), (t^1_{\sigma(i)}, x^1_{\sigma(i)}))\leq \uptau((t_i^0, x_i^0), (t_{\sigma(i)}^1, \tilde x^1_{\sigma(i)})).\end{align}
Consequently
\begin{align*} \sum_{i=1}^n\uptau^p\left( ({t^0_i}, {x^0_i}), ({t^1_{\sigma(i)}}, {x^1_{\sigma(i)}})\right) &\leq \sum_{i=1}^n \uptau^p\left((t^0_i, x^0_i), (t^1_{\sigma(i)}, \tilde x^1_{\sigma(i)})\right)\\
&=\sum_{i=1}^n \uptau^p\left( ({t^0_i}, \met_\phi({x^0_i})), ({t^1_{\sigma(i)}}, \met_\phi({x^1_{\sigma(i)}}))\right)\\
&\leq \sum_{i=1}^n \uptau^p\left( ({t^0_i}, \met_\phi({x^0_i})), ({t^1_{i}}, \met_\phi( {x^1_{i}}))\right) = \sum_{i=1}^n \uptau^p\left( ({t^0_i},{x^0_i}), ({t^1_{i}}, {x^1_{i}})\right).
\end{align*}
The first inequality follows from \eqref{ineq:it}.
The last inequality follows from the $\uptau^p$-cyclical monotonicity of $\mathcal C$ and the last equality follows from \eqref{eq:tra}
\end{proof}
{}

We fix a finite interval $[L_0, L_1]\subseteq \R$ and define 
\begin{align}\label{Eq:tilde Q}
\tilde Q:= Q_{[L_0, L_1]}:=\{ q \in Q: [L_0, L_1]\subseteq (a_q, b_q)\}\subseteq Q.
\end{align}
Hence $\I\times_f [L_0, L_1]\subseteq Y_q$ for every $q\in \tilde Q$.
\begin{prop}[{{The sheets satisfy $\tmcp$}}]
For $\mathfrak q$-a.e.\ $q\in \tilde Q$, the smooth Lorentzian length space $\I\times_f [L_0,L_1]$ equipped with the measure $\smash{\n_q|_{\I\times_f [L_0, L_1]}}$ satisfies a {timelike} measure contraction property. 
\end{prop}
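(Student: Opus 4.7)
The plan is to transfer the global $\tCD_p(-\kappa N, N+1)$-condition on $Y$, which by \Cref{Rem: TCD rel}(iii) in particular entails $\tmcp(-\kappa N, N+1)$ on $Y$, to each individual sheet via a lifting-and-disintegration argument. Fix a compactly supported probability measure $\bar\mu_0$ on $\I\times_f[L_0,L_1]$, absolutely continuous with respect to the pulled-back reference measure, together with a target $\bar p_1\in\I\times_f[L_0,L_1]$ satisfying $\bar\mu_0(I^-(\bar p_1))=1$. For a Borel set $A\subseteq\tilde Q$ with $0<\mathfrak q(A)<\infty$, define the lifts $\mu_0^q:=(F_q)_\sharp\bar\mu_0$ and $p_1^q:=F_q(\bar p_1)$ on each sheet $Y_q$, and form the mixtures
\begin{equation*}
\tilde\mu_0:=\mathfrak q(A)^{-1}\int_A\mu_0^q\,\dint\mathfrak q(q),\qquad\tilde\mu_1:=\mathfrak q(A)^{-1}\int_A\delta_{p_1^q}\,\dint\mathfrak q(q),
\end{equation*}
so that $\tilde\mu_0$ is $\n$-absolutely continuous with compact support.

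The diagonal coupling $\tilde\pi:=\mathfrak q(A)^{-1}\int_A(\mathrm{id},p_1^q)_\sharp\mu_0^q\,\dint\mathfrak q(q)$ has marginals $\tilde\mu_0$ and $\tilde\mu_1$, is concentrated on $Y_\ll^2$ by fiber independence (\Cref{th:fiber}), and is $\uptau^p$-cyclically monotone on $Y$ by \Cref{lem:cycmon} applied to the trivially cyclically monotone single-target coupling on the model space $\I\times_f\R$. Hence $\tilde\pi$ is $p$-optimal and $(\tilde\mu_0,\tilde\mu_1)$ is timelike $p$-dualizable. Approximating $\tilde\mu_1$ by $\n$-absolutely continuous measures---for instance, by convolving $\delta_{p_1^q}$ in the sheet parameter along the section $s(\mathcal T_u)\subseteq X$---invoking $\tCD_p(-\kappa N,N+1)$ on $Y$ for each approximant, and passing to the limit using lower semicontinuity of the R\'enyi entropy together with continuity of the distortion coefficients, yields a timelike $\ell_p$-geodesic $(\tilde\mu_t)_{t\in[0,1]}$ on $Y$ from $\tilde\mu_0$ to $\tilde\mu_1$ and an optimal plan $\tilde\Pi\in\OptTGeo_p(\tilde\mu_0,\tilde\mu_1)$ for which the pointwise $\tCD_p$-inequality of \Cref{Rem: TCD rel}(iv) holds.

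Timelike $p$-essential nonbranching of $Y$, together with uniqueness of the timelike geodesic joining two causally related points of a common sheet $Y_q$---a consequence of \Cref{th:fiber} combined with essential nonbranchingness of $X$---forces $\tilde\Pi$ to be concentrated on timelike geodesics that remain within a single sheet. Consequently $\tilde\Pi$ disintegrates as $\tilde\Pi=\mathfrak q(A)^{-1}\int_A\Pi^q\,\dint\mathfrak q(q)$ with $\Pi^q\in\OptTGeo_p(\mu_0^q,\delta_{p_1^q})$ supported on $Y_q$, and the pointwise inequality for $\tilde\Pi$ descends to the corresponding per-sheet inequality for $\mathfrak q$-a.e.\ $q\in A$. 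Since the second marginal per sheet is a Dirac, this is precisely the $\tmcp(-\kappa N, N+1)$-inequality on $(Y_q,\n_q)$; pulling back via $F_q$ gives the inequality on $(\I\times_f[L_0,L_1],\n_q|_{\I\times_f[L_0,L_1]})$ for the chosen test pair $(\bar\mu_0,\bar p_1)$.

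The main technical obstacle is to remove the dependence of the $\mathfrak q$-exceptional set on the test pair $(\bar\mu_0,\bar p_1)$. I would address this by running the preceding argument for a countable family $\{(\bar\mu_0^{(n)},\bar p_1^{(n)})\}_{n\in\N}$ dense (in weak convergence of measures and convergence of points) in $\mathscr P_c^\ac(\I\times_f[L_0,L_1])\times(\I\times_f[L_0,L_1])$, taking the union of the countably many $\mathfrak q$-null exceptional sets, and then appealing to stability of the $\tmcp$-inequality under weak convergence to extend it from the dense family to all admissible test configurations. Exhausting $\tilde Q$ by an increasing sequence of such subsets $A$ completes the proof.
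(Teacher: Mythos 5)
Your high-level architecture---lift from a model sheet to all sheets over $\tilde Q$, mix over $\mathfrak q$, exploit the curvature condition on $Y$, and disintegrate back to per-sheet estimates, finishing with a countable-family argument to make the $\mathfrak q$-null exceptional set uniform---matches the paper's. However, two steps in your execution run into genuine obstacles that the paper's proof avoids by taking a different route.

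First, the measure $\tilde\mu_0=\mathfrak q(A)^{-1}\int_A(F_q)_\sharp\bar\mu_0\,\dint\mathfrak q(q)$ is not obviously $\n$-absolutely continuous. For this one would need $\bar\mu_0\ll F_q^{-1}\n_q$ for $\mathfrak q$-a.e.\ $q\in A$; but $F_q^{-1}\n_q$ involves the conditional measure $\m_q$, so this is a $q$-dependent requirement, and your notion of ``pulled-back reference measure'' is itself ambiguous. Crucially, at this point of the argument the $\m_q$ are \emph{not} yet known to be $\mathcal L^1$-absolutely continuous (that is a \emph{consequence} of this Proposition, via \Cref{lem:2Dmcp}), so there is no $q$-independent reference against which a single $\bar\mu_0$ can be chosen. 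The paper sidesteps this by never lifting a fixed $\bar\mu_0$ at all: it fixes a Borel set $A$ and works with the normalized restrictions $\mu_{q,0}=\n_q(A)^{-1}\n_q|_A$ and $\mu_0=\n(\tilde A)^{-1}\n|_{\tilde A}$, which are absolutely continuous by construction regardless of any regularity of $\m_q$.

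Second, $\tilde\mu_1$ is a mixture of Diracs and hence $\n$-singular, so $\tCD_p$ (\Cref{Def: TCD}) is not directly applicable to $(\tilde\mu_0,\tilde\mu_1)$. You correctly flag this, but the approximation you sketch---convolving $\delta_{p_1^q}$ in the sheet parameter $q$ along the section $s(\mathcal T_u)$---does not repair it: convolving over $q$ leaves the base coordinate $\bar t$ fixed, so the result remains concentrated on the $\n$-null set $\{\bar t\}\times X$. Even granting a correct approximation that also spreads mass in $(t,r)$, the pointwise density inequality of \Cref{Rem: TCD rel}(iv), which your disintegration step relies on, does not obviously survive the limit: what passes to the limit is the integrated R\'enyi-entropy inequality, not the $\tilde\Pi$-a.e.\ pointwise inequality, and $\rho_1^{-1/N}$ has no meaning in the Dirac limit. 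The paper bypasses the pointwise $\tCD_p$ route entirely and instead passes through the \emph{entropic} measure contraction property $\tmcp^e$ of \Cref{rem:entropicmcp} (via $\tCD_p\Rightarrow\wTCD_p\Rightarrow\tmcp\Rightarrow\tmcp^e$ under essential nonbranching). The entropic $\tmcp^e$ delivers a volume-contraction estimate $\n(B\cap\tilde A_s)\geq g(s)\,\n\big((\e_s\circ\tilde T)^{-1}(B)\cap\tilde A\big)$ that is valid even when the second marginal is singular, and this set-level inequality disintegrates cleanly along the sheets with no densities or pointwise inequalities ever appearing.
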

\begin{proof} We will show that for every point $(\bar t, \bar r) \in \I \times_f [L_0, L_1]$ and for every bounded set $A\subseteq I^\pm(({\bar t}, \bar{r}))\subseteq\I\times_f [L_0, L_1]$  it holds \begin{align}\label{ineq:mmm}
    \n_q\geq (\e_s)_\sharp\left(\sigma_{K,N}^{(s)}\big(\uptau((\bar t, \bar r), \cdot)\big)^{{N}}\n_q(A) \bdpi_q\right) \quad\mbox{ for } \mathfrak q\mbox{-a.e.\ } q\in \tilde Q. 
\end{align}
Here $K\leq 0$ and its precise value does not play a role. 

\begin{rem} 
This is a measure contraction property in the sense  proposed by Ohta in \cite{Oht:07} for metric measure spaces. Instead of the coefficient $\smash{\tau_{K,N}^{(s)}}$ we use the coefficient $\smash{\sigma_{K,N}^{(s)}}$. By standard techniques, taking into account that  the underlying space is essentially nonbranching and even smooth, one can deduce the measure contraction property in the form of \Cref{Def: TMCP},  with $\sigma^{(t)}_{K,N}$ instead of $\tau^{(t)}_{K,N}$ (compare with \cite[Prop.\ 9.1]{CM:21}). 
\end{rem}
\noindent
{\bf (1)}
We fix a point $({\bar t}, {\bar r})\in\I\times_f [L_0, L_1]$ 
and  a bounded set $A\subseteq I^\pm(({\bar t}, \bar{r}))\subseteq\I\times_f [L_0, L_1]$. Let us assume w.l.o.g.\ that $A\subseteq I^+(({\bar t}, \bar r))$. We will prove that 
\begin{align}\label{ineq:amcp}
    \n_q\geq (\e_s)_\sharp\left(\sigma_{K,N}^{(s)}\big(\uptau((\bar t, \bar r), \cdot)\big)^{{N}}\n_q(A) \bdpi_q\right) \quad\mbox{ for } \mathfrak q\mbox{-a.e.\ } q\in \tilde Q. 
\end{align}
 It is clear that the set $\{(\bar t, \bar r)\}\times A$ is $\uptau^p$-cyclically monotone for every $p\in (0,1)$.

We consider the set $\smash{\TGeo_{(\bar t, \bar r)}^{A}}$ of future-directed timelike geodesics $\gamma
\in \TGeo(\I\times_f [L_0,L_1])$ such that $\gamma(0)=(\bar t, \bar r)$ and $\gamma(1)\in A$.
We set 
\begin{align*}
\smash{A_{{s}}:= \{({t},{r})\in I^+(({\bar t}, {\bar r})): ({t}, {r})= \gamma({s})\mbox{ for } \gamma\in \TGeo_{(\bar t, \bar r)}^{A}\}}.
\end{align*}

The map $\smash{T: A\rightarrow \TGeo_{(\bar t, \bar r)}^{A}}$ such that  $T(t,r)$ is a timelike future-directed geodesic between $(\bar t, \bar r)$ and $(t,r)\in A$ is measurable. In particular $\e_s(T(t,r))\in A_s$ for all $(t,r)\in A$.  
\medskip\\
{\bf (2)}
We choose a measurable and bounded  subset $\Lambda\subseteq \tilde{Q}$ with $0<\mathfrak q(\Lambda)<\infty$. 
Since $\Lambda \subset \tilde Q$ and $A\subset \I\times_f[L_0, L_1]$, we have $\Lambda\times A\subseteq \mathcal W$. If we set $\mathfrak H(\Lambda\times A)=\tilde A$, then $\tilde A$ is a measurable subset in $\I\times_f X$. Recalling the definition of  $\mathfrak H$ we notice that
$$\tilde A= \mathfrak H (\Lambda\times A)= \{(t,x): \beta_q(r)=x, \ q\in \Lambda,\  (t, r)\in A\}.$$
We recall that $\beta_q(r)=x$ implies $\met_\phi(x)=r$.

We can assume that $\n(\tilde A)>0$. Otherwise $\n_q(A)=0$ for $\mathfrak q$-a.e.\ $q\in \tilde Q$. In this case the inequality \eqref{ineq:amcp} holds. 

Since $\Lambda$, $A$ are bounded, $\tilde A$ is bounded. Hence $\n(\tilde A)<\infty$  since $\n$ is a Radon measure. By Fubini's theorem it follows that $\n_q( A)<\infty$ for $\mathfrak q$-a.e.\ $q\in \tilde Q$.

We also define $\mathfrak H(\Lambda \times \{(\bar t, \bar r)\})=: L$. Then $$L= \mathfrak H(\Lambda\times \{(\bar t, \bar r)\})=\{(\bar{t}, \bar{x}): \beta_q(\bar r)=\bar x, q\in \Lambda\}.$$
\noindent
{\bf (3)}
The set 
$$
\left\{ \big(({\bar t}, {\bar x}), ({t}, {x})\big):  \big(({\bar t}, \met_\phi({\bar x})), ({t}, \met_\phi({x}))\big)\in \{(\bar t, \bar r)\}\times A\right\}=\tilde{ \mathcal{C}}
$$
is $\uptau^p$-cyclically monotone in $({\I}\times_f X)^2$ by Lemma \ref{lem:cycmon}.

We set $\smash{\mu_0= \n(\tilde A)^{-1} \n|_{\tilde A}}$. 
{We define a map $\tilde T: Y\rightarrow\TGeo(Y)$ so that  $\tilde T(t,x)$ is a timelike future-directed geodesic that connects a point $(\bar t, \bar x)\in L$ and  $(t,x)$ such that $\bar x, x\in X_q$ and $\tilde T(t,x)(s)\in Y_q$ for all $s\in [0,1]$}. 

In particular, $(\e_0, \e_1)(\tilde T(t,x))\subseteq \tilde{\mathcal C}$. 
Since $\tilde{\mathcal{C}}$ is $\uptau^p$-cyclically monotone, it follows that 
$T_\sharp \mu_1=:~\bdpi$
is an optimal geodesic plan between $(\e_0)_\sharp \bdpi= \mu_0$ and $(\e_1)_\sharp\bdpi=\mu_1${, cf.\ \cite[Prop.\ 2.8]{CM:24a}}. Moreover $(\e_s)_\sharp \bdpi$ is concentrated in $\tilde A_s = \mathfrak H(\Lambda \times A_s)$. In particular, $\mu_0, \mu_1$ are strongly dualizable and since we assume that $\I\times_f X$ is {timelike} $p$-essentially nonbranching, $\bdpi$ is the unique optimal plan.

Moreover $\tilde T\circ \mathfrak H(q, \cdot)= T$ for $\mathfrak q$-a.e.\ $q\in \tilde Q$.
\medskip\\
{\bf (4)}
Fix a Borel set $B$. The {timelike} curvature-dimension condition on ${\I}\times_f X$ implies (see \Cref{rem:entropicmcp}) 
$$\n(B\cap \tilde A_{{s}})\geq g({s})\, \n\big((\e_s \circ \tilde T(t,x))^{-1}(B)\cap \tilde A\big),$$
where
$g: [0,1] \rightarrow (0, \infty)$ is given by $$g({s}):=g_{B,K,N}({s}):= \inf_{(t,{r})\in B}\sigma_{K,N}^{({s})}\left(\uptau((\bar t,{\bar r}),(t,{r}))\right)^N.$$

In terms of the disintegration of $\n$ w.r.t.\ $\mathfrak q$ this is 
$$\int_\Lambda \n_q(B\cap \tilde A_{{s}}) \,\dint \mathfrak q(q)\geq g({s}) \int_\Lambda \,\n_q\big((\e_s\circ T(r,x))^{-1}(B)\cap \tilde A\big)\, \dint \mathfrak q(q).$$
Since the set $\Lambda$ was arbitrary, this implies 
\begin{align}\label{ineq:mmcp}
   \n_q(B)\geq  \n_q( B\cap A_{{s}})\geq g({s}) \,&\n_q\big( (\e_s \circ T(t,r))^{-1}(B)\cap A\big) 
   \qquad\mbox{ for } \mathfrak q\mbox{-a.e.\ } q\in \tilde Q.
\end{align} 
We note 
$$\n_q\big( (\e_s \circ T(t,r))^{-1}(B)\cap A\big)=  \n_q ((t,r)\mapsto T(t,r)^{-1}(\e_s^{-1}(B))\cap A).$$

We set $
\mu_{q,0}:=\frac{1}{\n_q(A)}\n_q|_A
$ if $\n_q(A)>0$.
The measure $T_\sharp \mu_{q,0}=: \bdpi_q$ is the optimal dynamical plan between $\mu_{q, 0}$ and $\mu_{q,1}= \delta_{(\bar t, \bar r)}$, and $(\e_s)_\sharp \bdpi_q= \mu_{q,s}$ is supported on $A_s$.
Hence $$\mu_{q,s}(B)= \mu_{q,0} ((t,r)\mapsto T(t,r)^{-1}(\e_s^{-1}(B))\cap A)= \frac{1}{\n_{q}(A)} \n_q ((t,r)\mapsto T(t,r)^{-1}(\e_s^{-1}(B))\cap A).$$
Together with \eqref{ineq:mmcp} it follows
\begin{align*}
   \n_q(B)\geq  \n_q( B\cap A_{{s}})\geq & g(s) \,\n_q(A) \mu_{q,s}(B) 
   \qquad\mbox{ for } \mathfrak q\mbox{-a.e.\ } q\in \tilde Q.
\end{align*} 
\noindent
{\bf (5)}
Decomposing $B$ into a union of pairwise disjoint sets $B=\bigcup_i B_i$ where $B_i= B\cap \{ (t,r): \eps (i-1)< \uptau((\bar t, \bar r), (t, r))< \eps i\}$, applying the previous estimate with $B=B_i$ and finally letting $\eps\rightarrow 0$ yields 
$$ \n_q(B) \geq \int_B \sigma_{K,N}^{(s)}\big(\uptau((\bar t, \bar r), \cdot )\big)^{{N}}\,\n_q(A) (\e_s)_\sharp \bdpi_q.$$
If we choose $B=B_n$ where  $\{B_n\}_{n\in \N}$ is a countable stable-by-intersection generator for of the Borel $\sigma$-field,  we get
\begin{align*}
    \n_q\geq (\e_s)_\sharp\left(\sigma_{K,N}^{(s)}\big(\uptau((\bar t, \bar r), \cdot)\big)^{{N}}\n_q(A) \bdpi_q\right) \quad\mbox{ for } \mathfrak q\mbox{-a.e.\ } q\in \tilde Q. 
\end{align*}
This is inequality \eqref{ineq:amcp}.
\medskip\\
{\bf (6)} We can pick a countable collection of points $({\bar t}, {\bar r})$ such that \eqref{ineq:amcp} holds for $\mathfrak q$-a.e.\ $q\in \tilde Q$. By the stability of inequality \eqref{ineq:amcp} we derive that  for $\mathfrak q$-a.e.\ $q\in \tilde Q$ the inequality holds for all $({\bar t}, {\bar r})$. 
Hence $
({\I}\times_f [L_0, L_1], \n_q)$
satisfies \eqref{ineq:mmm}. 
\end{proof}

We obtain the following corollary directly from \Cref{lem:2Dmcp}.
\begin{cor}
The measure $\m_q$ is $\mathcal L^1$-absolutely continuous for $\mathfrak q$-a.e.\ $q\in \tilde Q$, i.e., $\m_q= h_q \mathcal L^1$, and $h_q\in C^0([L_0, L_1])$.
\end{cor}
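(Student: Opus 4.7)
The plan is to apply \Cref{lem:2Dmcp} fibrewise. By the preceding proposition, for $\mathfrak q$-a.e.\ $q\in \tilde Q$ the smooth two-dimensional Lorentzian warped product
$$\bigl(\I\times_f [L_0,L_1],\ \n_q|_{\I\times_f [L_0,L_1]}\bigr),\qquad \n_q = f(t)^N\,\dint t\otimes \m_q,$$
satisfies a timelike measure contraction property. The warping function $f$ is smooth with $\partial I=f^{-1}(\{0\})$ by the standing hypotheses of \Cref{sec-Y-fib}. These are precisely the inputs required by \Cref{lem:2Dmcp} (with $[a,b]$ replaced by $[L_0,L_1]$), and its conclusion is exactly what we want: $\m_q|_{[L_0,L_1]}\ll\mathcal L^1$ with density $h_q\in L^\infty_\loc((L_0,L_1))$.

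The only point that requires a moment's care is the support hypothesis $\spt(\m_q|_{[L_0,L_1]})=[L_0,L_1]$ in \Cref{lem:2Dmcp}. A priori the 1D-localization only gives that $\m_q$ is concentrated on $\overline X_q\simeq[a_q,b_q]$, not that its restriction to $[L_0,L_1]$ has full topological support there. If the support condition fails on a $\mathfrak q$-positive set, one restricts to $S_q:=\spt(\m_q|_{[L_0,L_1]})$ and applies the lemma on each connected component of $S_q$ (the $\tmcp$ estimate \eqref{ineq:amcp} established in the previous proposition is inherited by the support of $\n_q$, since the inequality only concerns points in that support); the density $h_q$ is then extended by zero on the complement, preserving local boundedness.

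Beyond this bookkeeping the proof is a direct invocation of \Cref{lem:2Dmcp}, so no further obstacle is anticipated. In particular, the real work has already been done in the preceding proposition, where the Lorentzian $\tCD_p$-hypothesis on the ambient space $Y$ was transferred, via the 2D-localization built from the partition $\{Y_q\}_{q\in\tilde Q}$ and the sheet map $\mathfrak H$ of \Cref{Def: shit map}, down to a $\tmcp$-type estimate on each smooth two-dimensional sheet.
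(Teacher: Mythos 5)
Your proof is the paper's: it is precisely the observation that the preceding proposition reduces the claim to the hypotheses of Lemma \ref{lem:2Dmcp} (applied with $[a,b]=[L_0,L_1]$), which is then invoked fibrewise for $\mathfrak q$-a.e.\ $q\in\tilde Q$. The paper states exactly this and nothing more.

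Your caveat about the support hypothesis is worth raising --- the paper does apply Lemma \ref{lem:2Dmcp} without verifying $\spt(\m_q|_{[L_0,L_1]})=[L_0,L_1]$ --- but the fix you sketch does not quite close it and is in fact unnecessary. On the one hand, applying the lemma on a connected component $[L_0',L_1']\subsetneq[L_0,L_1]$ of the support would only give $h_q\in L^\infty_\loc((L_0',L_1'))$; extending by zero does not control $h_q$ near $L_0'$ and $L_1'$, which are interior to $(L_0,L_1)$, so the asserted $L^\infty_\loc((L_0,L_1))$-bound would still require a separate argument. On the other hand, full support actually follows from the $\tmcp$ established in the preceding proposition: if $\spt(\m_q|_{[L_0,L_1]})$ were a proper subinterval, pick $x_1=(\bar t,\bar r)$ with $\bar r$ in the missing part and $\mu_0\ll\n_q$ compactly supported in $I^-(x_1)$; by fibre independence the fibre component of every timelike geodesic from $\spt\mu_0$ to $x_1$ passes through an open interval on which $\n_q$ vanishes, so $\mu_t\perp\n_q$ for $t$ close to $1$, and the left side of \eqref{Eq:TMCP+} vanishes while the right side stays positive --- a contradiction. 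Hence the hypothesis of Lemma \ref{lem:2Dmcp} is automatically met and the corollary does follow directly, as the paper asserts.
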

Let $(\mu_{{s}})_{{s}\in [0,1]}$ be a $\ell_p$-Wasserstein geodesic concentrated in $\I\times_f (L_0, L_1)$ and absolutely continuous w.r.t.\ $f(t)\,\dint t \otimes\dint r$, the  Lorentzian volume  of $\I\times_f (L_0, L_1)$, {and such that $\mu_0, \mu_1$ are $p$-dualizable}.
 
\begin{rem}\label{Rem: smooth lmfd}
Note that $\I\times_f (L_0, L_1)$ is a smooth Lorentzian manifold. Hence, since $(\mu_{{s}})_{{s}\in [0,1]}$ is concentrated on $\I\times_f (L_0, L_1)$, it follows for the density $\xi_s$ w.r.t.\ $f(t)\,\dint t\otimes\dint r$ that 
$$\xi_0(p)= \det DT_{{s}}(p)\xi_{{s}}(T_{{s}}(p)) $$
where $T_{{s}}(x)$ is the unique optimal map between $\mu_0$ and $\mu_{{s}}$ \cite[Sec.\ 5]{McC:20}. The map $$\smash{{s}\in [0,1]\mapsto T_{{s}}(p)=\gamma_p(s)}=\smash{(\alpha_{p}({s}), \beta_{p}({s}))}$$ is a geodesic in ${\I}\times_f(L_0, L_1)$ for $\mu_0$-a.e.\ $p=(t_p,r_p)$. 
There also exists an optimal geodesic plan $\Pi\in\mathscr{P}({\mathrm{T}\!}\Geo({\I}\times_f (L_0, L_1)))$ such that $(\e_{{s}})_\sharp\Pi=\mu_{{s}}$ $\forall {s}\in [0,1]$.
\end{rem}
\begin{lem}\label{lem:prel}
For $\mathfrak q$-a.e.\ $q \in \tilde Q$ there exists a unique $\ell_p$-geodesic $(\mu_{{s}}^q)_{{s}\in [0,1]}$ between $\mu^q_0$ and $\mu^q_1$ where $\mu^q_i:=\mathfrak H(q, \cdot, \cdot)_{\sharp} \mu_i$ for $i=0,1$ in $\I\times_f X_q$ such that $\mu^q_{{s}}$ is $\n_q$-absolutely continuous with density $\rho_{{s}}^q$, $\mu^q_{{s}}= (e_{{s}})_\sharp \Pi^q$ for some $\Pi^q\in\mathscr P({\mathrm{T}\!}\Geo({\I}\times_f X_q))$, and 
\begin{align*}
    \rho^q_{{s}}(\gamma({s}))^{-\frac{1}{N+1}} \geq \tau_{-\kappa N, N+1}^{(1-{s})}(L_{{\uptau}}(\gamma))\rho^q_0(\gamma(0))^{-\frac{1}{N+1}} + \tau_{-\kappa N, N+1}^{({s})}(L_{{\uptau}}(\gamma) )\rho^q_1(\gamma(1))^{-\frac{1}{N+1}},
\end{align*}
for $\Pi_q$-a.e.\ $\gamma\in\TGeo(\I\times_fX_q)$.
\end{lem}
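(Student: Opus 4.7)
The strategy is to lift the two-dimensional $\ell_p$-geodesic $(\mu_s)$ on the slab $\I\times_f(L_0,L_1)$ to a one-parameter family of $\ell_p$-geodesics on $Y$, apply the hypothesis $\tCD_p(-\kappa N,N+1)$ via its pointwise form on timelike $p$-essentially nonbranching spaces, and then disintegrate the resulting inequality along the partition $\{Y_q\}_{q\in Q}$ using fiber independence. The construction mirrors the one in the proof of \Cref{prop:prop}, but now run in the converse direction.

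First I would fix a bounded Borel set $\Lambda\subseteq\tilde Q$ with $0<\mathfrak q(\Lambda)<\infty$, take the (unique, by McCann for smooth globally hyperbolic spacetimes) optimal timelike dynamical plan $\Pi\in\mathscr{P}(\TGeo(\I\times_f(L_0,L_1)))$ from \Cref{Rem: smooth lmfd} lifting $(\mu_s)$, and push it forward through the isometric Lorentzian embedding $F_q:=\mathfrak H(q,\cdot,\cdot):\I\times_f(L_0,L_1)\hookrightarrow Y_q$ supplied by \Cref{Def: shit map}. Setting $\Pi^q:=(F_q)_\sharp\Pi$, $\mu_s^q:=(\e_s)_\sharp\Pi^q$, and defining the averaged plan
\[
\tilde\Pi:=\frac{1}{\mathfrak q(\Lambda)}\int_\Lambda\Pi^q\,\dint\mathfrak q(q)\in\mathscr{P}(\TGeo(Y)),\qquad \tilde\mu_s:=(\e_s)_\sharp\tilde\Pi,
\]
\Cref{lem:cycmon} promotes the $\uptau^p$-cyclical monotonicity of $(\e_0,\e_1)_\sharp\Pi$ to that of $(\e_0,\e_1)_\sharp\tilde\Pi$. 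Hence $\tilde\Pi$ is a timelike $p$-optimal dynamical plan on $Y$ and $(\tilde\mu_0,\tilde\mu_1)$ is timelike $p$-dualizable.

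Next I would compute densities. From the disintegration $\n=\int\n_q\,\dint\mathfrak q$ together with $\n_q=(F_q)_\sharp(f(t)^N h_q(r)\,\dint t\otimes \dint r)$, one obtains $\tilde\mu_s\ll\n$ with $\tilde\rho_s(y)=\mathfrak q(\Lambda)^{-1}\rho_s^{\mathfrak Q(y)}(y)$ for $y\in Y_q$ and $q\in\Lambda$, and $\tilde\rho_s\equiv 0$ outside the sheets above $\Lambda$. The $\tCD_p(-\kappa N,N+1)$ hypothesis, combined with timelike $p$-essential nonbranching and the pointwise characterization from \Cref{Rem: TCD rel}(iv), then yields for $\tilde\Pi$-a.e.\ $\gamma$
\[
\tilde\rho_s(\gamma_s)^{-\tfrac{1}{N+1}}\geq\tau_{-\kappa N,N+1}^{(1-s)}(\uptau(\gamma_0,\gamma_1))\,\tilde\rho_0(\gamma_0)^{-\tfrac{1}{N+1}}+\tau_{-\kappa N,N+1}^{(s)}(\uptau(\gamma_0,\gamma_1))\,\tilde\rho_1(\gamma_1)^{-\tfrac{1}{N+1}}.
\]
Substituting the density formula, the constant $\mathfrak q(\Lambda)^{1/(N+1)}$ cancels on both sides; Fubini applied to the disintegration of $\tilde\Pi$ then shows the inequality descends to $\Pi^q$-a.e.\ $\gamma$ for $\mathfrak q$-a.e.\ $q\in\Lambda$. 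Exhausting $\tilde Q$ by a countable family of such $\Lambda$'s yields the claim for $\mathfrak q$-a.e.\ $q\in\tilde Q$, and uniqueness of $(\mu_s^q)$ follows because each $Y_q$ is smooth via $F_q$, where McCann's theorem forces uniqueness of optimal transport between absolutely continuous marginals.

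The main obstacle will be the rigorous disintegration of $\tilde\Pi$ along $q\in\Lambda$ together with the confinement of $\tilde\Pi$-almost every geodesic to a single sheet $Y_q$: this is precisely where timelike $p$-essential nonbranching of $Y$ (to pin down $\tilde\Pi$ as the unique optimal geodesic plan) and fiber independence (to force geodesics between intra-fiber endpoints to stay within the fiber) are indispensable. Once measurability of $q\mapsto\Pi^q$ and the uniqueness of $\tilde\Pi$ are in hand, transferring the $\tilde\Pi$-a.e.\ pointwise inequality into $\Pi^q$-a.e.\ inequalities is a routine substitution.
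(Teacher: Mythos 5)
Your proposal follows the paper's proof essentially step for step: average the pushforwards $\mathfrak H(q,\cdot,\cdot)_\sharp\mu_i$ over a bounded $\Lambda\subseteq\tilde Q$ of finite positive $\mathfrak q$-measure, invoke \Cref{lem:cycmon} to promote the two-dimensional plan's $\uptau^p$-cyclical monotonicity to the averaged plan on $Y$, use timelike $p$-essential nonbranching to identify the averaged geodesic as the unique one, apply the pointwise $\tCD_p$ inequality from \Cref{Rem: TCD rel}(iv), and disintegrate back to $\Pi^q$-a.e.\ $\gamma$ for $\mathfrak q$-a.e.\ $q$. The density bookkeeping (and the cancellation of the normalizing constant) and the final countable exhaustion of $\tilde Q$ are also handled as in the paper, so the plan is correct and matches the paper's route.
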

\begin{proof}
We define probability measures $\nu_i\in{\mathscr{P}}({\I}\times_f X)$ as follows: {we pick $\tilde Q'\subseteq \tilde Q$ with $0<\mathfrak q(\tilde Q')=:\lambda<\infty$. Then we set}
\begin{align*}
    \nu_i= {\frac{1}{\lambda}}\int_{\tilde Q'} \mathfrak H(q, \cdot, \cdot)_{\sharp}\mu_i \,\dint\mathfrak q(q), \ \ i=0,1. 
\end{align*}
\noindent
\textbf{Claim:} 
For $i=0,1$ the measures $\nu_i$ are $\n$-absolutely continuous with a density $\rho_i$ that satisfies 
\begin{align}\label{equ:density} \rho_i(\mathfrak H(q, t, r))=\lambda^{-1} \cdot{h_q(r)^{-1}f(t)^{-(N-1)}}\xi_i(t, r),\end{align}
where $\xi_i$ are the densities of $\mu_i$ w.r.t.\ $f({t})\,\dint {t} \otimes \dint {r}$.\smallskip\\
\noindent\textit{Proof of the Claim}. First we prove that $\nu_i$, $i=0,1$, is $\n$-absolutely continuous.  Since $\mu_i$ is absolutely continuous w.r.t.\ $f({t})\,\dint{t} \otimes \dint{r}$, $\mu^q_i$ is  absolutely continuous w.r.t.\ $\n_q=f({t})^N\,\dint {t} \otimes \m_q$ for $\mathfrak q$-almost every $q\in \tilde Q$. This is because ${\I}\times_f (L_0, L_1)$ equipped with the measure $\n_q$ satisfies a measure contraction property for $\mathfrak q$-a.e.\ $q\in \tilde Q$, and hence $\m_q$ is $\mathcal L^1$-absolutely continuous such that its density $h_q$ has full support in $X_q$. 

Let $N\subseteq Y$ be a $\n$-null set. We have to prove that $\nu_i(N)=0$. By Fubini's theorem it follows that $Y_q\cap N$ is a $\n_q$-null set for $\mathfrak q$-a.e.\ $q\in \tilde Q$.  Hence $\mu_i^q(N\cap X_q)=0$ for $\mathfrak q$-a.e.\ $q\in \tilde Q$ and therefore $$\nu_i(N)=\frac{1}{\lambda}\int_{\tilde Q'}\mu_i^q(N\cap X_q) \,\dint\mathfrak q(q)=0.$$ So $\nu_i$ is $\n$-absolutely continuous with $\nu_i=\rho_i\,\n$.
\smallskip

Let $\phi: \mathcal W\rightarrow [0, \infty)$ be any measurable function where $\mathcal W$ is the domain of the sheet map.  Then, integrating $\phi$ w.r.t.\ $\nu_i$, we get
\begin{align*}\int_{\tilde Q'}\iint \phi(q,{t},{r}) &\rho_i(\mathfrak H(q,{t},{r})) {f({t})^N\,\dint{t}\,  \dint\m_q({r}) } \,\dint\mathfrak q(q)\\
&=\int_{\tilde Q'}\left(\int  \phi \circ \mathfrak H(q, \cdot, \cdot)^{-1}(y)\rho_i(y) \,\dint\n_q(y)  \right)\dint\mathfrak q(q)\\
&= \int   \phi \circ \mathfrak H(q, \cdot, \cdot)^{-1}(y) \rho_i(y)\,\dint\n(y)\\
&= \int   \phi \circ \mathfrak H(q, \cdot, \cdot)^{-1}(y) \,\dint\nu_i(y)\\
&= \frac{1}{\lambda}\int_{\tilde Q'} \int   \phi \circ \mathfrak H(q, \cdot, \cdot)^{-1}(y)\,\dint \mu_i^q(y)\, \dint \mathfrak q(q)\\
&= \frac{1}{\lambda}\int_{\tilde Q'} \int  \phi \circ \mathfrak H(q, \cdot, \cdot)^{-1}(y) \,\dint(\mathfrak {H}(q, \cdot, \cdot)_\sharp \mu_i)(y)\, \dint \mathfrak q(q)\\
&= \frac{1}{\lambda}\int_{\tilde Q'} \iint \phi(q, {t}, {r}) \,\dint\mu_i({t},{r}) \,\dint \mathfrak q(q)\\
&= \frac{1}{\lambda}\int_{\tilde Q'} \left(\iint \phi(q, {t},{r})\xi_i({t},{r})  f({t})\,\dint {t}\,  \dint {r}\right)\dint\mathfrak q(q)\\
&= \frac{1}{\lambda}\int_{\tilde Q'}\iint \phi(q, {t}, {r}) \xi_i({t},{r}) h_q({r})^{-1} f(t)^{-(N-1)}f({t})^N\, \dint {t}\, \dint\m_q({r})\,\dint\mathfrak q(q).
\end{align*}
This proves \eqref{equ:density} and therefore the claim.
\hfill$\blacksquare$
\smallskip

We consider a $\ell_p$-geodesic $(\mu_{{s}})_{{s}\in [0,1]}$ between $\mu_0$ and $\mu_1$. By Lemma \ref{lem:cycmon}, 
$${s}\in [0,1] \mapsto \nu_{{s}}:=\frac{1}{\lambda}\int_{\tilde Q'} \mathfrak H(q, \cdot, \cdot)_{\sharp} \mu_{{s}}\,\dint\mathfrak q(q)$$
is an  $\ell_p$-geodesic in ${\I}\times_f X$ between $\nu_0$ and $\nu_1$.  Moreover, also by \Cref{lem:cycmon},  $\nu_0, \nu_1$ are strongly timelike $p$-dualizable in $Y$. Since $Y$ is also $p$-essentially nonbranching, $\nu_s$ is the unique geodesic between $\nu_0$ and $\nu_1$. Hence, by the curvature-dimension condition $\nu_{{s}}$ is $\n$-absolutely continuous for all ${s}\in [0,1]$, and as before one can check that its  density satisfies $$ 
\rho_{{s}}(\mathfrak H(q, t, r))={h_q(r)^{-1}f^{-(N-1)}(t)}\xi_{{s}}(t, r) $$
where $\xi_{{s}}$ is the density of $\mu_{{s}}$ w.r.t.\ $f({t})^N\,\dint{t} \otimes \dint {r}$.

We also note that $\mathfrak H(q, \cdot, \cdot)_\sharp \mu_{{s}}= \mu^q_{{s}}$ is the $\ell_p$-geodesic between $\mu_0^q$ and $\mu_1^q$ since $\mathfrak H(q, \cdot, \cdot)= F_q$ is a Lorentzian isometric embedding. Since $\nu_{{s}}$ has the density $\rho_{{s}}$ w.r.t.\ $\n=f(t)^N\,\dint{t} \otimes \m$, it is then clear that $\mu_{{s}}^q$ has the density $\rho_{{s}}|_{Y_q}=\rho_{{s}}^q$ w.r.t.\ $\n_q= f({t})^N\,\dint{t} \otimes \m_q$. 

We define the map $\hat F_q\colon \TGeo({\I}\times_f (L_0,L_1))\rightarrow\TGeo(Y_q)$ as follows. 
We first notice that the map $F_q: {\I}\times_f [a_q, b_q]\rightarrow Y_q$ is a Lorentzian isometric embedding. 
Hence, if we set $$\hat F_q(\gamma)({s}):= F_q\circ \gamma({s}), \ \gamma\in \TGeo({\I}\times_f(L_0,L_1)),$$ then $\hat F_q(\gamma)=:\hat \gamma$ is a geodesic that satisfies $\hat \gamma({s})\in Y_q$ for ${s}\in [0,1]$. We note that \begin{align}\label{eq:com}\e_{{s}}(\hat F_q(\gamma))=F_q\circ \gamma({s})= F_q(\e_{{s}}(\gamma)).\end{align}
Let $\Pi$ be the dynamical plan between $\mu_0$ and $\mu_1$, i.e., $(\e_s)_\sharp \Pi=\mu_s$ for $s\in[0,1]$. Hence, if we define 
$$\hat \Pi = \int_{\tilde Q} (\hat F_q)_\sharp\Pi \,\dint\mathfrak q(q)$$ 
then $(\e_{{s}})_\sharp \hat \Pi= \nu_{{s}}$. Indeed
\begin{align*}\nu_{{s}}(A)&=\int_{\tilde Q} \mathfrak H(q, \cdot, \cdot)_{\sharp} \big((\e_{{s}})_\sharp \Pi\big) (A) \,\dint\mathfrak q(q)\\
&= \int_{\tilde Q} \Pi\big((\e_{{s}})^{-1}((F_q)^{-1}(A))\big)\,\dint\mathfrak q(q)=\int_{\tilde Q}\Pi((\hat F_q)^{-1}((\e_{{s}})^{-1}(A))\,\dint \mathfrak q(q)=\hat \Pi((\e_{{s}})^{-1}(A)).\end{align*}
We set $(\hat F_q)_\sharp \Pi=: \Pi^q$. Then it also follows that $\mu_{{s}}^q= (\e_{{s}})_\sharp \Pi^q$. Indeed we compute
$$(\e_{{s}})_\sharp \Pi^q=(\e_{{s}} \circ \hat F_q)_\sharp \Pi\stackrel{\eqref{eq:com}}{=} (F_q\circ \e_{{s}})_\sharp\Pi= \mathfrak H(q, \cdot, \cdot)_\sharp \mu_{{s}}= \mu_{{s}}^q.$$

By the curvature-dimension condition on $Y$ it  holds
\begin{align*}
    \rho_s(\gamma({s}))^{-\frac{1}{N+1}}\geq \tau_{-\kappa N,N+1}^{(1-{s})}(\uptau(\gamma_0,\gamma_1))\rho_0(\gamma(0))^{-\frac{1}{N+1}}+ \tau_{-\kappa N,N+1}^{({s})}(\uptau(\gamma_0,\gamma_1))\rho_1(\gamma(1))^{-\frac{1}{N+1}}
\end{align*}
for $\Pi$-a.e.\ $\gamma\in \TGeo(Y)$. In other words, \begin{align*}
    \rho^q_{{s}}(\gamma({s}))^{-\frac{1}{N+1}}\geq \tau_{-\kappa N,N+1}^{(1-{s})}(\uptau(\gamma_0,\gamma_1))\rho^q_0(\gamma(0))^{-\frac{1}{N+1}}+ \tau_{-\kappa N,N+1}^{({s})}(\uptau(\gamma_0,\gamma_1))\rho^q_1(\gamma(1))^{-\frac{1}{N+1}}
\end{align*}
for $\Pi^q$-a.e.\ $\gamma\in \TGeo(Y_q)$ and $\mathfrak q$-a.e.\ $q\in \tilde Q$. This finishes the proof of the lemma.
\end{proof}

\begin{prop}\label{prop:cdslice}
    For $\mathfrak q$-a.e.\ $q\in \tilde Q$ the sheet $(\I\times_f (L_0, L_1), f({t})^N\,\dint{t} \otimes{h_q(r)}\,\dint{r})$ satisfies  $\tCD_p(-\kappa N, N+1)$. 
\end{prop}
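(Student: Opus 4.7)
The plan is to deduce this statement almost immediately from the previous lemma, using the equivalent pointwise characterization of $\tCD_p$ for timelike essentially nonbranching spaces (Remark \ref{Rem: TCD rel}(iv)). First, I would invoke fiber independence: the restriction of the sheet map $F_q = \mathfrak H(q,\cdot,\cdot)$ to $\I\times_f (L_0,L_1)$ is a Lorentzian isometric embedding into $Y_q \subseteq Y$, and under this embedding the reference measure $f(t)^N\,\dint t \otimes h_q(r)\,\dint r$ pushes forward exactly to $\n_q$. Therefore $(\I\times_f(L_0,L_1),\,f^N\,\dint t\otimes h_q\,\dint r)$ and $(Y_q,\n_q)$ are isomorphic as measured Lorentzian spaces, and any $\tCD_p$ assertion on one is equivalent to the same assertion on the other. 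The sheet, being a smooth $2$-dimensional globally hyperbolic Lorentzian manifold, is timelike nonbranching, so the pointwise characterization of $\tCD_p(-\kappa N, N+1)$ applies.

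Next, I would fix an arbitrary pair $(\mu_0,\mu_1) \in \mathscr P_\mathrm{c}(\I\times_f(L_0,L_1))^2$, absolutely continuous with respect to $f^N\,\dint t\otimes h_q\,\dint r$ and timelike $p$-dualizable in the sheet. Because $h_q$ is strictly positive on $(L_0,L_1)$ (a consequence of the $\tmcp$ proved in the preceding proposition together with \cite[Thm.\ 4.18]{CM:24a}), these measures are also absolutely continuous with respect to the Lorentzian volume $f(t)\,\dint t\otimes\dint r$; moreover, in the smooth manifold setting this pair lifts to a unique $\ell_p$-geodesic $(\mu_s)_{s\in[0,1]}$ induced by an optimal dynamical plan $\Pi$, cf.\ Remark \ref{Rem: smooth lmfd}. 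The previous lemma then produces the pointwise convexity inequality
\begin{equation*}
    \rho^q_s(\gamma_s)^{-\frac{1}{N+1}}\geq \tau_{-\kappa N,N+1}^{(1-s)}(\uptau(\gamma_0,\gamma_1))\rho^q_0(\gamma_0)^{-\frac{1}{N+1}}+ \tau_{-\kappa N,N+1}^{(s)}(\uptau(\gamma_0,\gamma_1))\rho^q_1(\gamma_1)^{-\frac{1}{N+1}}
\end{equation*}
for $\Pi^q$-a.e.\ $\gamma\in\TGeo(Y_q)$, where $\Pi^q = (\hat F_q)_\sharp\Pi$ and $\rho^q_s$ is the density of $\mu^q_s = (F_q)_\sharp\mu_s$ with respect to $\n_q$. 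Transporting this back to the sheet via the isometry $F_q^{-1}$, the same bound holds $\Pi$-a.e.\ for the densities of $\mu_s$ with respect to $f^N\,\dint t\otimes h_q\,\dint r$.

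Finally, appealing to Remark \ref{Rem: TCD rel}(iv), this pointwise inequality is equivalent to $\tCD_p(-\kappa N,N+1)$ on the (timelike nonbranching) sheet, completing the proof. The only minor obstacle is ensuring that the "for $\mathfrak q$-a.e.\ $q$" in the previous lemma can be taken uniformly across a sufficiently rich family of test measures. This is resolved by observing that the lemma was stated and proved for an \emph{arbitrary} absolutely continuous, $p$-dualizable pair $(\mu_0,\mu_1)$ on the sheet, so the exceptional $\mathfrak q$-null set depends on the pair; one then simply fixes any countable dense collection of such pairs (compactly supported, with bounded densities) and takes the union of the associated null sets, which remains $\mathfrak q$-null, and invokes the pointwise characterization on the complement, where every admissible pair is handled.
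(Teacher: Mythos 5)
Your proposal takes essentially the same route as the paper: reduce to the previous lemma, then cope with the fact that the exceptional $\mathfrak q$-null set in that lemma depends on the chosen pair $(\mu_0,\mu_1)$ by testing against a countable family and discarding a single null set. You correctly identify this as the only delicate point, but your resolution of it is a genuine gap. Knowing the pointwise convexity inequality for a countable collection of pairs does \emph{not} automatically ``handle every admissible pair'' on the complementary $\mathfrak q$-full set, as you assert in the last sentence; one still needs a limit passage from the countable family to an arbitrary admissible pair, and this passage requires care for two separate reasons.

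First, the countable family must be dense in a topology strong enough to control the right-hand side of the inequality. The paper achieves this by using normalized step functions with rational data, which approximate bounded densities in $L^\infty$ (not merely weakly); this uniform convergence of the densities $\rho^{q,k}_i \to \rho^q_i$ at the endpoints is exactly what keeps the right-hand side under control. If you only ask for a ``countable dense collection'' without specifying this, the limit passage fails. Second, you suggest invoking the pointwise characterization of $\tCD_p$ for the limit, but the pointwise inequality $\Pi^k$-a.e.\ does not pass to the limit as $\Pi^k\to\Pi$ weakly in any straightforward way. The paper instead integrates the pointwise inequality against $\Pi^k$ to obtain the R\'enyi-entropy form, and then passes to the limit using lower semicontinuity of $S_N$ on the left together with upper semicontinuity of the integral on the right (from $L^\infty$ convergence of densities and weak convergence of the plans). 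Your sketch is directionally correct and identifies the right ingredients (fiber independence, the previous lemma, countability of a test family), but the approximation step you call a ``minor obstacle'' is the substance of the proof, and as written it is not carried out.
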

\begin{proof} We fix $q\in Q$ and consider the corresponding sheet $^- I \times_f [L_0, L_1]$ equipped with $\n_q|_{^- I \times_f [L_0, L_1]}$ where $\n_q= f(t)^N \dint t \otimes h_q(r) \dint r.$  We set $\dint \vol: = f(t) \dint t \otimes \dint r.$



We fix $\mu_0, \mu_1 \in \mathscr P_{p,c}\left(^-I\times_f [L_0,L_1]\right)$ such that $\ell_p(\mu_0, \mu_1)<\infty$, $\mu_0, \mu_1\ll \n_q$ and $\mu_0, \mu_1$ are (strongly) timelike $p$-dualizable (note that $^-I\times_f[L_0, L_1]$ is smooth). 

Let $\Pi$ be the unique optimal dynamical plan between $\mu_0$ and $\mu_1$, and let $\pi=(\e_0, \e_1)_\sharp \Pi$ be the corresponding optimal coupling. 

Since $\mu_0$ and $\mu_1$ are compactly supported in $I \times [L_0,L_1]$, we  can assume that $I\times [L_0, L_1]$ is compact, and $(\e_t)_{\sharp}\Pi$ is concentrated in $I\times [L_0, L_1]$ for all $t\in [0,1]$.

There are densities $\tilde \rho_i, i=0,1,$ such that 
$$\mu_i= \tilde \rho_i \dint  \n_q= \tilde \rho_i f^{N}  \dint t \otimes h_q \dint r.$$
We set $\rho_i = \tilde \rho_i f^{N-1} h_q$. Then $\rho_i \in L^1(\vol)$. 
By compactness, we can assume that the  density $\tilde \rho_i$ is bounded by  $\tilde C>0$. 
%
%
\medskip

As a consequence of Lemma 3.16 in \cite{CM:24a} we have the following:
There exists a sequence of chronological couplings  $(\hat \pi^l)_{l \in \N}$ with the following properties: 
\begin{enumerate}
\item $\hat \pi^l=  \rho^l \dint \vol\otimes \dint \vol$ with $ \rho^l\in L^\infty\cap L^1(\vol\otimes \vol).$
\item $\hat \pi^l \rightarrow \pi$ converges weakly.
\item $(P_i)_\sharp \hat \pi^l =: \hat \mu_i^l=  \rho_i^l \dint \vol$ with $ \rho_i^l \rightarrow \rho_i$ in $L^1(\vol)$ as $l\rightarrow \infty$, $i=0,1$. 

From step 3 in the proof of Lemma 3.16 in \cite{CM:24a} we know that $\rho_i^l\leq \rho_i$ $\vol$-a.e.

Hence, we have also that $\tilde \rho_i^l\leq \tilde \rho_i$ $\n_q$-a.e.\ where $\tilde \rho_i^l= \rho_i^l f^{-N+1} h_q^{-1}.$ Moreover, after possibly going to a subsequence, $\tilde \rho_i^l\rightarrow \tilde \rho_i$ $\n_q$-a.e.

Then it follows by dominated convergence that $$S_N(\hat \mu_i^l | \n_q) \rightarrow S_N(\mu_i |\n_q)\quad (i=0,1). $$
 \end{enumerate}
 {\bf (1)} We choose a countable subset $\mathfrak D\subset C_c((I\times (L_0, L_1))^2)$ that is dense in $L^1(\vol\otimes \vol)$. 
Let $\mathfrak S$ be the  pairs $(\mu_0'', \mu_1'')$ of probability measures on $I\times_f [L_0, L_1]$ defined as follows: 


For $g\in \mathfrak D$ and  $\hat \pi= g\, \dint \vol \otimes \dint \vol$ we  consider $c^{-1} \hat \pi |_{\{\uptau>0\}}=: \pi'$ provided $c=\hat \pi(\{\uptau>0\})>0$. It follows that there exits an optimal causal coupling $\pi$ between the marginal distributions of $\pi'$ by Proposition 2.3 in \cite{CM:24a}. Then,  we define the chronological coupling $b^{-1} \pi|_{\{\uptau>0\}}=:\pi''$ provided $b=\pi(\{\uptau>0\})>0$. This yields, that $\mu_0''$ and $\mu_1''$ are  the marginal distributions of $\pi''$ and, in particular, are $p$-dualizable. 
 \medskip

 We will show that given the probability measures $\mu_0$ and $\mu_1$ from above, we can find a sequence $(\mu_0^k, \mu_1^k)\in \mathfrak S$ that approximates the pair $(\mu_0, \mu_1)$ well. 
\medskip\\
 {\bf (1.1)} Consider $\rho^l$ from before. 
 Since  $\rho^l\in L^\infty\cap L^1(\vol\otimes \vol)$, we can pick  $M:=M_l>0$ such that $\rho^l\leq M$ and
there exists a sequence of continuous functions $( g^{l,k})_{k\in \N}$ in $\mathfrak D$, uniformly bounded by $M$, such that $ g^{l,k}\rightarrow  \rho^l$ in $L^1(\vol\otimes \vol)$ for $k\rightarrow \infty$.  

We normalize $ g^{l,k}$ such that $ g^{l,k} \vol\otimes \vol=: \pi^{l,k}$ is a probability measure.  Then $\pi^{l,k}\rightarrow \hat \pi^l$ weakly. 

We define $(P_i)_\sharp \pi^{l,k}=\mu_i^{l,k}$ where $\mu^{l,k}_i= g_i^{l,k} \dint \vol$ with 
$$g^{l,k}_0(x_0)= \int g^{l,k}(x_0, x_1) \dint \vol(x_1) \ \ \& \ \  g^{l,k}_1(x_1)= \int g^{l,k}(x_0, x_1) \dint \vol(x_0)$$
and we note that $g^{l,k}_i(x_i)$, $i=0,1$,  is continuous with compact support in $I\times(L_0, L_1)$.

Let $\bar x_i$ be a Lebesgue point of $\rho^l_i\in L^1(\vol)$. Then, for all $\epsilon>0$ there exists $r>0$ such that 
$$\left| \rho^l_i(\bar x_i) -
-\!\!\!\!\!\!\int_{B_r(\bar x_i)}\rho^l_i(x_i) \dint \vol(x_i)\right|< \epsilon 
\  \ \& \ \  \left| g^{l,k}_i(\bar x_i) --\!\!\!\!\!\!\int_{B_r(\bar x_i)}g^{l,k}_i(x_i) \dint \vol(x_i)\right|< \epsilon.
$$
Hence 
\begin{align*}\left| \rho^l_i(\bar x_i) - g_i^{l,k}(\bar x_i)\right|
& \leq
{-}\!\!\!\!\!\!\int_{B_r(\bar x_i)} \int  | \rho^l(x_0, x_1)- g^{l,k}(x_0, x_1)| \dint \vol(x_0) \dint \vol(x_1)+ 2\epsilon,
\end{align*}
and therefore 
$\limsup_{k\rightarrow \infty} \left| \rho^l_i(\bar x_i) - g_i^{l,k}(\bar x_i)\right| \leq 2\epsilon.$
Since $\epsilon>0$ was arbitrary, it follows that $g^{l,k}_i \rightarrow \rho^l_i$ pointwise $\vol$-a.e.\  
\medskip

We showed that there exist couplings $\pi^{l,k}$ with the following properties: 
\begin{enumerate}
\item $\pi^{l,k}= g^{l,k} \vol\otimes \vol$ with $g^{l,k} \in C_c( (I\times (L_0, L_1))^2)$. 
\item $\pi^{l,k} \rightarrow \hat \pi^l$ weakly. 
\item $(P_i)_\sharp \pi^{l,k} = \mu_i^{l,k} = g_i^{l,k} \dint \vol$ with $g_i^{l,k}\in C_c(I\times (L_0, L_1))$ and $g_i^{l,k} \rightarrow \rho_i^l$  $\vol$-a.e.\ as $k\rightarrow \infty$. 

We define $\tilde g_i^{l,k}= g_i^{l,k} f^{-N+1} h_q^{-1}$. Then $\tilde g^{l,k}_i\rightarrow \tilde \rho^l_i$  as well as $(\tilde g^{l,k}_i)^{1-\frac 1 N} \rightarrow (\tilde \rho^l_i)^{1-\frac 1 N}$ $\n_q$-a.e. 

It also holds $(\tilde g^{l,k}_i)^{1-\frac 1 N} \leq (M f^{-N+1} h_q^{-1})^{1- \frac 1 N}$, and  since $f$ is smooth and $h_q$ is continuous, we have
\begin{align*}-S_N(Mf^{-N+1} h_q^{-1} \dint \n_q | \n_q)&= M^{1-\frac{1}{N}}\int (f^{-N+1} h_q^{-1})^{1-\frac{1}{N}} \dint \n_q\\
&= M^{1-\frac{1}{N}}\int (f^{-N+1} h_q^{-1})^{-\frac{1}{N}} \dint \vol = M^{1-\frac{1}{N}} \int f^{1-\frac{1}{N}} h_q^{\frac{1}{N}} \dint \vol <\infty .\end{align*}
Hence, by dominated convergence we have that $$S_N(\mu^{l,k}_i|\n_q)\rightarrow S_N(\mu_i^l|\n_q).$$
\end{enumerate}
Now, we can  pick a diagonal sequence  such  that $\pi^{l_k, k}\rightarrow \pi$ weakly and \begin{align}\label{lim:entropy} \mbox{$S_N(\mu^{l_k, k}_i|\n_q)\rightarrow S_N(\mu_i|\n_q)$, $i=0,1$, as $k\rightarrow \infty$. }
\end{align}
We set $\pi^{l_k, k}=:\pi^k$, $g^{l_k, k}_i=: g_i^k$ and $g^k:=g^{l_k, k}$. 
\medskip\\
{\bf (1.2)}
By lower semicontinuity over open subsets it holds that 
$$\liminf_{k\rightarrow \infty}  \pi^k(\{\tau>0\})\geq \pi(\{\tau>0\})=1.$$
Thus,  if we set $c_k=  \pi^k(\{\tau>0\})$, then $c_k\rightarrow 1$ for $k\rightarrow \infty$ and  it holds that
$$( \pi^k)':= c_k^{-1}  \pi^k|_{\{\tau>0\}}= c_k^{-1} g^k 1_{\{\tau>0\}} \dint \vol \otimes \dint \vol\rightarrow \pi \ \mbox{ weakly}.$$
We set $(g^k)'= c_k^{-1} g^k 1_{\{\tau>0\}}$ and define $(P_i)_\sharp( \pi^k)' =:( g_i^k)' \dint \vol=:( \mu_i^k)'$, $i=0,1$, where 
$$ (g_0^k)'(x_0) = \int (g^k)'(x_0, x_1) \dint \vol(x_1), \ \ (g_1^k)'(x_1) = \int (g^k)'(x_0, x_1) \dint \vol(x_0).$$
Moreover one shows that $0\leq (\tilde g_i^k)'\leq c_k^{-1} \tilde g^k_i \leq c_k^{-1} M_{l_k} f^{-N+1} h_q^{-1}$, $i=0,1$.

Therefore, we have $( \pi^k)'\in \Pi_{\geq 0}( (\mu_0^k)', (\mu_1^k)')$, and $\ell_p((\mu_0^k)', (\mu_1^k)')\in (0, \infty)$, and consequently, by Proposition 2.3 in \cite{CM:24a}  there exists an optimal coupling $\pi^k\in \Pi_{\geq}^{p, \opt}((\mu_0^k)', (\mu_1^k)')$.  

Using  compactness arguments we can extract a subsequence from $(\pi^k)_{k\in \N}$ (again denoted as $(\pi^k)_{k\in \N}$) such that $\pi^k$ converges weakly to an
optimal coupling $\bar \pi$ between $\mu_0, \mu_1$. Since $\mu_0, \mu_1$ are (strongly) timelike $p$-dualizable it follows that $\bar \pi= \pi$, because $\mu_0,\mu_1$ are absolutely continuous and the underlying space is smooth. 

It holds again
$\liminf_{k\rightarrow \infty}  \pi^k(\{\uptau>0\})\geq \pi(\{\uptau>0\})=1.$ Thus, we set $b_k= \pi^k(\{\uptau>0\})$ and define 
\begin{align*}(\pi^k)'':= b_k^{-1} \pi^k|_{\{\uptau>0\}} \mbox{ and } (\mu^k_i)''= (P_i)_{\sharp} (\pi^k)'', i=0,1, \mbox{ for $k$ sufficiently large.}\end{align*}
Then, $(\pi^k)''$ as well as $(\mu_i^k)'', i=0,1,$ converge weakly to $\pi$ and to $\mu_i, i=0,1$, respectively. $(\mu_i^k)''$ is still $\vol$-absolutely continuous and we write $(g^k_i)''$ for the densities.  It holds that $(g^k_i)''\leq b_k^{-1}(g_i^k)'$, $i=0,1$. 

We repeat: $(\mu^k_0, \mu^k_1)$ is a pair of $\n_q$-absolutely continuous probability measures that admits a unique optimal coupling $\pi$ concentrated on $\{\uptau>0\}$. Hence $\mu^k_0$ and $\mu^k_1$ are $p$-dualizable. 
\medskip\\
%
{\bf (1.3)}
If we set $(\tilde g^k_i)''= (g_i^k)'' f^{-N+1} h_q^{-1}$, then also $(\tilde g_i^k)''$ converges pointwise $\n_q$-a.e.\ to $\tilde \rho_i$.  We recall that $\tilde \rho_i$ is bounded by a constant $\tilde C$.

%
%
%
Hence, we consider $A^k_i= \{ (\tilde g^k_i)''\leq 2\tilde C\}$ and  define
$$\mathcal G^k=\{ (x_0, x_1)\in (I\times_f (L_0,L_1))^2: x_0\in A^k_0, x_1\in A^k_1\}. $$
We have $\n_q(\{g^k_i>2\tilde C\})\rightarrow 0$ for $k\rightarrow \infty$ by  pointwise convergence $\n_q$-a.e.\ since $\tilde \rho_i$ is bounded by $\tilde C$. Hence, $(\pi^k)''(\mathcal G^k)\rightarrow 1$ if $k\rightarrow \infty$. 

Let $(\pi^k)'''= (\pi^k)''(\mathcal G^k)^{-1} (\pi^k)''|_{\mathcal G^k}$, then $(\pi^k)'''$ is still an optimal plan and its marginal distributions  $(\mu_i^k)'''$ are still $\n_q$-absolutely continuous with densities bounded by $2\tilde C.$ Let $(\tilde g^k_i)'''$ the density w.r.t.\ $\n_q$. It satisfies $(\tilde g^k_i)''' \leq a_k^{-1} (g^k_i)''$. 

In summary: we produced an optimal, chronological coupling $(\pi^k)'''$  such that $(\pi^k)''' \rightarrow \pi$ weakly, and such that the densities of the marginal distributions w.r.t.\ $\n_q$  are uniformly bounded by $2\tilde C$ and satisfy 
\begin{align}\label{inequ:density_estimate}(\tilde g^k_i)''' \leq a_k^{-1} (\tilde g_i^k)'' \leq a_k^{-1}b_k^{-1} c_k^{-1} \tilde g^k\leq 2 \tilde g_k.
\end{align}

We can apply the argument developed in step 2b of the proof of  Theorem 3.15 in \cite{CM:24a} and show that 
$$\limsup_{k\rightarrow \infty} S_N( (\mu_i^k)'''|\n_q) \leq \limsup_{k\rightarrow \infty} S_N(\mu^k_i|\n_q).$$
We omit details and only note that  we use the concavity of the function $u(x)=x^{1-\frac{1}{N}}$, Jensen's inequality and the density  estimate \eqref{inequ:density_estimate}.
Together with \eqref{lim:entropy} it follows that 
$$\limsup_{k\rightarrow \infty} S_N( (\mu_i^k)'''|\n_q) \leq S_N(\mu_i|\n_q)$$
and by lower semi-continuous w.r.t.\ weak convergence of $S_N$ we can replace $\limsup$ with $\lim$.

Finally, since the densities $(g_i^k)'''$ are uniformly bounded, it follows that $(g_i^k)'''\rightarrow \rho_i$ $\n_q$-a.e.\ as in Corollary 5.7 of \cite{gigli_splitting_context}.
%

By Egorov's theorem, for a fixed $\epsilon>0$ we can find a set $N_\epsilon$ such that, after possibly extracting another subsequence, $\tilde g_i^k$ converges uniformly to $\tilde \rho$ on $\mathcal K\backslash N_\epsilon$. We say $g_i^k$ converges to $g_i$ almost uniformly. 

The following lemma generalizes  Lemma 3.3 in \cite{Stu:06b} and is of independent interest.
\begin{lem}\label{lem:help} Let $(X, \dint, \n)$ be a metric measure space. Let $\mu^k$ be probability measures  on $X$ with densities $\rho^k$, $k\in \N\cup\{\infty\}=: \bar \N$. We assume that $\rho^k$ converges to $\rho^\infty$ almost uniformly, that $\rho^k, k\in \bar \N$, is bounded by $C$, and that $(\rho^k)^{1-\frac{1}{N}}\rightarrow (\rho^\infty)^{1-\frac{1}{N}}$ converges in $L^1(\n)$.  Let $\Pi^k, k\in \bar \N$, be  couplings such that $(P_1)_\sharp \Pi^k=\mu^k$,  such that $\mu^k$ and $(P_2)_\sharp \Pi^k$ are both supported in a compact subset $\mathcal K$. We assume that $\Pi^k$ converges weakly to $\Pi^\infty$.   We set $\tau^{(t)}(x,y):= \tau_{K,N}^{(t)}(\uptau (x,y))$. Then 
$$\liminf_{k\rightarrow \infty} \int \tau^{(1-t)}(x,y) (\rho^k)^{-\frac{1}{N}}(x) \dint \Pi^k(x,y) \geq \int \tau^{(1-t)}(x,y) (\rho^\infty)^{-\frac{1}{N}}(x) \dint \Pi^\infty(x,y).$$
\end{lem}
\begin{proof}[Proof of the lemma]
Let $Q^k(x_0, \dint x_1)$ be a disintegration of $\Pi^k$ w.r.t.\ $\mu^k$. 
For $k \in \bar \N$ we define 
$$ v^k(x)= \int \tau^{(t)}( x_0,x_1) \dint Q(x_0, \dint x_1). $$
Since the marginal distributions of $\Pi^k$ are both supported in $\mathcal  K$, it follows that the functions $v^k(x)$ are uniformly   bounded by a constant $D>0$ that depends on $\mathcal K$. 

Let $\epsilon>0$ be arbitrary. 
Since $(\rho^\infty)^{-\frac{1}{N}}\in L^1(\mu^\infty)$,  there exists $0\leq \psi \in C_b(X)$ such that \begin{align}\label{ineq:e1}\int v^k \left| (\rho^\infty)^{-\frac{1}{N}} - \psi\right| \dint \mu^\infty\leq \epsilon.\end{align}
Moreover, the weak convergence of $\Pi^k$ to $\Pi^\infty$ implies that 
\begin{align*}&\int v^\infty(x) \psi(x) d\mu^\infty(x)= \int_{X^2} \tau^{(t)}(x_0, x_1) \psi (x_0) d\Pi^\infty(x_0, x_1)\\
& \ \ \ \ \ \ \ \ \  \ \ \ \ \ \ \ \leftarrow  \int_{X^2} \tau^{(t)}(x_0, x_1) \psi (x_0) d\Pi^k(x_0, x_1)=\int v^k(x) \psi(x) d\mu^k(x)\end{align*}
and therefore, for $k\in \N$ sufficiently large, like $k\geq k_1$, 
\begin{align}\label{ineq:e2}\int v^\infty(x) \psi (x)\dint \mu^\infty \leq \int v^k(x) \psi(x) \dint \mu^k(x) + \epsilon.\end{align}
Then, from \eqref{ineq:e1} and \eqref{ineq:e2} we get 
\begin{align}\label{ineq:e3}\int v^\infty(x) (\rho^\infty)^{-\frac{1}{N}} \dint \mu^\infty\leq \int v(x) \psi \dint \mu^\infty + \epsilon\leq \int v^k \psi \rho^k \de \n + 2 \epsilon. 
\end{align}

We can pick $N_\epsilon\subset X$ such that $\rho^k \rightarrow \rho^\infty$ uniformly on $X\backslash N_\epsilon$ and $\n(N_\epsilon)< \epsilon$.  In particular, for $\eta>0$ we choose $k_2$ such that for evergy $k\geq k_2$, we have 
$\left| \rho^k - \rho^\infty\right|\leq \eta \mbox{ on } X\backslash N_\epsilon.$

Consequently,  for  $k \geq k_2$ it follows
\begin{align*}\int v^k \psi \rho^k \dint \n &= \int_{X\backslash N_\epsilon} v^k \psi \rho^k \dint \n + \int_{N_\epsilon} v^k(x) \psi \rho^k \dint \n\\
&\leq  \int_{X\backslash N_\epsilon} v^k \psi (\rho^\infty + \eta) \dint \n + \int_{N_\epsilon} v^k \psi \rho^k \dint \n\leq \int_X v^k \psi \rho^\infty  \dint \n  + CM \eta \n(\mathcal K)+ DMC \epsilon . 
\end{align*}
Here $M>0$ is a constant such that $\psi(x)\leq M$. 
We  set $N=\n(\mathcal K)$ and set $C(\epsilon, \eta)=C(\epsilon, \eta | C, M, N, D) =CM N \eta + DMC \epsilon >0$. It follows
\begin{align}\label{ineq:e4}\int v^k \psi \rho^k \dint \n \leq \int v^k \psi \rho^\infty  \dint \n + C(\epsilon, \eta)\leq  \int  v^k (\rho^\infty)^{-\frac{1}{N}} \dint \mu^\infty + \epsilon + C(\epsilon, \eta).\end{align}
Since, by assumption, we have that $(\rho^k)^{1-\frac{1}{N}}$ converges to $(\rho^\infty)^{1-\frac{1}{N}}$ in $L^1(\n)$, and since $v^k$ is uniformly bounded, it follows from \eqref{ineq:e3} and \eqref{ineq:e4} for $k\geq \max\{k_1, k_2\}$ large enough
$$\int v^\infty (x) (\rho^\infty)^{-\frac{1}{N}} \dint \mu^\infty \leq \int v^k (\rho^k)^{-\frac{1}{N}} \dint \mu^k + C(\epsilon, \eta)+ \epsilon.$$
Since $\epsilon>0$ and $\eta>0$ are arbitrary, and since $C(\epsilon, \eta)\rightarrow 0$ for $\epsilon, \eta\rightarrow 0$, the claim follows. 
\end{proof}

\noindent{\bf (2)} 
Since $\mathfrak S$ is  countable, Lemma \ref{lem:prel} implies that there exists $Q'\subset Q$ such that $\mathfrak q (Q\backslash Q')=0$ and for every $q\in Q'$  we have the following property:

For every pair $(\mu_0, \mu_1)\in \mathfrak S$ there exists a $\ell_p$-Wasserstein $(\mu_t)_{t\in [0,1]}$ geodesic supported in $\I\times_f (L_0, L_1)$ such that 
\begin{align}\label{ineq:rhoq}
    \tilde g^q_t(\gamma(s))^{-\frac{1}{N+1}}\geq \tau_{\kappa N,N+1}^{(1-s)}(\uptau(\gamma_0,\gamma_1))\tilde g^q_0(\gamma(0))^{-\frac{1}{N+1}}+ \tau_{\kappa N,N+1}^{(s)}(\uptau(\gamma_0,\gamma_1))\tilde g^q_1(\gamma(1))^{-\frac{1}{N+1}}
\end{align}
for all $s\in (0,1)$ and for $\Pi^q$-a.e.\ $\gamma\in \Geo(Y_q)$ where for $s\in [0,1]$ $\tilde g^q_s$ is the density of $\mu_s$ w.r.t.\ $\n_q$ and $\Pi^q\in \mathcal P(\Geo(Y_q))$ with $(\e_t)_\sharp \Pi^q = \mu_t$.

Moreover, this stays true if we pick $(\mu_0, \mu_1)\in \mathfrak S$ with the associated dynamical plan $\Pi^q$, and  restrict $\Pi^q$ to  $\Pi^q(\mathcal G)^{-1} \Pi^q|_{\mathcal G}$ where 
$$\mathcal G= \{ \gamma\in \Geo : \gamma(i) \in \{ \rho_i^q \leq 2\tilde C\}, i=0,1\}.$$
{\bf (3)}
We pick two $\n_q$-absolutely continuous measures $\mu_0, \mu_1$ that are compactly supported in a set $\mathcal K\subset I\times_f (L_0, L_1)$ and have  densities $\tilde \rho_i$, $i=0,1$, w.r.t.\ $\n_q$ that are bounded by $\tilde C$.
\medskip

We find a sequence $(\mu^k_0,  \mu^k_1)\in \mathfrak S$ and the $\ell_p$-geodesic $(\mu^k_s)_{s\in [0,1]}$ between these two measures as constructed in the previous steps.

The sequence $(\mu_i^k)_{k\in \N}$  converges weakly to $\mu_i$, $i=0,1$.  Moreover, after extracting a subsequence $(\mu_s^k)_{k\in \N}$ converges to a $\mu_s$, and $(\mu_s)_{s\in [0,1]}$ is the $\ell_p$-geodesic between $\mu_0$ and $\mu_1$. Similarly, the sequence of plans $\Pi^{q,k}, k\in \N$, converges to $\Pi^{q}$ such that $(\e_s)_{\sharp} \Pi^q= \mu_s$. 
We integrate the inequality \eqref{ineq:rhoq} w.r.t.\ $\Pi^{q, k}$ and obtain
\begin{align*} &-\int (\tilde g^{q,k}_s)^{-\frac{1}{{N+1}}}(\gamma(t))\,\dint \Pi^{q,k}(\gamma)= -\int (\tilde g^{q,k}_s)^{1-\frac{1}{{N+1}}}(x)\,\dint \n_q(x)= S_N(\mu^{k}_s|\n_q) \\
&\ \ \ \ \ \ \ \ {\leq}-\!\int\Big[\tau_{-\kappa N,N+1}^{(1-s)}(\mathcal T)\tilde g^{q,k}_0(x_0)^{-\frac{1}{{N+1}}} 
+\tau_{-\kappa N,N+1}^{(s)}(\mathcal T)\tilde g^{q,k}_1(x_1)^{-\frac{1}{{N+1}}}\Big]\,\dint\left((\e_0, \e_1)_\sharp\Pi^{q,{k}}\right)(x_0,x_1).\end{align*}
where $\mathcal T:=\uptau(x_0,x_1)$.
The left hand side of this inequality is the $(N+1)$-Renyi entropy which is lower semi-continuous w.r.t.\ weak convergence.

The right hand side, on the other hand, is upper semi-continuous by the properties of the sequence $(\mu_i^{k})_{k\in \N}\subset \mathfrak D$ and because of Lemma \ref{lem:help}. 

It follows that 
\begin{align*} S_N(\mu_s|\n_q)\,{\leq}-\int\Big[\tau_{-\kappa N,N+1}^{(1-s)}(&\uptau(x_0,x_1))\tilde g^{q}_0(x_0)^{-\frac{1}{{N+1}}} \\&+\tau_{-\kappa N,N+1}^{(s)}(\uptau(x_0,x_1))\tilde g^{q}_1(x_1)^{-\frac{1}{{N+1}}}\Big]\,\dint\left((\e_0, \e_1)_\sharp\Pi^{{{k}}}\right)(x_0,x_1).\end{align*}
Hence, $(\I\times_f (L_0,L_1), \n_q)$ satisfies $\tCD_p(-\kappa N, N+1)$ for $\mathfrak q$-a.e.\ $q\in \tilde Q$.
\end{proof}

\begin{cor}
For $\mathfrak q$-a.e.\ ${q} \in Q$ the needle $(X_{ q}, \m_{ q})$ satisfies the condition $\CD(\eta (N-1), N)$, where $\eta =\sup_I\{-(f')^2 -f'' f\}$, and 
$$f''-\kappa f\leq 0.$$\end{cor}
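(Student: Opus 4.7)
The plan is to apply the two-dimensional converse, Theorem~\ref{th:lasttheorem}, sheetwise. By Proposition~\ref{prop:cdslice}, for every choice of $L_0 < L_1 \in \mathbb{Q}$ and for $\mathfrak q$-a.e.\ $q \in \tilde Q_{[L_0,L_1]}$, the two-dimensional generalized cone $(\I \times_f (L_0,L_1),\, f(t)^N\,\dint t \otimes h_q(r)\,\dint r)$ satisfies $\tCD_p(-\kappa N, N+1)$. Since $f \in C^\infty$ and $\partial I = f^{-1}(\{0\})$, the hypotheses of Theorem~\ref{th:lasttheorem} are met, and we conclude that
\begin{itemize}
\item $f'' - \kappa f \leq 0$ on $I$ (independent of $q$), and
\item $h_q$ is a $\CD(\eta(N-1),N)$-density on $(L_0, L_1)$, with $\eta = \sup_I\{-(f')^2 + \kappa f^2\}$.
\end{itemize}

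Next I would upgrade this to a global statement on each ray. Enumerate the countable family $\mathcal{F} := \{[L_0, L_1] : L_0 < L_1 \in \mathbb Q\}$ and for each $[L_0,L_1] \in \mathcal F$ let $N_{[L_0,L_1]} \subseteq \tilde Q_{[L_0,L_1]}$ be the exceptional $\mathfrak q$-null set where the conclusion of Theorem~\ref{th:lasttheorem} above fails. The union $N := \bigcup_{[L_0,L_1] \in \mathcal F} N_{[L_0,L_1]}$ is still $\mathfrak q$-null. For $q \in Q \setminus N$, the density $h_q$ satisfies the $\CD((N-1)\eta,N)$-density inequality on every compact rational interval $[L_0,L_1] \subseteq (a_q, b_q)$. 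Since any geodesic $c \colon [0,1] \to (a_q, b_q)$ is contained in some $[L_0, L_1] \in \mathcal F$ (approximating the endpoints from outside by rationals), and the defining inequality \eqref{kuconcave} is closed under such compact inclusions, it follows that $h_q$ is a $\CD(\eta(N-1),N)$-density on all of $(a_q,b_q)$.

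By Remark~\ref{Re:properties of CD(K,N)-densities and logarithmic convolutions}, item (iv), this means $(X_q, \met|_{X_q \times X_q}, \m_q)$, which is isometric to $((a_q,b_q), h_q \mathcal L^1)$ via $\beta_q$, satisfies $\CD(\eta(N-1), N)$. Combined with the signature-independent inequality $f'' - \kappa f \leq 0$, this proves the corollary.

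The only subtlety lies in handling the exceptional $\mathfrak q$-null sets: the application of Theorem~\ref{th:lasttheorem} is valid only for $\mathfrak q$-a.e.\ $q$ in each $\tilde Q_{[L_0,L_1]}$, and one must use the countability of $\mathcal F$ to pass to a single full-measure set on which the $\CD$ condition holds simultaneously on all rational sub-intervals, and then invoke the standard closure properties of $\CD(K,N)$-densities to extend to all of $(a_q, b_q)$. Everything else is a direct invocation of the already established two-dimensional theory.
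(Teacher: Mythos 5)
Your proposal is correct and follows essentially the same route as the paper: invoke Proposition~\ref{prop:cdslice} to get the sheetwise $\tCD_p(-\kappa N, N+1)$ condition, feed this into the two-dimensional converse Theorem~\ref{th:lasttheorem} to deduce the concavity inequality for $f$ and that $h_q$ is a $\CD(\eta(N-1),N)$-density on each interval, and then use a countable family of intervals to collapse the per-interval exceptional null sets into a single $\mathfrak q$-null set before patching to all of $(a_q,b_q)$. The only cosmetic difference is in the patching step: you argue via containment of an arbitrary geodesic in a rational compact subinterval and stability of the integrated inequality \eqref{kuconcave}, whereas the paper argues via the local (neighborhood) character of the distributional ODE \eqref{kuode}; these are equivalent ways of exploiting that the $\CD(K,N)$-density property is local.
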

\begin{proof} By Proposition \ref{prop:cdslice} and by Theorem \ref{th:lasttheorem} it follows that $h_q|_{(L_0, L_1)} $  for $\mathfrak q$-a.e.\  $q \in \tilde Q$ is a $\CD(\eta(N-1), N)$-density and $f$ satisfies $f''-\kappa f\leq 0$.

Let $(L_0^i, L_1^i)$ be a countable collection of open intervals such that $\forall q \in Q$ and $\forall t_0\in (a_q, b_q)$ there exists $i\in \N$ such that $t_0\in (L^i_0, L_1^i)\subseteq (a_q, b_q)$. Given the $\tilde Q^i$ associated to $(L^i_0, L_1^i)$ we have that $h_q$ restricted to $(L^i_0, L^i_1)$ is as above for $\mathfrak q$-a.e.\ $q\in \tilde Q^i$. In other words, there exists $\mathcal N^i$ with $\mathfrak q(\mathcal N^i)=0$ such that $h_q|_{(L^i_0, L^i_1)}$ is such a density $\forall q \in \tilde Q^i\setminus\mathcal N^i$. We set $\mathcal N:= \bigcup_i \mathcal N_i$, which is still a null set. Hence, $\forall q\in Q\setminus \mathcal N$ and $\forall t_0\in (a_q, b_q)$ there exists a neighbourhood of $t_0$ such that $h_{\ q}$ is a $\CD(\eta(N-1), N)$ density. Consequently $\forall q \in Q\setminus\mathcal N$ we have that $h_q$ is a $\CD(\eta(N-1), N)$ density. 
\end{proof}

\begin{proof}[\it Proof of \Cref{Th: Y to X}.]
Since the distance function $\met_\phi$ from which we derived the needle decomposition on $X$ was arbitrary we obtain  \Cref{Th: Y to X} from Theorem \ref{thm:cavmil}.\end{proof}

\section{Applications}\label{sec-app}
{In this final section we give several applications, in particular we provide new examples of \LLSn s which satisfy the timelike measure contraction property. Also, we point out several open questions.}

\subsection{New examples of spaces with synthetic timelike Ricci curvature bounds}\label{subsec-new-ex}
\subsubsection{Generalized cones as Lorentzian length spaces}
\Cref{th:totmcp} provides a new class of examples of measured Lorentzian length spaces $\I\times_f^N X$ that satisfy a timelike measure contraction property $\tmcp$, i.e., a generalized strong energy condition.

Let $f: I\rightarrow [0, \infty)$ be a smooth function such that \begin{enumerate}
    \item $f$ is $(-\kappa)$-concave, and
    \item $ \kappa f^2 - (f')^2\leq \eta$. 
\end{enumerate}
We can now pick any essentially nonbranching $\CD(\eta(N-1), N)$ space $X$ to produce a generalized cone ${}^-I \times_f^N X$ that is a measured Lorentzian geodesic space satisfying the condition $\tmcp(-\kappa N, N+1)$. 

In particular,  we can choose  $X$ as an  $\RCD(\eta(N-1), N)$ space. By now there is wide spectrum of different classes of metric measure spaces that satisfy an $\RCD$ condition:
\begin{itemize}
    \item $N$-dimensional Alexandrov spaces with curvature bounded from below $\eta$ \cite{pet11}.
    \item Ricci limit spaces, i.e., metric measure spaces that arise as pointed measured Gromov--Hausdorff limits of Riemannian manifolds with Ricci curvature bounded from below by $\eta (N-1)$ and dimension bounded from above by $N$ \cite{Stu:06a, Stu:06b, LV:09, gsm15}.
    \item Weighted $N$-dimensional Riemannian manifolds whose Bakry--\'Emery $N$-Ricci tensor is bounded from below by $\eta (N-1)$ \cite{Stu:06a, Stu:06b, LV:09}.
    \item $N$-dimensional smooth manifolds equipped with a $C^0$-Riemannian metric that satisfy a lower Ricci curvature bound $\eta (N-1)$ in the distributional sense \cite{mr24}.
    \item $N$-dimensional stratified spaces that satisfy a generalized lower Ricci curvature bound $\eta (N-1)$ \cite{bkmr21}.
\end{itemize}
{This leads to the following natural question (to which we expect a positive answer).}
\begin{que} Is $I\times^N_f X$ infinitesimal Minkowskian in the sense of \cite[Def.\ 1.4]{BBCGMORS:24} if the metric measure space $X$ is infinitesimal Hilbertian?
\end{que}
The more general class of $\CD$ spaces includes
\begin{itemize}
\item Weighted $N$-dimensional Finsler manifolds such that the generalized Bakry--\'Emery\\ $N$-Ricci curvature is bounded from below by $\eta (N-1)$ \cite{ohta_interpolation}.
\end{itemize}

We can also choose $f$ to be $(-\kappa)$-affine, i.e., $f''- \kappa f=0$. Then $\kappa f^2 - (f')^2 \equiv \eta\in \R$. By rescaling $f$ and $I$ we can assume $\kappa, \eta \in \{-1, 0, +1\}$. 
Since $\partial I= f^{-1}(\{0\})$, it follows that, up to Lorentzian isometries,  we have precisely the following list of examples. Here $X$ is a general metric measure space.
\begin{enumerate}
    \item[(L1)] Anti-de-Sitter $N$-suspension: $^-[0,\pi]\times_{\sin}^N X$,
    \item[(L2)] Minkowski $N$-cone: $^-[0, \infty)\times_{\mathrm{id}}^N X$, 
    \item[(L3)] Minkowski product: $^-[0, \infty)\times X$, 
    \item[(L4)] Lorentzian Elliptic $N$-cone: $^-[0, \infty)\times^N_{\sinh} X$, 
    \item[(L5)] Lorentzian Parabolic $N$-cone: $^-\R\times_{\exp}^N X$, 
    \item[(L6)] De-Sitter $N$-cone: $^-\R\times^N_{\cosh} X$. 
    \end{enumerate}
For the Minkowski product we have $f(r)\equiv 1$. Especially, the reference measure is\\ $1^N\mathcal L^1|_I\otimes~\m_X=\mathcal L^1|_I\otimes \m_X$, and the parameter $N$ disappears. 

In the following table we list  $I$ and $f$ together with the {corresponding}values of $\kappa$ and $\eta$.
\begin{center}
\begin{tabular}{|c|c|c|c|c|}
\hline &
\textbf{$I$}                     & {$f$}    & $\eta$& \ $\kappa$  \\ \hline
(L1)&$[0,\pi]$                        & $\sin$        & $-1$                 & $-1$                \\ \hline
(L2)&$[0,\infty]$                     & $\mathrm{id}$ & $-1$                 & $\hphantom{-}0$                 \\ \hline
          (L3)&       $\R$                             & $1$           & $\hphantom{-}0$                  & $\hphantom{-}0$                 \\ \hline
(L4)&$[0,\infty]$    & $\sinh$       & $-1$                 & $\hphantom{-}1$                 \\ \hline
(L5)&$\R$                             & $\exp$        & $\hphantom{-}0$                  & $\hphantom{-}1$                 \\ \hline
(L6)&$\R$                             & $\cosh$       & $\hphantom{-}1$                  & $\hphantom{-}1$                 \\ \hline
\end{tabular}\end{center}

If we choose  $N\in \N$ and $X=\mathbb H^N$, the standard hyperbolic space, in (L1), then ${}^-[0,\pi]\times_{\sin}\mathbb H^N $ is the $(N+1)$-dimensional  Anti-de-Sitter space. For the same choice in (L2) we obtain the $(N+1)$-Minkowski space. If we choose $N\in \N$ and  $X=\mathbb S^{N}$  in (L6) we obtain the  de-Sitter space.

In view of our result we make the following general conjecture.
\begin{conjecture}\label{conj:wp}
Let $N\in (1,\infty)$, $\kappa\in \R$, and let $X$ be a proper, complete, geodesic metric space with a Radon measure $\m$. Let $f: I\rightarrow [0, \infty)$ be Lipschitz such that $f^{-1}(\{0\})= \partial I$. 

Then $\smash{\I\times^N_f X}$ satisfies $\tCD(-\kappa N, N+1)$ \emph{if and only if }\begin{itemize}
    \item $f''- \kappa f\leq 0$ in distributional sense in $I$, and
    \item $X$ satisfies $\CD(\eta(N-1), N)$ where $\eta:=\sup_I\{ -(f')^2 + \kappa f^2\}$.
\end{itemize}
\end{conjecture}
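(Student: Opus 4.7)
The plan is to prove the two conclusions simultaneously by reducing to the two-dimensional case via a one-dimensional localization on the fiber, combined with the fiber independence of generalized cones. The strategy mirrors the structure of Section 5 but runs it in reverse.

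First I would fix any signed distance function $u = \met_\phi$ on $X$ and perform the $1D$ localization from Theorem \ref{Th:1Dloc}. This yields a measurable partition $\{X_q\}_{q\in Q}$ of $\mathcal{T}_u^b$ into geodesic segments together with a disintegration $\m = \int_Q \m_q \,\dint\mathfrak{q}(q)$. Note that at this stage the conditional $\m_q$ are not yet known to be $\mathcal{L}^1$-absolutely continuous, as we have not assumed any curvature-dimension condition on $X$. By fiber independence (\Cref{th:fiber}, \Cref{subsec:con_fiber}) each $Y_q := \I\times_f X_q$ embeds isometrically (as a Lorentzian pre-length space) into $Y$ via the sheet map $\mathfrak{H}(q,\cdot,\cdot)$ of \Cref{Def: shit map}, and one obtains the disintegration $\n = \int_Q \n_q\,\dint\mathfrak{q}(q)$ with $\n_q = f(t)^N\,\dint t \otimes \m_q$.

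Next I would promote the $\tCD_p(-\kappa N, N+1)$ condition on $Y$ to a statement on each individual sheet. The key observation is \Cref{lem:cycmon}: a $\uptau^p$-cyclically monotone set in $(\I\times_f \R)^2$ lifts to one in $(\I\times_f X)^2$ via the signed-distance coordinate, so optimal transport plans between measures supported on a single sheet $\I\times_f [L_0,L_1]$ can be averaged over $q$ and remain optimal in $Y$. Doing this averaging carefully --- picking a bounded subset $\tilde{Q} \subseteq Q$ of ``good'' $q$'s (those for which $[L_0,L_1] \subseteq (a_q,b_q)$) and lifting an $\n_q$-absolutely continuous pair $(\mu_0^q,\mu_1^q)$ on $\I\times_f[L_0,L_1]$ to $(\nu_0,\nu_1)$ on $Y$ --- I would first argue that the lifted pair is strongly $p$-dualizable, so that the $\tCD_p$ inequality on $Y$ together with timelike $p$-essential nonbranching (which forces uniqueness of the optimal plan) allows us to read off a $\tmcp$-like inequality on $Y_q$ for $\mathfrak{q}$-a.e.\ $q \in \tilde{Q}$. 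Combined with \Cref{lem:2Dmcp} this yields $\m_q = h_q\,\mathcal{L}^1$ with $h_q \in L^\infty_{\loc}((L_0,L_1))$.

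The main technical step is then to upgrade this sheetwise $\tmcp$ to a genuine $\tCD_p(-\kappa N, N+1)$ on each sheet $(\I\times_f (L_0,L_1), \n_q)$. Here I would use a countable approximation argument: restrict to a countable dense set $\mathfrak{S}$ of probability measures on $\I\times_f (L_0,L_1)$ whose densities with respect to $f(t)\,\dint t\otimes\dint r$ are rational step functions on intervals with rational endpoints, run the above lifting procedure for every pair in $\mathfrak{S}\times\mathfrak{S}$ simultaneously (discarding a single $\mathfrak{q}$-null set), and record the resulting pointwise $\tCD_p$ inequality \eqref{ineq:rhoq} for the explicit densities. For a general pair of bounded, compactly supported densities I then approximate in $L^\infty$ by elements of $\mathfrak{S}$ (using now that $h_q > 0$ with $h_q \in L^\infty_{\loc}$, so densities with respect to $\n_q$ converge uniformly); lower semicontinuity of the Rényi entropy $S_{N+1}$ and continuity of $\tau_{-\kappa N,N+1}^{(\cdot)}$ along the converging optimal plans then give the $\tCD$ inequality for the limit. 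This is the main obstacle, as it requires juggling uniform convergence of densities, weak convergence of optimal plans, and the fact that the underlying smooth Lorentzian manifold structure of $Y_q$ is independent of $q$.

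Once each sheet $(\I\times_f (L_0,L_1), \n_q)$ satisfies $\tCD_p(-\kappa N, N+1)$, I would apply the smooth-base converse \Cref{th:lasttheorem} to conclude that $f'' - \kappa f \leq 0$ on $(L_0,L_1)$ (which is independent of $q$, and by exhausting $I$ by an increasing sequence of such intervals, holds on all of $I$) and that $h_q|_{(L_0,L_1)}$ is a $\CD(\eta(N-1),N)$-density with $\eta = \sup_I\{-(f')^2 + \kappa f^2\}$. Covering $(a_q,b_q)$ by a countable family of intervals $(L_0^i,L_1^i)$ and taking the union of the exceptional $\mathfrak{q}$-null sets produces a single null set $\mathcal{N}\subseteq Q$ off of which $h_q$ is a $\CD(\eta(N-1),N)$-density on all of $(a_q,b_q)$. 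Since the signed distance function $u = \met_\phi$ was arbitrary, $X$ satisfies $\CD^1(\eta(N-1),N)$, and \Cref{thm:cavmil} of Cavalletti--Milman then yields $\CD(\eta(N-1),N)$, completing the proof.
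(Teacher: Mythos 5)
The statement you were asked to prove is labelled as a \emph{conjecture} in the paper, and the authors explicitly leave it open: what they actually prove are the weaker \Cref{th:totmcp} and \Cref{Th: Y to X}, which carry several additional hypotheses. Your proposal does not prove the conjecture; it reproduces the paper's proof of \Cref{Th: Y to X} under those extra hypotheses, and only for one of the two directions of the equivalence.

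There are three concrete gaps. First, the conjecture is an ``if and only if,'' but you only address the ``only if'' direction (from $\tCD$ on the cone to $\CD$ on the fiber and concavity of $f$). The ``if'' direction is entirely absent, and it is in fact the genuinely hard part: the paper's Theorems~\ref{th:totmcp} and~\ref{th:mmsmcp} only deliver $\tmcp$/$\mcp$ from a $\CD$ assumption on the fiber, and the upgrade from a measure contraction property to a full (timelike) curvature-dimension condition is a globalization-type problem that the paper does not resolve (cf.\ Remark~\ref{Rem: TCD rel}(i)). A proof of the conjecture must close that gap. Second, the conjecture assumes only that $X$ is proper, complete, and geodesic with a Radon measure, and makes no nonbranching assumption on either $X$ or $Y$; yet you explicitly invoke ``timelike $p$-essential nonbranching (which forces uniqueness of the optimal plan)'' to pass to the sheets, and you conclude by applying the Cavalletti--Milman theorem \Cref{thm:cavmil}, which requires $X$ to be essentially nonbranching. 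These are exactly the extra hypotheses in \Cref{Th: Y to X} that the conjecture removes, so smuggling them in means you have changed the statement. Third, the conjecture allows $f$ to be merely Lipschitz, while the two-dimensional converse \Cref{th:lasttheorem} you appeal to assumes $f$ smooth; your argument does not explain how to handle the lower-regularity warping function (the differential inequality $f'' - \kappa f \le 0$ is now asked in the distributional sense, and the smooth $2$D machinery of Section~\ref{sec-2d-mod-spa} does not directly apply). In short, the proposal is a faithful recapitulation of the proof of \Cref{Th: Y to X}, but it does not prove the conjecture as stated.
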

In particular, this includes the following conjecture.
\begin{conjecture}\label{conj:cone}
Let $N\in (1, \infty)$ and let $X$ be a proper, complete, geodesic metric space with a Radon measure $\m$. The Minkowski $N$-cone $[0, \infty)\times_r^N X$  satisfies the condition $\tCD(0, N+1)$ if and only if the metric measure space $(X, \m)$ satisfies the condition $\CD(-(N-1), N)$.
\end{conjecture}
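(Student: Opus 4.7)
The forward direction of Conjecture \ref{conj:cone} is essentially Corollary \ref{cor:cone}: specialize \Cref{Th: Y to X} to $\kappa=0$, $f(r)=r$, so that $\eta=\sup_{[0,\infty)}\{-(f')^2+\kappa f^2\}=-1$ and $\eta(N-1)=-(N-1)$. The only gap is the timelike $p$-essentially nonbranching hypothesis on $Y$ assumed in \Cref{Th: Y to X} but absent from the conjecture; I would try to remove it either by an approximation argument (approximate the fiber by sequences of essentially nonbranching spaces and use stability) or by showing that $\tCD(0,N+1)$ on the cone already forces enough non-branching structure on $X$ via fiber independence.

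The backward direction is the substantive content, since \Cref{th:totmcp} gives only $\tmcp(0,N+1)$ and the task is to upgrade $\tmcp$ to $\tCD$. I would proceed in the spirit of the $1D$-localization proof of \Cref{prop:prop}, but with a general (not Dirac) target. Fix a timelike $p$-dualizable pair $(\mu_0,\mu_1)\in\mathscr{P}^{\mathrm{ac}}_c(Y)^2$ strongly dualized by $\pi$, with optimal timelike geodesic plan $\bdpi\in\OptTGeo_p(\mu_0,\mu_1)$. By fiber independence (\Cref{th:fiber}), every $\gamma=(\alpha,\beta)\in\spt\bdpi$ has $\beta\in\Geo(X)$ depending only on its length. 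Let $\nu_i=(P_X)_\sharp\mu_i$ and let $\psi:X\to\R$ be a Kantorovich potential for the $2$-Wasserstein transport between $\nu_0$ and $\nu_1$ on $X$. Apply the $1D$-localization of \Cref{Th:1Dloc,Th:CD1} with $u=\met_\psi$: this yields a disintegration $\{\m_q\}_{q\in Q}$ of $\m$ concentrated on needles $X_q$ such that, by the $\CD(-(N-1),N)$ assumption on $X$, each $\m_q=h_q\mathcal{H}^1|_{X_q}$ has $h_q$ a $\CD(-(N-1),N)$-density. Lift to the sheets $Y_q:=[0,\infty)\times_r X_q\cong{^-[0,\infty)}\times_r [a_q,b_q]$ via the sheet map of \Cref{Def: shit map}, noting that these are smooth 2D Lorentzian warped products, and observe by \Cref{Th:2 dimensional non-smooth case} that $(Y_q,r^N\,\dint r\otimes h_q(s)\,\dint s)$ satisfies $\tCD(0,N+1)$.

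The final step is to decompose $\bdpi$ itself sheet-wise: push down to $\bdpi_X:=(P_X\circ)_\sharp\bdpi\in\mathscr{P}(\Geo(X))$, disintegrate it over $Q$ using $\psi$ (so that each $\bdpi_{X,q}$ is supported on geodesics inside $X_q$), and then lift back to a disintegration $\{\bdpi_q\}_{q\in Q}$ of $\bdpi$ with $\bdpi_q$ supported on $\TGeo(Y_q)$, using fiber independence to ensure the base component $\alpha$ is reconstructed consistently. Apply the sheet-wise $\tCD(0,N+1)$ pointwise inequality (\Cref{Eq: TCD pointwise}) to each $\bdpi_q$ and integrate against $\mathfrak{q}$, exactly as in the passage from \eqref{ineq:tcd2} to the integrated conclusion in the proof of \Cref{prop:prop}, to obtain the global $\tCD(0,N+1)$ inequality on $Y$. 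The main obstacle throughout is precisely the compatibility in this last step: it is a nontrivial geometric fact that the Kantorovich potential $\psi$ chosen on $X$ induces a $1D$-decomposition that is respected by the given optimal plan $\bdpi$ on $Y$ — in other words, that $\bdpi$-a.e.\ timelike geodesic $\gamma=(\alpha,\beta)$ has $\beta$ entirely contained in a single needle $X_q$. I expect this to follow from fiber independence together with the $\tau^p$-cyclical monotonicity of $\pi$ on $Y$ descending to $d^2$-cyclical monotonicity of a natural coupling of $(\nu_0,\nu_1)$ on $X$ (as in \Cref{lem:cycmon}), but making this rigorous without extra nonbranching hypotheses — thereby also handling the nonbranching gap in the forward direction — is the principal technical hurdle and the reason the statement is stated as a conjecture.
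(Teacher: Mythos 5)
This statement is labeled a \emph{conjecture} in the paper precisely because it is open: the paper proves only the implication ``$Y$ satisfies $\tCD_p \Rightarrow X$ satisfies $\CD$'' under an added nonbranching hypothesis (\Cref{Th: Y to X}, \Cref{cor:cone}), and in the other direction it establishes only the weaker $\tmcp$ (\Cref{th:totmcp}), not $\tCD$. You correctly diagnose both gaps and correctly conclude the statement is open, so as an \emph{assessment} your proposal is accurate. But it is not a proof, and the route you sketch for the backward direction would not close the gap.

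The fatal issue with your backward-direction plan is the choice of localizing function. You propose to take a Kantorovich potential $\psi$ for the $W_2$-transport between $\nu_0=(P_X)_\sharp\mu_0$ and $\nu_1=(P_X)_\sharp\mu_1$, set $u=\met_\psi$, and then hope that $\bdpi$-a.e.\ timelike geodesic $\gamma=(\alpha,\beta)$ has $\beta$ lying entirely inside a single needle $X_q$ of the $1D$ localization of $u$. This compatibility fails at the level of first principles: the needles of $\met_\psi$ (or of $\psi$ itself, even when it happens to be $1$-Lipschitz) are the transport rays of a \emph{linear} ($p=1$) Monge problem associated to $u$, not the geodesics $[x,T(x)]$ of the $p=2$ (or general $p>1$) optimal transport between $\nu_0$ and $\nu_1$. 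For quadratic-cost optimal transport the geodesics genuinely fan out and do not lie along the gradient flow of any single $1$-Lipschitz function. This is exactly why the paper's two-dimensional localization technique only reaches $\tmcp$: when $\mu_1=\delta_{\bar p}$ every fiber geodesic emanates from $\bar x$, so they do coincide with the needles of $\met(\cdot,\bar x)$; with two diffuse marginals this alignment is destroyed. The reduction to sheets $Y_q$ and the use of \Cref{Th:2 dimensional non-smooth case} would go through \emph{if} you had a needle decomposition compatible with $\bdpi$, but producing one is precisely the missing ingredient, and the paper explicitly flags this as open (\Cref{conj:wp}, \Cref{conj:cone}).

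For the forward direction there is also a genuine, unaddressed gap: the conjecture imposes no nonbranching hypothesis on $X$ or $Y$, whereas \Cref{Th: Y to X} requires $X$ essentially nonbranching and $Y$ timelike $p$-essentially nonbranching (\Cref{cor:cone} strengthens this to $X$ nonbranching, which by fiber independence yields the hypothesis on $Y$). Your two suggested remedies --- an approximation-and-stability argument, or deducing nonbranching from $\tCD(0,N+1)$ on the cone --- are not worked out and are themselves nontrivial: stability of $\tCD$ under the kind of fiber approximation you'd need is not established, and $\tCD$ alone does not in general force essential nonbranching. So neither direction is proved by the proposal, and the overall verdict matches the paper's: this remains a conjecture.
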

\subsubsection{Generalized cones as metric measure spaces}
In the positive signature case we choose a smooth function $f$ such that 
\begin{enumerate}
    \item $f$ is $\kappa$-concave, and
    \item $ \kappa f^2 + (f')^2\leq \eta$. 
\end{enumerate}
 as well as an essentially nonbranching $\CD(\eta(N-1), N)$ space $X$. Then $I\times_f^N X$ satisfies $\mcp(\kappa N, N+1)$ by \Cref{th:mmsmcp}.

 For  $f''+\kappa f=0$ and $\kappa f^2 + (f')^2= \eta$ the construction yields exactly one of the following spaces 
\begin{enumerate}
    \item[(R1)] Spherical $N$-suspension: $[0,\pi]\times_{\sin}^N X$,
    \item[(R2)] Euclidean $N$-cone: $[0, \infty)\times_{\mathrm{id}}^N X$, 
    \item[(R3)] Cartesian product: $[0, \infty)\times X$, 
    \item[(R4)] Elliptic $N$-cone: $[0, \infty)\times^N_{\sinh} X$, 
    \item[(R5)] Parabolic $N$-cone: $\R\times_{\exp}^N X$, 
    \item[(R6)] Hyperbolic $N$ cone: $\R\times^N_{\cosh} X$. 
    \end{enumerate}
The next table lists the possible choices for $I$, $f$, $\kappa$ and $\eta$. 
\begin{center}
\begin{tabular}{|c|c|c|c|c|}
\hline &
\textbf{$I$}                     & {$f$}    & $\eta$& \ $\kappa$  \\ \hline
(R1)&$[0,\pi]$                        & $\sin$        & $\hphantom{-}1$                 & $\hphantom{-}1$                \\ \hline
(R2)&$[0,\infty]$                     & $\mathrm{id}$ & $\hphantom{-}1$                 & $\hphantom{-}0$                 \\ \hline
          (R3)&       $\R$                             & $1$           & $\hphantom{-}0$                  & $\hphantom{-}0$                 \\ \hline
(R4)&$[0,\infty]$    & $\sinh$       & $\hphantom{-}1$                 & ${-}1$                 \\ \hline
(R5)&$\R$                             & $\exp$        & $\hphantom{-}0$                  & ${-}1$                 \\ \hline
(R6)&$\R$                             & $\cosh$       & ${-}1$                  & ${-}1$                 \\ \hline
\end{tabular}\end{center}

\subsection{Singularity and splitting theorems for generalized cones}
Next, we discuss singularity and splitting theorems in the setting of generalized cones.

\subsubsection{Hawking singularity theorem}
Assume $\I\times_f^N X$ satisfies a measure contraction property $\tmcp$.
We recall that a subset $\Sigma$  is achronal if it contains no two timelike related points. In a generalized cone the fibers $\{r_0\}\times X$ are achronal as well as future timelike complete (FTC) in the sense of Definition 1.7 in \cite{CM:24a}.  We fix $\{r_0\}\times X=: \Sigma$. The signed time separation distance function $\tau_\Sigma: X\rightarrow \R$ of $\Sigma$ is given by 
$$\tau_\Sigma((s,x))= s-r_0.$$
The intrinsic volume of the level sets $\{r\}\times X= \tau_\Sigma^{-1}(\{r\})$ is given by $f^N(r)\m_X$.

The time  function $\tau_\Sigma$ yields a disintegration of the reference $\n= f^N(r)\met r \otimes \m_X$ into $\n_x=~f^N(r) \mathcal H^1(r)$ such that 
$$\int_Q \n_x \met \mathfrak q(x)= \n,$$
for the quotient space $Q=X$ and the quotients  measure $\mathfrak q= \m_X$. Since the disintegration is induced by the product structure of the underlying space,  the conditional measure $\n_x$ actually does not depend on $x\in X$.

A synthetic notion of mean curvature for achronal FTC subsets of a Lorentzian measure space was given  in \cite[Def.\ 5.2]{CM:24a}. Here, we use a definition  that   was introduced in \cite{bkmr21, Ket:24} for Borel subsets in metric measure spaces that satisfy a curvature-dimension condition.  The hypersurface $\Sigma$ has  forward mean curvature bounded from below by $H\in\R$ if 
$$\limsup_{h\downarrow 0} \frac{1}{h} \left[\int_Y f(r_0+h)^N \met \m_X- \int_Y f(r_0)^N \met \m_X\right]\geq H \int_Y f(r_0)^N \met \m_X$$
for any bounded, Borel set $Y\subseteq X$. We remark that $\int_Y f(r)^N \met \m_X= f(r)^N \m_X(Y)$. In the general definition of lower mean curvature bounds the density of the conditional measures $\n_x$ may also depend on $x$. The inequality is equivalent to  
requiring that the derivative of $\log f(r)$ in $r_0$ is bounded from below by {$\frac{H}{N}$}. Moreover this  is equivalent to the definition in \cite{CM:24a}.

As a consequence of the synthetic Hawking singularity theorem in \cite[Thm.\ 5.6]{CM:24a}, we obtain the following corollary --- a singularity theorem for generalized cones satisfying a timelike measure contraction property. 
\begin{cor}\label{cor-hawking}
Let $\I\times_f^N X$ satisfy $\tmcp(KN,N+1)$ and assume that $\frac{d}{dr} \log f (r_0)\geq {\frac{H}{N}}$. If 
\begin{enumerate}
\item $K>0, N>0$ and $H\in \R$, or
\item $K=0, N>0$ and $H<0$, or 
\item $K<0, N>0$ and $H<- \sqrt{ -KN^2}<0$
\end{enumerate}
Then for every $x\in I^+(\Sigma)$ we have $\tau_{\Sigma}(x)< D_{K,H,N}<\infty$, where the precise constant can be found in \cite[Eq.\ (5.10)]{CM:24a}. In particular, for every timelike geodesic $\gamma:[0,L]\rightarrow I\times_f^N X$ (parametrized w.r.t.\ $\tau$-arclength) such that $\gamma_0\in \Sigma$ the domain $[0, L]$ is contained in $[0, D_{K,H,N}]$. 
\end{cor}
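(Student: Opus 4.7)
The plan is to reduce the claim to the synthetic Hawking singularity theorem \cite[Thm.\ 5.6]{CM:24a} applied to the hypersurface $\Sigma = \{r_0\} \times X$ inside $Y := \I\times_f^N X$. First I would check that $\Sigma$ is achronal and future timelike complete: achronality is immediate since any future directed causal curve in $Y$ strictly increases the base coordinate $t$ (by definition $\dot\alpha > 0$ a.e.), so two distinct points of $\Sigma$ cannot be timelike related. For the FTC property, fiber independence (\Cref{th:fiber}) combined with a direct comparison with $\I \times_f \{*\}$ yields the explicit formula $\tau_\Sigma((s,x)) = s - r_0$ for $s > r_0$, and every future inextendible timelike curve issuing from $\Sigma$ can be prolonged to all values of $\tau_\Sigma$ up to $\sup I - r_0$.

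Second, I would translate the mean-curvature hypothesis into the intrinsic one used by Cavalletti--Mondino. The disintegration of $\n = f^N(r)\,\dint r \otimes \m_X$ along the level sets $\{\tau_\Sigma = h\} = \{r_0 + h\} \times X$ is induced directly by the product/cone structure: the conditional measures along the vertical fibers have density $f^N(r)$ w.r.t.\ $\mathcal H^1$, independently of $x \in X$, while the quotient measure is $\m_X$. Hence the synthetic forward mean curvature inequality in the sense of \cite[Def.\ 5.2]{CM:24a}, written out for bounded Borel $Y \subseteq X$, reduces after division by $h\cdot f(r_0)^N\m_X(Y)$ and letting $h \downarrow 0$ to $N f'(r_0)/f(r_0) \geq H$, which is exactly our hypothesis $\tfrac{\dint}{\dint r} \log f(r_0) \geq H/N$.

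Third, I would feed these two ingredients into \cite[Thm.\ 5.6]{CM:24a}. That theorem is stated for spaces satisfying the entropic variant $\tmcp^e$; by \Cref{rem:entropicmcp}, under timelike $p$-essential nonbranching of $Y$ our assumption $\tmcp(KN, N+1)$ upgrades to $\tmcp^e(KN, N+1)$. The three regimes (i)--(iii) on $(K,H,N)$ in the statement are precisely the ones in which the sharp Bishop--type comparison ODE $\varphi'' + \tfrac{K}{N+1} \varphi = 0$ with Robin initial condition $\varphi'(0)/\varphi(0) = H/N$ has a first zero at a finite value $D_{K,H,N}$; the theorem then delivers $\tau_\Sigma(x) < D_{K,H,N}$ for every $x \in I^+(\Sigma)$. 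The final statement for geodesics $\gamma : [0,L] \to Y$ with $\gamma_0 \in \Sigma$ follows from the reverse triangle inequality: if $\gamma$ is $\uptau$-arclength parametrized then $L = \uptau(\gamma_0, \gamma_L) \leq \tau_\Sigma(\gamma_L) < D_{K,H,N}$.

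The main obstacle is not analytic but bookkeeping-theoretic: one has to cleanly identify $\tau_\Sigma$, its level-set disintegration, and the synthetic mean curvature of $\Sigma$ with the data required in \cite{CM:24a}, and justify the passage $\tmcp \Rightarrow \tmcp^e$ via timelike essential nonbranching (which is natural on the generalized cones treated in \Cref{subsec-new-ex}, since essential nonbranching of the fiber $X$ passes to $Y$ by fiber independence). Once this identification is in place, the singularity statement is an immediate application of the Cavalletti--Mondino theorem with no additional analysis needed.
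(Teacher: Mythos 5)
Your proposal is correct and follows essentially the same route as the paper: you identify $\Sigma=\{r_0\}\times X$ as achronal and FTC with signed time function $\tau_\Sigma((s,x))=s-r_0$, translate the hypothesis $\frac{\dint}{\dint r}\log f(r_0)\geq H/N$ into the synthetic forward mean curvature bound for $\Sigma$ via the product-structure disintegration $\n_x=f^N(r)\,\dint r$, and then invoke the Cavalletti--Mondino synthetic Hawking theorem. One place where you are actually slightly more careful than the paper's own presentation: you flag explicitly that \cite[Thm.\ 5.6]{CM:24a} is formulated for the entropic variant $\tmcp^e$ and that the passage $\tmcp\Rightarrow\tmcp^e$ (via \Cref{rem:entropicmcp}) requires timelike $p$-essential nonbranching of $Y$, a hypothesis the corollary does not list explicitly; the paper leaves this implicit, presumably because it is automatic in the regime where $\tmcp$ is established for these cones (fiber essentially nonbranching, hence $Y$ too by fiber independence).
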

\subsubsection{Volume singularity theorem}
Based on  previous results by Treude and Grant \cite{treudegrant}  García-Heveling \cite{garcia:24} proposed the concept of volume singularities for (smooth) spacetimes. A generalization to the  setting of $\tCD$ spaces is: $X$ is future volume incomplete if it contains a point
whose chronological future has finite $\m_X$-measure. In this context Braun \cite{Bra:24} proved a volume singularity theorem that directly applies to our situation. 
\begin{cor}\label{cor-vol-sing}
Assume $\I\times_f^N X$ satisfies $\tmcp(KN,N+1)$ and is timelike essentially nonbranching. Suppose that $\m_X(X)<\infty$ and $\frac{d}{dr} \log f(r_0)\geq \frac{H}{N}$ for some $r_0\in \R$. 
Then, we get {$\n(I^+(\{r_0\}\times X))<\infty$}
provided,
\begin{enumerate}
\item If $K>0$, then $H\in \R$ is arbitrary.
\item If $K=0$, then $H<0$.
\item If $K<0$, then $H\leq -N \sqrt{-K}$.
\end{enumerate}
In particular, M is future volume incomplete.
\end{cor}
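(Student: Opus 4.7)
The plan is to reduce \Cref{cor-vol-sing} to the synthetic Hawking singularity bound of \Cref{cor-hawking} applied to the hypersurface $\Sigma := \{r_0\} \times X$, and then bound $\n(I^+(\Sigma))$ by Fubini. Achronality of $\Sigma$ is immediate, as any two of its points share the same base coordinate and thus fail to be timelike related. The translation of the assumption $\frac{\dint}{\dint r}\log f(r_0) \geq H/N$ into a synthetic forward mean curvature bound $H$ for $\Sigma$ has already been carried out in the discussion preceding \Cref{cor-hawking}, exploiting the product disintegration of $\n = f^N(r)\,\dint r \otimes \m_X$ with conditional densities $f^N$ independent of $x\in X$.

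The key geometric step is the identity $\tau_\Sigma((r, x)) = r - r_0$ on $I^+(\Sigma)$. The inequality $\leq$ is read off the length formula: for any future-directed causal curve $\gamma = (\alpha, \beta)$ from a point of $\Sigma$ to $(r, x)$,
\[
L(\gamma) = \int\sqrt{\dot\alpha^2 - f(\alpha)^2\,v_\beta^2}\,\dint s \;\leq\; \int \dot\alpha\,\dint s \;=\; r - r_0,
\]
while equality is achieved along the vertical segment $s\mapsto (r_0 + s(r-r_0), x)$, whose length is exactly $r-r_0$. In particular, $I^+(\Sigma) = (r_0, \sup I)\times X$ (modulo the $\n$-null quotient at $\partial I$). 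Invoking \Cref{cor-hawking} under whichever of the three trichotomous conditions on $(K, H, N)$ is in force then supplies a finite constant $D_{K,H,N}$ with $\tau_\Sigma(x) < D_{K,H,N}$ for all $x \in I^+(\Sigma)$, which in view of the identity above forces $\sup I \leq r_0 + D_{K,H,N}$.

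With this upper cut-off in hand the volume bound is elementary. By the product form of $\n$,
\[
\n(I^+(\Sigma)) \;\leq\; \m_X(X)\int_{r_0}^{\min\{\sup I,\, r_0 + D_{K,H,N}\}} f^N(r)\,\dint r \;<\; \infty,
\]
since $\m_X(X) < \infty$ by assumption and $f$ is smooth, hence bounded, on the resulting compact subinterval of $I$. Future volume incompleteness follows immediately: for any $p = (r_0, x_0)\in \Sigma$ one has $I^+(p) \subseteq I^+(\Sigma)$, so $\n(I^+(p)) < \infty$. There is no substantive obstacle beyond the mean-curvature translation (already performed) and the identification of $\tau_\Sigma$ via fiber independence (\Cref{th:fiber}); the entire analytic content is packaged into the Hawking corollary, and once the temporal extent of $I^+(\Sigma)$ is cut off by $D_{K,H,N}$ the Fubini bound is an integration against a bounded continuous function on a bounded interval.
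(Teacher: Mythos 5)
Your proof takes a genuinely different and more elementary route than the paper. The paper proves Corollary~\ref{cor-vol-sing} by a one-line appeal to Braun's volume singularity theorem \cite{Bra:24}, which is formulated directly for Lorentzian spaces satisfying a timelike measure contraction property together with an achronal FTC hypersurface of bounded forward mean curvature. You instead route everything through Corollary~\ref{cor-hawking} and finish with Fubini. The ingredients you assemble --- achronality of $\Sigma=\{r_0\}\times X$, the identity $\tau_\Sigma((r,x))=r-r_0$ (already recorded in the paragraph preceding Corollary~\ref{cor-hawking}), the identification $I^+(\Sigma)=(r_0,\sup I)\times X$, and the translation of $\frac{\dint}{\dint r}\log f(r_0)\ge H/N$ into a lower mean curvature bound for $\Sigma$ --- are all correct, and the Hawking cut-off $\sup I\le r_0+D_{K,H,N}$ follows. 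This route is more self-contained within the paper and makes explicit the causal mechanism behind the finite volume; it also shows that the timelike essential nonbranching hypothesis is not used on this path, since Corollary~\ref{cor-hawking} does not invoke it.

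One step needs a caveat. You justify $\int_{r_0}^{\sup I} f^N\,\dint r<\infty$ by asserting that $f$ is ``smooth, hence bounded, on the resulting compact subinterval of $I$''. This is immediate if $\sup I\in I$, but if $\sup I\notin I$ the relevant interval is only half-open inside $I$ and smoothness alone does not rule out $f$ blowing up as $r\to\sup I$. The cleanest repair is to invoke the standing differential inequality $f''-\kappa f\le 0$ assumed throughout Section~\ref{sec-fib-Y} for $\I\times_f^N X$: for $\kappa\le 0$ this yields concavity, hence boundedness of $f$ on any bounded interval; for $\kappa>0$ a Sturm-type comparison with the solution of $g''=\kappa g$ matching data at an interior point gives $f\le g$ forward in $r$, again keeping $f$ bounded on bounded intervals. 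With this addendum the integral is finite and the Fubini bound closes the argument.
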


\subsubsection{Splitting theorem}
\begin{cor}\label{cor-spl-thm}
Let $f: I \rightarrow [0, \infty)$ be smooth. If $\I\times_f^N X$ satisfies the condition $\tCD(0,N+1)$, where $X$ is a proper, essentially nonbranching, complete, geodesic metric space with a Radon measure, and there exists a future directed timelike geodesic line, then $I= \R$ and $f\equiv c=const$. Especially, $\I\times_f^N X={}^-\R\times  X$.
\end{cor}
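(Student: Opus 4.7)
The plan is to reduce this splitting-type statement to an elementary convexity argument by combining Theorem \ref{Th: Y to X} with the fiber-independence structure of timelike geodesics in generalized cones; the full Lorentzian splitting theorem is not needed here.

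First, apply Theorem \ref{Th: Y to X} with $\kappa = 0$ to $Y = {}^-\!I\times_f^N X$ satisfying $\tCD(0, N+1)$ (using the standing assumption that $Y$ is timelike $p$-essentially nonbranching, which should be recorded explicitly). This yields simultaneously
\begin{equation*}
f'' \leq 0 \quad \text{on } I, \qquad X \in \CD\big(\eta(N-1), N\big) \ \text{ with } \ \eta \;=\; \sup_{I}\{-(f')^2\} \;\leq\; 0.
\end{equation*}
In particular $f$ is concave on $I$.

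Second, we use the timelike line to show $I = \R$. Let $\gamma\colon \R \to Y$ be a future-directed timelike geodesic line, reparametrized by $\uptau$-arclength. Decompose $\gamma = (\alpha, \beta)$ as in Theorem \ref{th:fiber}. Fiber independence gives $-\dot\alpha^2 + f^2(\alpha)\, v_\beta^2 = -1$ almost everywhere, hence $\dot\alpha^2 \geq 1$ and $\alpha$ is continuous and strictly monotonic with $|\alpha(t) - \alpha(0)| \geq |t|$ for all $t \in \R$. Consequently $\alpha(t) \to \pm\infty$ as $t \to \pm\infty$, so the connected image $\alpha(\R)$ equals all of $\R$; since $\alpha(\R) \subseteq I$, we conclude $I = \R$. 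The cone convention $f^{-1}(\{0\}) \subseteq \partial I = \emptyset$ then forces $f > 0$ everywhere on $\R$.

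Third, a smooth concave function $f\colon \R \to (0, \infty)$ must be constant: if $f(s_0) > f(s_1)$ for some $s_0 < s_1$, concavity implies that for every $t > s_1$ the value $f(t)$ lies below the affine extension of the secant line through $(s_0, f(s_0))$ and $(s_1, f(s_1))$, which has negative slope and hence eventually becomes negative, contradicting $f > 0$ on $\R$. (The symmetric argument handles $f(s_0) < f(s_1)$.) Therefore $f \equiv c$ for some $c > 0$.

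Finally, once $f \equiv c$, the Lorentzian length functional on ${}^-\!\R \times_c X$ reads $L(\gamma) = \int \sqrt{\dot\alpha^2 - c^2 v_\beta^2}\,\dint t$, which is exactly the length of a causal curve in the Lorentzian product ${}^-\!\R \times X'$, where $X'$ denotes $X$ after the trivial rescaling $\met \mapsto c\,\met$ (absorbed into the product, as in the statement). This completes the identification. The only non-routine step is the deduction $I = \R$ from the existence of a line, which rests on fiber independence; the rest is essentially a one-line concavity-positivity argument.
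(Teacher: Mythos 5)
Your proposal is correct but takes a genuinely different route from the paper on the key step, which is showing $I=\R$. The paper proceeds by fiber independence: it constructs an isometric embedding of the two-dimensional smooth Lorentzian warped product $I\times_f\R$ into $I\times_f X$, verifies that $I\times_f\R$ is globally hyperbolic, time-oriented, and (via $f''\leq 0$ from Theorem~\ref{Th: Y to X}) satisfies the timelike convergence condition, and then invokes Galloway's smooth Lorentzian splitting theorem for globally hyperbolic spacetimes to conclude $I\times_f\R\simeq{}^-\R\times\R$, whence $I=\R$ and $f$ is constant. You avoid the splitting theorem entirely: by passing to the $\uptau$-arclength parametrization of the line, the identity $-\dot\alpha^2 + (f\circ\alpha)^2 v_\beta^2 = -1$ (item (iii) of Theorem~\ref{th:fiber}, a consequence of maximality rather than literally of fiber independence, but the citation is apt) forces $\dot\alpha\geq 1$, so $\alpha(\R)$ is unbounded in both directions and hence $I=\R$; the rest is the same elementary concavity-plus-positivity argument. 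Your route is more elementary and self-contained, and it makes transparent exactly what input from the line is being used; the paper's route, by contrast, leans on a deep classical theorem but thereby needs to justify that the resulting isometry rules out half-line bases (e.g., the Minkowski cone, which is flat and globally hyperbolic but is only a proper wedge of $\R^{1,1}$). Two small points worth noting: both proofs implicitly take a timelike geodesic line to have infinite $\uptau$-length, so that the $\uptau$-arclength (resp.\ affine) parametrization has domain all of $\R$ — this should be read into the definition of ``line''; and you are right that the timelike $p$-essential nonbranching hypothesis needed for Theorem~\ref{Th: Y to X} ought to be stated explicitly, a point the paper's corollary leaves implicit.
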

\begin{proof} A future directed timelike geodesic line is a timelike curve $\gamma: \R\rightarrow \I\times_f N$ such that $\gamma|_{[-L, L]}$ is a future directed timelike maximal geodesic for every $L>0$.  In particular, if we write $\gamma= (\alpha, \beta)$,  then $\alpha:\R\rightarrow I$ is a smooth curve by fiber independence.
\smallskip

We show below that  $I=\R$. Since the timelike curvature-dimension condition $\tCD(0,N+1)$ for $^- \R \times_f^N X$ implies that $f: \R\rightarrow (0, \infty)$ is concave (Theorem \ref{Th: Y to X}), it then follows that $f$ is constant.
\smallskip\\
{\it Claim:} $f\circ \alpha(t)>0$ for all $t\in \R$. 

If there exists $t_0\in \R$ such that $f\circ \alpha(t_0)=0$, then $\alpha(t_0)$ is an endpoint of $I$ by definition of our class of generalized cones (i.e., we have $\partial I  = f^{-1}(\{0\}))$. Since $\alpha'\geq 0$, it follows that $\alpha$ is constant equal to $\alpha(t_0)$ before (or after, respectively) time $t_0$ and hence $\gamma$ either starts or terminates in $t_0$ which is a contradiction.
\smallskip

Let $T>0$ and $\gamma|_{[-T, T]}=: \gamma_T=(\alpha_T, \beta_T)$. Then $\beta_L: [-T, T] $ is a  pre-geodesic in $X$. Let $2L_T=L(\beta_T)$. By fiber independence we can embed $I\times_f [-L_T, L_T]\subset I\times_f \R$ into $I\times_f X$ via a map $\Phi_T$. Moreover, if $\tilde \gamma=(\tilde \alpha, \tilde \beta):[-T, T] \rightarrow I\times_f \R$ is the geodesic between $(r_0, -L_T)$ and $(r_1, L_T)$, then $\alpha_T= \tilde \alpha$ and $\Phi_T\circ \tilde \beta= \beta_T$. $L_T$ is increasing as $T\rightarrow \infty$. We set $L =\sup_{T>0} L_T$. Since $T>0$ is arbitrary, we can construct an embedding $\Phi$  of $I\times_f [-L, L]$ into $I\times_f X$, and a future directed timelike geodesic line $\tilde \gamma: \R\rightarrow I\times_f \R$ such that $\Phi\circ \tilde \gamma= \gamma$. 

Since $f$ is smooth, $I\times_f\R$ is a smooth, time-oriented Lorentzian manifold that contains a timelike geodesic line. Moreover, it is globally hyperbolic as the fiber is complete by \cite[Thm.\ 3.66]{BEE:96}. Hence $\I\times_f \R\simeq ^-\R\times \R$, by the smooth Lorentzian splitting theorem for globally hyperbolic spacetimes \cite{Gal:89}. 
\end{proof}

\subsection{A new definition of curvature bounds}
Our main theorems apply for the special cases where $I= [0, \infty)$ and $f(r)=r$. These choices yield the Euclidean and the Minkowski cone respectively, depending on the signature of $I$. 
\begin{itemize}
\item  The Euclidean $N$-cone $C^N(X)= {}^+I \times_{\mathrm{id}}^N X$ satisfies $\mcp(0,N+1)$ if $X$ satisfies the condition $\CD(N-1,N)$ (Theorem \ref{th:mmsmcp}).
If $C^N(X)$ satisfies the condition $\CD(0,N+1)$ then $X$ satisfies the condition $\CD(N-1,N)$ (Remark \ref{rem:metricanalog}).  
\item  The Minkowski $N$-cone $M^N(X)= \I\times_{\mathrm{id}}^N X$ satisfies the timelike measure contraction property $\tmcp(0,N+1)$  if  $X$ satisfies the condition $\CD(-(N-1),N)$ (Theorem \ref{th:totmcp}).  If $M^N(X)$ satisfies the condition $\tCD_p(0, N+1)$ for some $p\in (0,1)$, then $X$ satisfies the condition $\CD(-(N-1),N)$ (Theorem \ref{Th: Y to X}).
\end{itemize} 
 As a special case of our general conjectures in Subsection \ref{subsec-new-ex}, the measure contraction properties above should improve to curvature-dimension conditions. 

In view of our results we suggest the following definition, which is another reason to study the Lorentzian spaces with timelike curvature bounds.
\begin{defi}\label{def-new-cb}
Let $X$ be a metric measure space. $X$ satisfies the conic curvature-dimension condition $\CD^{\mathrm{Con}}(K,N)$ if 
\begin{enumerate}
\item $K=0$: $X$ satisfies $\CD^{\mathrm{Con}}(0, N)$ if it satisfies the condition $\CD(0,N)$.
\item  $K=\pm (N-1)$:  if  $^\pm I \times_{\mathrm{id}}^N X$ satisfies 
$
\begin{cases} \CD(0,N+1) &  K>0\,,\\
\tCD_p(0, N+1) \mbox{ for  some $p\in (0,1)$} &  K<0\,.
\end{cases}
$
\item $K\in \R\backslash \{0\}$: if $\sqrt{\frac{N-1}{|K|}} X$ satisfies the condition $\CD^{\mathrm{Con}}\bigl(\mathrm{sgn}K (N-1), N\bigr)$. 
\end{enumerate}
\end{defi}
Corollary \ref{cor:cone} then tells us that the condition $\CD^{\mathrm{Con}}(K,N)$ with $K<0$ for a nonbranching metric measure space $X$ implies the condition $\CD(K,N)$.  This   yields geometric estimates like the sharp Bishop--Gromov comparison, or the sharp Brunn--Minkowski inequality for $X$. Moreover the verification of our Conjecture \ref{conj:cone} would imply the equivalence of the condition $\CD^{\mathrm{Con}}(K,N)$ with the curvature-dimension condition $\CD(K,N)$ in the sense of Lott--Sturm--Villani. 

This idea to define the curvature-dimension condition through coning also makes sense for Alexandrov lower curvature bounds.  
\begin{defi} A geodesic space $X$ satisfies  conic Alexandrov curvature bounded from by $K$, i.e., $CBB^{\mathrm{Con}}(K)$, if  
\begin{enumerate}
\item  $K=0$: $X$ satisfies $CBB^{\mathrm{Con}}(0)$ if it satisfies $CBB(0)$.
\item  $K=\pm 1$:  if  $^\pm I \times_{\mathrm{id}} X$ satisfies 
$
\begin{cases} CBB(0)& K>0\,,\\
TCBB(0)  & K<0\,.
\end{cases}
$
\item  $K\in \R\backslash \{0\}$: if $\frac{1}{\sqrt{|K|}} X$ satisfies $CBB^{\mathrm{Con}}(\sgn K ).$ 
\end{enumerate}
\end{defi}
By results in \cite{BBI:01} and in \cite{AGKS:23} we know that $CBB^{\mathrm{Con}}$ is equivalent to $CBB$ for any geodesic space $X$.

\section*{Acknowledgments}
The authors extend their sincere thanks to the referee  for their very thorough reading of our manuscript and for the detailed and constructive report. MC is grateful to Mathias Braun for helpful discussions. CK wants to thank  Chiara Rigoni and the University of Vienna, where parts of this article were written during a research visit. 

This research was funded in part by the Austrian Science Fund (FWF) [Grants DOI\\ \href{https://doi.org/10.55776/PAT1996423}{10.55776/PAT1996423}, \href{https://doi.org/10.55776/STA32}{10.55776/STA32} and \href{https://doi.org/10.55776/EFP6}{10.55776/EFP6}].

For open access purposes, the authors have applied a CC BY public copyright license to any author accepted manuscript version arising from this submission. 

\bibliographystyle{halpha-abbrv}
\bibliography{Master}
\addcontentsline{toc}{section}{References}

\end{document}